\newtheorem{thm}{Th\'eor\`eme}[section]
\newtheorem{prop}[thm]{Proposition}
\newtheorem{lemma}[thm]{Lemme}
\newtheorem{lemang}[thm]{Lemma}
\theoremstyle{definition}
\theoremstyle{remark}
\newtheorem{rmk}[thm]{Remarque}
\newtheorem{remark}[thm]{Remark}
\numberwithin{equation}{section}
\newcommand{\Gal}{{\rm Gal}}
\newcommand{\Br}{{\rm Br}}
\newcommand{\Q}{\mathbb Q}
\newcommand{\R}{\mathbb R}
\newcommand{\F}{\mathbb F}
\newcommand{\C}{\mathbb C}
\newcommand{\Z}{\mathbb Z}
\newcommand{\g}{\mathfrak g}
\renewcommand{\P}{\mathbb P}
\newcommand{\Spec}{\operatorname{Spec}}
\newcommand{\A}{\mathbb A}
\newcommand{\cl}{\overline}
\renewcommand{\phi}{\varphi}
\title[Intersections de deux quadriques]{Retour sur l'arithm\'etique des intersections  de deux quadriques, \author{Jean-Louis Colliot-Th\'el\`ene}
\\avec un appendice par A. Kuznetsov}
\address{Universit\'e Paris-Saclay, CNRS, Laboratoire de math\'ematiques d'Orsay, 91405, Orsay, France.}
\email{jean-louis.colliot-thelene@universite-paris-saclay.fr}
\address{
Steklov Mathematical Institute of Russian Academy of Sciences, Moscow, Russia
Laboratory of Algebraic Geometry, NRU HSE, Moscow, Russia}
\email{akuznet@mi-ras.ru}
\date{28 novembre 2023 :  J. reine angew. Math. DOI 10.1515/crelle-2023-0081}
\begin{document}
	\maketitle
	\hypersetup{backref=true}

\begin{abstract} 
Soit $k$ un corps $p$-adique. On montre que toute intersection de deux quadriques dans l'espace projectif $\P^4_{k}$
contient un point sur une extension quadratique, ce qui g\'en\'eralise un r\'esultat de Creutz et Viray
pour le cas lisse. La preuve utilise un th\'eor\`eme de Lichtenbaum sur les courbes de genre 1 sur un corps
$p$-adique.
 On d\'eduit de ce r\'esultat que toute intersection lisse de deux quadriques  $X \subset \P^5_{k}$ 
sur un corps de nombres $k$ poss\`ede un point sur une extension quadratique. 
On d\'eduit aussi de ce r\'esultat une d\'emonstration
 relativement courte  d'un th\'eor\`eme de Heath-Brown :
le principe de Hasse vaut pour les intersections compl\`etes lisses $X \subset \P^7_{k}$ de deux quadriques
sur un corps de nombres. On donne aussi une d\'emonstration alternative d'un principe de Hasse
pour certaines intersections de deux quadriques dans $\P^5_{k}$, d\^{u} \`a Iyer et Parimala.

Lichtenbaum proved that index and period coincide for a curve of genus one
over a $p$-adic field. Salberger proved that the Hasse principle holds for a
smooth complete intersection of two quadrics  $X \subset \P^n$
 over a number field, if $n \geq 5$ and $X$
 contains a conic.
 Building upon these two results, we extend recent results of Creutz and Viray
 (2021) on the existence of a quadratic point on intersections of two quadrics over $p$-adic fields
 and over number fields.  We then recover  Heath-Brown's theorem (2018)
  that the Hasse principle holds for smooth
complete intersections of two quadrics in $\P^7$. We also give an alternate proof of a theorem of
Iyer and Parimala (2022)  on the local-global principle in the case $n=5$.
\end{abstract}

\section{Introduction}\label{intro}

\def\Pic{{\rm Pic}}
\def\k{{\overline k}}
\def\C{{\overline C}}

\subsection{Le contexte}

Une  conjecture g\'en\'erale  \cite[Conjecture 14.1.2]{CTSk21}  postule que l'obstruction de Brauer-Manin est la seule obstruction au principe de Hasse pour les points rationnels des vari\'et\'es projectives, lisses, g\'eom\'etriquement
rationnellement connexes sur un corps  de nombres   $k$. Cette conjecture fut faite par Sansuc et moi  \cite{CTSa80} dans le cas des surfaces g\'eom\'etriquement rationnelles, et formul\'ee en dimension sup\'erieure  par  moi  en 2000. Lorsque la vari\'et\'e $X$ satisfait de plus que le quotient
 $\Br(X)/{\rm Im}(\Br(k))$   du groupe de Brauer  de $X$ par l'image
du groupe de Brauer du corps de base est nul, 
 la conjecture dit  que le principe de Hasse vaut pour les points rationnels de $X$ : si la vari\'et\'e $X$ a des points
 dans tous les compl\'et\'es $k_{v}$ de $k$, alors $X$ a un point rationnel.
 On renvoie le lecteur au rapport de Wittenberg \cite{Wi15} et \`a \cite[Chap. 14]{CTSk21}
 pour une description r\'ecente des recherches dans ce domaine.

Soit  d\'esormais  $X\subset   \P^n_{k}$ une {\it  intersection compl\`ete   lisse  de deux quadriques}.
Pour $n \geq 4$, c'est une vari\'et\'e g\'eom\'etri\-que\-ment rationnelle \cite[Prop. 2.2]{CTSaSD87}, a~fortori est-elle g\'eom\'etri\-quement rationnellement
connexe. Pour $n \geq 5$, on a de plus $\Br(X)/{\rm Im}(\Br(k))=0$.
On sait  montrer \cite[Thm. 4.2.1, Prop. 5.2.3]{Ha94}
 que si la conjecture g\'en\'erale vaut  pour un entier $m\geq 4$
et  toute  $X \subset \P^m_{k}$,
 alors elle vaut  pour  tout $n \geq m$ et  toute telle   $X \subset \P^n_{k}$. 
 Pour $m\geq 5$, ceci dit simplement
 que si le principe de Hasse vaut pour toute $X \subset \P^m_{k}$ avec un $m \geq 5$,
 alors il vaut pour toute $X \subset \P^n_{k}$ pour tout $n \geq m$.

En 1987, l'article  \cite{CTSaSD87} a \'etabli   
  la conjecture g\'en\'erale pour $n \geq 4$ sous l'hypoth\`ese que
$X$ contient un couple de droites   conjugu\'ees, et  pour $n\geq 8$ a \'etabli le 
principe de Hasse pour toute $X \subset \P^n_{k}$.

 En 1988 et 1989, on montra, par deux m\'ethodes distinctes \cite{CT88, Sal88, Sal89, SalSk91}, 
 que la conjecture g\'en\'erale vaut pour $X \subset \P^4_{k}$ 
 contenant une conique, ou, en d'autres
 termes, pour $X \subset \P^4_{k}$ admettant une fibration en coniques sur la droite projective.
 
En 1993, Salberger a \'etabli  la conjecture g\'en\'erale
  pour     $n \geq 5$,  lorsque $X$ contient   une conique \cite{Sal93} \cite[Prop. 5.2.6]{Ha94}. 
  
En 2006, O. Wittenberg \cite{Wi07}  a montr\'e comment sous la  combinaison de deux conjectures
classiques mais pour l'instant hors d'atteinte, \`a savoir la finitude des groupes de Tate-Shafarevich
des courbes elliptiques et la conjecture de   Schinzel, le principe de Hasse
vaut pour   $ X \subset \P^n_{k}$ et $n\geq 5$.

En 2017,  dans le cas analogue d'un corps global $k$ de caract\'eristique $p>2$, par  des m\'ethodes
g\'eom\'etriques,  Zhiyu Tian \cite{ZT17} a \'etabli le principe de Hasse pour $n \geq 5$.
 
En 2018, R. Heath-Brown \cite{HB18} a \'etabli le principe de Hasse 
pour  $n=7$  sur un corps de nombres. Par rapport \`a \cite{CTSaSD87},
le point nouveau  essentiel dans son travail
est la d\'emonstration que, sur un corps $p$-adique,
 toute intersection compl\`ete lisse de deux quadriques  dans $\P^7_{k}$  
  qui contient un point rationnel contient une conique.

Le pr\'esent article a \'et\'e suscit\'e par le travail de Heath-Brown et par deux articles r\'ecents.

D'une part B. Creutz et B. Viray  \cite{CV21} ont \'etudi\'e l'existence de points
quadratiques sur $X \subset \P^n_{k}$
pour $n \geq 4$ et $k$ local ou global. Sur un corps local, ils montrent qu'il existe
un point ferm\'e de degr\'e au plus 2. Sur un corps global, ils montrent
 que le pgcd des degr\'es des points ferm\'es (l'indice de $X$)  divise 2.

 D'autre part J. Iyer et R. Parimala \cite{IP22} ont \'etabli un cas particulier
 de la conjecture pour $X \subset \P^5_{k}$
 (th\'eor\`eme \ref{PariIyer1} ci-dessous).

Ces travaux am\`enent \`a s'interroger \`a nouveau
  sur les intersections de deux quadriques lisses $X \subset \P^n_{k}$
d\'efinies sur un corps local ou sur un corps global,
  sur l'existence   de droites d\'efinies sur le corps de base ou
 sur une extension quadratique, et sur l'existence de coniques,
 dans le but d'utiliser ces courbes
 pour \'etablir le principe de Hasse pour les points rationnels dans le cas global.

 \subsection{Principaux r\'esultats}

Le r\'esultat principal de l'article est une d\'emonstration nous semble-t-il
 conceptuelle des deux  th\'eor\`emes  suivants dus \`a Heath-Brown \cite[Thm. 1, Thm. 2]{HB18}.
 
 \begin{thm}\label{hbrloc}
Soient  $k$ un corps $p$-adique et $X \subset \P^7_{k}$ une intersection compl\`ete lisse
de deux quadriques poss\'edant un point rationnel. Dans le pinceau des formes quadratiques s'annulant sur $X$,
il existe des formes de rang 8 qui contiennent trois hyperboliques, i.e. qui s'annulent 
sur un espace lin\'eaire $\P^2_{k} \subset  \P^7_{k}$.
\end{thm}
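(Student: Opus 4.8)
The plan is to exploit the rational point $P\in X(k)$ to reduce the question to an intersection of two quadrics in $\P^4_k$, to which the main $p$-adic result of the paper applies. Choose coordinates so that $P=(1:0:\dots:0)$, let $Q_1,Q_2$ be a basis of the pencil, with associated symmetric bilinear forms $B_1,B_2$ and Gram matrices $A_1,A_2$. A line $\langle P,w\rangle$ lies on $X$ if and only if $B_1(P,w)=B_2(P,w)=0$ and $Q_1(w)=Q_2(w)=0$; the two linear conditions cut out a $\P^4_k\subset\P^6_k=\P(k^8/\langle P\rangle)$, and on it the lines through $P$ contained in $X$ are exactly the points of the intersection of two quadrics $Y:=\{Q_1=Q_2=0\}\subset\P^4_k$. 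First I would apply the main theorem (every intersection of two quadrics in $\P^4_k$ has a closed point of degree at most $2$) to produce either a $k$-line through $P$ on $X$, or a conjugate pair $\ell,\ell'$ of such lines defined over a quadratic extension $K/k$.

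The key step is that a conjugate pair gives a plane for free. Since $\ell,\ell'$ are distinct and both pass through $P$, they span a $k$-rational plane $\Pi=\ell+\ell'$ (a $\P^2_k\subset\P^7_k$), the Galois action merely swapping $\ell$ and $\ell'$. Writing $\ell=\langle P,u\rangle$ and $\ell'=\langle P,u'\rangle$, the form $Q_\lambda=\lambda Q_1+\mu Q_2$ already vanishes on $\ell$ and on $\ell'$ for every $\lambda$; it vanishes on all of $\Pi$ if and only if the single extra condition $B_\lambda(u,u')=\lambda B_1(u,u')+\mu B_2(u,u')=0$ holds. The scalars $B_i(u,u')$ are Galois-invariant (the involution swaps $u,u'$ while $B_i$ is symmetric), hence lie in $k$, so this is a $k$-linear form in $(\lambda:\mu)$ with a unique root $\lambda_0\in\P^1(k)$ (and if it vanishes identically then $\Pi\subset X$ and any nondegenerate member of the pencil already works). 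For this $\lambda_0$ the $k$-plane $\Pi$ is totally isotropic for $Q_{\lambda_0}$.

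If instead the main theorem only returns a $k$-rational line, I would produce a second one: by the $p$-adic implicit function theorem a smooth $k$-point of the surface $Y$ has a whole neighbourhood of $k$-points, so there are infinitely many $k$-lines through $P$ on $X$; any two distinct ones span a $k$-plane $\Pi$, and the same one-dimensional computation yields a $\lambda_0\in\P^1(k)$ with $\Pi$ totally isotropic for $Q_{\lambda_0}$. The finitely many singular points of $Y$, and the degenerate configurations where $Y$ fails to be a smooth surface, would be treated separately using the full strength of the main theorem. In all cases one arrives at a $k$-rational $\P^2_k\subset\P^7_k$ on which some $Q_{\lambda_0}$ vanishes.

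It remains to ensure that $Q_{\lambda_0}$ has rank $8$, and this is the main obstacle. If $Q_{\lambda_0}$ is nondegenerate, the totally isotropic $3$-dimensional subspace $\Pi$ forces its Witt index to be at least $3$, i.e. $Q_{\lambda_0}\cong\mathbb H\perp\mathbb H\perp\mathbb H\perp q_0$ with $\dim q_0=2$, which is exactly the assertion that it contains three hyperbolic planes. Now $Q_\lambda$ is degenerate only at the (at most eight, since $X$ is smooth) roots of the binary octic $\det(\lambda A_1+\mu A_2)$, so the danger is that the $\lambda_0$ produced happens to be one of these finitely many values. I expect to dispose of this by genericity: the constructions above furnish not one but a whole family of planes $\Pi$ (varying the conjugate pair, or the two rational lines), giving infinitely many values $\lambda_0$, whereas only finitely many $\lambda$ are degenerate; choosing the pair so that $\lambda_0$ avoids the discriminant locus yields a nondegenerate $Q_{\lambda_0}$ of rank $8$ containing three hyperbolics. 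The delicate point to pin down is precisely that distinct pairs sweep out infinitely many admissible values $\lambda_0$.
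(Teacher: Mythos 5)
Your construction of the plane is essentially the paper's: both proofs fix a rational point, identify the lines of $X$ through it with an intersection of two quadrics $Y\subset\P^4_k$, invoke the $p$-adic quadratic-point theorem (th\'eor\`eme \ref{dansP4}) to obtain a rational or conjugate pair of lines, and observe that the plane they span is totally isotropic for some member of the pencil. But the two points you yourself flag as unresolved are genuine gaps, and the paper closes them by different means. First, you work with the fixed point $P$ and postpone the case where $Y$ is singular or degenerate; yet the density of $K$-points on $Y$ (which you need both to produce a second rational line and to run your perturbation argument) is exactly what fails when $Y$ is not a smooth geometrically integral surface. The paper avoids this by \emph{moving} the base point: $X(k)$ is Zariski dense ($X$ is $k$-unirationnelle), and by la proposition \ref{generalintegre} (f) a general choice of $M\in X(k)$ makes $X\cap T_M$ a cone over a \emph{smooth} complete intersection $Y\subset\P^4$, after which smoothness of $Y$ plus a $K$-point gives Zariski density of $Y(K)$.

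Second, the rank-$8$ upgrade, which you leave as ``the delicate point to pin down'', is where the paper does something you do not. Since $X$ is smooth the degenerate members of the pencil have corank $1$, so the bad case is that your $Q_{\lambda_0}$ has rank $7$; the paper then applies la proposition \ref{coniqueplan} (the variety $G_2(X)$ is smooth, geometrically integral and dominates $\P^1_k$, and a $p$-adic field is fertile, so a $k$-point of $G_2(X)$ over a degenerate fibre can be moved to one over a nondegenerate fibre), or alternatively the elementary proposition \ref{dansP7} (vii): a rank-$7$ form in the pencil means the separable octic $\det(\lambda f+\mu g)$ has a simple $k$-root, whence some $f+\lambda g$ has rank $8$ and non-square determinant, and over a $p$-adic field such a form is $2H\perp q$ with $q$ of rank $4$ and non-square determinant, hence isotropic (proposition \ref{bienconnu}), so it contains $3H$ --- no plane needed at all. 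Your alternative, sweeping out infinitely many values $\lambda_0$ by varying the pair of lines, can in fact be completed: if the map $(u,u')\mapsto\lambda_0(u,u')$ were constant, the linear span of $Y\cup\{P\}$, a $\P^5$, would be totally isotropic for a quadratic form in $8$ variables, which is absurd; so the map is dominant and density of $Y(K)$ (once $Y$ is smooth) yields infinitely many admissible $\lambda_0$. But as written you neither prove this non-constancy nor arrange the smoothness of $Y$ on which it depends, so the argument is incomplete at precisely its last step.
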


\begin{thm}\label{hbrgl}
Soient $k$ un corps de nombres et  $X \subset \P^7_{k}$ une intersection compl\`ete lisse
de deux quadriques. Le principe de Hasse
vaut pour $X$ : si $X$ a des points rationnels dans tous les compl\'et\'es $k_{v}$
de $k$, alors $X$ a des points rationnels.
\end{thm}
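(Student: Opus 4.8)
The plan is to derive Theorem~\ref{hbrgl} from Salberger's theorem \cite{Sal93}, according to which the Hasse principle holds for a smooth complete intersection of two quadrics $X \subset \P^n_k$ with $n \geq 5$ as soon as $X$ contains a conic defined over $k$ (recall that $\Br(X)/\mathrm{Im}(\Br(k)) = 0$ in this range, so that the Brauer--Manin obstruction is vacuous). Assuming $X(k_v) \neq \emptyset$ for every place $v$, the whole problem is thereby reduced to the \emph{global} production of a conic on $X$, the corresponding \emph{local} statement being exactly what Theorem~\ref{hbrloc} supplies. So I would first recast Theorem~\ref{hbrloc} geometrically: if a nondegenerate (rank~$8$) member $Q_\lambda$ of the pencil $\{Q_\lambda\}_{\lambda \in \P^1}$ of quadrics through $X$ contains a plane $\Pi \cong \P^2$, then $\Pi \cap X = \Pi \cap Q_\mu$ (any $\mu \neq \lambda$) is a conic lying on $X$; containing a $\P^2$ means $Q_\lambda$ has Witt index $\geq 3$, i.e. carries three hyperbolic planes.

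Next I would check that the pencil has everywhere-local $\P^2$'s. For a finite place $v$, the hypothesis $X(k_v) \neq \emptyset$ together with Theorem~\ref{hbrloc} yields a nondegenerate member of the pencil containing a $\P^2_{k_v}$. The archimedean places are dealt with by an elementary signature computation: over $\mathbb{C}$ there is nothing to prove, while over $\mathbb{R}$ the existence of a real point on $X$ forces, after simultaneous diagonalization of the pencil, a nondegenerate member whose signature $(p,q)$ with $p+q=8$ satisfies $\min(p,q)\geq 3$, hence which carries three hyperbolic planes and a real $\P^2$.

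The hard part, and the step I expect to be the main obstacle, is that these local planes need not sit over a common member of the pencil: at the place $v$ one only controls a plane inside $Q_{\lambda_v}$ for some $\lambda_v \in \P^1(k_v)$ that varies with $v$. The point to exploit is that, for a \emph{fixed} $k$-rational nondegenerate member $Q_\lambda$, the existence of an isotropic $\P^2$ (a $3$-dimensional totally isotropic subspace, i.e. Witt index $\geq 3$) is itself a local--global property of an $8$-dimensional quadratic form, and so holds over $k$ as soon as it holds over every $k_v$, by Hasse--Minkowski. The real content is therefore to descend the varying local data to a \emph{single} $\lambda \in \P^1(k)$: I would run the fibration method over the base $\P^1$ of the pencil, away from the degree-$8$ discriminant locus where $Q_\lambda$ degenerates, moving the local points and controlling the (vertical) Brauer--Manin obstruction by a reciprocity argument, so as to produce one $k$-rational nondegenerate $Q_\lambda$ that contains a $\P^2$ over every completion simultaneously. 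This is the place where genuine arithmetic beyond Theorem~\ref{hbrloc} enters, and where one reduces $W$, the parameter variety of pairs $(\lambda,\Pi)$, to a situation already governed by the methods of \cite{CTSaSD87} and \cite{Sal93}.

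Once such a $\lambda \in \P^1(k)$ is secured, Hasse--Minkowski provides an isotropic $\P^2 \subset Q_\lambda$ over $k$, hence a conic on $X$ defined over $k$; Salberger's theorem \cite{Sal93} then gives $X(k) \neq \emptyset$, which is the assertion of Theorem~\ref{hbrgl}. (The verification that the resulting conic may be taken integral, and that the fibration-method bookkeeping is compatible with avoiding the finitely many degenerate fibres, is routine and I would leave it to the detailed argument.)
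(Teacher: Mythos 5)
Your overall architecture is the right one and coincides with the paper's: reduce everything to producing a conic on $X$ over $k$ (equivalently a nondegenerate member of the pencil containing $3H$), using Theorem~\ref{hbrloc} at the finite places, a signature argument at the real places (the paper's Proposition~\ref{mordell}, which in fact needs no hypothesis on $X(\R)$), the Hasse principle for containment of forms (Theorem~\ref{Hasse}) to globalize once a single $\lambda\in\P^1(k)$ is fixed, and Salberger's Theorem~\ref{sal93} to finish. You have also correctly isolated the one nontrivial step: the local parameters $\lambda_v$ vary with $v$, and one must replace them by a single $\lambda\in\P^1(k)$.

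It is precisely at that step that your proposal has a genuine gap. You propose to resolve it by running the fibration method on the family of planes over $\P^1$, ``controlling the vertical Brauer--Manin obstruction by a reciprocity argument''; this is not carried out, and it is far heavier machinery than the situation requires (one would have to analyse the fibres of $G_{2}(X)\to\P^1$ over the degree-$8$ discriminant, the Brauer group of the total space, etc., none of which you do). The missing idea that makes the step elementary is Proposition~\ref{dansP7}(v): at every finite place $v$ of good reduction (hence at all but finitely many places), \emph{every} nonsingular form $\lambda f+\mu g$ already contains $3H$ over $k_v$. This follows because $F_{2}(X)$ is a torsor under an abelian variety, hence has a point over the residue field by Lang's theorem, which lifts by Hensel; so $X$ contains a $\P^2_{k_v}$ and every nondegenerate member of the pencil has Witt index at least $3$. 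Consequently only a finite set $S$ of places needs attention. At each $v\in S$ Theorem~\ref{hbrloc} (resp.\ Proposition~\ref{mordell} for $v$ real) gives one $(\lambda_v,\mu_v)$ with $\lambda_v f+\mu_v g$ of rank $8$ containing $3H$, and since this is an open condition on $\P^1(k_v)$ (implicit function theorem), ordinary weak approximation on $\P^1$ produces a single $\lambda\in k$ with $f+\lambda g$ nondegenerate and containing $3H$ in every completion. Theorem~\ref{Hasse} then gives $3H$ over $k$, hence a conic on $X$, and Theorem~\ref{sal93} concludes. No Brauer--Manin or reciprocity argument is needed anywhere in this step.
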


Un th\'eor\`eme g\'en\'eral  sur les intersections lisses   de deux quadriques dans $\P^n_{k}$
pour $n \geq 5$  qui poss\`edent un point rationnel
\cite[Thm. 3.11]{CTSaSD87} donne alors l'approximation faible pour $X$,
que nous ne discutons donc pas dans le pr\'esent article.

\medskip

Nous donnons par ailleurs  une d\'emonstration originale d'un th\'eor\`eme r\'ecent
d'Iyer et Parimala :

 \begin{thm}\label{PariIyer1} 
Soient $k$ un corps de nombres et $X \subset \P^5_{k}$ une intersection compl\`ete
lisse de deux quadriques donn\'ee par un syst\`eme $f=0, g=0$.
Supposons que la courbe $C$ d'\'equation 
$y^2=-det(\lambda f + \mu g)$ a un point de degr\'e impair,
ce qui est le cas par exemple s'il existe une forme de rang 5 dans le pinceau.
Si, pour chaque place $v$, la vari\'et\'e $X$ contient une $k_{v}$-droite,
alors $X$ a un point rationnel.
\end{thm} 
C'est le th\'eor\`eme \ref{iyerparimala} (voir aussi le th\'eor\`eme \ref{autreIP}).

\medskip

Nous obtenons aussi les r\'esultats suivants.

\begin{thm}\label{quadlocal}
 Sur un corps $p$-adique $k$,  sur toute intersection  $X$
 de deux quadriques dans $\P^n_{k}$, $n \geq 4$,
 il existe un point dans une extension quadratique de $k$.
 \end{thm}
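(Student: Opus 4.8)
The plan is to reduce to $\P^4_k$ and there exploit the degenerate members of the pencil of quadrics through $X$, together with Lichtenbaum's theorem that period equals index for a genus-one curve over a $p$-adic field. First I would reduce to $n=4$: for $n\geq 5$ a general $k$-rational hyperplane $H$ meets $X$ in an intersection of two quadrics $X\cap H\subset H\cong\P^{n-1}_k$, and a closed point of degree $\leq 2$ on $X\cap H$ is one on $X$; iterating, it suffices to treat $X\subset\P^4_k$. Pencils in which one quadric has small rank, or in which the two quadrics share a component, force a $k$-rational linear subspace along which the residual quadric is a form in $\geq 5$ variables, hence isotropic over $k$; these degenerate configurations therefore already carry a $k$-point, so I may assume that $X\subset\P^4_k$ is an integral (indeed smooth) intersection of two quadrics $Q_1=0,\ Q_2=0$, i.e.\ a del Pezzo surface of degree $4$.

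Write $\delta(\lambda,\mu)=\det(\lambda M_1+\mu M_2)$ for the discriminant of the pencil, a binary quintic whose roots are the (rank $4$) singular members $Q_t$. Such a member is a cone over a $k(t)$-form of a smooth quadric surface, and its two rulings cut $X$ in two pencils of conics with classes $f$ and $-K_X-f$; a ruling plane defined over a field $F$ yields a conic on $X$ over $F$, and any conic over $F$ carries a closed point of degree $\leq 2$ over $F$. The key mechanism is that for a $k$-rational singular member the two conjugate conics $C_1,C_2$ in the classes $f$ and $-K_X-f$ satisfy $C_1+C_2\sim -K_X$ and $C_1\cdot C_2=2$, so the Galois-stable intersection $C_1\cap C_2$ is an effective $0$-cycle of degree $2$ over $k$, hence a closed point of degree $\leq 2$. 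A short analysis of conic classes from two distinct members shows their intersection number is always $1$ or $2$, so the same conjugate-conic trick also disposes of a degree-$2$ root of $\delta$ whose rulings are split over the residue field. Crucially this intersection trick avoids the loss of a factor $2$ that a naive quadratic base change would cost.

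Since $\deg\delta=5$ is odd, $\delta$ always has a closed point of odd degree; but its roots may fail to include one handled by the previous paragraph (for instance $\delta$ may be irreducible of degree $5$), and this is the residual case. Here I would attach to $X$ a genus-one curve $\Gamma/k$ of period dividing $2$ — a double cover $y^2=q(\lambda,\mu)$ governing when a member of the (restricted) pencil of quadric surfaces acquires a rational ruling — and apply Lichtenbaum's theorem to produce a closed point of degree $\leq 2$ on $\Gamma$, which I then arrange to produce a closed point of degree $\leq 2$ on $X$.

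\textbf{The main obstacle is exactly the degree bookkeeping of this last step.} A naive hyperplane section $X\cap H\subset\P^3_k$ is a genus-one curve, but only of period dividing $4$, and such space curves genuinely can have index $4$ over a $p$-adic field; moreover making a ruling rational over a quadratic extension and then invoking Lichtenbaum there returns a point of degree up to $4$. Producing instead a period-two genus-one curve over $k$ itself — equivalently, a single Galois-stable closed point of degree $2$ rather than a point of degree $4$ — is where the two-dimensionality of $X$, as opposed to that of a generic space curve, must be used, and this is the crux of the argument.
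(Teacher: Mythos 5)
Your reduction to $\P^4_k$ is fine, and your treatment of the case where the discriminant quintic has a rational root is correct and even works over any field: a rank-$4$ member of the pencil defined over $k$ carries two conjugate ruling planes meeting in a $k$-rational generator, whose intersection with a second member of the pencil is an effective Galois-stable $0$-cycle of degree $2$. This is close in spirit to the argument in the proof of Proposition \ref{dansP4bis} (vi). But the proof is not complete: you yourself flag that the residual case --- producing a genus-one curve over $k$ of period dividing $2$ when the quintic has no root of degree $1$, nor a degree-$2$ root with split rulings --- is ``the crux of the argument'', and you do not supply that construction. The curve $\Gamma\colon y^2=q(\lambda,\mu)$ you propose is moreover the wrong object: a degree-$2$ point on a parameter curve governing rationality of rulings only yields a conic on $X$ over a quadratic extension, hence a closed point of degree up to $4$ on $X$ --- exactly the factor-of-$2$ loss you are trying to avoid. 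Odd-degree roots of the quintic do not rescue you either: by Proposition \ref{springer} a point of degree $3$ or $5$ on $X$ descends to a rational point, but a point of degree $6$ or $10$ does not descend to degree $2$; this is precisely why the global statement in Proposition \ref{dansP4bis} (vi) only bounds the index. Finally, your preliminary reduction to the smooth integral case is not justified: a singular but geometrically integral complete intersection in $\P^4_k$ need not carry a $k$-rational linear subspace (compare Proposition \ref{coraytsfasman}).

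The paper closes the gap with one additional $p$-adic input that your proposal never uses: every nondegenerate quadratic form in five variables over a $p$-adic field is isotropic. Choosing coordinates adapted to an isotropic vector of a rank-$5$ member, one writes the pencil as $f(x_0,x_1,x_2)+x_3x_4=0$, $g=0$, and slices by the hyperplane $x_4=0$. The resulting curve $Y\subset\P^3_k$ is cut out by two forms, one of which has rank at most $3$, so the binary quartic $\det(\lambda f+\mu h)$ has a rational root, the covering $y^2=\det(\lambda f+\mu h)$ has a rational point, and $Y$ has period dividing $2$; the Roquette--Lichtenbaum theorem then gives a closed point of degree at most $2$ on $Y\subset X$ (Proposition \ref{intersectquelconqueP3}). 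Two features of this are exactly what your sketch is missing: the genus-one curve is a hyperplane section of $X$ itself, so its points are points of $X$ with no loss of degree; and the period-$2$ property comes from the adapted choice of hyperplane, not from the two-dimensionality of $X$ in any more elaborate sense. All degeneracies (low rank, common components, non-integrality) are then handled inside $\P^3$ by Lemma \ref{intersecpasintegre}, so no reduction to the smooth case in $\P^4_k$ is needed.
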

 C'est le th\'eor\`eme \ref{dansP4}.  Ceci g\'en\'eralise
   le th\'eor\`eme \cite[Thm. 1.2 (1)]{CV21} obtenu par
  Creutz et Viray  dans le cas lisse.
  
 \begin{thm}\label{quadglobal}
 Sur un corps de nombres,
   toute intersection compl\`ete lisse de deux quadriques dans $\P^n_{k}$
   pour $n \geq 5$ poss\`ede un point quadratique.
   \end{thm}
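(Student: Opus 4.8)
Le plan est de se ramener au cas $n=5$, puis de trouver une extension quadratique $K/k$ telle que $X_{K}$ poss\`ede un $K$-point : un tel point fournit aussit\^ot un point ferm\'e de degr\'e $\leq 2$ sur $X$. Pour $n \geq 6$, on coupe $X$ par un sous-espace lin\'eaire $\P^5_{k} \subset \P^n_{k}$ g\'en\'eral d\'efini sur $k$ ; le corps $k$ \'etant infini, le th\'eor\`eme de Bertini assure que $X \cap \P^5_{k}$ est une intersection compl\`ete lisse de deux quadriques dans $\P^5_{k}$, et un point ferm\'e de degr\'e $\leq 2$ sur cette section en est un sur $X$. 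On suppose donc d\'esormais $X \subset \P^5_{k}$.

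L'outil global est le th\'eor\`eme de Salberger \cite{Sal93} : sur un corps de nombres, une intersection compl\`ete lisse de deux quadriques dans $\P^5$ qui contient une conique et qui a des points dans tous les compl\'et\'es poss\`ede un point rationnel. On cherche donc \`a r\'ealiser ces deux hypoth\`eses apr\`es une extension quadratique convenable. Par le th\'eor\`eme \ref{quadlocal} appliqu\'e place par place, $X$ poss\`ede en chaque place finie $v$ un point dans une extension au plus quadratique $L_{v}/k_{v}$ ; aux places archim\'ediennes on prend $\mathbb{C}/\R$, ce qui convient encore. Hors d'un ensemble fini $S$ de places, $X$ a bonne r\'eduction et, par les estimations de Lang--Weil jointes au lemme de Hensel, un $k_{v}$-point ; seules les places de $S$ imposent donc une v\'eritable extension quadratique.

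On construit alors, par le th\'eor\`eme de Grunwald--Wang, une extension quadratique globale $K/k$ dont le compl\'et\'e en chaque place de $S$ contient le corps local $L_{v}$ voulu et qui est d\'eploy\'ee aux places o\`u $X$ poss\`ede d\'ej\`a un point rationnel : on obtient ainsi $X(K_{w}) \neq \emptyset$ pour toute place $w$ de $K$, soit la solubilit\'e locale partout de $X_{K}$. Reste \`a munir $X_{K}$ d'une conique d\'efinie sur $K$. Une mani\`ere concr\`ete d'y parvenir est de produire sur $X$ un couple de droites conjugu\'ees $\{\ell, \ell^{\sigma}\}$ d\'efinies sur une extension quadratique : deux telles droites, g\'en\'eriquement disjointes, engendrent un $\P^3$ d\'efini sur $k$, et pour un couple g\'en\'erique la courbe $X \cap \P^3$, de degr\'e $4$, a pour r\'esiduelle \`a $\ell \cup \ell^{\sigma}$ une conique d\'efinie sur $k$, donc sur $K$. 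On est ainsi ramen\'e \`a l'existence d'un point ferm\'e de degr\'e $\leq 2$ sur le sch\'ema de Fano des droites $F_{1}(X)$, que l'on esp\`ere contr\^oler par la m\^eme m\'ethode locale-globale.

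La conique et la solubilit\'e locale \'etant acquises sur $K$, le th\'eor\`eme de Salberger fournit un $K$-point de $X$, d'o\`u le point quadratique cherch\'e, ce qui renforce le r\'esultat d'indice divisant $2$ de Creutz et Viray \cite{CV21} en l'existence d'un v\'eritable point de degr\'e $\leq 2$. L'obstacle principal est la production de la conique : il faut assurer simultan\'ement, au moyen d'une seule extension quadratique $K$, la solubilit\'e locale partout et l'existence de la conique. Cela exige de ma\^{\i}triser l'arithm\'etique du sch\'ema des droites (ou des coniques) de $X$, en ins\'erant au besoin les conditions ad\'equates dans le choix de $K$, et de contr\^oler le cas exceptionnel de Grunwald--Wang au-dessus de $2$.
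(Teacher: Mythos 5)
Votre proposition comporte une lacune r\'eelle, que vous signalez d'ailleurs vous-m\^{e}me : la production d'une conique sur $X_{K}$ n'est pas \'etablie, et c'est pr\'ecis\'ement l'\'etape dont tout le reste d\'epend. Vouloir l'obtenir \`a partir d'un couple de droites conjugu\'ees revient \`a demander un point ferm\'e de degr\'e $\leq 2$ sur $F_{1}(X)$ ; or pour $X \subset \P^5_{k}$ la vari\'et\'e $F_{1}(X)$ est un espace principal homog\`ene d'une surface ab\'elienne (proposition \ref{generalintegre} (b)), pour lequel aucune m\'ethode locale-globale n'est disponible (le principe de Hasse pour de tels torseurs est gouvern\'e par le groupe de Tate--Shafarevich). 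De plus, l'argument devient circulaire : si l'on disposait d'un couple de droites conjugu\'ees sur $X$, leur r\'eunion serait une courbe de degr\'e $2$ d\'efinie sur $k$, et une section hyperplane g\'en\'erale fournirait directement le point quadratique cherch\'e, sans aucun recours au th\'eor\`eme \ref{sal93} de Salberger ni \`a la solubilit\'e locale de $X_{K}$. Le d\'etour par Salberger vise en fait un \'enonc\'e beaucoup plus fort (un $K$-point rationnel pour $K/k$ quadratique) que ce que demande le th\'eor\`eme.

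La d\'emonstration du texte suit une route toute diff\'erente et bien plus \'economique : elle travaille directement sur le pinceau de formes quadratiques. Pour chaque place $v$, l'existence d'un point quadratique local (th\'eor\`eme \ref{dansP4}, cas lisse de Creutz--Viray) donne, via la proposition \ref{pointquaddroite}, une forme non singuli\`ere $\lambda_{v} f + \mu_{v} g$ contenant $2H$ ; hors d'un ensemble fini de places c'est automatique par bonne r\'eduction (proposition \ref{dansP5} (iv)). Par approximation faible sur $\P^1_{k}$ et le th\'eor\`eme des fonctions implicites, on choisit un seul couple $(\lambda,\mu) \in \P^1(k)$ tel que $\lambda f + \mu g$ contienne $2H$ dans tous les compl\'et\'es, donc sur $k$ par le principe de Hasse pour les sous-formes (th\'eor\`eme \ref{Hasse}) ; la droite $\P^1_{k}$ contenue dans cette quadrique coupe une autre quadrique du pinceau en un sch\'ema de longueur $2$, d'o\`u le point quadratique. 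C'est cette substitution --- le principe de Hasse pour les formes quadratiques \`a la place d'un principe local-global pour les points rationnels --- qui fait fonctionner l'argument et que votre proposition ne parvient pas \`a remplacer.
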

  C'est le th\'eor\`eme \ref{quadratiqueP5nombres}.  Ceci  donne une d\'emonstration
  inconditionnelle du th\'eor\`eme conditionnel \cite[Thm. 1.2 (4)]{CV21}.
 
 \medskip
 
 Dans la perspective de recherches ult\'erieures, on a syst\'ematiquement
 enregistr\'e un certain nombre de r\'esultats g\'en\'eraux
 sur les sous-espaces lin\'eaires des intersections de deux quadriques, et sur 
 les sous-espaces lin\'eaires contenus dans les quadriques contenant ces vari\'et\'es,
 sur divers corps de base (finis, locaux, globaux).

 \subsection{Structure de l'article}

 \medskip
  
 Au \S \ref{para2},  on fait un certain nombre de rappels sur l'alg\`ebre, la g\'eom\'etrie et l'arith\-m\'etique
 des  intersections compl\`etes lisses de deux quadriques $X \subset \P^n_{k}$, $n \geq 3$.
  
  On donne de nombreux rappels sur la g\'eom\'etrie des espaces lin\'eaires contenus
 dans une intersection compl\`ete lisse de deux quadriques $X \subset \P^n_{k}$, et sur la g\'eom\'etrie
 des vari\'et\'es param\'etrant un espace lin\'eaire contenu dans une quadrique
 contenant $X$.
  
  On rappelle  un th\'eor\`eme local-global de Salberger  \cite{Sal93}
 sur les intersections de deux quadriques $X \subset \P^n_{k}$ qui con\-tien\-nent une conique (th\'eor\`eme \ref{sal93} ci-dessous).
 Ce th\'eor\`eme joue ici  un r\^ole fondamental dans la d\'emonstration
 des th\'eor\`emes \ref{hbrgl} et \ref{PariIyer1}.
  
  \medskip
  
  Au \S \ref{para3},  on se place sur un corps $p$-adique $k$ et on s'int\'eresse
  \`a l'existence d'un point au plus quadratique sur une intersection
  de deux quadriques.
On commence par  le cas crucial des courbes $X \subset \P^3_{k}$
(donc des courbes de genre 1 dans le cas lisse).
   En utilisant des r\'esultats de Roquette et Lichtenbaum \cite{Li68, Li69}
  qui reposent sur le th\'eor\`eme de dualit\'e de Tate 
  pour les vari\'et\'es ab\'eliennes sur un corps $p$-adique,
  on \'etablit  :
  \begin{prop}\label{intP3intro}
     Soient $f(x_{0}, x_{1}, x_{2},x_{3} )$ et $g(x_{1}, x_{2},x_{3})$ deux formes
     quadratiques sur un corps $p$-adique. La $k$-vari\'et\'e $X \subset \P^3_{k}$
     d\'efinie  par $f=g=0$ poss\`ede un point dans une extension quadratique
     de $k$.
     \end{prop}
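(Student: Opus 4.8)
Le plan est d'exploiter le fait que la forme $g$ ne fait pas intervenir la variable $x_{0}$. Si le coefficient de $x_{0}^2$ dans $f$ est nul, alors le point $v=[1:0:0:0]$ appartient à $X$ et fournit un point rationnel ; je peux donc supposer $v\notin X$. La quadrique $\{g=0\}\subset \P^3_{k}$ est un cône de sommet $v$ au-dessus de la conique $C\subset \P^2_{k}=\{x_{0}=0\}$ d'équation $g(x_{1},x_{2},x_{3})=0$, et la projection depuis $v$ définit un morphisme fini $\pi:X\to C$. En effet, la droite joignant $v$ à un point $[a_{1}:a_{2}:a_{3}]$ de $C$ rencontre $X$ aux points $[s:ta_{1}:ta_{2}:ta_{3}]$ annulant la forme binaire $f(s,ta_{1},ta_{2},ta_{3})$, qui est de degré $2$ en $[s:t]$ puisque $g$ s'annule le long de cette droite ; ainsi $\pi$ est de degré $2$.

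Je traiterais d'abord le cas où $X$ est lisse. C'est alors une intersection complète lisse de deux quadriques dans $\P^3_{k}$, donc une courbe de genre $1$ (le fibré canonique est trivial par adjonction). Sur $\k$, on a $C_{\k}\cong \P^1$ et $\Pic(C_{\k})=\Z$ est engendré par la classe $h$ d'un point, invariante sous $\Gal(\k/k)$ ; son image réciproque $\pi^{*}h\in \Pic(X_{\k})$ est donc invariante, de degré $\deg(\pi)\cdot \deg(h)=2$. La période de la courbe de genre $1$ $X$ divise ainsi $2$. D'après le théorème de Lichtenbaum \cite{Li68,Li69}, sur un corps $p$-adique la période et l'indice d'une courbe de genre $1$ coïncident ; l'indice de $X$ divise donc $2$, et il existe sur $X$ un diviseur $D$ rationnel sur $k$ de degré $2$. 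Par Riemann-Roch, $\ell(D)=2$, donc le système linéaire $|D|$ est un $\P^1_{k}$ défini sur $k$ ; un point rationnel de ce $\P^1_{k}$ donne un diviseur effectif rationnel de degré $2$, c'est-à-dire un point fermé de $X$ de degré au plus $2$, soit un point de $X$ dans une extension de $k$ de degré au plus $2$.

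Il resterait à examiner les configurations dégénérées, lorsque $f=g=0$ ne définit pas une courbe lisse (forme $g$ de rang $\le 2$, ou $X$ réductible, non réduite, ou intègre singulière). Dans chacun de ces cas je conclurais directement : les composantes géométriques de $X$ sont rationnelles (droites, coniques, cubiques gauches), donc définies sur $k$ ou conjuguées par paires, et fournissent un point fermé de degré au plus $2$ sur le corps $p$-adique $k$ ; et si $X$ est intègre singulière de genre arithmétique $1$, sa normalisée est une conique, qui possède un tel point.

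L'obstacle principal est le cas où la conique $C$ n'a pas de point $k$-rationnel. La fibre de $\pi$ au-dessus d'un point fermé de degré $2$ de $C$ ne donne alors qu'un diviseur rationnel de degré $4$ sur $X$, d'où seulement que l'indice divise $4$. C'est précisément le théorème de Lichtenbaum, reposant sur la dualité de Tate pour la jacobienne de $X$, qui permet de passer de la classe de diviseur de degré $2$ (toujours présente grâce à $\pi^{*}h$) à un véritable diviseur rationnel de degré $2$, et donc de garantir l'existence du point quadratique.
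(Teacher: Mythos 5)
Votre argument dans le cas lisse est correct et co\"{\i}ncide essentiellement avec la premi\`ere variante que l'article donne en remarque apr\`es sa d\'emonstration : la projection depuis $[1:0:0:0]$ fait de $X$ un rev\^{e}tement double de la conique lisse $g=0$ dans $\P^2_{k}$, d'o\`u une classe de degr\'e $2$ dans $\Pic(\cl{X})^{\g}$, donc une p\'eriode divisant $2$, et le th\'eor\`eme de Lichtenbaum (p\'eriode $=$ indice sur un corps $p$-adique) conclut. La d\'emonstration principale de l'article passe plut\^{o}t par la courbe $y^2=\det(\lambda f+\mu g)$, qui a un point rationnel puisque $\det(g)=0$, puis par la proposition \ref{pseudoclass} (e) ; les deux routes reposent sur le m\^{e}me ingr\'edient (la dualit\'e de Tate via Lichtenbaum) et sont interchangeables.

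En revanche, votre traitement des cas d\'eg\'en\'er\'es comporte une lacune r\'eelle. L'affirmation selon laquelle les composantes g\'eom\'etriques de $X$ seraient d\'efinies sur $k$ ou conjugu\'ees par paires n'est pas justifi\'ee et est fausse en g\'en\'eral : une intersection de deux quadriques dans $\P^3$ peut \^{e}tre la r\'eunion de quatre droites formant un cycle, transitivement (par exemple cycliquement) permut\'ees par le groupe de Galois, et une telle configuration ne poss\`ede aucun point ferm\'e de degr\'e $\leq 2$ (les quatre sommets du cycle forment alors une seule orbite de longueur $4$, et tout autre point a un corps de d\'efinition contenant celui d'une droite). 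C'est pr\'ecis\'ement pour \'ecarter ce cas que l'hypoth\`ese sur $g$ est utilis\'ee dans l'article : le lemme \ref{intersecpasintegre} montre que, en l'absence de point de degr\'e $\leq 2$, toute forme du pinceau est de rang au moins $3$, donc $g$ est de rang exactement $3$, et que la configuration des quatre droites n'admet aucune forme de rang $3$ dans le pinceau sur $\k$ (quatre droites trac\'ees sur un c\^{o}ne quadratique passeraient toutes par le sommet, donc seraient concourantes, ce qui contredit la structure de cycle). Il vous faut un argument de ce type --- et non un appel global \`a la rationalit\'e des composantes --- pour ramener la d\'emonstration au cas lisse. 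Les autres cas que vous listez (rang de $g$ au plus $2$, courbe int\`egre singuli\`ere de genre arithm\'etique $1$) se traitent effectivement de fa\c{c}on directe.
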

     C'est la proposition \ref{intersectquelconqueP3}.

 On revient sur des r\'esultats de Creutz et Viray.
  Ceux-ci ont montr\'e \cite[Thm. 1.2 (1)]{CV21}
 que   sur tout corps $p$-adique $k$
  toute intersection  compl\`ete {\it lisse} $X$
 de deux quadriques dans $\P^n_{k}$, $n \geq 4$, poss\`ede
  un point dans une extension quadratique.
On \'etablit et g\'en\'eralise  ce r\'esultat aux intersections quelconques.
 C'est le th\'eor\`eme \ref{dansP4}, qui r\'esulte facilement de la proposition 
 \ref{intersectquelconqueP3} dans le cas $n=3$.
 Cette m\'ethode  permet d'\'eviter les 
d\'elicats calculs locaux, en particulier 2-adiques,
 de \cite[\S 2, Proof of Thm. 2.1]{CV21} et de l'article \cite{HB18}.
 
Le th\'eor\`eme de Lichtenbaum
\'etait mentionn\'e dans dans  \cite[Prop. 4.5]{CV21},
pour une d\'emonstration alternative d'un \'enonc\'e
 un peu plus faible dans le cas $n=4$.

La proposition \ref{intP3intro}
forme  la base de notre m\'ethode pour \'etablir, au \S \ref{para7},
  le th\'eor\`eme  local de Heath-Brown (th\'eor\`eme \ref{hbrloc}) dans $\P^7_{k}$. 

    \medskip
    
    Dans les paragraphes suivants, pour $4 \leq n \leq 7$,
     pour $k$ fini,   local,  global, et $X$
   intersection compl\`ete lisse   de deux quadriques dans $\P^n_{k}$ donn\'ee par 
   un syst\`eme $f=g=0$,
  on   \'etudie  syst\'ematiquement la dimension des sous-espaces lin\'eaires  $\P^r_{k}\subset \P^n_{k}$ sur lesquels une forme
     $\lambda f + \mu g$ dans le pinceau de quadriques
     contenant $X$ peut s'annuler.  On \'etudie l'existence  de points
     quadratiques, de droites, de coniques sur $X \subset \P^n_{k}$.

Au \S \ref{para4}, on s'int\'eresse \`a $X \subset \P^4$. On montre   que,
sur un corps de nombres, si $X$ poss\`ede des points dans tous les compl\'et\'es,
alors l'indice de la surface $X$ divise 2. C'est un cas particulier d'un r\'esultat
de \cite{CV21}.

     Au \S \ref{para5}, on discute le cas $X \subset \P^5_{k}$, sous plusieurs angles.
     On \'etablit le th\'eor\`eme  \ref{quadglobal}. La d\'emonstration repose sur le fait
     que pour toute $X$, la vari\'et\'e $F_{1}(X)$ des droites contenues dans $X$ est connue, et qu'en
     particulier c'est une vari\'et\'e g\'eom\'e\-tri\-quement int\`egre, donc qui sur un corps de nombres
     poss\`ede  des points dans presque tous les compl\'et\'es de $k$.
     En utilisant le th\'eor\`eme \ref{sal93} (Salberger),
      on donne une d\'emonstration
     du th\'eor\`eme  \ref{PariIyer1} de Iyer et Parimala.  La seconde partie du \S \ref{para5} se place du point de vue
  g\'eom\'etrique  de la vari\'et\'e $G_{2}(X)$ des 2-plans contenus dans une quadrique du pinceau.
 On apporte des compl\'e\-ments \`a un article
   de Hassett et Tschinkel \cite{HT21}, sur un corps quelconque, par exemple le th\'eor\`eme
   \ref{alphabis}.
   Sur un corps de nombres, cela donne une autre d\'emonstration 
   du th\'eor\`eme  \ref{PariIyer1} de Iyer et Parimala, reposant elle aussi sur le
   th\'eor\`eme  \ref{sal93}.

     Dans le bref \S \ref{para6}, on consid\`ere le cas $X \subset \P^6_{k}$,
     sur lequel on n'a aucun r\'esultat global significatif autre que ceux
     obtenus dans \cite{CTSaSD87}.

   Au \S  \ref{para7}, on donne  {\it une d\'emonstration  relativement courte des  th\'eor\`emes \ref{hbrloc} et \ref{hbrgl} 
    de
  Heath-Brown \cite[Thm. 1, Thm. 2]{HB18} pour $X$ lisse dans $\P^7_{k}$.}
   Notre  d\'emonstration du th\'eor\`eme local \ref{hbrloc}  
  repose  sur un argument g\'eom\'etrique simple
  portant sur la g\'eom\'etrie de l'intersection de $X \subset \P^7_{k}$
   avec son espace tangent en un point
  rationnel,  et sur  notre  g\'en\'eralisation du  th\'eor\`eme de Creutz et Viray
  sur les points quadratiques sur un corps local dans le cas  d'une intersection
  quelconque de deux quadriques dans $\P^4_{k}$, laquelle repose ultimement
  sur la proposition \ref{intP3intro}.
Notre passage du th\'eor\`eme local  \ref{hbrloc} au  th\'eor\`eme local-global 
  \ref{hbrgl} repose sur le  Th\'eor\`eme  \ref{sal93} (Salberger), qui n'\'etait pas
  utilis\'e dans \cite{HB18}.
   
  \medskip
  
  Dans  l'appendice,  Alexander Kuznetsov  \'etablit  des propri\'et\'es g\'eom\'etriques
 des sch\'emas de Hilbert param\'etrant les espaces lin\'eaires et les quadriques contenus
 dans une intersection compl\`ete lisse de deux quadriques, telles qu'affirm\'ees au \S \ref{para2}.

\subsection{Notation}
  
  \'Etant donn\'e un corps $k$, on note $k^s$ une cl\^{o}ture s\'eparable
  et $\k$ une cl\^{o}ture alg\'ebrique. On note $\g={\rm Gal}(k^s/k)$ le groupe
  de Galois absolu. Pour une $k$-vari\'et\'e alg\'ebrique $X$, on note $\cl{X}=X \times_{k}  \k$.

  Pour $X$ une vari\'et\'e alg\'ebrique sur un corps $k$ et $L/k$ une extension de corps,
  on note $X(L)$ l'ensemble des points de $X$  \`a valeurs dans $L$.
  
  Un corps $k$ est dit fertile si pour toute $k$-vari\'et\'e lisse int\`egre $X$ sur $k$,
de dimension au moins 1, l'hypoth\`ese $X(k) \neq \emptyset$ implique
$X(k)$ Zariski dense. Un corps $p$-adique est fertile. Le corps des r\'eels est fertile.
Soit $p$ un nombre premier. Tout corps dont le groupe de Galois absolu 
est un pro-$p$-groupe est fertile. Cette notion est particuli\`erement int\'eressante
pour les intersections de deux quadriques : voir la remarque suivant la proposition
\ref{springer}.
  
  Pour $k$ un corps de nombres, et $v$ une place de $k$, on note $k_{v}$ le compl\'et\'e.
  On note $\A_{k}$ l'anneau des ad\`eles de $k$.  Pour $X$ une $k$-vari\'et\'e, on note $X(\A_{k})$
  l'ensemble de ses points ad\'eliques et on note  $X(\A_{k})^{\rm Br} \subset X(\A_{k})$ l'ensemble de
  Brauer--Manin de $X$ \cite[Chapter 13.3]{CTSk21}.

  On suppose le lecteur familier avec la th\'eorie alg\'ebrique des formes quadratiques \cite{Lam73, Lam05, Ka08}.
  Soit $r \geq 1$ un entier.
  Une forme quadratique non d\'eg\'en\'er\'ee  sur un corps $k$ 
  (${\rm{car}}(k)\neq 2$)
  est dite contenir  $rH$ si elle contient en facteur direct orthogonal la somme directe
  orthogonale de $r$ plans hyperboliques $H=<1,-1>$.

 Soit $n \geq 2$.  Une quadrique $C \subset \P^n_{k}$ de dimension $r-1$ est par d\'efinition une sous-vari\'et\'e
d'un espace lin\'eaire $\P^r_{k} \subset \P^n_{k}$ d\'efinie par l'annulation
d'un polyn\^ome homog\`ene de degr\'e 2 dans $\P^r_{k}$.

   \section{Rappels et pr\'eliminaires}\label{para2}

\subsection{Quelques \'enonc\'es connus}
    
   \begin{prop} Soit $k$ un corps, ${\rm car}(k) \neq 2$.
   Soit  $q(x_{0}, \dots,x_{n})$, $n \geq 1$  une forme quadratique non d\'eg\'en\'er\'ee sur $k$.
   Soit $Q \subset \P^n_{k}$ la quadrique qu'elle d\'efinit. Pour  un entier $r\geq 0$,
   les conditions suivantes sont \'equivalentes :
  
  (i) La forme $q$ contient $(r+1)H$.

  (ii) Il existe un $\P^r_{k}$ contenu dans $Q$.
   \end{prop}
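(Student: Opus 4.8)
The plan is to translate both conditions into the language of totally isotropic subspaces of the quadratic space $(V,q)$, where $V = k^{n+1}$, and then run a Witt-type induction. Write $b(x,y) = q(x+y)-q(x)-q(y)$ for the associated symmetric bilinear form, which is nondegenerate because $q$ is. A $k$-point of $Q$ is the same as a line $kv \subset V$ with $q(v)=0$, and more generally a linear subspace $\P^r_k \subset \P^n_k$ is contained in $Q$ if and only if the corresponding $(r+1)$-dimensional subspace $W \subset V$ is totally isotropic, i.e. $q|_W = 0$ (equivalently $b|_{W \times W} = 0$, since $\mathrm{char}(k)\neq 2$). Thus condition (ii) is equivalent to the existence of a totally isotropic subspace $W$ with $\dim_k W = r+1$.

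The implication (i) $\Rightarrow$ (ii) is then immediate: if $q$ contains $(r+1)H$, write the $i$-th hyperbolic summand as $\langle e_i, f_i\rangle$ with $q(e_i)=q(f_i)=0$ and $b(e_i,f_i)=1$; the span $\langle e_1, \dots, e_{r+1}\rangle$ is totally isotropic of dimension $r+1$, and its projectivization is a $\P^r_k \subset Q$.

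For (ii) $\Rightarrow$ (i) I would argue by induction on $r$, splitting off one hyperbolic plane at a time. Given a totally isotropic $W$ with $\dim_k W = r+1 \geq 1$, pick $0 \neq e_1 \in W$. Nondegeneracy of $b$ yields a vector on which $b(e_1, \cdot)$ does not vanish; after rescaling and subtracting a suitable multiple of $e_1$ (using $q(e_1)=0$) one obtains an isotropic $f_1$ with $b(e_1,f_1)=1$, so $H_1 := \langle e_1, f_1\rangle$ is a hyperbolic plane and $V = H_1 \perp V_1$ with $V_1 = H_1^\perp$ nondegenerate. The crux is to produce inside $V_1$ a totally isotropic subspace of dimension $r$: since $W$ is totally isotropic we have $b(e_1, w)=0$ for all $w \in W$, so $W \subset e_1^\perp$, whence $W \cap V_1 = W \cap f_1^\perp$ is the kernel of the linear form $w \mapsto b(f_1,w)$ on $W$. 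This form is nonzero (it sends $e_1$ to $1$), so $W \cap V_1$ is totally isotropic of dimension exactly $r$. Applying the inductive hypothesis to $(V_1, q|_{V_1})$ gives that $q|_{V_1}$ contains $rH$, and therefore $q$ contains $(r+1)H$.

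The only delicate step is the dimension count $\dim_k(W \cap V_1) = r$: it is precisely here that nondegeneracy of $q$ is used, both to split off the first hyperbolic plane and to guarantee that passing to the orthogonal complement drops the isotropic dimension by exactly one rather than two. Once this is in hand the induction closes with no further difficulty, the base case $r=0$ being the tautology that a nonzero isotropic vector is the same datum as a $k$-point of $Q$.
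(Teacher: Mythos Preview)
Your argument is correct and is precisely the standard Witt-decomposition proof one finds in any treatment of quadratic forms. There is nothing to compare against: in the paper this proposition appears in the subsection \emph{Quelques \'enonc\'es connus} and is stated without proof as a classical fact (the references \cite{Lam73, Lam05, Ka08} in the introduction cover it). Your write-up would serve perfectly well as the omitted justification.

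One very minor expository point: what you call the ``tautology'' in the base case $r=0$ is really the same hyperbolic-plane-splitting step you already carried out in the inductive step (a nonzero isotropic vector $e_1$ yields a hyperbolic summand by constructing $f_1$); the genuinely tautological part is only the dictionary between $k$-points of $Q$ and isotropic lines. Since you have already explained that construction in full, there is no gap.
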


   La   proposition suivante remonte \`a F. Ch\^{a}telet (1948). 
 Voir aussi  \cite[Chap. XII, \S 2, Thm. 2.1]{Lam05},
  \cite[Thm. 2.5]{CTSk93}, \cite[Prop. 8.1.10]{Ka08}.

 \begin{prop}\label{chatelet} 
 Soit $k$ un corps, ${\rm car}(k) \neq 2$. Soit 
 $q$ une forme quadratique  non d\'eg\'en\'er\'ee en 4 variables.   Soit $d$ son d\'eterminant.
 La forme quadratique $q$ a un z\'ero non trivial si et seulement si
 elle a un z\'ero non trivial sur le corps $k(\sqrt{d})$.
  \end{prop}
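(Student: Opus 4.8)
Le plan est le suivant. Le sens direct est trivial : un zéro non trivial de $q$ sur $k$ en est un sur $k(\sqrt d)$. On peut de plus supposer $d \notin k^{*2}$, faute de quoi $k(\sqrt d)=k$ et il n'y a rien à démontrer ; on pose alors $K=k(\sqrt d)$, qui est une extension quadratique de $k$, et on note $b_q$ la forme bilinéaire symétrique associée à $q$. Pour le sens réciproque, je partirais d'un zéro non trivial $w \in K^4$ de $q$. En écrivant $w = u + \sqrt d\, v$ avec $u,v \in k^4$ non tous deux nuls, et en développant $q(w) = q(u) + d\,q(v) + 2\sqrt d\, b_q(u,v)$, la séparation des parties rationnelle et en $\sqrt d$ (licite puisque $\sqrt d \notin k$ et ${\rm car}(k)\neq 2$) fournit les deux équations
\[
 q(u) + d\,q(v) = 0, \qquad b_q(u,v)=0.
\]

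Je distinguerais ensuite deux cas selon la valeur de $q(v)$. Si $q(v)=0$, alors $q(u)=0$ aussi, et comme $(u,v)\neq(0,0)$ l'un au moins des vecteurs $u,v$ est un zéro non trivial de $q$ déjà défini sur $k$. Si $q(v)=b\neq 0$, on a $q(u)=-db$ ; je vérifierais d'abord que $u$ et $v$ sont $k$-linéairement indépendants, car sinon $u=\lambda v$ et l'orthogonalité donnerait $\lambda\, q(v)=0$, donc $\lambda=0$ et $u=0$, d'où $-db=q(u)=0$, ce qui contredit $d\neq 0$ et $b\neq 0$. Le plan $\langle u,v\rangle$ porte alors la forme $\langle -db, b\rangle \cong b\langle 1,-d\rangle$, de déterminant $-db^2 \equiv -d$, qui est non dégénérée ; la forme $q$ se scinde donc en $q \cong b\langle 1,-d\rangle \perp q'$ avec $q'$ binaire. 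Le calcul des déterminants donne $d \equiv (-d)\cdot \det(q') \pmod{k^{*2}}$, soit $\det(q') \equiv -1$. Or une forme binaire de déterminant $\equiv -1$ modulo les carrés est isotrope sur $k$ (une forme $\langle s,t\rangle$ est isotrope si et seulement si $-st$ est un carré, et ici $-st\equiv 1$) ; par conséquent $q$ possède un zéro non trivial sur $k$.

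Le point le plus délicat me semble être la gestion des cas dégénérés — s'assurer que l'on produit bien un vecteur non nul et que le plan $\langle u,v\rangle$ est non dégénéré, afin de pouvoir l'isoler comme facteur direct orthogonal — ainsi que la comptabilité exacte des déterminants modulo les carrés. Une fois ces vérifications effectuées, la conclusion sur la forme binaire complémentaire $q'$ est immédiate. Cette approche est spécifique au cas de $4$ variables, où le complément orthogonal de la binaire $b\langle 1,-d\rangle$ est lui-même binaire, ce qui force le déterminant $-1$ et l'isotropie.
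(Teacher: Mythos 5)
Votre d\'emonstration est correcte, mais elle suit une route diff\'erente de celle du texte. Vous proc\'edez alg\'ebriquement : partant d'un z\'ero $w=u+\sqrt d\,v$ sur $K=k(\sqrt d)$, la s\'eparation des composantes donne $q(u)+d\,q(v)=0$ et $b_q(u,v)=0$, puis (dans le cas non trivial) le plan r\'egulier $\langle u,v\rangle$ porte une sous-forme binaire de d\'eterminant $-d$ modulo les carr\'es, dont le compl\'ement orthogonal binaire $q'$ a d\'eterminant $-1$ modulo les carr\'es, donc est isotrope ; toutes les v\'erifications (non-nullit\'e des vecteurs produits, ind\'ependance lin\'eaire, r\'egularit\'e du plan, comptabilit\'e des d\'eterminants, crit\`ere d'isotropie des formes binaires) sont exactes. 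Le texte, lui, donne un argument g\'eom\'etrique : apr\`es r\'eduction \`a $q=\langle 1,-a,-b,abd\rangle$ hyperbolique sur $K$, la quadrique $Q\subset \P^3_k$ devient $\P^1_K\times_K\P^1_K$ avec les deux syst\`emes de g\'en\'eratrices \'echang\'es par $\Gal(K/k)$, et l'intersection d'une g\'en\'eratrice avec sa conjugu\'ee fournit un point rationnel. Votre approche est plus \'el\'ementaire et autonome (c'est essentiellement la preuve classique \`a la Lam, cit\'ee dans le texte comme r\'ef\'erence) et exploite de fa\c{c}on d\'ecisive le fait que le compl\'ement orthogonal est lui aussi binaire ; l'approche g\'eom\'etrique du texte met en \'evidence la structure de la quadrique de dimension 2 et pr\'efigure les arguments de descente galoisienne sur les droites utilis\'es ailleurs dans l'article. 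Les deux conviennent parfaitement.
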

  \begin{proof}
  Il suffit de consid\'erer le cas o\`u $d$ n'est pas un carr\'e.
  Soit $K=k(\sqrt{d})$.
  On peut supposer la forme donn\'ee sous la forme $q=<1,-a,-b,abd>$ et que
  la forme $<1,-a,-b,ab>$ sur $K$ a un z\'ero non trivial, donc est hyperbolique.
  La quadrique $Q \subset \P^3_{k}$ d\'efinie par $q=0$  v\'erifie alors
  $Q_{K} \simeq \P^1_{K} \times_{K} \P^1_{K}$, les deux syst\`emes de g\'en\'eratrices
  sur $K$ \'etant permut\'ees par l'action de $Gal(K/k)$.
  Si $L \subset Q_{K}$ est une g\'en\'eratrice
  d'un certain type, la conjugu\'ee $L'$ sous l'action de $Gal(K/k)$ est
  une g\'en\'eratrice de l'autre type. Alors l'intersection de $L$ et $L'$
  est un $K$-point invariant sous $Gal(K/k)$, donc $Q$ poss\`ede un $k$-point.
  \end{proof}

   Le cas $r=0$ du th\'eor\`eme suivant fut \'etabli  ind\'ependamment par A. Brumer.

   \begin{prop}(Amer)\label{amerbrumer} \cite{Leep}
   Soit $k$ un corps, ${\rm car}(k) \neq 2$.
   Soit $X \subset \P^n_{k}$ une intersection  quelconque de deux quadriques d\'efinie par
   l'annulation de deux formes quadratiques $f$ et $g$. Soit $r\geq 0$ un entier.
    Les conditions suivantes
   sont \'equivalentes :
   
   (a) Il existe un espace lin\'eaire $\P^r_{k}$ contenu dans $X$.
   
   (b) La forme quadratique g\'en\'erique $f+tg$ sur le corps $k(t)$ s'annule sur
   un espace lin\'eaire $\P^r_{k(t)}$.
   
   Si la forme  quadratique g\'en\'erique $f+tg$ sur le corps $k(t)$
  est non d\'eg\'en\'er\'ee, par exemple si $X$ est une intersection compl\`ete lisse,
  ces conditions sont encore \'equivalentes \`a:
  
  (c) La forme quadratique  $f+tg$ contient $(r+1)H$.
    \end{prop}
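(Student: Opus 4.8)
The plan is to treat the three conditions separately: (a) $\Rightarrow$ (b) is formal, (b) $\Rightarrow$ (a) is the substance (the theorem of Amer and Brumer), and (b) $\Leftrightarrow$ (c) follows at once from the first Proposition of this section. The implication (a) $\Rightarrow$ (b) uses nothing special: a linear space $\P^r_k \subset X$ is contained in both quadrics $f=0$ and $g=0$, hence in the quadric $f+tg=0$ for every value of the parameter, so after base change to $k(t)$ the space $\P^r_{k(t)}$ lies on the quadric defined by the generic form $f+tg$.

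For the converse (b) $\Rightarrow$ (a) I would set up a descent on degree. Write $A_f, A_g$ for the Gram matrices of $f$ and $g$, with associated bilinear forms $B_f, B_g$. A totally isotropic $(r+1)$-plane in $k(t)^{n+1}$ for $f+tg$ is the column span of a rank-$(r+1)$ matrix $M(t)$; after clearing denominators and dividing by the greatest common divisor of its maximal minors, I may take $M(t) \in k[t]^{(n+1)\times(r+1)}$ and primitive, and isotropy reads $M(t)^{T}(A_f+tA_g)M(t)=0$ identically in $t$. When $\deg M = 0$ the matrix is constant and this polynomial identity forces $M^{T}A_f M = 0$ and $M^{T}A_g M = 0$ separately, so the columns span a common isotropic $(r+1)$-plane, that is, a $\P^r_k \subset X$. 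The induction step is to lower $\deg M$ by exploiting the vanishing of the top coefficient of $M(t)^{T}(A_f+tA_g)M(t)$.

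I expect this degree reduction to be the main obstacle, and here I would lean on \cite{Leep}. The difficulty is genuinely that a single specialization does not suffice: reducing the identity modulo $t$ (that is, evaluating at $t=0$) only shows that $M(0)$ has columns isotropic for $f$, while evaluating at $t=\infty$ yields isotropy for $g$, and these give two a priori different subspaces; the content of the theorem is to produce one subspace isotropic for $f$ and $g$ simultaneously. The mechanism is already visible when $r=0$: if $v(t)=a_0+a_1 t$ is a zero of $f+tg$ of minimal degree one, comparison of coefficients gives $f(a_0)=0$, $g(a_1)=0$, $2B_f(a_0,a_1)=-g(a_0)$ and $2B_g(a_0,a_1)=-f(a_1)$, whence $f(a_1)\,a_0+g(a_0)\,a_1$ is a common zero of $f$ and $g$; the general descent is a matrix-valued elaboration of this computation.

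Finally, under the nondegeneracy hypothesis the equivalence (b) $\Leftrightarrow$ (c) is immediate and requires no further work: applying the first Proposition of this section over the field $K=k(t)$ to the nondegenerate form $q=f+tg$ and the quadric it defines in $\P^n_K$, condition (i) there, that $q$ contains $(r+1)H$, is exactly our (c), while condition (ii) there, that the quadric contains a $\P^r_K$, is exactly our (b).
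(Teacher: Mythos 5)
The paper gives no proof of this proposition and simply cites \cite{Leep}; your proposal does the same for the substantive implication (b) $\Rightarrow$ (a), and the parts you do carry out --- (a) $\Rightarrow$ (b) by specialization, (b) $\Leftrightarrow$ (c) via the first proposition of the section applied to the nondegenerate form $f+tg$ over $k(t)$, and the explicit $r=0$ computation producing the common zero $f(a_1)\,a_0+g(a_0)\,a_1$ --- are all correct. Your approach therefore coincides with the paper's (both rest on Leep's preprint for the degree descent in general rank), supplemented by an accurate account of the easy reductions.
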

    
En combinant  avec le th\'eor\`eme bien connu de T. A. Springer sur  les formes quadratiques,
    ceci donne :
    \begin{prop}\label{springer}
     Soit $k$ un corps, ${\rm car}(k) \neq 2$.
     Soit $K/k$ une extension finie de corps de degr\'e impair. Soit $X \subset \P^n_{k}$ une intersection  
     de deux quadriques d\'efinie par
   l'annulation de deux formes quadratiques $f$ et $g$. 
   Si les conditions (a) ou (b)
   de la proposition \ref{amerbrumer} valent apr\`es extension de $k$ \`a $K$,
   alors elles valent sur $k$.
 \end{prop}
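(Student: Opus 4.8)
The plan is to combine the Amer–Brumer equivalence (Proposition \ref{amerbrumer}) with Springer's theorem on the behaviour of anisotropic quadratic forms under odd-degree extensions. The key observation is that both conditions (a) and (b) are, via Proposition \ref{amerbrumer}, equivalent to a single \emph{isotropy-type} statement about the generic form $f+tg$ over the field $k(t)$, and it is precisely such statements that Springer's theorem controls under odd-degree base change.

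First I would reduce everything to condition (b). Indeed, by Proposition \ref{amerbrumer} applied over $k$, conditions (a) and (b) are equivalent, and applied over $K$ they are again equivalent; so it suffices to prove that condition (b) descends from $K$ to $k$. Condition (b) asserts that the generic quadratic form $f+tg$, viewed over the rational function field $k(t)$, vanishes on a linear space $\P^r_{k(t)}$, i.e.\ has a totally isotropic subspace of projective dimension $r$ (equivalently, its Witt index is at least $r+1$). The point is that this is a statement purely about the form $f+tg$ over the field $k(t)$, with no reference to the ambient variety $X$.

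The heart of the argument is then a descent for the Witt index along the odd-degree extension $K(t)/k(t)$. Note that $[K(t):k(t)]=[K:k]$ is odd, since $K/k$ is a finite odd-degree extension and $t$ is transcendental. By hypothesis, condition (b) holds over $K$, so $f+tg$ over $K(t)$ admits a totally isotropic $\P^r_{K(t)}$. I would now invoke Springer's theorem in its general form: for a nondegenerate quadratic form over a field of characteristic $\ne 2$, the Witt index is unchanged under an odd-degree field extension. Applying this to $f+tg$ over $k(t)$ and the odd-degree extension $K(t)/k(t)$, the Witt index over $k(t)$ equals the Witt index over $K(t)$, which is at least $r+1$; hence $f+tg$ vanishes on some $\P^r_{k(t)}$ over $k(t)$, which is exactly condition (b) over $k$. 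One small point deserves care when the generic form may be degenerate: if $f+tg$ is degenerate, I would first split off its radical, which is defined over $k(t)$ and whose dimension is a geometric invariant insensitive to odd-degree extension, and apply Springer's theorem to the nondegenerate part, adjusting the isotropy bound accordingly.

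The main obstacle, such as it is, lies in matching the precise isotropy invariant that Springer's theorem preserves with the geometric condition (b). Springer's classical statement is usually phrased as ``an anisotropic form stays anisotropic over an odd-degree extension''; the form one needs here is the stronger consequence that the full Witt index is preserved, which follows by splitting off hyperbolic planes one at a time and applying the anisotropic statement to the anisotropic kernel. I would make this deduction explicit, and handle the degenerate case by the radical reduction above, so that condition (b) over $K$ transfers cleanly to condition (b) over $k$.
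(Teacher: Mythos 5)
Votre argument est correct et suit exactement la voie que l'article sous-entend : via la proposition \ref{amerbrumer}, les conditions (a) et (b) se ramènent à l'isotropie (d'indice de Witt prescrit) de la forme générique $f+tg$ sur $k(t)$, et le théorème de Springer appliqué à l'extension de degré impair $K(t)/k(t)$ fait descendre cette propriété de $K(t)$ à $k(t)$. Les deux précisions que vous ajoutez --- la préservation de l'indice de Witt entier (et pas seulement de l'anisotropie) en détachant les plans hyperboliques un à un, et le traitement du cas dégénéré en scindant le radical, dont la dimension est invariante par extension de corps --- sont exactement les détails que l'article laisse au lecteur.
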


 Une cons\'equence de ce r\'esultat est que, pour \'etablir une telle propri\'et\'e
 pour un $X$ sur $k$ donn\'e, on peut supposer le corps $k$ fertile. Il suffit en effet
 d'\'etablir le r\'esultat sur la perfection $E$ de $k$, puis sur le corps fixe
 d'un pro-2-Sylow du  groupe de Galois absolu de $E$.

 \begin{prop}\label{corpsCi}
 Soit $k$ un corps, ${\rm car}(k) \neq 2$. Soit $X \subset \P^n_{k}$
 une intersection quelconque de deux quadriques, d\'efinie par un syst\`eme $f=g=0$ de deux
 formes quadratiques.  
  
  (a) Si $k$ est alg\'ebriquement clos, et $n \geq 2r+2$, il existe un  sous-espace lin\'eaire
 $\P^r_{k}$  de $\P^n_{k}$ contenu dans $X$.
 
 (b) Si $k$ est fini, et $n \geq 2r+4$,il existe un  sous-espace lin\'eaire
 $\P^r_{k}$  de $\P^n_{k}$ contenu dans $X$.
\end{prop}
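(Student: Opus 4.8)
The plan is to reduce both cases to a statement about a single quadratic form and then invoke the theory of $C_i$ fields. By the Amer--Brumer criterion (Proposition~\ref{amerbrumer}, equivalence (a)$\Leftrightarrow$(b)), a linear space $\P^r_k$ is contained in $X$ if and only if the generic form $q:=f+tg$ vanishes on some $\P^r_{k(t)}$; equivalently, $q$ admits a totally isotropic $k(t)$-subspace of dimension $r+1$. Writing $N=n+1$ for the number of variables, the task becomes: show that the quadratic form $q$ in $N$ variables over $F=k(t)$ has a totally isotropic subspace of dimension at least $r+1$. The gain from passing to $F$ is that it lies one step higher in the $C_i$ hierarchy: by the Tsen--Lang theorem, $F$ is $C_1$ when $k$ is algebraically clos, and $C_2$ when $k$ is finite. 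Consequently the anisotropic kernel of any quadratic form over $F$ has dimension at most $u$, with $u=2$ in case (a) and $u=4$ in case (b).

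I would then estimate the maximal totally isotropic subspace of $q$. Let $d=\dim\operatorname{rad}(q)$ and decompose the nondegenerate part as $wH\perp q_{\mathrm{an}}$, so that the maximal totally isotropic subspace has dimension $d+w$ and $N=d+2w+\dim q_{\mathrm{an}}$. A nonzero radical only helps --- it contributes $d$ directly to the isotropic dimension --- so the binding situation is the nondegenerate one ($d=0$), which is moreover automatic when $X$ is a complete intersection, since then $f+tg$ is nondegenerate over $k(t)$. Bounding $\dim q_{\mathrm{an}}\le u$ gives $w\ge (N-u)/2$, hence a totally isotropic subspace of dimension at least $(N-u)/2=(n+1-u)/2$.

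For case (b) this crude estimate already suffices: with $u=4$ one gets an isotropic subspace of dimension at least $(n-3)/2$, which is $\ge r+1$ as soon as $n\ge 2r+5$, and Proposition~\ref{amerbrumer} then produces the desired $\P^r_k\subset X$. Case (a) is sharper and is where the real care is needed: the raw bound $(n-1)/2$ only yields $n\ge 2r+3$, so I would refine it by parity. Over the $C_1$ field $F$ the anisotropic kernel has dimension $\le 2$, but $\dim q_{\mathrm{an}}\equiv N\pmod 2$, so when $N=n+1$ is odd it is in fact $\le 1$; in all cases the Witt index is at least $\lceil (n-1)/2\rceil$, which is $\ge r+1$ exactly when $n\ge 2r+2$. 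Thus the main obstacle is not the reduction but this Witt-theoretic bookkeeping --- tracking the parity of $N$ together with the degeneracy index $d$ --- which is precisely what converts the soft $C_i$ input into the numerical thresholds $2r+2$ and $2r+5$.
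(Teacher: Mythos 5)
Your proof is correct and takes essentially the same route as the paper: the paper's entire proof of Proposition~\ref{corpsCi} is to invoke Proposition~\ref{amerbrumer} together with the fact that $k(t)$ is $C_1$ in case (a) and $C_2$ in case (b). The Witt-index bookkeeping you supply --- in particular the parity refinement $\dim q_{\mathrm{an}}\equiv N \pmod 2$ needed to reach the sharp threshold $n\geq 2r+2$ in case (a), and the observation that a nontrivial radical only increases the dimension of a maximal totally isotropic subspace --- is exactly the detail the paper leaves implicit.
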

\begin{proof} Ceci r\'esulte de la proposition 
\ref{amerbrumer} et
du fait que   le corps $k(t)$ est un corps $C_{1}$ dans le cas (a)
et un corps $C_{2}$  dans le cas (b).
\end{proof}

 Pour une intersection quelconque de deux quadriques $X \subset \P^n_{k}$  avec $n \geq 2r+2$,
 la vari\'et\'e $F_{r}(X)$ est non vide. Soit en effet $f=g=0$ un syst\`eme de formes quadratiques d\'efinissant 
 $X \subset \P^n$. D'apr\`es la proposition \ref{amerbrumer}, pour voir que $F_{r}(X)(k)$ est non vide pour $k$ alg\'ebriquement clos,
  il suffit de voir que la forme quadratique $f+tg$  s'annule sur un espace lin\'eaire $\P^r_{k(t)} \subset \P^n_{k(t)}$. Ceci r\'esulte du fait que le corps  $k(t)$ est un corps $C_{1}$.

 \begin{prop}\cite[Prop. 2.1]{Reid72}
 Soit $k$ un corps, ${\rm car}(k) \neq 2$.
 Soient $X \subset \P^n_{k}$, $n \geq 2$, une intersection compl\`ete de deux quadriques
 d\'efinie par l'annulation de deux formes quadratiques $f(x_{0},\dots, x_{n})$ et $g(x_{0},\dots, x_{n})$.
 La $k$-vari\'et\'e est lisse si et seulement si le polyn\^{o}me homog\`ene  $det(\lambda f+ \mu g)$
est non nul et est s\'eparable, i.e. a  toutes ses racines simples dans $\k$.
  \end{prop}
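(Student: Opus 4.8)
The plan is to check smoothness over $\cl{k}$ and recast it as a statement about the pencil of Gram matrices. Both the smoothness of $X$ and the two conditions on $\Delta(\lambda,\mu):=\det(\lambda f+\mu g)$ are unchanged under extension of scalars, so I would first reduce to the case $k=\cl{k}$. Writing $A,B$ for the symmetric $(n+1)\times(n+1)$ matrices attached to $f,g$ (here ${\rm car}(k)\neq2$ is used, and $\nabla f(x)=2Ax$, $\nabla g(x)=2Bx$), one has $\Delta=\det(\lambda A+\mu B)$, homogeneous of degree $n+1$. Since $X$ is a complete intersection of codimension $2$, the Jacobian criterion identifies the singular points of $X$ with the $x\in X$ at which the vectors $Ax$ and $Bx$ are linearly dependent; equivalently, $x\neq0$ is singular on $X$ iff $f(x)=g(x)=0$ and $(\lambda A+\mu B)x=0$ for some $[\lambda:\mu]\in\P^1$. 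The goal is then to show that such an $x$ exists exactly when $\Delta\equiv0$ or $\Delta$ has a repeated root.

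Assume first $\Delta\not\equiv0$. After replacing $(f,g)$ by two independent linear combinations --- this leaves $X$ unchanged and only substitutes linearly in $(\lambda,\mu)$, so it preserves both ``nonzero'' and ``separable'' --- I may arrange that $A$ is invertible, and set $M=A^{-1}B$. Then $\Delta=\det(A)\cdot\det(\lambda I+\mu M)$, so the roots of $\Delta$ are the eigenvalues of $M$ and $\Delta$ is separable iff $M$ has $n+1$ distinct eigenvalues; a direct computation shows $M$ is self-adjoint for the nondegenerate form $\langle u,v\rangle=u^{T}Av$. Since $A$ is invertible, the condition $(\lambda A+\mu B)x=0$ forces $\mu\neq0$ and $Mx=\theta x$ with $\theta=-\lambda/\mu$, and then $g(x)=\theta f(x)$; hence the singular points of $X$ are precisely the eigenvectors $x$ of $M$ with $f(x)=0$. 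If the eigenvalues are distinct I would take a basis $e_{0},\dots,e_{n}$ of eigenvectors, note they are pairwise $\langle\,,\rangle$-orthogonal and satisfy $f(e_{i})=\langle e_{i},e_{i}\rangle\neq0$ (otherwise $e_{i}$ would lie in the radical of $A$), and conclude that no eigenvector kills $f$, so $X$ is smooth. For the converse I would show a repeated root yields a singular point: if $\dim\ker(M-\theta I)\geq2$ the restriction of $f$ to that space is a quadratic form in at least two variables over $\cl{k}$ and so vanishes somewhere nontrivially; and if the $\theta$-eigenspace is a line $\cl{k}\,e$ while $\theta$ is a multiple root, a generalized eigenvector $v$ with $(M-\theta I)v=e$ gives $f(e)=\langle(M-\theta I)v,e\rangle=\langle v,(M-\theta I)e\rangle=0$ by self-adjointness, so $e$ is singular.

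The remaining case $\Delta\equiv0$ is the one I expect to be the main obstacle, and here I would argue homogeneously rather than by eigenvalues. Then $\lambda A+\mu B$ is singular over $\cl{k}(\lambda,\mu)$, so it has a nonzero kernel vector; clearing denominators gives $x(\lambda,\mu)\in\cl{k}[\lambda,\mu]^{n+1}$, homogeneous of some degree $d\geq0$ with primitive entries (no common factor), such that $(\lambda A+\mu B)\,x(\lambda,\mu)=0$. Pairing with $x$ yields $\lambda f(x)+\mu g(x)=0$ in $\cl{k}[\lambda,\mu]$, and coprimality of $\lambda,\mu$ forces $f(x)=\mu h$ and $g(x)=-\lambda h$ for a single homogeneous $h$ of degree $2d-1$. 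If $d=0$ then $x$ is a nonzero constant vector with $Ax=Bx=0$, hence $f(x)=g(x)=0$ and $x$ is automatically singular on $X$. If $d\geq1$ then $h$ is either identically $0$ or a form of positive degree, so in all cases $h$ has a zero $[\lambda_{0}:\mu_{0}]\in\P^1(\cl{k})$; primitivity of $x$ guarantees $x(\lambda_{0},\mu_{0})\neq0$, and there $f=g=0$, producing a singular point. This would show that $\Delta\equiv0$ forces $X$ singular and, with the previous paragraph, gives the stated equivalence. The subtle points I would watch are the reduction making $A$ invertible (legitimate only because $\Delta\not\equiv0$) and the passage to primitive homogeneous kernel vectors, which is exactly what rules out the spurious possibility that $g(x)$ be a nonzero constant in an affine chart.
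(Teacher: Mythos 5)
The paper does not prove this proposition; it is quoted from Reid's thesis, so there is no in-text argument to compare against. Your proof is correct and is essentially the classical one: the Jacobian criterion identifies singular points with vectors $x$ on $X$ killed by some member $\lambda A+\mu B$ of the matrix pencil; when $\Delta\not\equiv 0$ one normalizes $A$ invertible and reads everything off the self-adjoint operator $M=A^{-1}B$ (distinct eigenvalues give an orthogonal eigenbasis with $f(e_i)=\langle e_i,e_i\rangle\neq 0$, a repeated eigenvalue produces an isotropic eigenvector either inside a $\geq 2$-dimensional eigenspace or via a generalized eigenvector and self-adjointness); and the case $\Delta\equiv 0$ is handled by specializing a primitive homogeneous kernel vector. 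The only step worth writing out in full is why that kernel vector can be chosen with \emph{homogeneous} polynomial entries: solve the system by Cramer's rule on a maximal invertible submatrix of $\lambda A+\mu B$, so the entries are (up to a common homogeneous factor) minors of a matrix with entries linear in $(\lambda,\mu)$, then divide by the gcd, which is itself homogeneous; after that, primitivity does rule out the vector vanishing at the chosen root of $h$, exactly as you say.
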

  
  \medskip

Soit $X \subset \P^n_{k}, n \geq 4,$ 
une intersection compl\`ete lisse de deux quadriques.
Soit $P \in X(k)$. On peut supposer $X$ donn\'ee par un syst\`eme 
$ f= x_{0}x_{1} + q_{1}(x_{1}, \dots, x_{n})=0$
$ g=x_{0}x_{2} + q_{2}(x_{1}, \dots, x_{n})=0,$
Le point $P$ \'etant donn\'e par $(1,0,\dots,0)$ et l'espace tangent $T_{P}$ \`a $X$
en $P$ \'etant donn\'e par $x_{1}=x_{2}=0$.
Soit $H \subset \P^n_{k}$ l'espace projectif de dimension $n-3$
donn\'e par  $x_{0}=x_{1}=x_{2}=0$.  Soit 
$Y \subset H \simeq \P^{n-3}$  l'intersection de deux quadriques donn\'ee 
  par le syst\`eme
$$ q_{1}(0,0, x_{3}, \dots, x_{n})= q_{2}(0,0, x_{3}, \dots, x_{n})=0.$$

\begin{prop}\label{para3CTSaSD}
Soit $k$ un corps, ${\rm car}(k) =0$.
Avec les notations ci-dessus, l'intersection  $X \cap T_{P} \subset \P^n_{k}$
est le c\^{o}ne de sommet $P$ sur $Y$. 
  Les  droites de $X \subset \P^n_{k}$ qui passent par $P$ sont les g\'en\'eratrices de ce c\^one.
\end{prop}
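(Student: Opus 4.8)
The plan is to work entirely in the given normal form and to reduce both assertions to the single observation that, once we restrict the equations of $X$ to $T_P$, they no longer involve the coordinate $x_0$ singling out the vertex $P$. First I would record (it is part of the stated normalization, but it is worth checking) that the projective tangent space is $T_P=\{x_1=x_2=0\}$: differentiating $f=x_0x_1+q_1$ and $g=x_0x_2+q_2$ at $P=(1,0,\dots,0)$ gives gradients $(0,1,0,\dots,0)$ and $(0,0,1,0,\dots,0)$, since the quadratic forms $q_i$ in $x_1,\dots,x_n$ have vanishing first partials at the origin. Thus $T_P\simeq\P^{n-2}$, with homogeneous coordinates $(x_0:x_3:\dots:x_n)$.

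The heart of the first assertion is then immediate. Setting $x_1=x_2=0$ kills the bilinear terms $x_0x_1$ and $x_0x_2$, so that inside $T_P$ the intersection $X\cap T_P$ is cut out by $\bar q_1(x_3,\dots,x_n)=\bar q_2(x_3,\dots,x_n)=0$, where $\bar q_i:=q_i(0,0,x_3,\dots,x_n)$. These are exactly the two equations defining $Y$ in $H=\{x_0=x_1=x_2=0\}$, and they involve none of the coordinate $x_0$; hence the subscheme they cut out in $\P^{n-2}=T_P$ is by definition the cone with vertex $P=(1:0:\dots:0)$ over $Y\subset H$. This proves $X\cap T_P$ is the cone of vertex $P$ over $Y$.

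For the second assertion I would argue by double inclusion. Any line $\ell\subset X$ through $P$ satisfies $\ell=T_P\ell\subseteq T_PX=T_P$, hence $\ell\subseteq X\cap T_P$; and projecting from $P$ within $T_P$ sends such a line to a single point of $H$, which must lie on $Y$ because the defining equations $\bar q_i$ depend only on the projected coordinates, so $\ell$ is a generator $PR$ with $R\in Y$. Conversely, parametrizing the generator joining $P$ to $R=(0:0:0:r_3:\dots:r_n)\in Y$ as $(s:0:0:tr_3:\dots:tr_n)$ and substituting gives $f=t^2\bar q_1(r)=0$ and $g=t^2\bar q_2(r)=0$, so every generator is contained in $X$. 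The two inclusions together identify the lines of $X$ through $P$ with the generators of the cone. The only step needing a moment's care, and the closest thing to an obstacle, is the inclusion $\ell\subseteq T_P$: this is precisely where smoothness of $X$ at $P$ (guaranteed here, with $\mathrm{car}(k)=0$) is used, ensuring that every line of $X$ through $P$ lies in the tangent space. Everything else is a direct substitution.
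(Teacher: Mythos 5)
Your proof is correct and is the standard direct verification: the paper itself gives no argument but simply refers to \cite[\S 3, Proof of Thm. 3.2]{CTSaSD87}, and your computation (restriction of $f,g$ to $T_P$ eliminates $x_0$, hence yields the cone over $Y$; a line of $X$ through $P$ lies in $T_P$ because it equals its own tangent line, and the parametrization $(s:0:0:tr_3:\dots:tr_n)$ identifies such lines with generators) is exactly the expected one. The only cosmetic remark is that the appeal to ${\rm car}(k)=0$ is not really what makes $\ell\subseteq T_P$ work --- only the smoothness of $X$ at $P$ is needed there.
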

\begin{proof} Voir \cite[\S  3,  Proof of Thm. 3.2]{CTSaSD87}.
\end{proof}

\subsection{Quelques lemmes pr\'eparatoires}

\begin{prop}\label{pointrat2H}
Soit $k$ un corps, ${\rm car}(k) =0$.   
Soit $X \subset \P^n_{k}$, $n \geq 3$, une intersection compl\`ete lisse
de deux quadriques, d\'efinie par un syt\`eme $f=g=0$.
Supposons $X(k)\neq \emptyset$. Pour $n=3$, supposons $k$ fertile.
Alors  il existe une forme non d\'eg\'en\'er\'ee $\lambda f + \mu g=0 $ dans le pinceau
qui s'annule sur un  $\P^1_{k}$,  c'est-\`a-dire qui contient $2H$,
et l'ensemble des 
 $(\lambda,\mu) \in \P^1(k)$ avec cette
propri\'et\'e est  Zariski dense dans $\P^1_{k}$.
\end{prop}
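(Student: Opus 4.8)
\emph{Plan de d\'emonstration envisag\'e.} L'objectif est de produire un ensemble Zariski-dense de $(\lambda:\mu)\in\P^1(k)$ tel que la quadrique $Q_{\lambda,\mu}=\{\lambda f+\mu g=0\}$ contienne une $k$-droite. Comme $X$ est lisse, le polyn\^ome $\det(\lambda f+\mu g)$ est s\'eparable (crit\`ere de Reid), de sorte que seules $n+1$ formes du pinceau sont d\'eg\'en\'er\'ees ; on pourra donc \'ecarter ces valeurs, et d'apr\`es la premi\`ere proposition du \S1 (reliant l'existence d'un $\P^1_k\subset Q$ \`a la pr\'esence de $2H$ dans une forme non d\'eg\'en\'er\'ee), un tel \'enonc\'e de densit\'e entra\^{\i}nera la conclusion. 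Gr\^ace au point $P\in X(k)$, on normalise le syst\`eme sous la forme $f=x_0x_1+q_1(x_1,\dots,x_n)$, $g=x_0x_2+q_2(x_1,\dots,x_n)$ avec $P=(1,0,\dots,0)$, comme dans le pr\'eliminaire qui pr\'ec\`ede la proposition \ref{para3CTSaSD}.

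Je chercherais d'abord les droites de $Q_{\lambda,\mu}$ passant par $P$. En rep\'erant une telle droite par sa direction $v=(v_1,\dots,v_n)\in\P^{n-1}$ et en substituant $(1,tv_1,\dots,tv_n)$ dans $\lambda f+\mu g=x_0(\lambda x_1+\mu x_2)+(\lambda q_1+\mu q_2)(x_1,\dots,x_n)$, on trouve que la droite est contenue dans $Q_{\lambda,\mu}$ si et seulement si $\lambda v_1+\mu v_2=0$ et $(\lambda q_1+\mu q_2)(v)=0$. Ce syst\`eme lin\'eaire en $(\lambda,\mu)$ admet une solution non nulle si et seulement si $v$ appartient \`a l'hypersurface cubique $S=\{\Phi=0\}\subset\P^{n-1}$, o\`u $\Phi(v)=v_1q_2(v)-v_2q_1(v)$, la valeur correspondante \'etant $(\lambda:\mu)=(v_2:-v_1)$. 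L'application rationnelle $\psi\colon S\to\P^1$, $v\mapsto(v_2:-v_1)$, \'etant dominante (elle est non constante et $S$ int\`egre), il suffit de montrer que $S(k)$ est Zariski-dense : alors $\psi(S(k))$ le sera dans $\P^1$, une fois \'ecart\'es le lieu d'ind\'etermination et les $n+1$ mauvaises valeurs.

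Le point crucial est donc la densit\'e de $S(k)$. Je v\'erifierais d'abord que $S$ est g\'eom\'etriquement int\`egre, ce que devrait forcer la lissit\'e de $X$ : une factorisation de $\Phi$ sur $\cl k$ cr\'eerait un lieu singulier de dimension trop grande. Un calcul du gradient montre que le long de l'espace lin\'eaire $L=\{v_1=v_2=0\}\cong\P^{n-3}\subset S$ on a $\nabla\Phi=(q_2(v),-q_1(v),0,\dots,0)$, de sorte que $\operatorname{Sing}(S)\cap L$ est exactement $Y=\{q_1=q_2=0\}$, ferm\'e de codimension $2$ dans $L$. Ainsi tout point rationnel de $L\setminus Y$ est un point lisse de $S$ ; comme $k$ est infini et que $Y$ est un ferm\'e propre de $L$, un tel point existe d\`es que $n\ge 3$ (pour $n=3$ c'est le point $(0:0:1)$, qui est lisse car la lissit\'e de $X$ force $Y=\emptyset$).

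Il reste \`a d\'eduire la densit\'e de $S(k)$ de l'existence d'un point rationnel lisse, et c'est ici que les deux cas divergent. Pour $n\ge 4$, $S$ est une hypersurface cubique g\'eom\'etriquement int\`egre de dimension $\ge 2$ poss\'edant un point rationnel lisse : le proc\'ed\'e classique de projection tangente (unirationalit\'e des cubiques) montre que $S$ est $k$-unirationnelle, d'o\`u la densit\'e de $S(k)$, sans hypoth\`ese suppl\'ementaire sur $k$. Pour $n=3$, en revanche, $S$ est une cubique plane lisse de genre $1$, sans espoir d'unirationalit\'e ; c'est pr\'ecis\'ement pourquoi on suppose $k$ fertile, la fertilit\'e appliqu\'ee \`a la courbe int\`egre $S$ munie du point lisse $(0:0:1)$ donnant directement la densit\'e de $S(k)$. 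Le principal obstacle de cette approche est donc double : \'etablir proprement l'int\'egrit\'e g\'eom\'etrique de $S$ \`a partir de la lissit\'e de $X$, et disposer de l'\'enonc\'e d'unirationalit\'e des hypersurfaces cubiques muni d'un point lisse pour le cas $n\ge 4$.
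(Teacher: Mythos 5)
Votre r\'eduction \`a la densit\'e des points rationnels de l'hypersurface cubique $S=\{v_1q_2(v)-v_2q_1(v)=0\}\subset\P^{n-1}$ est correcte dans son principe, mais l'\'etape cl\'e pour $n\ge 4$ repose sur un \'enonc\'e faux. Une hypersurface cubique g\'eom\'etriquement int\`egre de dimension $\ge 2$ munie d'un point rationnel \emph{lisse} n'est pas, en g\'en\'eral, $k$-unirationnelle, et ses points rationnels ne sont pas n\'ecessairement denses : un c\^one sur une cubique plane lisse de genre $1$ n'ayant qu'un nombre fini de points rationnels est g\'eom\'etriquement int\`egre, poss\`ede des points rationnels lisses (sur les g\'en\'eratrices rationnelles, hors du sommet), et son ensemble de points rationnels est une r\'eunion finie de droites, donc non dense. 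Le proc\'ed\'e de projection tangente exige des hypoth\`eses suppl\'ementaires (typiquement la lissit\'e de la cubique, ou au moins l'exclusion des c\^ones), que vous ne v\'erifiez pas pour $S$. Par ailleurs, l'int\'egrit\'e g\'eom\'etrique de $S$ est seulement affirm\'ee (\og devrait forcer \fg), et l'argument esquiss\'e via la dimension du lieu singulier n'est pas men\'e \`a bien.

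Ces deux lacunes se comblent d'un coup en identifiant $S$ : vos \'equations montrent que, hors du lieu $\{v_1=v_2=0\}$ (de dimension $n-3<n-2$), un point $v\in S$ correspond exactement \`a une droite par $P$ recoupant $X$ ; autrement dit $S$ est l'adh\'erence de l'image de la projection $\pi_P\colon X\dashrightarrow\P^{n-1}$ depuis $P$. Comme $X$ est g\'eom\'etriquement int\`egre et non d\'eg\'en\'er\'ee, $S$ l'est aussi, et surtout la densit\'e de $S(k)$ d\'ecoule de celle de $X(k)$ -- laquelle vaut pour $n\ge4$ parce que $X$ munie d'un point rationnel est $k$-unirationnelle (\cite[Prop. 2.3]{CTSaSD87}), et pour $n=3$ par fertilit\'e de $k$. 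C'est pr\'ecis\'ement l'ingr\'edient de la preuve du texte, qui \'evite la cubique auxiliaire : on y prend l'ouvert $U\subset X\times_k X$ des couples $(A,B)$ dont la s\'ecante $AB$ n'est pas contenue dans $X$, le morphisme $U\to\P^1_k$ associant \`a $(A,B)$ l'unique quadrique du pinceau contenant $AB$, et la densit\'e de $U(k)$ donn\'ee par l'unirationalit\'e (resp. la fertilit\'e). Votre variante \og une extr\'emit\'e fix\'ee en $P$ \fg\ fonctionne donc, \`a condition de remplacer l'appel \`a l'unirationalit\'e des cubiques par cette observation ; pour $n=3$ votre traitement (fertilit\'e appliqu\'ee \`a la cubique plane $S$, qui est bien lisse et isomorphe \`a $X$ via $\pi_P$) est correct.
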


\begin{proof}
 Il existe un ouvert de Zariski non vide $U \subset X \times_{k}X$
tel que, pour tout couple de points g\'eom\'etriques $(A,B) \in U$, on ait $A \neq B$ et
la droite $AB$ n'est pas contenue dans $X$.
Ce dernier point est clair, car pour tout point g\'eom\'etrique $A \in X$ il existe un point g\'eom\'etrique $B$ de $X$
tel que la droite $AB$ ne soit pas contenue dans $X$, sinon  $X$ serait  un c\^{o}ne de sommet $A$. 
 On dispose alors d'un $k$-morphisme $\phi: U \to \P^1_{k}$ envoyant $(A,B) \in U$ sur le param\`etre $(\lambda,\mu)$ de  l'unique quadrique du pinceau contenant la droite $AB$.
 Pour $n \geq 4$ et $X(k)\neq \emptyset$, l'ensemble $X(k)$ est Zariski dense dans $X$, car $X$
 est $k$-unirationnelle \cite[Prop. 2.3]{CTSaSD87}. Pour $n=3$ et $k$ fertile, $X(k)\neq \emptyset$
 implique $X(k)$ Zariski dense dans $X$.
 On voit alors que $U(k)$ est Zariski dense dans $U$, et donc son image par  $\phi$ dans $\P^1(k)$
 est Zariski dense. 
 Comme l'ensemble des points $(\lambda,\mu) \in \P^1$ tels que  la forme quadratique $\lambda f + \mu g$
est singuli\`ere est fini, ceci \'etablit la proposition.
\end{proof}

\begin{prop}\label{pointquaddroite}
Soit $k$ un corps, ${\rm car}(k) =0$.   Soit $X \subset \P^n_{k}$,   
une intersection compl\`ete lisse de deux quadriques d\'efinie par $f=g=0$.
Supposons $n \geq 4$, ou $n \geq 3$ et le corps $k$   fertile, par exemple   $p$-adique.
Il  y a \'equivalence entre :

(a) Il existe un point de degr\'e au plus 2 sur $X$.

(b) Il existe une forme  $\lambda f + \mu g=0 $ dans le pinceau qui s'annule sur un
  $\P^1_{k}$.

(c) Il existe une forme non d\'eg\'en\'er\'ee $\lambda f + \mu g=0 $ dans le pinceau
qui s'annule sur un  $\P^1_{k}$,  c'est-\`a-dire qui contient $2H$.
  
L'ensemble des  $(\lambda,\mu) \in \P^1(k)$ satisfaisant cette
propri\'et\'e est 
vide ou
Zariski dense dans $\P^1_{k}$.
\end{prop}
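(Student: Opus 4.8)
The plan is to prove the cycle (c) $\Rightarrow$ (b) $\Rightarrow$ (a) $\Rightarrow$ (c), the first implication being trivial, and to read off the density clause from the last one. For (b) $\Rightarrow$ (a) I would argue by restriction to the line. Suppose a member $q_0 = \lambda_0 f + \mu_0 g$ of the pencil vanishes on a $k$-line $L \cong \P^1_k$, and pick a second form $g'$ in the pencil independent from $q_0$, so that $X = \{q_0=0\}\cap\{g'=0\}$ and hence $X \cap L$ is the zero scheme of the binary quadratic form $g'|_L$ on $L$. If $g'|_L \equiv 0$ then $L \subset X$ and $X$ has a $k$-point; otherwise the zero scheme is an effective divisor of degree $2$ on $\P^1_k$, that is, a closed point of degree at most $2$ on $X$. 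Either way (a) holds.

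For (a) $\Rightarrow$ (c) I would first dispose of the rational case: if $X(k)\neq\emptyset$ (in particular if the point produced above is rational, or if $L \subset X$), Proposition \ref{pointrat2H} directly supplies a non-degenerate member containing $2H$ together with the Zariski density of the corresponding parameters, which is exactly (c) and the density clause. So I may assume the only available point is a closed point $P$ of degree exactly $2$, with residue field $K=k(\sqrt d)$ and geometric conjugates $P, \sigma P$. The key device is the \emph{symmetric} $k$-morphism $\phi\colon U \to \P^1_k$ from the proof of Proposition \ref{pointrat2H}, sending a pair of points spanning a line not contained in $X$ to the parameter of the unique member through that line. Applying it to $(P,\sigma P)$: if $(P,\sigma P)\notin U$, i.e. the chord $\overline{P\,\sigma P}$ lies in $X$, then $X$ contains a $k$-line and we fall back to the rational case; otherwise, because $\phi$ is symmetric and $\sigma$ interchanges $P$ and $\sigma P$, the value $\phi(P,\sigma P)$ is Galois-invariant, hence lies in $\P^1(k)$, and its member vanishes on the Galois-stable, hence $k$-rational, line $\overline{P\,\sigma P}$. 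This already gives (b) with a concrete parameter.

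It then remains to promote this single parameter to a Zariski-dense set, and to a non-degenerate member; this is the part I expect to be the main obstacle. Over $K$ the point $P$ makes $X_K$ be $K$-unirational for $n\ge 4$ by \cite[Prop. 2.3]{CTSaSD87}, so $X(K)$ is Zariski dense in $X$, and I would run the construction above with every $A\in X(K)$ in place of $P$, obtaining for all $A$ outside a proper closed subset a parameter $\phi(A,\sigma A)\in\P^1(k)$ whose member contains the $k$-line $\overline{A\,\sigma A}$. The delicate point is that these parameters are not all equal. I would establish this by passing to the Weil restriction $R=\operatorname{Res}_{K/k}X_K$, on which the symmetric map descends to a $k$-morphism $\Psi\colon R \dashrightarrow \P^1_k$ with $\Psi(A)=\phi(A,\sigma A)$ for $A\in R(k)=X(K)$; geometrically $R\cong X\times X$ and $\Psi$ becomes $\phi$, which is dominant, so $\Psi$ is dominant. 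Since $R$ is $k$-unirational with a $k$-point, $R(k)$ is Zariski dense, whence $\Psi(R(k))$ is Zariski dense in $\P^1_k$; as $\P^1$ is a curve, this image is infinite.

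Finally I would discard the finitely many degenerate members — finitely many because smoothness of $X$ forces $\det(\lambda f+\mu g)$ to be separable — which leaves infinitely many, hence a Zariski-dense family of, non-degenerate members vanishing on a $k$-line. This proves (c) and simultaneously the density statement: whenever the solution set is nonempty it satisfies (b), hence (a), hence the dense conclusion just obtained, so it is empty or Zariski dense. For the remaining case $n=3$ with $k$ fertile I would argue in the same way, using that fertility of $K$ guarantees $X(K)$ is Zariski dense, which is the only input the symmetric-morphism argument needs.
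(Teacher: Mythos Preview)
Your argument is correct and matches the paper's: reduce to (a) $\Rightarrow$ (c), dispose of the rational case via Proposition~\ref{pointrat2H}, and for a genuine quadratic point pass to the Weil restriction $R = R_{K/k}(X_K)$ with its dominant $k$-rational map $\Psi$ to $\P^1_k$, concluding from Zariski density of $R(k)$ (via $k$-unirationality of $R$ when $n\ge 4$). One small imprecision in the $n=3$ fertile case: what your argument actually uses is density of $R(k)$ in the $k$-variety $R$, and this follows directly from fertility of $k$ applied to the smooth geometrically integral $k$-variety $R$ with a $k$-point --- invoking fertility of $K$ to get $X(K)$ dense in $X_K$ is a detour and does not obviously yield $R(k)$ dense in $R$; the paper uses the direct route.
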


\begin{proof} Que (c) implique (b) qui implique (a) est clair.
 Montrons que (a)
implique (c).
Si $X(k) \neq \emptyset$, il suffit d'appliquer la proposition \ref{pointrat2H}.
Soit $K/k$ une extension quadratique de corps, avec $X(K) \neq \emptyset$.
Dans la d\'emonstration de la proposition \ref{pointrat2H}, on peut remplacer $X \times_{k}X$
par $R_{K/k}(X_{K})$. On obtient un ouvert $U \subset R_{K/k}(X_{K})$
et un $k$-morphisme $\phi : U \to \P^1_{k}$.
La $k$-vari\'et\'e lisse $R_{K/k}(X_{K})$ contient un $k$-point.
Pour $n \geq 3$ et $k$ fertile, cela conclut la d\'emonstration.
Pour $n\geq 4$, l'intersection lisse de deux quadriques $X_{K}$ contient un
$K$-point, donc est $K$-unirationnelle. Ceci implique que la $k$-vari\'et\'e
$R_{K/k}(X_{K})$  
 est $k$-unirationnelle, et on conclut comme dans la d\'emonstration de la proposition \ref{pointrat2H}.
\end{proof}

  \medskip

\begin{lemma}\label{intersecpasintegre}
Soit $k$ un corps, ${\rm car}(k) =0$.
Soit $n \geq 3$ un entier. Soient $f$ et $g$ deux formes quadratiques sur $k$  en $n+1$ variables, et soit $X \subset \P^n_{k}$ la vari\'et\'e d\'efinie par $f=g=0$.
Si $X$ ne poss\`ede pas de point dans une extension de degr\'e 1 ou 2 de $k$, alors :

(i) La $k$-vari\'et\'e $X$ est non conique et purement de codimension 2 dans $\P^n_{k}$.

(ii)  Le polyn\^ome
$det(\lambda f + \mu g)$ est non nul.

(iii) Toute forme $\lambda f + \mu g$
dans le pinceau sur $k$ est de rang au moins 3.

(iv) Si   $n \geq 4$,  la $k$-vari\'et\'e $X$ est g\'eom\'etriquement int\`egre.

(v) Si $n=3$ et $X$ n'est pas une courbe lisse g\'eom\'etriquement int\`egre, alors $\cl{X}$ est la r\'eunion de 4 droites transitivement permut\'ees
par l'action du groupe de Galois, l'ensemble des 4 droites formant un cycle. De plus dans ce cas il n'existe
pas de forme de rang 3 dans le pinceau sur $\k$.
\end{lemma}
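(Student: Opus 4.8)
The plan is to use the single hypothesis at our disposal---that $X$ carries no closed point of degree $\le 2$---to exclude, one configuration at a time, every way in which $X$ could fail to be the expected smooth complete intersection. Geometric statements are checked over $\overline{k}$, the recurring principle being that a Galois-stable linear subspace, singular point, or common factor is automatically defined over $k$, so that any such object of dimension $\ge 0$ produces a point of small degree. For \textup{(i)}, I first treat the dimension: a codimension-$1$ component forces $f$ and $g$ to share a factor $h$ in $\overline{k}[x_0,\dots,x_n]$, and since $\gcd(f,g)$ is Galois-stable up to a scalar, $h$ is defined over $k$. If $\deg h=2$ then $f,g$ are proportional and $X$ is a single quadric of dimension $\ge 1$, which acquires a point of degree $\le 2$ upon slicing by a $k$-line; if $\deg h=1$ then $X\supseteq\mathbb{P}^{n-1}_k$ has $k$-points. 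Both contradict the hypothesis, so $X$ is purely of codimension $2$. For the cone assertion, the vertex locus of $\overline{X}$ is a Galois-stable linear subspace contained in $\overline{X}$; it descends to a $k$-linear space inside $X$, so if $X$ were a cone this space would be nonempty and would carry a $k$-point.

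I would dispatch \textup{(iii)} next. If a member $q=\lambda f+\mu g$ with $(\lambda:\mu)\in\mathbb{P}^1(k)$ had rank $\le 2$, its radical $\Pi=\operatorname{rad}(q)$ is a $k$-rational linear space of dimension $n-2$ (of dimension $n-1$ when $\operatorname{rank} q=1$) on which $q$ vanishes, so $f|_\Pi$ and $g|_\Pi$ are linearly dependent. Then $X\cap\Pi$ is either all of $\Pi$---giving $k$-points, as $\dim\Pi\ge 1$---or a single quadric in a projective space of dimension $\ge 1$ over $k$, which again has a point of degree $\le 2$; either way we contradict the hypothesis. For \textup{(ii)}, suppose $\det(\lambda f+\mu g)\equiv 0$ and let $A,B$ be the Gram matrices of $f,g$; then $A+tB$ is singular over $k(t)$, so there is a primitive $v(t)\in k[t]^{n+1}$ with $(A+tB)v(t)=0$. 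Writing $v(t)=\sum_i v_i t^i$, the coefficients of $t^0$ and $t^1$ give $Av_0=0$ and $Bv_0=-Av_1$; primitivity forces $v_0=v(0)\neq 0$, and then $f(v_0)=v_0^\top Av_0=0$ and $g(v_0)=v_0^\top Bv_0=-(Av_0)^\top v_1=0$, so $[v_0]\in X(k)$, a contradiction. Hence $\det(\lambda f+\mu g)\not\equiv 0$.

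For \textup{(iv)}, with $n\ge 4$ the variety $X$ is a complete intersection of dimension $\ge 2$, hence connected and Cohen--Macaulay (no embedded components). The decisive remark is that a linear subspace $\mathbb{P}^{n-2}\subseteq\overline{X}$ forces $f,g$ into the ideal $(\ell_1,\ell_2)$ cut out by that subspace, so every member of the pencil has rank $\le 4$; as $n+1\ge 5$ this gives $\det(\lambda f+\mu g)\equiv 0$, contradicting \textup{(ii)}. Thus $\overline{X}$ contains no $\mathbb{P}^{n-2}$, and since $\deg\overline{X}=4$ the only reducible possibility left is two quadric components of dimension $n-2$, each lying in a hyperplane, whence $\overline{X}\subseteq H_1\cup H_2$ and $\ell_1\ell_2$ is a rank-$\le 2$ member of the pencil, Galois-stable hence $k$-rational, contradicting \textup{(iii)}. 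A non-reduced $\overline{X}$ has reduced support a single quadric $Q=V(\ell,h)$; writing $f,g\in(\ell,h)$ yields either a common factor $\ell$ (contradicting \textup{(i)}) or a combination equal to $\ell$ times a linear form, again a rank-$\le 2$ $k$-member. Therefore $\overline{X}$ is integral.

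For \textup{(v)}, when $n=3$ the curve $\overline{X}$ is a connected complete intersection of degree $4$ and arithmetic genus $1$. The arguments of \textup{(iv)} exclude every component of degree $\ge 2$: two conics give $\overline{X}\subseteq H_1\cup H_2$ and a rank-$\le 2$ $k$-member, while a conic or twisted cubic occurring alongside components of a different degree is the unique component of its degree, hence Galois-stable and defined over $k$, and so carries a point of degree $\le 2$. Non-reduced structures are likewise excluded, their reduced support of degree $\le 2$ producing a $k$-line, a point of degree $\le 2$, or a rank-$\le 2$ $k$-member; and an integral singular genus-$1$ curve would have a single Galois-fixed singular point, i.e. a $k$-point. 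Hence $\overline{X}$ is a reduced union of four lines; no Galois orbit can have size $1$ or $2$ without producing a point of degree $\le 2$, so the action is transitive and the incidence graph of the four lines is regular. Connectedness together with $p_a=1$ then forces this graph to be the $4$-cycle, so the lines form a space quadrilateral. Finally a rank-$3$ member would be a quadric cone, all of whose lines pass through the vertex; since the four edges of a quadrilateral have no common point, no rank-$3$ member can exist. The main obstacle is exactly the bookkeeping of \textup{(iv)}--\textup{(v)}: enumerating the degree and multiplicity patterns of $\overline{X}$ and verifying that each non-integral pattern is symmetric enough under Galois to manufacture a point of degree $\le 2$ or a low-rank member---the single most useful lever being the implication ``a linear $\mathbb{P}^{n-2}$ in $\overline{X}$ makes every member of the pencil degenerate.''
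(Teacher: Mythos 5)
Your proof is correct and follows the same overall skeleton as the paper's: first exclude common factors, cones and rank-$\le 2$ members of the pencil by manufacturing points of degree $\le 2$, then run a case analysis on the possible degenerations of $\overline{X}$. The difference is one of self-containedness: where the paper delegates the substantive steps to \cite{CTSaSD87} (Lemma~1.1 for purity, Lemma~1.14 for ``$\det(\lambda f+\mu g)\equiv 0$ implies a rational point'', Lemma~1.11 for geometric integrality when $n\ge 4$, Lemmas~1.7 and~1.10 for the reduction to four lines when $n=3$), you reprove everything from scratch. Your argument for (ii) --- a primitive kernel vector $v(t)$ of $A+tB$ has $v(0)\ne 0$, and $v(0)$ is automatically a common zero of $f$ and $g$ --- is a clean elementary substitute for the citation. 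Your lever for (iv), namely that a $\P^{n-2}\subset\overline{X}$ forces every member of the pencil into $(\ell_1,\ell_2)$ and hence to have rank $\le 4$, contradicting (ii) as soon as $n\ge 4$, is exactly the right observation and keeps the component analysis short. For the four-line configuration in (v) you use Galois-transitivity plus $p_a=1$ to force the $4$-cycle, where the paper reads the incidence structure off the two rulings of a smooth quadric of the pencil; both work, and the final exclusion of a rank-$3$ member by concurrency at the vertex of the cone is identical to the paper's.

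One step deserves a word more of justification. In the case $\overline{X}=Q_1\cup Q_2$ with $Q_i$ a quadric in the hyperplane $H_i=V(\ell_i)$, you assert that $\ell_1\ell_2$ is a member of the pencil because $\overline{X}\subseteq H_1\cup H_2$. Containment of $X$ in a quadric does not by itself place that quadric in the pencil: for degenerate $X$ the space of quadrics through $X_{\mathrm{red}}$ can be strictly larger than $\langle f,g\rangle$. The correct argument is: $f|_{H_1}$ and $g|_{H_1}$ both lie in the one-dimensional space $\overline{k}\cdot q_1$ (with $I(Q_1)=(\ell_1,q_1)$) and are not both zero (else $\ell_1$ is a common factor), so some nonzero member of the pencil is divisible by $\ell_1$; that member vanishes on the irreducible $Q_2$, which spans $H_2$ and is not contained in $H_1$, so its second linear factor must be $\ell_2$. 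This identifies the member as $\ell_1\ell_2$ up to scalar, and Galois-stability of the pair $\{H_1,H_2\}$ then makes it a $k$-rational member of rank $\le 2$, contradicting (iii). The same remark applies to the double-quadric case. With that patch the proof is complete.
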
 
\begin{proof}
Si $X$  est conique, le sommet du c\^{o}ne est un
espace lin\'eaire d\'efini sur $k$.

Si $f$ et $g$  ont un facteur commun, on peut le supposer d\'efini sur $k$, il  est de degr\'e 1 ou 2, 
donc $X$ poss\`ede un point dans une extension $K/k$ avec $[K:k] \leq 2$.

Supposons d\'esormais $f$ et $g$ sans facteur commun.
Alors  $X \subset \P^n_{k}$ est purement de codimension 2 \cite[Lemma 1.1]{CTSaSD87}.
 
Si le polyn\^{o}me homog\`ene $det(\lambda f + \mu g)$ s'annule identiquement, alors
$X$ poss\`ede un point singulier rationnel \cite[Lemma 1.14]{CTSaSD87}.

S'il existe une forme de rang au plus 2 dans le pinceau $\lambda f + \mu g$,
elle s'annule sur un espace lin\'eaire $\P^{n-2}_{k}  \subset \P^n_{k}$, et $X$
contient la trace de toute autre forme du pinceau, donc une quadrique
de dimension $n-3\geq 0$, donc un point dans une extension au plus quadratique de $k$.

On est   ramen\'e \`a \'etudier le cas o\`u $f$ et $g$ n'ont pas de facteur commun,
o\`u il existe une forme de rang 
 $n+1$ dans le pinceau, et  o\`u toute forme non
nulle dans le pinceau est de rang au moins 3. 

Pour $n \geq 4$,  
  \cite[Lemma 1.11]{CTSaSD87} assure que la $k$-vari\'et\'e $X$ est g\'eom\'e\-tri\-quement int\`egre.
  
  Soit $n=3$. Si la courbe $X$ est singuli\`ere mais g\'eom\'etriquement int\`egre, comme c'est une courbe
     de genre arithm\'etique 1, elle poss\`ede exactement un point singulier, qui est donc rationnel.
 Supposons $\cl{X}$   non int\`egre. Si $X$ contient une droite d\'efinie sur $k$
 ou une extension quadratique de $k$, ou si $X$ contient une conique lisse, l'\'enonc\'e est clair.
  En combinant \cite[Lemmas 1.7, 1.10]{CTSaSD87} et le fait que toute forme dans le pinceau
  $\lambda f + \mu g$ sur $k$ est de rang au moins 3, on voit qu'on est r\'eduit
  au cas o\`u  $\cl{X}$ est  l'union de 4 droites distinctes transitivement permut\'ees par
  l'action du groupe de Galois.
  Soit $Q \subset \P^3_{k}$  une quadrique lisse d\'efinie par une forme du pinceau.
  On a $\Pic(\cl{Q}) = \Z e_{1} \oplus \Z e_{2}$, les classes $e_{1}$ et $e_{2}$ sont permut\'ees par
  l'action du groupe de Galois, deux droites sont dans la classe $e_{1}$, deux dans la classe $e_{2}$.
  Chaque droite rencontre exactement deux autres des 4 droites.
  S'il existe une forme de rang exactement 3 dans le pinceau sur $\k$,
  alors les 4 droites sont contenues dans un c\^{o}ne sur une conique lisse.
  Toute droite de $\P^3$ contenue dans ce c\^{o}ne est une g\'en\'eratrice,
   i.e. passe par le sommet du c\^{o}ne.
  Mais alors   les 4 droites sont concourantes, contradiction.
\end{proof}

\begin{prop}\label{coraytsfasman}
Soit $k$ un corps, ${\rm car}(k) =0$.
Soit $X \subset \P^4_{k}$ une intersection 
compl\`ete de deux quadriques, g\'eom\'etriquement int\`egre et non conique. Si $X$ est 
singuli\`ere, alors
soit  il existe une extension $K/k$ de degr\'e $[K:k]\leq 2$
telle que $X_{K}$ est $K$-birationnelle \`a $\P^2_{K}$, soit 
$X$ est $k$-birationnelle \`a une surface
$R_{K/k}(C)$ pour $K/k$ une extension quadratique de corps et $C$ une conique lisse sur $K$.
\end{prop}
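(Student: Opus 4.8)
The plan is to use a singular point of $X$ as the centre of a projection, reducing $X$ birationally to a quadric surface in $\P^3$, and then to read off the dichotomy from the classification of that quadric surface over the base field.

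First I would locate a singular point of small degree. Consider the binary form $\det(\lambda f + \mu g)$. If it vanishes identically, then \cite[Lemma 1.14]{CTSaSD87} already produces a $k$-rational singular point. Otherwise it is a binary form of degree $5$, and by the criterion of Reid recalled above the singularity of $X$ forces it to be inseparable, i.e. to have a repeated root $\rho$. Since a repeated irreducible factor of a quintic has degree at most $2$, the root $\rho$ is defined over an extension $K_0/k$ with $[K_0:k]\le 2$. A short linear-algebra computation does the rest: if $v_0$ spans the kernel of the matrix $\lambda_0 A+\mu_0 B$ attached to the degenerate member at $\rho$, then $\rho$ being a multiple root gives $v_0^{t}Bv_0=0$, whence $f(v_0)=g(v_0)=0$; thus the vertex $v_0$ lies on $X$ and is a singular point $P_0$ of $X$ of degree $\le 2$ (when the corank jumps, one intersects the vertex linear space with $X$ to produce such a point, using that by hypothesis no form in the pencil has rank $\le 2$, as in Lemma \ref{intersecpasintegre}).

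Next I would project from $P_0$, working over $K_0=k(P_0)$. Choosing coordinates so that $P_0=(1:0:\dots:0)$ and a member $g$ of the pencil is the cone of vertex $P_0$, one writes $g=q_2(x_1,\dots,x_4)$ and $f=x_0\ell+q_1$; for a point of the quadric surface $S=\{q_2=0\}\subset\P^3$ the line joining it to $P_0$ lies on the cone and meets $\{f=0\}$ in a single residual point, exactly as in the proof of Proposition \ref{para3CTSaSD}. This realizes the projection as a birational map $X\dashrightarrow S$. Geometric integrality and non-coniness of $X$ force $q_2$ to have rank $3$ or $4$, and I would then classify $S$. If $q_2$ has rank $3$, then $S$ is the cone over a smooth conic $c$, hence birational to $c\times\P^1$; as $c$ carries a closed point of degree $\le 2$, both $S$ and $X$ become rational over an extension of degree $\le 2$, giving the first alternative. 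If $q_2$ has rank $4$, then $S$ is a smooth quadric surface with discriminant $d$: when $d$ is a square (or $S$ has a rational point) stereographic projection gives $S\sim\P^2$ over an extension of degree $\le 2$, again the first alternative; when $d$ is not a square the two rulings of $S$ are conjugate over $K=k(\sqrt d)$, and $S$ is $k$-isomorphic to the Weil restriction $R_{K/k}(C)$, where $C$ is the conic over $K$ parametrizing one ruling, which is the second alternative. (Proposition \ref{chatelet} is the natural tool for controlling when $q_2$ acquires a zero over a quadratic extension.)

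The hard part will be the descent of these conclusions from $K_0$ back down to $k$ in the awkward case where the only available singular point $P_0$ is genuinely quadratic. Here one must either use the conjugate point $P_0'\in X(K_0)$ to manufacture a $K_0$-rational point on $S$, placing us in the first alternative over $K_0$, or recognize the $k$-birational type directly as a Weil restriction. The delicate bookkeeping is to keep track of which quadratic extension intervenes — the discriminant field of $q_2$ versus the field of definition of $P_0$ — and to check that it matches the data of $R_{K/k}(C)$; the cleanest way to organize this is through the Segre symbol of the pencil over $\overline{k}$ together with Galois descent.
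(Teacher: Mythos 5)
Your plan — project from a singular point of degree at most $2$ — is genuinely different from the paper's proof, which after disposing of the non-isolated case simply invokes the Coray--Tsfasman classification of singular quartic del Pezzo surfaces over a non-closed field (\cite[\S 6, 7]{CoTs88}, in particular Lemma~7.4): every case except $4A_1$ yields $K$-rationality for some $[K:k]\le 2$, and the $4A_1$ case yields a degree-$8$ del Pezzo surface, i.e.\ an $R_{K/k}(C)$. Your approach would be more self-contained, but it breaks down at its very first step, and it does so exactly in the case the second alternative is about. When a repeated root $\rho$ of the quintic $\det(\lambda f+\mu g)$ has corank $2$ (the $4A_1$ surfaces, Segre symbol $[(1,1),(1,1),1]$, where the pencil has two such members, possibly conjugate over a quadratic extension), the vertex is a \emph{line} $L$ defined over $K_0=k(\rho)$ with $[K_0:k]\le 2$, and $L\cap X$ is a degree-$2$ subscheme of $L$ \emph{over $K_0$}; your parenthetical recipe therefore produces a singular point of degree up to $4$ over $k$, not $\le 2$. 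When the Galois group permutes the four nodes transitively --- which does happen, and is precisely the situation in which $X$ need not become rational over any quadratic extension --- there is no singular point of degree $\le 2$ at all, so your projection has no admissible centre.

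Second, even granting a singular point $P_0$ of degree $\le 2$, your projection only gives rationality over a quadratic extension of $K_0=k(P_0)$, i.e.\ over a field of degree up to $4$. You flag this descent problem, but the fix you sketch (use the conjugate $P_0'$ to put a $K_0$-point on $S$) can only ever land you in the first alternative; nowhere do you actually establish the second one, namely a $k$-birational identification of $X$ with $R_{K/k}(C)$. That identification requires recognizing the $k$-form of the degree-$8$ del Pezzo surface attached to the $4A_1$ configuration via the Galois action on the two vertex lines and the four nodes, which is exactly the content of \cite[Lemma 7.4]{CoTs88} that the paper cites; your closing appeal to ``Segre symbols together with Galois descent'' is a placeholder for that argument, not a proof of it. (You also leave aside the case where the singular locus contains a curve, which the paper dispatches first using \cite[Lemma 3.15.1, Prop.\ 2.2]{CTSaSD87}.) So the proposal is a reasonable strategy for the generic singular cases, but the one case that forces the dichotomy in the statement is precisely the one it does not reach.
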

\begin{proof}
Si $C \subset X$ est une courbe (non n\'ecessairement g\'eom\'etriquement int\`egre)
contenue dans le lieu singulier de $X$, alors $C$ est une droite  $\P^1_{k} \subset \P^4_{k}$
\cite[Lemma 3.15.1]{CTSaSD87}. Dans ce cas, $X$ est $k$-birationnelle au produit
d'un espace projectif  et d'une quadrique lisse de dimension au moins 1
\cite[Prop. 2.2]{CTSaSD87}.
On peut donc supposer que les singularit\'es de $X$
sont isol\'ees.  
Ces singularit\'es
sont discut\'ees par Coray et Tsfasman, en particulier
du point de vue de l'action du groupe de Galois. 
Notons qu'une surface dite d'Iskovkikh devient $K$-rationnelle sur l'extension
quadratique $K/k$ sur laquelle sont d\'efinis ses points doubles.
Il ressort de \cite[\S 6, 7]{CoTs88} en particulier de \cite[Lemma 7.4]{CoTs88} que,
sauf peut-\^{e}tre dans le cas o\`u $X$ poss\`ede 
4 points singuliers de type $4 A_{1}$, il existe une extension $K/k$ au plus
quadratique telle que $X_{K}$ soit $k$-birationnelle \`a $\P^2_{K}$.
Dans le cas 
$4A_{1}$, la surface est $k$-birationnelle \`a une surface de del Pezzo de degr\'e 8,
c'est-\`a-dire \`a une vari\'et\'e $R_{K/k}(C)$ pour $K/k$ extension quadratique s\'eparable et $C$ conique lisse sur $K$.
\end{proof}

 \begin{rmk}
 Pour $k$ $p$-adique, on peut \'eliminer le cas o\`u $X$ est  $k$-birationnelle \`a $Y=R_{K/k}(C)$,
 avec $C/K$ conique lisse sur un corps extension quadratique de $k$.
  Comme $k$ est un corps $p$-adique, il existe une extension quadratique $L/k$ de corps
  telle que $K \otimes L / K $ soit une extension quadratique de corps $KL/K$.
On a $R_{K/k}(C) (L) = C(K \otimes _{k} L) = C(KL)$.  Mais toute conique sur
un corps $p$-adique acquiert un point rationnel dans toute extension  de corps quadratique.
Donc  $C(KL) \neq \emptyset$ et $R_{K/k}(C)$ poss\`ede un point dans $L$, 
donc est $L$-birationnel \`a $\P^2_{L}$,
et les points $L$-rationnels  sont denses sur $X_{L}$.
\end{rmk}

  \begin{rmk}
 Il y a une certaine analogie entre la  proposition \ref{coraytsfasman} et la  premi\`ere d\'emonstration de  \cite[Thm. 2.1]{CV21},
 qui repose sur une discussion des d\'eg\'en\'erescences possibles d'une surface de del Pezzo de degr\'e 4.
 \end{rmk}

\subsection{G\'eom\'etrie des vari\'et\'es d'espaces lin\'eaires}

\begin{prop}\label{generalintegre}  
Soit $k$ un corps, ${\rm car}(k) =0$.
Soit  $X \subset \P^n_{k}$ une intersection compl\`ete
lisse de deux quadriques. Soit 
$F_{r}(X)$  la vari\'et\'e des espaces lin\'eaires de dimension $r$
contenus dans $X$. 

(a) Si l'on a $n\leq 2r+1$, alors $F_{r}(X)$ est vide.
Si l'on a $n \geq 2r+2$, alors $F_{r}(X)$ est non vide,
 projectif et  lisse, de dimension $(r+1)(n-2r-2)$,
 g\'eom\'etriquement connexe si $n>2r+2$.

(b) Pour $n=5$, la vari\'et\'e $F_{1}(X)$ est un espace principal homog\`ene
d'une vari\'et\'e ab\'elienne de dimension 2.

(c) Pour $n=6$, la vari\'et\'e  $F_{1}(X)$ est 
une vari\'et\'e de Fano projective lisse, g\'eom\'etriquement rationnelle, de dimension 4,
dont le groupe de Picard g\'eom\'etrique est libre de rang~8. 

(d) Pour $n=7$, la vari\'et\'e $F_{1}(X)$ est une vari\'et\'e de Fano
projective lisse, g\'eom\'etriquement rationnelle, de dimension 6,
 dont le groupe de Picard g\'eom\'etrique est libre de rang~1.

(e)  Pour $n=7$, la vari\'et\'e $F_{2}(X)$   est un espace principal homog\`ene  
d'une vari\'et\'e ab\'elienne de dimension 3.

(f)  Soit $n \geq 5$. Soit $Z \subset X \times_{k} F_{1}(X)$ la vari\'et\'e d'incidence des couples 
$(x,L)$ form\'ee d'un point de $X$ et d'une droite $L \subset X$ avec $x\in L$.
La projection $Z \to F_{1}(X)$ est un fibr\'e projectif en $\P^1$ localement trivial.
La $k$-vari\'et\'e $Z$ est lisse et  g\'eom\'etriquement connexe, de dimension $2n-7$.
Pour $n \geq 5$,
les fibres g\'en\'erales g\'eom\'etriques de la projection $Z \to X$ sont des intersections compl\`etes
lisses de deux quadriques dans $\P^{n-3}$. Pour $n \geq 6$,
ces fibres sont g\'eom\'etriquement connexes.
\end{prop}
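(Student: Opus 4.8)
Le plan est de traiter s\'epar\'ement chaque \'enonc\'e, les points de g\'eom\'etrie fine (lissit\'e, connexit\'e, groupes de Picard des sch\'emas de Hilbert) \'etant renvoy\'es \`a l'appendice et l'identification des sch\'emas extr\'emaux \`a des tores sous des jacobiennes \`a la th\'eorie classique. \emph{Partie (a).} Pour la vacuit\'e lorsque $n\leq 2r+1$, j'appliquerais la proposition~\ref{amerbrumer} sur $\cl k$: l'existence d'un $\P^r$ dans $\cl X$ \'equivaut \`a ce que la forme $f+tg$ sur $\cl k(t)$ contienne $(r+1)H$, ce qui force son rang \`a \^etre au moins $2r+2$. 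Comme ce rang vaut $n+1\leq 2r+2$, le seul cas limite est $n=2r+1$, o\`u la forme, de rang pair $2r+2$, contiendrait $(r+1)H$ seulement si elle \'etait hyperbolique, donc de discriminant carr\'e; or le crit\`ere de lissit\'e de Reid \cite[Prop.~2.1]{Reid72} assure que $\det(\lambda f+\mu g)$ a toutes ses racines simples, donc n'est pas un carr\'e dans $\cl k(t)$, contradiction. Pour la non-vacuit\'e lorsque $n\geq 2r+2$, j'invoquerais la discussion suivant la proposition~\ref{corpsCi}: le corps $\cl k(t)$ \'etant $C_1$, la forme $f+tg$ s'annule sur un $\P^r_{\cl k(t)}$. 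Pour la dimension, je r\'ealiserais $F_r(X)$ comme lieu des z\'eros (ferm\'e dans $G$, donc projectif) d'une section du fibr\'e $E=(\on{Sym}^2\mc S^\vee)^{\oplus 2}$ sur la grassmannienne $G=G(r+1,n+1)$, o\`u $\mc S$ est le sous-fibr\'e tautologique de rang $r+1$; le compte attendu donne $\dim G - \on{rg} E=(r+1)(n-r)-2\binom{r+2}{2}=(r+1)(n-2r-2)$. La lissit\'e, le fait que cette dimension soit bien atteinte, et la connexit\'e g\'eom\'etrique pour $n>2r+2$ d\'ecoulent de la r\'egularit\'e et de la positivit\'e de cette section, \'etablies dans l'appendice.

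\emph{Parties (b)--(e).} Les cas (b) ($n=5$, $r=1$) et (e) ($n=7$, $r=2$) sont extr\'emaux, de la forme $r=g-1$ avec $g=\lfloor(n-1)/2\rfloor$ (donc $g=2$, resp. $g=3$), et $F_{g-1}(X)$ y est de dimension $g$. J'associerais \`a $X$ la courbe hyperelliptique $C\colon w^2=\det(\lambda f+\mu g)$, de genre $g$, ramifi\'ee aux $n+1$ membres singuliers du pinceau; la th\'eorie classique (Reid, Desale--Ramanan, Donagi) identifie $F_{g-1}(X)$ \`a un espace principal homog\`ene sous $\on{Jac}(C)$, vari\'et\'e ab\'elienne de dimension $g$. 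Dans les cas (c) ($n=6$) et (d) ($n=7$), $F_1(X)$ n'est plus ab\'elienne: le caract\`ere de Fano s'obtient par adjonction sur $G(2,n+1)$, o\`u l'on trouve $-K_{F_1(X)}=(n-5)\,\mc O(1)|_{F_1(X)}$, ample d\`es que $n\geq 6$, tandis que la rationalit\'e g\'eom\'etrique et les rangs des groupes de Picard g\'eom\'etriques ($8$ pour $n=6$, $1$ pour $n=7$) r\'esultent de l'\'etude des sch\'emas de Hilbert men\'ee dans l'appendice, compl\'et\'ee par \cite{HT21}.

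\emph{Partie (f).} C'est la partie que je traiterais compl\`etement. La projection $Z\to F_1(X)$ s'identifie \`a la projectivisation $\P(\mc S|_{F_1(X)})$ du sous-fibr\'e tautologique de rang $2$, donc est un fibr\'e en $\P^1$ localement trivial au sens de Zariski, d'o\`u $\dim Z=\dim F_1(X)+1=2(n-4)+1=2n-7$. Comme $n\geq 5$ \'equivaut \`a $n>2\cdot 1+2$, la vari\'et\'e $F_1(X)$ est lisse et g\'eom\'etriquement connexe par~(a), et il en va de m\^eme de $Z$. Pour les fibres de $Z\to X$, au-dessus d'un point $P\in X$ la fibre param\`etre les droites de $X$ passant par $P$, qui par la proposition~\ref{para3CTSaSD} sont les g\'en\'eratrices du c\^one $X\cap T_P$ sur une intersection de deux quadriques $Y\subset\P^{n-3}$; la fibre est donc isomorphe \`a $Y$. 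Pour $P$ g\'en\'eral, $Y$ est une intersection compl\`ete lisse de deux quadriques dans $\P^{n-3}$ (section tangente g\'en\'erique), de dimension $n-5$; pour $n\geq 6$ cette dimension est positive et $Y$ est g\'eom\'etriquement connexe par connexit\'e des intersections compl\`etes lisses, tandis que pour $n=5$ on trouve $4$ points non connexes, d'o\`u la restriction $n\geq 6$.

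Le point le plus d\'elicat est contenu dans les parties (b), (e) (identification aux tores sous des jacobiennes) et dans les calculs de rang de Picard et de rationalit\'e de (c), (d): ils reposent sur la g\'eom\'etrie fine des sch\'emas de Hilbert, dont l'\'etablissement est pr\'ecis\'ement l'objet de l'appendice de Kuznetsov, et sur la th\'eorie classique des intersections de deux quadriques, que je me contenterais de citer.
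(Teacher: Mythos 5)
Your proposal is correct and follows essentially the same route as the paper: the hard geometric content (smoothness, connectedness, the torsor structures in (b) and (e), the Picard ranks in (c) and (d)) is delegated to the appendix and to the literature, and only (f) is argued in full, exactly as in the paper (the fibre of $Z \to X$ over a point $P$ is identified via Proposition \ref{para3CTSaSD} with an intersection of two quadrics $Y \subset \P^{n-3}$, generic smoothness in characteristic $0$ gives smoothness of the general fibre, and geometric connectedness for $n \geq 6$ follows since $Y$ then has positive dimension, whereas for $n=5$ one gets four points). Two remarks. First, your argument for the emptiness of $F_r(X)$ when $n \leq 2r+1$ --- via Proposition \ref{amerbrumer} over $\cl{k}$, reducing the borderline case $n=2r+1$ to the fact that a hyperbolic form of rank $2r+2$ has square determinant up to sign, contradicting the separability of $\det(\lambda f + \mu g)$ guaranteed by Reid's criterion --- is a legitimate alternative to Lemma \ref{lem:fr} of the appendix; it is in fact the same contradiction (squareness of the determinant versus reducedness of the discriminant) packaged through the Amer--Brumer theorem instead of the linear-algebra argument with $U^{\perp}$. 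Second, a misattribution: the geometric rationality and the Picard ranks in (c) and (d) are \emph{not} established in the appendix, which only treats $F_r(X)$, $G_r(X)$ and $S_r(X)$ as schemes (smoothness, dimension, connectedness); the paper takes (c) from Araujo--Casagrande \cite{AC17}, (d) from Debarre--Manivel \cite{DM98} and Newstead \cite{N75}, and (b), (e) from Wang \cite{XW18}, which moreover works in any characteristic different from $2$. Since this proposition is a compilation of known results, pointing to the correct sources is the substance of the proof, so you should adjust those references; your adjunction computation $-K_{F_1(X)} = (n-5)\,\mc{O}(1)|_{F_1(X)}$ for the Fano property is, however, a correct and welcome supplement.
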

\begin{proof}
 Pour \'etablir ces \'enonc\'es, on peut supposer le corps  $k$  alg\'ebri\-quement clos.

Voici des r\'ef\'erences pour les  divers r\'esultats ci-dessus, et des pr\'ecisions sur
les cas o\`u les  r\'esultats sont disponibles en caract\'eristique positive.

L'\'enonc\'e (a), sous la seule hypoth\`ese ${\rm car}(k)\neq 2$,
est \'etabli par A. Kuznetsov dans l'appendice.
Voir les Lemmes  \ref{lem:fr} et \ref{lem:fr-irred}. 
  Le fait que $F_{r}(X)$ est non vide
pour   $n \geq 2r+2$ vaut pour toute intersection de deux quadriques
(Proposition \ref{corpsCi}).

 Xiaoheng Wang \cite{XW18} \'etudie  les propri\'et\'es des vari\'et\'es d'espaces lin\'eaires de dimension maximale contenus dans une intersection compl\`ete  lisse  $X$ de deux quadriques dans $\P^n_{k}$, $n \geq 3$
et tout corps  $k$ avec ${\rm car.}(k) \neq 2$. Pour $X$ de dimension impaire sur un tel corps,
il montre que cette vari\'et\'e est un espace principal homog\`ene d'une vari\'et\'e ab\'elienne.  
Ceci vaut en particulier pour $F_{1}(X)$ et $n=5$  et pour $F_{2}(X)$ et $n=7$,
c'est-\`a-dire les \'enonc\'es (b) et  (e).
 
 En caract\'eristique z\'ero,  Araujo-Casagrande  \cite{AC17}  \'etudient la vari\'et\'e  des espaces lin\'eaires de dimension $m-1$
contenus dans une intersection compl\`ete lisse de deux quadriques  $X \subset \P^{2m+2}$
pour $m \geq 2$. C'est une vari\'et\'e projective, lisse, g\'eom\'etriquement connexe  
 de dimension $2m$. C'est une vari\'et\'e de Fano qui est g\'eom\'etriquement rationnelle.
 Pour $m=1$, il s'agit des surfaces de del Pezzo de degr\'e 4.
Le cas qui nous int\'eresse ici est $m=2$, c'est-\`a-dire la vari\'et\'e $ F_{1}(X)$ pour $X \subset \P^6$.
Le groupe de Picard g\'eom\'etrique dans ce cas  est $\Z^8$. Ceci donne l'\'enonc\'e (c).
 
 En caract\'eristique z\'ero,  la vari\'et\'e $F_{1}(X)$ des $\P^1$ dans $X \subset \P^7$ 
est une vari\'et\'e projective, lisse, g\'eom\'etri\-quement connexe,
de Fano \cite[Rem. 3.2, Rem. 3.6.1]{DM98},
 g\'eom\'etri\-quement rationnelle  \cite{N75} \cite[Rem. 7.3]{DM98},
 et le groupe de Picard g\'eom\'e\-trique est $\Z$ \cite[Cor. 3.5]{DM98}.
 Ceci donne l'\'enonc\'e (d).

\'Etablissons  le point (f). 
L'espace total $Z$ est lisse de dimension $2n-7$.
Pour $n\geq 5$, par tout point de $X$ il passe une droite,
le morphisme propre
  $q: Z \to X$ est donc dominant, la fibre g\'en\'erique de $q$ est
  de dimension $n-5$.
  Comme le corps  $k$ est de caract\'eristique z\'ero, la fibre g\'en\'erique de $q$  est 
  lisse. Pour tout point de $X$, d'apr\`es  la proposition \ref{para3CTSaSD},
  la fibre est une intersection de deux quadriques dans $\P^{n-3} $.
  Ainsi la fibre g\'en\'erique de  $Z \to X$ est une intersection 
  compl\`ete lisse de deux quadriques dans $\P^{n-3} $, elle est donc g\'eom\'etriquement
  int\`egre si l'on a $n \geq 6$. Il en est donc de m\^eme des fibres en tout point
  sch\'ematique d'un ouvert de Zariski non vide de $X$. 
 \end{proof}
    
 \begin{rmk}  Sur un corps  $k$ avec ${\rm car}(k)=0$,
Debarre-Manivel \cite{DM98} \'etudient les vari\'et\'es des sous-espaces lin\'eaires des intersections compl\`etes
{\it g\'en\'erales}. Certains de leurs r\'esultats  valent en caract\'eristique positive.
\end{rmk}

  Soit $X \subset \P^n_{k}$ une intersection compl\`ete  lisse 
  de deux quadriques donn\'ee par un syst\`eme  $f=g=0$. 
  Soit $G_{r}(X) \subset {\rm{Gr}}(r, n) \times \P^1_{k}$ la sous-vari\'et\'e 
  form\'ee des couples $(L, m)$ avec $L\subset \P^n$ espace lin\'eaire de dimension $r$ contenu  dans la quadrique 
 $\lambda f + \mu g=0$, o\`u $m=(\lambda,\mu) \in \P^1_{k}$.
 Soit $S_{r}(X)$ le sch\'ema de Hilbert  des quadriques  de dimension $r-1$ contenues dans $X$.

 \medskip
   
   La proposition suivante rassemble des r\'esultats \'etablis par A. Kuznetsov
   dans l'appendice.
    
  \begin{prop}\label{kuznestov}
  Soit $k$ un corps, ${\rm car}(k) \neq 2$.
   Supposons $2r+1 \leq n$.
  Avec les notations ci-desssus :
  
(1)  La vari\'et\'e $G_{r}(X)$ est lisse et g\'eom\'etriquement connexe, 
 et le morphisme $G_{r}(X) \to \P^1_{k}$ dominant.

 (2)   La vari\'et\'e $S_{r}(X)$ des quadriques de dimension $r-1$
  contenues dans $X$ est lisse et 
   g\'eom\'etriquement connexe. 
   La sous-vari\'et\'e des quadriques
non lisses est un ferm\'e propre de $S_{r}(X)$.

  (3)      \`A un espace lin\'eaire $L\simeq \P^r_{k}$ contenu dans une quadrique du pinceau et non contenu dans $X$, on associe 
  $X \cap L$. Inversement, \'etant donn\'ee une quadrique (g\'en\'eralis\'ee)  de dimension   $r-1$ contenue dans $X$,
 engendrant un espace lin\'eaire $L$  de dimension $r$ non contenu dans $X$, on lui associe le couple form\'e 
de l'espace lin\'eaire $L$
 et du param\`etre $(\lambda,\mu)$ de l'unique  quadrique du pinceau $\lambda f + \mu g$ contenant~$L$.
 Ceci d\'efinit des applications rationnelles birationnelles inverses l'une de l'autre entre $G_{r}(X)$ et $S_{r}(X)$.
 
 (4)  Si l'on a $n=2r+1$, et donc $F_{r}(X)=\emptyset$, 
  alors les   $k$-vari\'et\'es  $G_{r}(X)$ et $S_{r}(X)$ sont isomorphes.
  \end{prop}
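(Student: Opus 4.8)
L'approche que je suivrais repose sur l'interpr\'etation de $G_r(X)$ comme sch\'ema de Fano relatif. Comme la lissit\'e, la connexit\'e g\'eom\'etrique et les dimensions sont des \'enonc\'es g\'eom\'etriques, on se ram\`ene au cas $k=\overline{k}$. Le pinceau n'ayant, par la proposition de Reid rappel\'ee ci-dessus, que $n+1$ membres singuliers, chacun un c\^one quadratique de corang $1$ \`a sommets distincts, on introduit la quadrique universelle $\mathcal{Q}\subset\P^n_k\times\P^1_k$ d'\'equation $\lambda f+\mu g=0$. C'est l'\'eclat\'e de $\P^n_k$ le long de $X=\{f=g=0\}$, donc une vari\'et\'e lisse, et $G_r(X)$ s'identifie au sch\'ema de Fano relatif des $r$-plans contenus dans les fibres de $\mathcal{Q}\to\P^1_k$.

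Pour \'etablir (1), je commencerais par le lieu ouvert $U\subset\P^1_k$ des quadriques lisses. Au-dessus de $U$, la fibre est le sch\'ema de Fano des $r$-plans d'une quadrique lisse de dimension $n-1$, lisse de dimension $(r+1)(n-r)-\binom{r+2}{2}$ et non vide d\`es que $2r+1\leq n$ ; d'o\`u la dominance de $G_r(X)\to\P^1_k$ (qu'on peut aussi lire sur la fibre g\'en\'erique g\'eom\'etrique via la proposition \ref{corpsCi}). Resterait le plus d\'elicat : la lissit\'e au-dessus des $n+1$ points sp\'eciaux. Pour un couple $(L,m)$ avec $L$ contenu dans un c\^one de sommet $v$, je distinguerais selon que $v\in L$ ou non, et calculerais l'espace tangent \`a $G_r(X)$ en $(L,m)$, pour v\'erifier qu'il reste de la dimension attendue. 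La connexit\'e g\'eom\'etrique se d\'eduirait, pour $n>2r+1$, de la connexit\'e des fibres (la vari\'et\'e des $r$-plans d'une quadrique lisse est connexe d\`es que $n+1>2(r+1)$), jointe \`a la propret\'e et \`a la connexit\'e de $\P^1_k$ ; pour $n=2r+1$, o\`u chaque fibre lisse poss\`ede deux familles de $r$-plans, la connexit\'e reviendrait \`a celle du rev\^etement double de $\P^1_k$ param\'etrant ces familles, ramifi\'e aux $n+1$ points sp\'eciaux, lequel est connexe.

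Le point (3) est de nature explicite et je le traiterais directement. \`A $(L,m)\in G_r(X)$ avec $L\not\subset X$, la relation $\lambda f+\mu g=0$ rend $f|_L$ et $g|_L$ proportionnelles sur $L\cong\P^r_k$, de sorte que $X\cap L=\{f|_L=0\}$ est une quadrique de dimension $r-1$ contenue dans $X$. R\'eciproquement, \`a une quadrique $W\subset X$ de dimension $r-1$ engendrant un espace lin\'eaire $L=\langle W\rangle$ de dimension $r$ non contenu dans $X$, on associe le couple $(L,m)$, o\`u $m$ est l'unique param\`etre tel que $L\subset\{\lambda f+\mu g=0\}$ : cela tient \`a ce qu'une forme quadratique s'annulant sur une quadrique $\{q=0\}$ engendrant $\P^r$ est multiple de $q$, ce qui vaut pour $q$ assez g\'en\'erale. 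Ces deux applications rationnelles, d\'efinies sur des ouverts denses, sont visiblement inverses l'une de l'autre. Le point (4) en r\'esulterait : pour $n=2r+1$, la proposition \ref{generalintegre}(a) donne $F_r(X)=\emptyset$, donc la condition $L\not\subset X$ est automatique et $(L,m)\mapsto X\cap L$ est un morphisme partout d\'efini ; on v\'erifierait que l'application r\'eciproque l'est aussi (toute quadrique de dimension $r-1$ de $X$ engendre un vrai $r$-plan, sans quoi $X$ contiendrait un $r$-plan), d'o\`u l'isomorphisme.

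Pour (2), la lissit\'e et la connexit\'e de $S_r(X)$ d\'ecouleraient de (1) et (4) dans le cas $n=2r+1$ ; pour $n>2r+1$, j'\'etablirais la lissit\'e du sch\'ema de Hilbert en un point $[W]$ par l'annulation $H^1(W,N_{W/X})=0$, et l'irr\'eductibilit\'e via l'identification birationnelle (3) avec le $G_r(X)$ connexe. Le membre g\'en\'erique $X\cap L$ \'etant une quadrique lisse (la forme $f|_L$ g\'en\'erique est non d\'eg\'en\'er\'ee), le lieu des quadriques non lisses serait un ferm\'e propre. Le principal obstacle est clairement la lissit\'e de $G_r(X)$ et de $S_r(X)$ le long des membres d\'eg\'en\'er\'es \textemdash\ les sommets du pinceau d'une part, les quadriques singuli\`eres $X\cap L$ d'autre part \textemdash, qui exige les calculs d'espaces tangents et d'obstructions mentionn\'es, ainsi que l'argument de monodromie assurant la connexit\'e dans le cas extr\'emal $n=2r+1$.
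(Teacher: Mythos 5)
Votre plan pour (1), (3) et (4) suit globalement la d\'emarche de l'appendice (r\'eduction \`a $\bar k$, \'etude des fibres de $G_r(X)\to\P^1$, factorisation de Stein et rev\^etement double pour $n=2r+1$, correspondance birationnelle explicite), mais les deux points r\'eellement difficiles restent \`a l'\'etat d'intentions. D'abord, la lissit\'e de $G_r(X)$ au-dessus des $n+1$ quadriques d\'eg\'en\'er\'ees : vous annoncez un calcul d'espace tangent selon que le sommet $v$ appartient ou non \`a $L$, sans le faire. Le texte la d\'eduit de la lissit\'e de la quadrique relative totale $\mathcal{Q}$ (\'eclat\'e de $\P(V)$ le long de $X$) et de la surjectivit\'e de $T_P\P^1\to S^2K_P^\vee$ via \cite[Prop.~2.1]{Ku11}, puis, pour $n=2r+1$, lissifie la situation en rempla\c{c}ant $V\otimes\cO_{\P^1}$ par le fibr\'e $\bigoplus\cO_C(x_i)$ muni d'une forme partout non d\'eg\'en\'er\'ee (suivant \cite[Prop.~2.7]{FK18}) : c'est cela qui donne \`a la fois la lissit\'e de $G_r(X)\to C$ et la connexit\'e de ses fibres, et non un simple argument de monodromie.

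Ensuite et surtout, pour (2) votre proposition d'\'etablir $H^1(W,N_{W/X})=0$ en tout point $[W]$ de $S_r(X)$ est pr\'ecis\'ement le calcul que l'appendice est con\c{c}u pour \'eviter : les quadriques $W$ peuvent \^etre singuli\`eres, voire non r\'eduites, et cette annulation n'a rien d'\'evident. L'id\'ee clef du texte est que $G_r(X)$ et $S_r(X)$ sont les deux r\'esolutions de Springer $\tD_{1,\cE}$ et $\tD_{1,\cF}$ du m\^eme lieu de d\'eg\'en\'erescence $D_1\subset\Gr(r+1,V)$ du morphisme $\cO\oplus\cO\to S^2\cU^\vee$ ; la condition de r\'egularit\'e qui gouverne la lissit\'e de l'une \'equivaut, hors de $D_0=F_r(X)$, \`a celle qui gouverne l'autre (Remarque~\ref{rem:regularity}), et le cas des points de $F_r(X)$ se traite par le Lemme~\ref{lem:fr}. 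La lissit\'e de $S_r(X)$ se transf\`ere ainsi de celle de $G_r(X)$ sans aucune d\'eformation de quadriques singuli\`eres. Deux gaps secondaires : la connexit\'e de $S_r(X)$ ne d\'ecoule pas de la seule birationalit\'e avec $G_r(X)$ connexe (il faut savoir que le lieu o\`u l'application birationnelle est d\'efinie est dense dans \emph{chaque} composante, ce que le texte obtient par le calcul de dimension $\dim p_\cE^{-1}(D_0)<\dim G_r(X)$ joint \`a l'\'equidimensionnalit\'e issue de la lissit\'e de dimension attendue) ; et la densit\'e des quadriques lisses dans $S_r(X)$ ne peut pas \^etre affirm\'ee par g\'en\'ericit\'e en caract\'eristique $p$ : le texte doit la d\'emontrer via la fibration $L_r(Q)\times_Q X\to F_r(Q)$ et le Lemme~\ref{lem:gen-sm}, dont la preuve (r\'ecurrence par rev\^etements doubles) est sp\'ecifique \`a la caract\'eristique diff\'erente de $2$.
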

  \begin{proof}
 Pour l'\'enonc\'e (1), voir la  proposition \ref{prop:gr}.
Pour les \'enonc\'es (2), (3), (4)  voir  la proposition \ref{prop:sr}.
   \end{proof}

 \begin{prop}\label{coniqueplan}
 Soit $k$ un corps, ${\rm car}(k) =0$ et $n \geq 4$.  Soit $X \subset \P^n_{k}$,   
une intersection compl\`ete lisse de deux quadriques d\'efinie par $f=g=0$.
Il  y a \'equivalence entre :

(a) Il existe une conique $C \subset X$.

(b) Il existe une forme  $\lambda f + \mu g=0 $ dans le pinceau qui s'annule sur un
  plan $L \simeq\P^2_{k}$.
  
  Supposons de plus $n\geq 5$.
  Si   le corps $k$ est fertile, alors ces propri\'et\'es sont \'equivalentes \`a
  chacune des propri\'et\'es :
  
  (c) Il existe une forme  $\lambda f + \mu g=0 $ non d\'eg\'en\'er\'ee  dans le pinceau qui s'annule sur un
  plan $L \simeq\P^2_{k}$, c'est-\`a-dire qui contient $3H$.
  
  (d) Il existe une conique lisse $C \subset X$.
  
  (e) Il existe une forme $\lambda f + \mu g=0 $ non d\'eg\'en\'er\'ee  
  dans le pinceau qui s'annule sur un
  plan $L \simeq\P^2_{k}$ non contenu dans $X$,
  et telle que $X \cap L$ soit une conique lisse.
\end{prop}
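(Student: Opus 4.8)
The plan is to prove the unconditional equivalence (a)$\Leftrightarrow$(b) by restricting the two quadratic forms to the plane spanned by a conic, and then, under the extra hypotheses $n\geq 5$ and $k$ fertile, to deduce the strongest assertion (e) from (a) by a density argument, the remaining implications being formal.

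For (a)$\Rightarrow$(b): a conic $C\subset X$ is contained in a plane $L\simeq\P^2_k$ where it is cut out by a single quadratic form $q$. Since $C\subset X$, the restrictions $f|_L$ and $g|_L$ both vanish on $C$, hence are divisible by $q$; being themselves quadratic, $f|_L=c\,q$ and $g|_L=c'\,q$ for scalars $c,c'$. If $(c,c')\neq(0,0)$ the pencil member $c'f-cg$ vanishes on all of $L$; if $c=c'=0$ then $L\subset X$ and every member of the pencil vanishes on $L$. In both cases (b) holds. For the converse, if some $\lambda f+\mu g$ vanishes on a plane $L\simeq\P^2_k$, then $f|_L$ and $g|_L$ are proportional, so either $L\subset X$, whence $L$ contains conics lying in $X$, or $X\cap L$ is defined in $L$ by one nonzero quadratic form and is thus a conic contained in $X$. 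This handles all $n\geq 4$ and uses no fertility.

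Now assume $n\geq 5$ and $k$ fertile. The implications (e)$\Rightarrow$(c)$\Rightarrow$(b) and (e)$\Rightarrow$(d)$\Rightarrow$(a) are immediate, and we have just shown (a)$\Leftrightarrow$(b); it therefore suffices to prove (a)$\Rightarrow$(e). Assuming (a), hence (b), the variety $G_2(X)$ of planes contained in a member of the pencil has a $k$-point. As $2\cdot 2+1\leq n$, Proposition~\ref{kuznestov} applies: $G_2(X)$ is smooth and geometrically connected, hence geometrically integral; the morphism $G_2(X)\to\P^1_k$ is dominant, so $\dim G_2(X)\geq 1$; and $G_2(X)$ is birational, via the correspondence $(L,m)\mapsto X\cap L$, to the Hilbert scheme $S_2(X)$ of conics in $X$, whose non-smooth members form a proper closed subset.

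The key step is then to produce a dense open $W\subset G_2(X)$ of pairs $(L,m)$ with $L\not\subset X$ (the locus where the birational map to $S_2(X)$ is defined), with $m$ a non-singular member of the pencil (the preimage of the complement of the finite set of singular parameters, finite because $X$ smooth forces $\det(\lambda f+\mu g)$ to be separable), and with $X\cap L$ a smooth conic (the pullback of the dense open smooth-conic locus of $S_2(X)$). Each condition is dense open in the integral variety $G_2(X)$, so $W$ is dense open and non-empty. Since $k$ is fertile and $G_2(X)$ is smooth integral of dimension $\geq 1$ with a $k$-point, $G_2(X)(k)$ is Zariski dense and meets $W$; a $k$-point of $W$ is exactly the data of (e). The point requiring most care is the third condition: one must transport the openness and genericity of smoothness from $S_2(X)$ to $G_2(X)$ through Proposition~\ref{kuznestov}(2)--(3); the rest is a direct application of fertility to the geometry recorded in Proposition~\ref{kuznestov}.
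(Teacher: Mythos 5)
Your proposal is correct and follows essentially the same route as the paper: the equivalence (a)$\Leftrightarrow$(b) by elementary restriction of the pencil to the plane of the conic, and the passage to (c), (d), (e) via the geometry of $G_2(X)$ and $S_2(X)$ from Proposition~\ref{kuznestov} together with Zariski density of rational points on these integral varieties over a fertile field. The only difference is organizational — you prove (a)$\Rightarrow$(e) directly by intersecting the three dense open conditions in $G_2(X)$, whereas the paper treats (c), (d), (e) separately — but the underlying argument is identical.
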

\begin{proof} Sous l'hypoth\`ese (a), soit $L \subset \P^n_{k}$
 le plan de la conique, et $P \in L(k)$ un point non situ\'e sur la conique.
 Il existe une forme $\lambda f + \mu g$ dans le pinceau qui s'annule
 en $P$. Elle s'annule alors sur le plan $L$.
 Sous l'hypoth\`ese (b), soit on a $L \subset X$,
 et alors on a clairement des coniques lisses contenues dans $X$,
  soit toute quadrique du pinceau diff\'erente de
  $\lambda f + \mu g=0$ d\'ecoupe sur le  plan $L$  
 une conique contenue dans $X$. 
 
 On utilise maintenant la proposition \ref{kuznestov} dans le cas $r=2$.

 Sous l'hypoth\`ese (b), il existe un $k$-point sur 
$G_{2}(X)$, qui est une $k$-vari\'et\'e lisse g\'eom\'etriquement int\`egre. 
 Si $k$ est fertile,
les $k$-points sont Zariski denses sur $G_{2}(X)$. Le $k$-morphisme
$G_{2}(X) \to \P^1_{k}$ est dominant, il existe donc $(\lambda,\mu) \in \P^1(k)$
tel que $\lambda f + \mu g$ soit de rang maximal et s'annule sur un plan $L$.
Ceci \'etablit (c). 

Sous l'hypoth\`ese (a), comme la vari\'et\'e $S_{2}(X)$ est lisse et g\'eom\'etriquement int\`egre,
et que le lieu des coniques singuli\`eres est un ferm\'e propre de $S_{2}(X)$, si $k$ est
fertile, alors il existe une conique lisse dans $X$.

Pour obtenir (e), on utilise le fait que $G_{2}(X)$ et $S_{2}(X)$ sont
g\'eom\'etriquement int\`egres et
$k$-birationnelles entre elles,  et que le lieu des coniques non lisses
 est un ferm\'e propre de $S_{2}(X)$.
\end{proof}

 \begin{rmk}
 Pour $k$ de caract\'eristique z\'ero quelconque, les hypoth\`eses (a) ou (b)
 impliquent l'existence d'une extension finie de corps $K/k$ de degr\'e impair
 sur laquelle les \'enonc\'es (c), (d), (e)  valent. Ceci r\'esulte du cas $p=2$
 de l'\'enonc\'e g\'en\'eral  rappel\'e dans l'introduction :
 le corps fixe d'un pro-$p$-Sylow 
 du groupe de Galois absolu d'un corps  parfait $k$   est  un corps fertile.
 \end{rmk}

  \subsection{Principe local-global :  quelques r\'esultats connus}

  \medskip

  \begin{thm}\label{Hasse} (Hasse 1924 \cite{H24})
  Soient $k$ un corps de nombres, $1\leq n \leq m$ des entiers 
   et $\phi$ et $\psi$ deux formes quadratiques non d\'eg\'en\'er\'ees de rangs respectifs $n$ et $m$.
   Si sur tout compl\'et\'e $k_{v}$ de $k$ la forme $\phi$ est une sous-forme de $\psi$,
   alors sur $k$ c'est une sous-forme de $\psi$.
  \end{thm}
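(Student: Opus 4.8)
Le plan est de ramener l'\'enonc\'e au th\'eor\`eme de Hasse-Minkowski sur l'isotropie des formes quadratiques sur un corps de nombres, tout le reste \'etant une manipulation purement alg\'ebrique valable sur un corps quelconque de caract\'eristique diff\'erente de 2. Rappelons d'abord que, pour $a \in k^*$ et une forme non d\'eg\'en\'er\'ee $\psi$, la forme $\psi$ repr\'esente $a$ si et seulement si la forme $\psi \perp \langle -a \rangle$ est isotrope. Le th\'eor\`eme de Hasse-Minkowski affirme qu'une forme quadratique non d\'eg\'en\'er\'ee sur $k$ est isotrope d\`es qu'elle l'est sur tous les compl\'et\'es $k_v$. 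On en d\'eduit le principe local-global pour la repr\'esentation d'un scalaire : $\psi$ repr\'esente $a$ sur $k$ si et seulement si $\psi$ repr\'esente $a$ sur chaque $k_v$ (appliquer Hasse-Minkowski \`a la forme $\psi \perp \langle -a \rangle$, de rang $m+1$).

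Je proc\'ederais ensuite par r\'ecurrence sur le rang $n$ de $\phi$. Le cas $n=1$ r\'esulte imm\'ediatement de ce qui pr\'ec\`ede, car pour $\phi = \langle a \rangle$ \^etre sous-forme de $\psi$ \'equivaut \`a repr\'esenter le scalaire $a$. Pour l'\'etape de r\'ecurrence, on \'ecrit $\phi \simeq \langle a \rangle \perp \phi_1$ avec $\phi_1$ non d\'eg\'en\'er\'ee de rang $n-1$. Par transitivit\'e de la relation de sous-forme, $\langle a \rangle$ est sous-forme de $\psi$ sur chaque $k_v$, donc $\psi$ repr\'esente $a$ localement partout, donc globalement par le cas $n=1$. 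On dispose alors sur $k$ d'une d\'ecomposition $\psi \simeq \langle a \rangle \perp \psi_1$ avec $\psi_1$ non d\'eg\'en\'er\'ee de rang $m-1$.

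Il resterait \`a montrer que $\phi_1$ est sous-forme de $\psi_1$ sur $k$, ce qui par hypoth\`ese de r\'ecurrence se ram\`ene au fait que $\phi_1$ est sous-forme de $\psi_1$ sur chaque $k_v$. Or, sur $k_v$, on a $\langle a \rangle \perp \phi_1 = \phi \subset \psi = \langle a \rangle \perp \psi_1$, c'est-\`a-dire $\langle a \rangle \perp \psi_1 \simeq \langle a \rangle \perp \phi_1 \perp \eta_v$ pour une certaine forme $\eta_v$. Le th\'eor\`eme de simplification de Witt (valable sur tout corps de caract\'eristique diff\'erente de 2, en particulier sur chaque $k_v$) fournit $\psi_1 \simeq \phi_1 \perp \eta_v$, donc $\phi_1 \subset \psi_1$ sur $k_v$. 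La r\'ecurrence s'applique et donne $\psi_1 \simeq \phi_1 \perp \eta$ sur $k$, d'o\`u $\psi \simeq \langle a \rangle \perp \phi_1 \perp \eta \simeq \phi \perp \eta$, ce qui conclut.

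Le point d\'elicat, et le seul v\'eritable ingr\'edient arithm\'etique, est le th\'eor\`eme de Hasse-Minkowski ; tout le reste (simplification de Witt, et le passage de la repr\'esentation d'un scalaire \`a celle de la forme enti\`ere par r\'ecurrence sur le rang) est formel. On notera d'ailleurs que le cas $\phi$ de rang 1 contient d\'ej\`a l'\'essentiel de la substance, les rangs sup\'erieurs s'en d\'eduisant m-\/- sans nouvel input arithm\'etique -\/- par \'epluchage orthogonal successif.
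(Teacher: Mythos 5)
Votre preuve est correcte et suit exactement la m\^{e}me strat\'egie que celle esquiss\'ee dans l'article : ramener au cas $n=1$ (repr\'esentation d'un scalaire, via Hasse--Minkowski appliqu\'e \`a $\psi \perp \langle -a\rangle$), puis traiter le cas g\'en\'eral par r\'ecurrence gr\^{a}ce au th\'eor\`eme de simplification de Witt. Vous ne faites que d\'etailler ce que l'article condense en une phrase, et les d\'etails sont justes.
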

   Les cas classiques sont $n=1$ et $n=m$. Le cas g\'en\'eral
se d\'eduit du cas $n=1$ par  le th\'eor\`eme de simplification de Witt.

  \medskip
  
  L'\'enonc\'e suivant reformule un r\'esultat \'etabli ind\'ependamment
  dans   \cite{CT88} et \cite{Sal89}.
 
     \begin{thm}\label{sal93+ct}
Soit $k$ un corps de nombres. Soit 
 $X \subset \P^4_{k}$ une   intersection lisse de deux quadriques.
 Si $X$ contient une conique lisse $C \subset \P^4_{k}$,
et si $X(\A_{k})^{\Br} \neq \emptyset$, alors $X(k)$ est
non vide et dense dans $X(\A_{k})^{\Br} $.
\end{thm}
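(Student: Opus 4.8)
Le plan est de ramener l'\'enonc\'e \`a la m\'ethode des fibrations pour les surfaces fibr\'ees en coniques au-dessus de $\P^1_{k}$. J'exploiterais d'abord la g\'eom\'etrie : la surface $X$ est une surface de del Pezzo de degr\'e $4$, de sorte que $-K_{X}=\O_{X}(1)$. Pour la conique lisse $C \subset X$, de genre $0$ et de degr\'e $2$, la formule d'adjonction sur $X$ donne $2g(C)-2 = C^2 + C\cdot K_{X} = C^2 - 2$, d'o\`u $C^2=0$. Le syst\`eme lin\'eaire $|C|$ d\'efinit alors un morphisme $\pi : X \to \P^1_{k}$ dont la fibre g\'en\'erique est une conique lisse : c'est une structure de fibration en coniques. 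Sur $\k$, elle poss\`ede exactement $4$ fibres d\'eg\'en\'er\'ees, chacune r\'eunion de deux droites, puisque le nombre de fibres singuli\`eres vaut $8 - K_{X}^2 = 4$; les $8$ droites ainsi obtenues sont les droites de $\cl{X}$ situ\'ees dans ces fibres. La surface $X$ \'etant g\'eom\'etriquement rationnelle, le quotient $\Br(X)/{\rm Im}(\Br(k))$ est un groupe fini de $2$-torsion, que l'on d\'ecrit \`a partir de la position galoisienne de ces droites.

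J'appliquerais ensuite la m\'ethode des fibrations. Partant d'un point ad\'elique $(P_{v}) \in X(\A_{k})^{\Br}$ et d'un ensemble fini $S$ de places de $k$ o\`u l'on souhaite approcher, on consid\`ere les projections $t_{v}=\pi(P_{v}) \in \P^1(k_{v})$. Le but est de produire un param\`etre $t_{0} \in \P^1(k)$, arbitrairement proche des $t_{v}$ pour $v \in S$, tel que la fibre $X_{t_{0}}$ soit une conique lisse poss\'edant un $k_{w}$-point pour toute place $w$ de $k$. Comme une conique lisse sur un corps de nombres v\'erifie le principe de Hasse (Hasse--Minkowski), on aurait alors $X_{t_{0}}(k) \neq \emptyset$. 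La lissit\'e de $X_{t_{0}}$ fournit de plus des $k_{v}$-points proches de $P_{v}$ pour $v \in S$, et l'approximation faible sur la conique lisse $X_{t_{0}}$ (valable d\`es qu'elle poss\`ede un $k$-point) produit un $k$-point de $X_{t_{0}} \subset X$ approchant $(P_{v})$ aux places de $S$. En faisant varier $S$, cela donne la densit\'e de $X(k)$ dans $X(\A_{k})^{\Br}$.

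L'essentiel de la d\'emonstration, et l'obstacle principal, est de garantir la solubilit\'e locale de la fibre $X_{t_{0}}$ en \emph{toute} place, en n'utilisant que l'orthogonalit\'e de $(P_{v})$ au groupe de Brauer. Les classes de $\Br(X)/{\rm Im}(\Br(k))$ se rel\`event en des alg\`ebres de quaternions du type $(a,\ell(t))$ sur $k(t)$, o\`u $\ell$ est form\'ee des facteurs lin\'eaires d\'efinissant les points de d\'eg\'en\'erescence de $\pi$ et o\`u $a$ gouverne le corps sur lequel se d\'ecomposent les deux droites de chaque fibre singuli\`ere. L'hypoth\`ese de Brauer--Manin sur $(P_{v})$ impose sur les symboles locaux des conditions de compatibilit\'e qui, combin\'ees \`a la loi de r\'eciprocit\'e et au th\'eor\`eme de Dirichlet sur les premiers dans une progression arithm\'etique, permettent de choisir $t_{0}$ de sorte que la conique $X_{t_{0}}$ soit partout localement soluble d\`es qu'elle l'est aux places de $S$ et aux places contr\^ol\'ees par les r\'esidus. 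C'est pr\'ecis\'ement ce contr\^ole des r\'esidus aux $4$ fibres d\'eg\'en\'er\'ees qui est men\'e \`a bien, par descente explicite dans \cite{CT88} et par la m\'ethode des fibrations de Salberger dans \cite{Sal89}; le fait que le lieu de d\'eg\'en\'erescence soit fini et tr\`es restreint (au plus $4$ points) est ce qui rend l'argument r\'ealisable.
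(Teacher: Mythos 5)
Votre démonstration suit essentiellement la même voie que celle de l'article : on établit par adjonction que $C^2=0$, on en déduit la structure de fibration en coniques $X\to\P^1_{k}$ avec exactement $4$ fibres géométriques dégénérées, puis on invoque le théorème de \cite{CT88} et \cite{Sal89} pour de telles surfaces fibrées en coniques. L'esquisse supplémentaire de la méthode des fibrations (réciprocité, Dirichlet, contrôle des résidus) est un commentaire correct sur le contenu de ces références, mais le point clé reste, comme dans l'article, la citation de ce résultat connu.
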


\begin{proof}
 Soit $K  \in \Pic(X)$ la classe du faisceau canonique.
C'est l'oppos\'e de la classe d'une section hyperplane.
On a $(K.K)=4$ et le genre arithm\'etique de $C$ est donn\'e par la formule $p_{a}(C)=(C.(C+K))/2 +1$, donc $(C.C)=0$.
Le th\'eor\`eme de Riemann-Roch donne $h^0(C)\geq 2$, et donc
$h^0(C)=2$. Le syst\`eme lin\'eaire associ\'e \`a $C$ d\'efinit alors
un morphisme $X \to \P^1_{k}$ dont les fibres g\'en\'erales sont des
coniques. Un calcul classique montre que sur la cl\^{o}ture alg\'ebrique
il y a exactement 4 fibres g\'eom\'etriques singuli\`eres, chacune form\'ee
d'un couple de droites se recontrant en un point.
Sur un corps de nombres, 
 il a \'et\'e \'etabli par Salberger \cite{Sal88} \cite{Sal89}
et aussi dans \cite{CT88}  que pour une telle surface $X$ fibr\'ee en coniques
avec au plus 4 fibres g\'eom\'etriques non lisses,  l'hypoth\`ese
$X(\A_{k})^{\Br} \neq \emptyset$ implique $X(k) \neq \emptyset$.
Plus pr\'ecis\'ement, $X(k)$ est dense dans $X(\A_{k})^{\Br}$.
\end{proof}

\bigskip
  
  Le th\'eor\`eme suivant a \'et\'e \'etabli par Salberger \cite{Sal93} en 1993.
  Dans son travail \cite{Ha94} sur   la m\'ethode des fibrations, 
  Harari en esquisse une d\'emonstration \cite[Prop. 5.2.6]{Ha94}.
  Pour la commodit\'e du lecteur, je donne ci-dessous les d\'etails
  de l'argument.

    \begin{thm}(Salberger 1993) \label{sal93}
Soit $k$ un corps de nombres. Pour tout entier $n \geq 5$ et
 toute intersection compl\`ete  lisse de deux quadriques $X \subset \P^n_{k}$
contenant une conique $C \subset \P^n_{k}$, le principe de Hasse vaut.
\end{thm}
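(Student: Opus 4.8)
The plan is to prove, by induction on $n\geq 4$, the stronger statement that for a smooth complete intersection of two quadrics $X\subset\P^n_k$ containing a \emph{smooth} conic $C$, the set $X(k)$ is dense in $X(\A_k)^{\Br}$; Salberger's theorem then follows, because for $n\geq 5$ one has $\Br(X)/\mathrm{Im}(\Br(k))=0$, so that $X(\A_k)^{\Br}=X(\A_k)$ and density forces the Hasse principle. (We may assume $C$ smooth: a singular conic either has its node as a rational point of $X$, or provides a line $\ell\subset X$ from which a smooth conic in $X$ is cut out by a suitable $k$-rational plane through $\ell$, the space of such planes having Zariski-dense $k$-points.) The base case $n=4$ is exactly Theorem~\ref{sal93+ct}.

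For the inductive step, let $\Pi\cong\P^2_k$ be the plane spanned by $C$ and choose a pencil $(H_t)_{t\in\P^1_k}$ of hyperplanes of $\P^n_k$ all containing $\Pi$; its base locus $B$ is a $\P^{n-2}_k$ containing $\Pi$, so that $Z:=X\cap B$ is contained in every member $X\cap H_t$ and contains $C$. Sending a point of $X$ outside $Z$ to the unique member of the pencil through it defines a dominant rational map $X\dashrightarrow\P^1_k$; resolving its indeterminacy along $Z$ by a blow-up yields a morphism $p\colon\tilde X\to\P^1_k$ with $\tilde X$ smooth, projective and $k$-birational to $X$. Since $Z$ has codimension one in each fibre $X\cap H_t$, the blow-up does not modify the fibres, and for generic $t$ the fibre is, by generic smoothness in characteristic $0$, a smooth complete intersection of two quadrics in $H_t\cong\P^{n-1}_k$ — hence geometrically integral, of dimension $\geq 2$ — containing the smooth conic $C\subset\Pi\subset H_t$. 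Thus the generic fibre of $p$ is, over $k(\P^1)$, precisely an object of the type treated at the previous level, and the induction hypothesis applies: its rational points are dense in its Brauer–Manin set.

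It then remains to feed this into Harari's fibration method \cite{Ha94}: knowing that the generic fibre of $p\colon\tilde X\to\P^1_k$ has rational points dense in its Brauer–Manin set, one concludes the same for $\tilde X$, and hence for $X$, this property being a birational invariant of smooth projective varieties. I expect the main obstacle to be the verification of the hypotheses of the fibration theorem: one must control the vertical Brauer classes coming from $\Br(k(\P^1))$ and the behaviour of the finitely many degenerate fibres of $p$ — here Harari's formal lemma together with the finiteness of the geometric Brauer group of the fibres are the relevant tools — while keeping track at each level of the induction of the smooth conic $C$ surviving in the general fibre. Once these are in place, combining the base case Theorem~\ref{sal93+ct} with the inductive fibration argument yields density of $X(k)$ in $X(\A_k)^{\Br}$ for all $n\geq 4$, and in particular the Hasse principle for $n\geq 5$.
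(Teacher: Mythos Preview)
Your overall strategy is the same as the paper's: fibre $X$ in hyperplane sections through the plane~$\Pi$ of the conic, apply Harari's fibration method, and descend by induction to the case $n=4$ handled by Theorem~\ref{sal93+ct}. Two points, however, need sharpening.

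First, the sentence ``the induction hypothesis applies [to the generic fibre over $k(\P^1)$]: its rational points are dense in its Brauer--Manin set'' is not what you want. The statement you are inducting on is for varieties over \emph{number fields}, and $k(\P^1)$ is not one; the Brauer--Manin set over $k(\P^1)$ is not even the right object. What Harari's theorem actually gives you (and what the paper does) is: starting from $X(\A_k)^{\Br}\neq\emptyset$, it produces a $k$-rational point $m\in U(k)\subset\P^1(k)$ such that the closed fibre $W_m$, which is a smooth complete intersection of two quadrics in $\P^{n-1}$ over the number field $k$ containing the conic $C$, satisfies $W_m(\A_k)^{\Br}\neq\emptyset$. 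It is to this $W_m$ that the induction hypothesis applies.

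Second, and more substantively, you correctly flag ``the behaviour of the finitely many degenerate fibres'' as the main obstacle, but you do not name the tool that controls them. The hypothesis of Harari's fibration theorem \cite[Thm.~4.2.1]{Ha94} (or the more recent \cite[Thm.~1.3]{HWW21}) is that \emph{every} closed fibre contain a component of multiplicity~$1$ that is geometrically integral over its residue field. Generic smoothness only handles the good fibres. The missing ingredient is a theorem of Zak (see \cite[Prop.~5.2.1]{Ha94}): for $X\subset\P^n$ a smooth complete intersection of dimension $\geq 2$, \emph{every} hyperplane section $X\cap H$ is geometrically integral with at most isolated singularities. This is precisely what guarantees the fibre hypothesis, and without it the fibration argument does not go through. (Your remark that ``the blow-up does not modify the fibres'' is also too quick: one takes the graph closure in $X\times\P^1$, whose fibres are the $X\cap H_t$ by Zak's theorem, and then a resolution $W\to Z$ that is an isomorphism over the good locus $U$.)

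A minor point: your reduction to a smooth conic is overcomplicated. Any singular conic over $k$ has a $k$-rational singular point, which lies on $X$; you are done in that case.
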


\begin{proof}
 Soit $n \geq 5$. Si la conique est g\'eom\'etriquement r\'eductible, alors elle contient un point rationnel.
Supposons donc que $X$ contient une conique lisse $C$. Celle-ci est contenue dans
un   plan $\Pi:= \P^2_{k} \subset \P^n_{k}$ bien d\'efini. Si le plan $\Pi$ est contenu
dans $X$, alors $X(k)\neq \emptyset$. On suppose donc que $\Pi$ n'est pas contenu
dans $X$. L'intersection $\Pi \cap X$ est une courbe quartique dans $\Pi$, contenue dans  $X$,
et contenant la conique $C$.

Un th\'eor\`eme de Zak assure que  pour tout hyperplan $H$ de $\P^n$,
l'intersection compl\`ete  $X \cap H $ n'a qu'un nombre fini de singularit\'es et est
g\'eom\'etriquement int\`egre (cf. \cite[Prop. 5.2.1]{Ha94}).

Pour $n \geq 5$, une version du th\'eor\`eme de Bertini 
assure que l'hyperplan g\'en\'eral $H\simeq \P^{n-1}_{k}$ contenant $\Pi$, qui est param\'etr\'e 
par un espace $\P^{n-3}$, avec donc $n-3 \geq 2$,
d\'ecoupe sur $X $ une vari\'et\'e $H \cap X$  lisse et g\'eom\'etriquement connexe   (contenant la conique $C$).
 
On choisit deux tels  hyperplans, et on consid\`ere l'application rationnelle de $X$ vers $\P^1_{k}$
associ\'ee. C'est un morphisme hors de $X \cap \Pi$ (union dans $\Pi$ de $C$ et d'une conique).
Soit $Z \subset X \times \P^1_{k}$ le graphe de ce morphisme. 
D'apr\`es ce qui pr\'ec\`ede, les fibres  de $\pi: Z \to \P^1_{k}$ sont g\'eom\'etriquement int\`egres, 
donc $Z$ est g\'eom\'etriquement int\`egre.  
La projection $Z \to X$ est un morphisme $k$-birationnel.
 Au-dessus d'un ouvert  non vide $U \subset  \P^1_{k}$, la fibration $Z_{U}\to U$
est projective et lisse, \`a fibres des intersections compl\`etes  de deux quadriques dans $\P^{n-1}_{k}$,
g\'eom\'etriquement int\`egres, contenant $C$.
  
Soit $W$ une $k$-vari\'et\'e projective, lisse, g\'eom\'etriquement int\`egre
\'equip\'ee d'un  $k$-morphisme  $W \to Z$  birationnel 
qui induit un isomorphisme $W_{U} \to Z_{U}$ (possible par r\'esolution des singularit\'es).
Les fibres $W_{m}$ du morphisme compos\'e $W \to Z \to \P^1_{k}$ contiennent toutes une composante
de multiplicit\'e 1 g\'eom\'etriquement int\`egre sur le corps $k(m)$.
 
 Supposons $X(\A_{k}) \neq\emptyset$.
Pour $n \geq 5$, on a $\Br(X)/\Br(k)=0$.
 La m\'ethode des fibrations,   
 sous la forme donn\'ee par Harari
dans  \cite[Thm. 4.2.1]{Ha94}, et 
sous une forme encore plus g\'en\'erale
 dans  
  \cite[Thm. 1.3]{HWW21},
donne l'existence d'un point $m\in U(k)$ tel que la fibre $W_{m}$
satisfasse $\Br(W_{m})/\Br(k)=0$ et  $W_{m}(\A_{k}) \neq \emptyset$.

Par r\'ecurrence sur $n \geq 5$, le principe de Hasse pour $X$
se ram\`ene donc  au th\'eor\`eme \ref{sal93+ct}.
\end{proof}

\medskip

La m\'ethode montre aussi que $X(k)$ est dense dans $X(\A_{k})$,
mais sous l'hypoth\`ese $X(k)\neq \emptyset$, c'est un \'enonc\'e facile
\`a \'etablir directement \cite[Thm. 3.11]{CTSaSD87} pour toute   intersection compl\`ete lisse
de deux quadriques dans $\P^n_{k}$ avec $n \geq 5$.

 \begin{rmk}
 Dans son manuscrit \cite{Sal93}, pour $n \geq 6$
 et $X \subset \P^n_{k}$ une intersection compl\`ete
 de deux quadriques,
 g\'eom\'etriquement int\`egre et non conique,  mais non n\'ecessairement lisse,
sous l'hypoth\`ese que $X$ contient une conique lisse,
 Salberger \'etablit le principe de Hasse pour les
  mod\`eles projectifs et lisses de $X$.
   \end{rmk}

 Terminons cette section par deux \'enonc\'es connus sur les corps locaux, et qui seront
 utilis\'es dans l'article.
 Pour le premier, voir aussi
  \cite[Chap. VI, Cor. 2.5]{Lam73} et
 \cite[Chap. VI, Cor. 2.15]{Lam05}.

  \begin{prop}\label{bienconnu}
  Soit $k$ un corps local, ${\rm car}(k) \neq 2$. Soit 
 $q$ une forme quadratique non d\'eg\'en\'er\'ee en 4 variables.   
Si le d\'eterminant $d$ de $q$ n'est pas un carr\'e dans $k$,
alors  $q$ a un z\'ero non trivial sur $k$.
 \end{prop}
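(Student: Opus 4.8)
The plan is to reduce the statement to the classification of non-degenerate quadratic forms over a local field, together with the explicit shape of the \emph{anisotropic} form in dimension~$4$. First I would recall the two structural facts I need. Over a $p$-adic field the $u$-invariant equals $4$: every non-degenerate form in at least $5$ variables is isotropic, and in dimension $4$ there is, up to isometry, a single anisotropic form, namely the reduced norm form $N_{D}=\ang{1,-a,-b,ab}$ of the unique quaternion division algebra $D=(a,b)$ over $k$. (For $k=\R$ the anisotropic $4$-dimensional forms are $\pm\ang{1,1,1,1}$, and for $k=\C$ there are none, the hypothesis on $d$ being then vacuous.)

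Next I would compute the determinant of the model anisotropic form: $\det N_{D}=1\cdot(-a)\cdot(-b)\cdot(ab)=(ab)^{2}$, a square in $k^{*}/k^{*2}$; likewise $\det(\pm\ang{1,1,1,1})=1$. Since replacing a form by a scalar multiple $\lambda q$, with $\lambda\in k^{*}$, multiplies its determinant by $\lambda^{4}$, any form similar to $N_{D}$ again has square determinant. Hence \emph{every} anisotropic $4$-dimensional form over the local field $k$ has square determinant.

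I would then conclude by contraposition: if $d=\det q$ is not a square, then $q$ can be isometric neither to $N_{D}$ nor, over $\R$, to $\pm\ang{1,1,1,1}$; therefore $q$ is not anisotropic, that is, $q$ has a nontrivial zero over $k$. The point that requires care --- the main obstacle, such as it is --- is the uniqueness of the anisotropic $4$-dimensional form, without which the determinant computation on a single model would not suffice. This uniqueness rests on the fact that over a local field the triple (dimension, discriminant, Hasse--Witt invariant) is a complete system of invariants, which is precisely the classification recalled in Lam's book; note that the discriminant of a $4$-dimensional form coincides with its determinant, since $(-1)^{4\cdot 3/2}=1$. This direct route makes Ch\^atelet's Proposition~\ref{chatelet} unnecessary here: passing to $K=k(\sqrt{d})$ would make the determinant a square over $K$ but would not by itself settle isotropy, the Hasse invariant still intervening, so I would present the classification argument rather than the detour through a quadratic extension.
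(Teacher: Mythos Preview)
Your proof is correct, but it takes a different route from the paper's. You argue by contraposition from the classification of quadratic forms over local fields: the unique anisotropic $4$-dimensional form over a $p$-adic field is the norm form $N_D=\ang{1,-a,-b,ab}$ of the quaternion division algebra, which has square determinant, and similarly for~$\R$; hence a non-square determinant forces isotropy. The paper instead writes $q=\ang{1,-a,-b,abd}$, passes to $K=k(\sqrt{d})$ so that $q_K\cong\ang{1,-a,-b,ab}$, and then uses Proposition~\ref{chatelet} (Ch\^atelet) to reduce to showing $q_K$ is isotropic; this last step is handled by observing that $(a,b)\in\Br(k)[2]$ dies under restriction to $\Br(K)$, since restriction is multiplication by $[K:k]=2$ on $\Q/\Z$. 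So your dismissal of the quadratic-extension route is a little hasty: the Hasse invariant does intervene, but the paper disposes of it via the Brauer group of local fields, not by ignoring it. Your approach is arguably more direct and self-contained (one appeal to the classification theorem in Lam), while the paper's approach ties the statement to Ch\^atelet's proposition, which it has just proved and uses elsewhere.
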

   \begin{proof}
 Soit $q=<1,-a,-b,abd>$ avec $d$ non carr\'e. Soit $K=k(\sqrt{d})$.
  D'apr\`es la proposition \ref{chatelet}, il suffit de montrer que la forme
   $q_{K}=<1,-a,-b,ab>$  est isotrope, c'est-\`a-dire hyperbolique. 
  C'est le cas si et seulement si l'alg\`ebre de quaternions $(a,b) \in \Br(k)$
  a une image triviale dans $\Br(K)$. Mais on sait bien que pour une
  extension   de corps $p$-adiques $K/k$ de degr\'e $n$, la restriction 
  $Br(k) \to \Br(K)$  s'identifie \`a la multiplication par $n$ sur $\Q/\Z$.
  \end{proof}

  \begin{prop}(Mordell)\label{mordell}
 Soit $X \subset \P^n_{\R}$ une intersection compl\`ete  lisse de  deux quadriques $f=g=0$.
 Dans le pinceau  de formes quadratiques $\lambda f+\mu g$ il existe une
 forme non singuli\`ere de signature $0$ ou $1$: il existe une forme non singuli\`ere dans le pinceau
 contenant  $[(n+1)/2] H$.
  \end{prop}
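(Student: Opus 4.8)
La d\'emonstration que je propose repose sur l'\'etude de la variation de la signature dans le pinceau r\'eel. Notant $A$ et $B$ les matrices de Gram de $f$ et $g$, le plan est de consid\'erer le polyn\^ome homog\`ene $D(\lambda,\mu):=\det(\lambda A+\mu B)$, de degr\'e $n+1$ en $(\lambda,\mu)$. Comme $X$ est lisse, la proposition de Reid rappel\'ee ci-dessus assure que $D$ est non nul et s\'eparable, donc poss\`ede exactement $n+1$ racines distinctes dans $\P^1(\C)$. Je rel\`everais ensuite la droite projective r\'eelle $\P^1(\R)$ au cercle $S^1=\{(\lambda,\mu):\lambda^2+\mu^2=1\}$, rev\^etement double via l'application antipodale $\iota(\lambda,\mu)=(-\lambda,-\mu)$. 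Sur l'ouvert $S^1\setminus R$, o\`u $R$ est l'ensemble fini (antipodalement sym\'etrique) des rel\`evements des racines r\'eelles de $D$, la forme $q_{(\lambda,\mu)}:=\lambda f+\mu g$ est non d\'eg\'en\'er\'ee ; notant $(a,b)$ ses indices d'inertie, la loi d'inertie de Sylvester montre que sa signature $s(\lambda,\mu):=a-b$ est localement constante sur $S^1\setminus R$.

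Le premier point cl\'e est de comprendre le saut de $s$ lorsqu'on traverse un point de $R$. Chacune des $n+1$ racines de $D$ \'etant simple, $D$ change de signe en un tel point ; comme $D$ est, \`a un facteur strictement positif pr\`es, le produit des $n+1$ valeurs propres de la matrice sym\'etrique, et comme une seule de ces valeurs propres tend vers $0$ tandis que les autres gardent un signe constant par continuit\'e, c'est exactement cette valeur propre qui change de signe. Ainsi $a$ et $b$ varient de $\pm 1$ de fa\c{c}on oppos\'ee, et $s$ saute de $\pm 2$ en chaque point de $R$.

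Le second point, qui est le c\oe{}ur de la d\'emonstration, est un argument de parit\'e et de connexit\'e. Puisque $q_{\iota(\lambda,\mu)}=-q_{(\lambda,\mu)}$, on a $s\circ\iota=-s$ : la signature prend des valeurs oppos\'ees en deux points antipodaux, donc change de signe sur $S^1$. Par ailleurs $s$ a partout la parit\'e de $a+b=n+1$. Je raisonnerais par l'absurde en supposant qu'aucune forme non d\'eg\'en\'er\'ee du pinceau ne v\'erifie $|s|\le 1$ : si $n+1$ est pair cela signifie $s\neq 0$ partout, et si $n+1$ est impair cela signifie $|s|\ge 3$ partout. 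Dans les deux cas, comme $s$ ne saute que de $\pm 2$, il ne peut passer d'une valeur strictement positive \`a une valeur strictement n\'egative sans prendre une valeur interdite ($0$ dans le cas pair, $\pm 1$ dans le cas impair) ; $s$ garderait donc un signe constant sur tout $S^1\setminus R$, ce qui contredit $s\circ\iota=-s$. Il existe donc un point $(\lambda,\mu)\in S^1\setminus R$ en lequel $|s|\le 1$, et la forme non singuli\`ere correspondante $\lambda f+\mu g$ a pour signature $0$ ou $1$ ; elle contient alors $\min(a,b)=(n+1-|s|)/2=[(n+1)/2]$ plans hyperboliques, comme voulu.

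La principale difficult\'e \`a r\'ediger soigneusement est le lemme de saut de la signature : il faut s'assurer qu'en une racine simple de $D$ le noyau de la matrice est de dimension exactement $1$ et que la valeur propre correspondante traverse $0$ transversalement, de sorte que le saut soit bien $\pm 2$ et non une quantit\'e plus grande. C'est pr\'ecis\'ement la simplicit\'e des racines de $D$, garantie par la lissit\'e de $X$ et la proposition de Reid, qui assure ce comportement et fait fonctionner l'argument de parit\'e.
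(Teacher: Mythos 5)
Votre démonstration suit exactement la même stratégie que celle du texte : parcours du cercle $S^1$, antisymétrie de la signature sous l'application antipodale, et saut de $\pm 2$ au passage de chaque forme singulière (le texte renvoie pour ces deux derniers points à \cite{CTSaSD87} et \cite{HB18}). Votre rédaction explicite en outre correctement pourquoi la simplicité des racines du discriminant garantit que le saut vaut exactement $\pm 2$, ce qui est le seul point délicat ; l'argument est correct.
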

\begin{proof}
     On fait varier $(\lambda,\mu) $ dans le cercle $S^1$ d'\'equation 
     $\lambda^2+\mu^2=1$.
   Pour  $\lambda f+ \mu g$ non singuli\`ere, la signature de  $-\lambda f - \mu g$ 
   dans  $\Z$
   est l'oppos\'ee de la signature de $\lambda f+\mu g$. Par ailleurs au passage
   d'un point $(\lambda_{0}, \mu_{0}) \in \R$ o\`u la forme $\lambda_{0} f+\mu_{0} g$
   est singuli\`ere, la signature varie par addition ou soustraction de $2$.
     Voir \cite[\S10, Proof of Thm. 10.1, (e)  p. 114]{CTSaSD87} et \cite[Lemma 12.1]{HB18}. 
     \end{proof}

\section
{Points quadratiques sur les intersections de deux quadriques sur un corps $p$-adique}\label{para3}
 
 Soient $k$ un corps parfait, $\k$ une cl\^{o}ture alg\'ebrique,  et $\g=\Gal(\k/k)$.
Soit $X$ une courbe projective et lisse de genre 1 sur le corps $k$.
C'est un espace principal homog\`ene de sa jacobienne $J$.
On appelle p\'eriode de $X$ l'exposant de la classe $[X] \in H^1(k,J)$.
 On v\'erifie que cet entier est le g\'en\'erateur positif de l'image de
 l'application degr\'e 
 $\Pic(\cl{X})^\g \to \Z.$
 On appelle indice de $X$ le g\'en\'erateur positif de l'image de
 l'application degr\'e $\Pic(X) \to \Z$.  C'est  le pgcd des degr\'es
 des points ferm\'es sur $X$. Puisque $X$ est une courbe de genre 1,
c'est  aussi le plus petit degr\'e d'un tel 
 point ferm\'e. L'exposant divise l'indice. 
 L'\'enonc\'e suivant  
 est \cite[Thm. 3]{Li68} \cite[Thm. 7]{Li69}. La d\'emonstration utilise
 le th\'eor\`eme de dualit\'e de Tate pour les vari\'et\'es ab\'eliennes
 sur les corps locaux.
 \begin{thm}(Roquette, Lichtenbaum)\label{periode}
 Soit $X$ une courbe de genre 1 sur un corps $p$-adique $k$.
 L'exposant et l'indice de $X$ co\"{\i}ncident.
\end{thm}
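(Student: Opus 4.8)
Mon plan est de montrer $I=P$ en \'etablissant que l'indice $I$ divise la p\'eriode $P$, l'in\'egalit\'e $P\mid I$ \'etant imm\'ediate. Je partirais de la suite exacte fournie par la suite spectrale de Hochschild--Serre pour $\mathbb{G}_m$ sur $X$ : compte tenu du th\'eor\`eme 90 de Hilbert, qui donne $H^1(\g,\k^*)=0$, et de l'identification $H^2(\g,\k^*)=\Br(k)$, elle s'\'ecrit
\[
0 \to \Pic(X) \to \Pic(\cl X)^{\g} \xrightarrow{\ \delta\ } \Br(k).
\]
Par les descriptions rappel\'ees ci-dessus, $P$ est le g\'en\'erateur positif de l'image de $\deg$ sur $\Pic(\cl X)^{\g}$ et $I$ celui de l'image de $\deg$ sur $\Pic(X)=\ker\delta$. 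Je fixerais une classe invariante $L_{0}$ de degr\'e $P$ ; la suite $0 \to J(k) \to \Pic(\cl X)^{\g} \xrightarrow{\deg} P\Z \to 0$, o\`u $J(k)=\Pic^{0}(\cl X)^{\g}$, \'etant scind\'ee, toute classe invariante de degr\'e $P$ s'\'ecrit $L_{0}+D$ avec $D\in J(k)$, et $\delta(L_{0}+D)=\delta(L_{0})+\delta(D)$. Ainsi l'\'egalit\'e $I=P$ se ram\`ene \`a l'appartenance $\delta(L_{0}) \in \delta(J(k))$ dans $\Br(k)$.

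Le second temps consisterait \`a exploiter la dualit\'e locale de Tate pour la vari\'et\'e ab\'elienne $J$. D'abord, j'identifierais la restriction $\delta|_{J(k)}\colon J(k)\to \Br(k)=\Q/\Z$ au caract\`ere $\pm\langle [X],\,-\rangle$ d\'eduit de l'accouplement de Tate $H^1(k,J)\times J(k)\to \Q/\Z$, via la compatibilit\'e du morphisme de bord $\delta$ avec la structure de $X$ comme torseur sous $J$. Comme $[X]$ est d'ordre $P$ et que cet accouplement est parfait, $\langle [X],-\rangle$ serait un caract\`ere d'ordre exactement $P$, de sorte que $\delta(J(k))$ serait l'unique sous-groupe d'ordre $P$ de $\Q/\Z$, \`a savoir $\tfrac1P\Z/\Z$. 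Il resterait alors \`a v\'erifier que $\delta(L_{0})$ appartient \`a ce sous-groupe, c'est-\`a-dire que $P\cdot\delta(L_{0})=0$.

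Ce dernier point est celui o\`u je pr\'evois la principale difficult\'e : montrer que l'obstruction $\delta(L_{0})$ \`a la rationalit\'e d'une classe de diviseurs de degr\'e $P$ est annul\'ee par $P$. J'escompte l'obtenir en combinant la compatibilit\'e de $\delta$ avec l'accouplement de Tate et la sym\'etrie de ce dernier issue de la polarisation principale de la courbe elliptique $J$ ; c'est pr\'ecis\'ement le contenu des th\'eor\`emes de dualit\'e de Roquette et Lichtenbaum \cite{Li68,Li69}, qui reposent sur le th\'eor\`eme de dualit\'e de Tate pour $J$ sur le corps $p$-adique $k$. Une fois ce fait acquis, on aurait $\delta(L_{0})\in \tfrac1P\Z/\Z=\delta(J(k))$, donc une classe invariante de degr\'e $P$, corrig\'ee par un \'el\'ement convenable de $J(k)$, serait repr\'esent\'ee par un v\'eritable fibr\'e en droites sur $X$ ; d'o\`u $I\mid P$, et finalement $I=P$.
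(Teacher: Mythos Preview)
L'article ne d\'emontre pas ce th\'eor\`eme : il se contente de le citer comme \cite[Thm.~3]{Li68} et \cite[Thm.~7]{Li69}, en indiquant que la preuve repose sur la dualit\'e de Tate. Il n'y a donc pas de d\'emonstration propre \`a l'article \`a laquelle comparer votre proposition.

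Votre esquisse suit fid\`element la strat\'egie de Lichtenbaum : la suite exacte issue de Hochschild--Serre, la r\'eduction \`a $\delta(L_0)\in\delta(J(k))$, l'identification de $\delta|_{J(k)}$ \`a l'accouplement de Tate contre $[X]$ (c'est essentiellement \cite[Thm.~1]{Li68}), et l'usage de la dualit\'e de Tate pour en d\'eduire $\delta(J(k))=\tfrac{1}{P}\Z/\Z$ sont tous corrects.

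Le point que vous signalez comme principal obstacle --- montrer $P\cdot\delta(L_0)=0$ --- est effectivement le c{\oe}ur de l'affaire. Votre renvoi aux r\'esultats de \cite{Li68,Li69} \`a cet endroit appelle toutefois une mise en garde : l'\'enonc\'e que vous d\'emontrez est pr\'ecis\'ement \cite[Thm.~3]{Li68}, et il faut donc s'assurer de n'invoquer que les r\'esultats auxiliaires (l'identification explicite de $\delta$ via des cocycles, la sym\'etrie induite par l'autodualit\'e $J\cong\hat J$) et non le th\'eor\`eme lui-m\^eme. L'intuition de sym\'etrie que vous \'evoquez est la bonne, mais elle demande \`a \^etre transform\'ee en un calcul effectif pour que l'argument soit complet. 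En l'\'etat, votre texte est une feuille de route correcte de la preuve de Lichtenbaum plut\^ot qu'une d\'emonstration autonome --- ce qui, du reste, est exactement le statut que l'article lui-m\^eme accorde \`a ce r\'esultat.
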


Cet entier est aussi \'egal \`a l'ordre du noyau (fini) de la restriction $\Br(k)\to \Br(X)$
(Roquette, Lichtenbaum).

 La proposition suivante (\'enonc\'es (a) \`a (d)) est un cas particulier, sans doute classique,
d'un r\'ecent th\'eor\`eme de Xiaoheng Wang \cite{XW18} sur les sous-espaces lin\'eaires
de dimension maximale des intersections compl\`etes lisses de deux quadriques
dans $\P^{2n+1}$.  Une variante ant\'erieure est utilis\'ee 
par Creutz et Viray  dans la d\'emonstration de \cite[Prop. 4.7]{CV21}.

\begin{prop}\label{pseudoclass}
  Soit $k$ un corps de caract\'eristique diff\'erente de 2, et
 soit $X\subset \P^3_{k}$ une  intersection compl\`ete lisse
de deux quadriques $f=g=0$.  C'est une courbe de genre 1.
Notons $J=J_{X}=\Pic^0_{X/k}$. 
On a $X=\Pic^1_{X/k}$.
 Soit $C/k$ la courbe  projective lisse  d'\'equation
$y^2=det(\lambda f+ \mu g )$. C'est une courbe de genre 1.
 On a les propri\'et\'es suivantes :

(a) Les jacobiennes $J_{X}$ et $J_{C}$ sont isomorphes.

(b) On a $C \simeq \Pic^2_{X/k}$.

 (c) Dans $H^1(k,J_{X})  $, on a $[C]=2[X]$ et $2[C]=0$.
 
 (d) Si $C(k) \neq \emptyset$, alors la p\'eriode de $X$ divise 2.
 
 (e) Si $k$ est un corps $p$-adique, et  $C(k) \neq \emptyset$, alors 
  $X$ poss\`ede un point dans une
 extension quadratique de $k$.

 \end{prop}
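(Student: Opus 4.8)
Le plan est de ramener l'assertion \`a une comparaison entre p\'eriode et indice, puis d'invoquer le th\'eor\`eme \ref{periode} de Roquette et Lichtenbaum. L'essentiel du travail arithm\'etique ayant d\'ej\`a \'et\'e accompli dans les \'enonc\'es (a) \`a (d), il ne devrait rester qu'une d\'eduction formelle.

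D'abord, j'invoquerais l'hypoth\`ese $C(k) \neq \emptyset$ pour appliquer l'\'enonc\'e (d) d\'ej\`a \'etabli, qui donne que la p\'eriode de la courbe de genre 1 $X$ divise 2. Ensuite, comme $k$ est un corps $p$-adique, le th\'eor\`eme \ref{periode} affirme que la p\'eriode et l'indice de $X$ co\"{\i}ncident, de sorte que l'indice de $X$ divise lui aussi 2. Enfin, puisque $X$ est une courbe de genre 1, son indice co\"{\i}ncide avec le plus petit degr\'e d'un point ferm\'e de $X$, ainsi qu'on l'a rappel\'e avant le th\'eor\`eme \ref{periode}. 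Il existe donc un point ferm\'e de $X$ de degr\'e 1 ou 2 : dans le premier cas $X(k) \neq \emptyset$, dans le second le corps r\'esiduel est une extension quadratique $L/k$ telle que $X(L) \neq \emptyset$. Dans tous les cas, $X$ poss\`ede un point dans une extension au plus quadratique de $k$, ce qui est l'\'enonc\'e voulu.

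Je ne vois pas d'obstacle s\'erieux : la seule \'etape non formelle est l'appel au th\'eor\`eme \ref{periode}, dont la d\'emonstration repose sur la dualit\'e de Tate pour les vari\'et\'es ab\'eliennes sur les corps locaux, mais que l'on nous autorise \`a utiliser. Il me semble toutefois important de souligner que l'hypoth\`ese $p$-adique est ici essentielle : sur un corps de nombres, p\'eriode et indice peuvent diff\'erer, et le m\^eme raisonnement ne bornerait alors que la p\'eriode, sans contr\^oler l'indice ni donc le degr\'e d'un point ferm\'e.
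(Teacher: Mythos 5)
Votre démonstration de (e) est correcte et suit exactement la même voie que celle de l'article : on déduit de (d) que la période de $X$ divise 2, puis le théorème \ref{periode} (Roquette--Lichtenbaum) identifie période et indice sur un corps $p$-adique, et l'indice d'une courbe de genre 1 étant le plus petit degré d'un point fermé, on conclut. Quant aux points (a) à (d) que vous tenez pour acquis, l'article les traite lui-même par simple renvoi à Wang \cite{XW18}, donc il n'y a pas de lacune.
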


\begin{proof} Pour les points (a) \`a (d), voir Wang 
\cite[ \S 2.2. Theorem 2.25. p. 372.]{XW18}.
 Le point (e) r\'esulte de (d) et du  th\'eor\`eme \ref{periode}.
\end{proof}

 \begin{rmk}  
  On peut montrer qu'il existe un morphisme $X \to C$
 fini \'etale qui est une forme tordue de la multiplication
 par 2 sur $J_{X}$ : voir \cite[p. 361]{XW18}  et
  la r\'ef\'erence dans la d\'emonstration
 de \cite[Prop. 4.7]{CV21}.
 \end{rmk}

 \begin{rmk}
 Sur tout corps, comme $[C]=2[X]$, si $X$ poss\`ede un point quadratique,
  alors $C$ poss\`ede un point rationnel. Ceci peut se voir facilement. S'il y a un point quadratique,
  on prend la droite qui passe par ce point et son conjugu\'e. Puis un autre $k$-point dessus.
  Il y a une forme $\lambda f + \mu g$ qui s'annule l\`a-dessus donc aussi sur la droite.
  Cette forme $\lambda f + \mu g$ est constitu\'ee de 2 hyperboliques, donc son d\'eterminant
  est un carr\'e. On a trouv\'e un point rationnel sur $y^2=det( \lambda f + \mu g)$.
 \end{rmk}

     \begin{prop}\label{intersectquelconqueP3}
     Soient $f(x_{0}, x_{1}, x_{2},x_{3} )$ et $g(x_{1}, x_{2},x_{3})$ deux formes
     quadratiques sur un corps $p$-adique. La $k$-vari\'et\'e $X \subset \P^3_{k}$
     d\'efinie  par $f=g=0$ poss\`ede un point dans une extension quadratique
     de $k$.
     \end{prop}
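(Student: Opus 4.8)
Le plan est de raisonner par l'absurde : on suppose que $X$ ne poss\`ede aucun point de degr\'e $\leq 2$ sur $k$, et l'on cherche \`a se ramener au cas lisse d\'ej\`a trait\'e par la proposition \ref{pseudoclass}. Sous cette hypoth\`ese, le lemme \ref{intersecpasintegre} s'applique ; il assure en particulier que $\det(\lambda f + \mu g)$ est non nul et que toute forme du pinceau sur $k$ est de rang au moins $3$.

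L'observation d\'ecisive est que la forme $g$, ne faisant pas intervenir la variable $x_0$, est de rang au plus $3$ comme forme en quatre variables. Jointe \`a la minoration fournie par le lemme, ceci force $g$ \`a \^etre exactement de rang $3$, donc $\det(g) = 0$. La pr\'esence de cette forme de rang $3$ dans le pinceau exclut, via le point (v) du lemme \ref{intersecpasintegre}, le cas o\`u $\overline{X}$ serait la r\'eunion de quatre droites ; la $k$-vari\'et\'e $X$ est donc une courbe lisse g\'eom\'etriquement int\`egre de genre $1$, \`a laquelle s'applique la proposition \ref{pseudoclass}. De plus, puisque $\det(g)=0$, le point $(\lambda : \mu) = (0 : 1)$ est une racine du quartique $\det(\lambda f + \mu g)$, de sorte que le point de $C$ d'abscisse $(\lambda : \mu) = (0 : 1)$ et d'ordonn\'ee $y = 0$ est un $k$-point de la courbe $C : y^2 = \det(\lambda f + \mu g)$ --- c'est un point de ramification du rev\^etement double $C \to \P^1_k$, rationnel parce que la racine correspondante l'est. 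Ainsi $C(k) \neq \emptyset$, et la proposition \ref{pseudoclass}(e), qui repose sur le th\'eor\`eme de Lichtenbaum \ref{periode}, fournit un point de $X$ dans une extension quadratique de $k$, contredisant l'hypoth\`ese d'absurde.

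Le point v\'eritablement d\'elicat est de voir que la forme particuli\`ere de l'hypoth\`ese --- l'absence de $x_0$ dans $g$ --- se traduit exactement par l'existence d'une forme de rang $3$ dans le pinceau, et que cette forme joue un double r\^ole : elle \'ecarte le cas d\'eg\'en\'er\'e des quatre droites, ce qui ram\`ene au cas lisse, et produit simultan\'ement, par annulation du d\'eterminant, le $k$-point de $C$ dont la proposition \ref{pseudoclass} a besoin. Une fois cette remarque faite, tout d\'ecoule des r\'esultats d\'ej\`a \'etablis.
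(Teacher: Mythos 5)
Votre démonstration est correcte et suit essentiellement la même voie que celle de l'article : réduction au cas lisse via le lemme \ref{intersecpasintegre} (la forme $g$, de rang au plus 3 car sans $x_{0}$, doit être de rang exactement 3, ce qui exclut le cas des quatre droites), puis obtention d'un $k$-point de $C$ au-dessus de la racine $(0:1)$ du déterminant, et enfin application de la proposition \ref{pseudoclass}(e) reposant sur le théorème de Roquette--Lichtenbaum. La présentation par l'absurde et l'identification explicite du point de ramification rationnel de $C$ ne sont que des variantes cosmétiques de l'argument direct du texte.
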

     \begin{proof}
     Comme le rang de la forme quadratique $g$ est au plus 3,
  le lemme \ref{intersecpasintegre},  qui vaut sur tout corps, r\'eduit la d\'emonstration
  au cas o\`u $g$ est de rang exactement 3 et $X$ est une  intersection compl\`ete lisse,
  donc une courbe g\'eom\'etriquement int\`egre.
   Comme $g$ est de rang 3, le polyn\^{o}me s\'eparable $det(\lambda f+ \mu g)$ a un z\'ero sur $k$,
     donc la courbe lisse $C$ d\'efinie par $y^2=det(\lambda f+ \mu g)$ a un point rationnel.
    Comme $k$ est $p$-adique, la proposition \ref{pseudoclass}(e), qui repose sur le th\'eor\`eme \ref{periode},  assure que $X$ poss\`ede un point dans
   une extension au plus quadratique de $k$. 
   \end{proof}
     
     \begin{rmk}
     Voici une variante de la d\'emonstration, dans le cas o\`u $X \subset \P^3_{k}$
     est une intersection compl\`ete lisse.
     Soit $D \subset \P^2_{k}$ la conique lisse d\'efinie par $g(x_{1}, x_{2},x_{3})=0$.
     On a la projection $p: X  \to D$ qui est un morphisme fini de degr\'e 2.
     L'application degr\'e sur les groupes de Picard g\'eom\'etriques induit
    des homomorphismes     $ \Pic(\cl{X})  \to \Z$
   et $ \Pic(\cl{D}) \to \Z$, et l'application $p^* : \Pic(\cl{D}) \to \Pic(\cl{X}) $
   induit la multiplication par $2$ sur $\Z$. 
     Comme $C$ est une conique, la fl\`eche $ \Pic(\cl{D}) \to \Z$ est un isomorphisme.
     Prenant les invariants sous l'action du groupe de Galois $\g={\rm Gal}(\k/k)$,
     on voit que $2$ est dans l'image de    $ \Pic(\cl{X})^\g  \to \Z$.
Ainsi la p\'eriode de $X$ divise 2. Comme $k$ est un corps $p$-adique,
le th\'eor\`eme \ref{periode} donne que l'indice de $X$ divise~$2$, et donc  $X$,
qui est une courbe de genre 1,
poss\`ede un point dans une extension quadratique de $k$.
      \end{rmk}
    
  \begin{rmk}\label{encoreunevariante}\label{variantebis}
    Voici une autre variante de la d\'emonstration, dans le cas o\`u $X \subset \P^3_{k}$
 est une intersection compl\`ete lisse. 
 Soit $C \to \P^1_{k}$ la projection. Soit $t=\mu/\lambda$
 et $\A^1_{k}=\Spec(k[t])$.
La quadrique g\'en\'erique $f+tg$ sur le corps $k(\P^1)=k(t)$
par passage \`a $k(C)$ a un d\'eterminant un carr\'e, et d\'etermine
un \'el\'ement $\alpha \in \Br(k(C))[2]$ qui appartient \`a $\Br(C)[2]$.

S'il existe un point $P$ de $C(k)$ tel que $\alpha(P)=0 \in \Br(k)$,
 comme $k$ est $p$-adique, 
on peut le supposer situ\'e au-dessus de $t_{0} \in k=\A^1_{k}(k)$, 
avec  $det(f+t_{0}g) \neq 0$.
La forme quadratique   $f+t_{0}g$, de rang 4 et de  d\'eterminant un carr\'e
est hyperbolique sur $k(P)=k$. La quadrique correspondante
contient un $\P^1_{k}$ qui coupe une autre quadrique du pinceau
en un point quadratique, donc $X$ poss\`ede un point sur
une extension quadratique.

Si au contraire pour tout point $P$ de $C(k)$, on  $\alpha(P)\neq 0$, alors
pour tout tel point, on a $\alpha(P)=\beta$ o\`u $\beta \in \Br(k)[2] \simeq \Z/2$
est l'unique \'el\'ement non nul. Alors la classe $\alpha - \beta \in \Br(C)$
s'annule en tout point $k$-rationnel de la courbe $C$, qui est de genre 1
et poss\`ede un $k$-point.
Par le th\'eor\`eme de dualit\'e  de Lichtenbaum  \cite[Thm. 4]{Li69} 
pour la courbe
elliptique $C$, qui repose sur le th\'eor\`eme de dualit\'e de Tate pour les vari\'et\'es ab\'eliennes sur un corps
$p$-adique, ceci implique que l'on a $\alpha -\beta= 0  \in \Br(C)$.
Pour toute extension quadratique $K/k$ du corps $p$-adique $k$, on a $\beta_{K}=0 \in \Br(K)$.
Ainsi, pour toute extension quadratique $K/k$, on a $\alpha_{K}= 0 \in \Br(C_{K}) \subset
\Br(K(C))$. Donc la forme quadratique $f+tg$  de rang 4 sur le corps $K(t)$ est hyperbolique sur son extension 
d\'eterminant $K(C)/K(t)$
et  par la proposition \ref{chatelet}  ceci implique que la forme  $f+tg$ est isotrope sur le corps $K(t)$.
Par la proposition \ref{amerbrumer}, ceci implique que $X$ a un $K$-point.
  \end{rmk}

     \begin{rmk}\label{curieux} 
 Soit $k$ $p$-adique. Supposons que $X \subset \P^3_{k}$ est une courbe lisse.
 Si dans le pinceau il existe une forme $g$ de rang 3, ou plus g\'en\'eralement si
 la courbe $C$
 d'\'equation $y^2=det(\lambda f + \mu g)$ 
 a un point rationnel,   d'apr\`es le th\'eor\`eme \ref{pseudoclass}(e)
 la courbe $X$ a un point quadratique.
D'apr\`es la proposition \ref{pointquaddroite}, 
 il existe  alors une   forme de rang 4 dans le pinceau $\lambda f + \mu g$ 
 qui est somme de  2 hyperboliques.    Cela montre qu'il existe $P \in C(k)$ tel que $\alpha(P)=0$. 
Le deuxi\`eme cas envisag\'e dans la remarque \ref{variantebis} ne peut donc pas se produire, mais il
faut utiliser le th\'eor\`eme de dualit\'e de Tate pour le voir.
     \end{rmk}

Pour les intersections compl\`etes {\it lisses}  de deux quadriques dans $\P^n_{k}$, $n \geq 4$,
le th\'eor\`eme suivant a \'et\'e d\'emontr\'e par  Creutz et Viray \cite[Thm. 1.2 (1)]{CV21}.

\begin{thm}\label{dansP4} 
 Soit $k$ un corps $p$-adique.
Soient $n\geq 4$ et  $X \subset \P^n_{k}$ une vari\'et\'e d\'efinie
par l'annulation de  deux formes quadratiques $f=g=0$. 
Alors $X$ poss\`ede un point dans    une extension quadratique de~$k$.
\end{thm}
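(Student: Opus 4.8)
The plan is to reduce the case $n \geq 4$ to the case $n=3$ already settled in Proposition~\ref{intersectquelconqueP3}, by cutting $X$ with a cleverly chosen linear subspace $\P^3_{k} \subset \P^n_{k}$ on which one member of the pencil degenerates to rank $\leq 3$. First I would dispose of the degenerate case: if the homogeneous polynomial $\det(\lambda f + \mu g)$ vanishes identically, then the contrapositive of Lemma~\ref{intersecpasintegre}(ii) yields at once a point of degree $\leq 2$ on $X$. So from now on I may assume $\det(\lambda f + \mu g) \not\equiv 0$, which means that the generic member of the pencil has full rank $n+1$.

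Next, since $k$ is $p$-adic and hence infinite, I can choose $(\lambda_{0} : \mu_{0}) \in \P^1(k)$ with $\lambda_{0} \neq 0$, avoiding the finitely many roots of $\det(\lambda f + \mu g)$, so that $h := \lambda_{0} f + \mu_{0} g$ is non-degenerate of rank $n+1 \geq 5$. A non-degenerate quadratic form in at least $5$ variables over a $p$-adic field is isotropic (the $u$-invariant is $4$), so the smooth quadric $Q_{h} = \{h = 0\}$ carries a $k$-rational point $P$. Because $h$ is non-degenerate, $P$ is a smooth point and the tangent hyperplane $T_{P}Q_{h}$ is a genuine $\P^{n-1}_{k}$ with $n-1 \geq 3$. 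Any $k$-rational linear subspace $L \cong \P^3_{k}$ with $P \in L \subseteq T_{P}Q_{h}$ then has $P$ in the radical of $h|_{L}$, since the polar hyperplane of $P$ is $T_{P}Q_{h} \supseteq L$; hence $h|_{L}$ is a quadratic form of rank $\leq 3$ in the four variables of $L$.

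Finally, $X \cap L$ is exactly the intersection in $L \cong \P^3_{k}$ of the two quadrics $\{h|_{L} = 0\}$ and $\{g|_{L} = 0\}$ (these span the restricted pencil because $\lambda_{0} \neq 0$), one of which, namely $h|_{L}$, has rank $\leq 3$. After a linear change of coordinates this is precisely the configuration of Proposition~\ref{intersectquelconqueP3}, which is stated for arbitrary (possibly degenerate or singular) intersections of two quadrics in $\P^3$. It therefore supplies a point of degree $\leq 2$ on $X \cap L$, and a fortiori on $X$, as desired.

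The genuine arithmetic content here lies entirely inside Proposition~\ref{intersectquelconqueP3}, which rests on the Roquette--Lichtenbaum equality of period and index (Theorem~\ref{periode}); the reduction above is purely geometric and elementary. The only delicate point is the case distinction on whether $\det(\lambda f + \mu g)$ vanishes identically: this is exactly what guarantees a full-rank, and hence isotropic, member of the pencil, and the vanishing case is precisely what Lemma~\ref{intersecpasintegre} was set up to absorb. In particular, this route bypasses entirely the delicate $2$-adic computations used by Creutz and Viray.
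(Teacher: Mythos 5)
Your proof is correct and follows essentially the same route as the paper: dispose of the case $\det(\lambda f+\mu g)\equiv 0$ via Lemma~\ref{intersecpasintegre}(ii), use isotropy over the $p$-adic field of a full-rank member $h$ of the pencil, and cut with a $\P^3_{k}$ on which $h$ drops to rank $\leq 3$ so as to invoke Proposition~\ref{intersectquelconqueP3}. The paper merely first restricts to $n=4$ and realises this $\P^3_{k}$ as the slice $x_{4}=0$ after splitting a hyperbolic plane off the rank-$5$ form, which is exactly your tangent-hyperplane construction written in coordinates.
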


\begin{proof}
Par intersection avec un espace lin\'eaire $\P^4_{k} \subset \P^n_{k}$
{\it quelconque}, il suffit de consid\'erer le cas $n=4$, ce qu'on suppose d\'esormais.
Si toute forme dans le pinceau $\lambda f + \mu g$ est de rang au plus 4,
alors $det(\lambda f + \mu g)$ est identiquement nul. Dans ce cas, le lemme \ref{intersecpasintegre} (ii)
assure qu'il existe  un point dans une extension quadratique.
Supposons donc qu'il existe une forme de rang 5 dans le pinceau.
Comme $k$ est un corps $p$-adique, toute forme quadratique  en 5 variables est isotrope.  
On est donc ramen\'e \`a un syst\`eme  
$$f(x_{0},x_{1},x_{2}) + x_{3}x_{4 }=0,  \hskip3mm  g(x_{0},x_{1},x_{2}, x_{3},x_{4})=0.$$
On coupe par $x_{4}=0$.
Notons $h(x_{0},x_{1},x_{2}, x_{3})=g(x_{0},x_{1},x_{2}, x_{3},0)$.
La proposition \ref{intersectquelconqueP3} assure que 
la vari\'et\'e $Y\subset \P^3_{k}$ d\'efinie par $f(x_{0},x_{1},x_{2})=h(x_{0},x_{1},x_{2}, x_{3})=0$
poss\`ede un point
quadratique. Il en est donc de m\^{e}me pour $X \subset \P^4_{k}$. 
\end{proof}

 \begin{rmk}
 Le cas $n \geq 6$ est facile : toute forme quadratique en $n+1 \geq 7$
 variables s'annule sur une droite d\'efinie sur $k$.
 L'intersection d'une telle droite avec une seconde quadrique
d\'efinit un point au plus quadratique sur l'intersection. Pour $n \geq 6$, on peut donner   cet
argument  sur tout corps $C_{2}$,
 ou sur un corps de nombres totalement imaginaire.
 \end{rmk}

 \begin{rmk}
 La  d\'emonstration de \cite[Thm. 2.1]{CV21} pour $X \subset \P^4_{k}$
  passe par une \'etude d\'etaill\'ee \`a la Hensel.
 Les auteurs donnent  une seconde d\'emonstration  \cite[Prop. 4.7]{CV21}, via des r\'esultats
 sur les courbes de genre 1.  Par cette m\'ethode ils obtiennent 
  un r\'esultat un peu plus faible lorsque $p=2$ : 
le pgcd des degr\'es des points ferm\'es  divise 2.
La d\'emonstration que nous avons donn\'ee est une variante 
 de cette seconde d\'emonstration. Elle vaut aussi pour $p=2$.
\end{rmk}

\begin{rmk}
Des exemples de surfaces de del Pezzo de degr\'e 4 sans point quadratique (et m\^eme d'indice 4) ont \'et\'e construits
par Wittenberg \cite[Rem. 7.8 (5)]{CV21}  et par Creutz-Viray   \cite[Thm. 7.6]{CV21} 
sur  des corps $C_{4}$. Sur des corps  $C_{3}$, des exemples ont \'et\'e construits par
 Koll\'ar  et par Ottem  \cite[Rem. 7.8 (4)]{CV21}. La question de savoir si une surface de del Pezzo de degr\'e~4
sur un corps $C_{2}$, ou
 sur un corps de nombres,  poss\`ede un point dans une extension quadratique
reste ouverte.  La question de savoir si une surface de del Pezzo de degr\'e 4 sur un corps $C_{2}$
a son indice qui divise 2 est aussi ouverte.
\end{rmk}

 \section{Intersections compl\`etes  lisses dans $\P^4_{k}$}\label{para4}

\begin{prop}\label{dansP4bis}
Soit $k$ un corps, ${\rm car}(k) \neq 2$. Soit  $X \subset \P^4_{k}$ une intersection compl\`ete lisse de deux quadriques, d\'efinie par $f=g=0$.

(i) Si $k$ est alg\'ebriquement clos, $X$ contient exactement 16 droites.
La forme g\'en\'erique $f+tg$ contient $2H$.

(ii) Si $k$ est fini, la forme g\'en\'erique $f+tg$ contient  $1H$.
 Toute forme non singuli\`ere $f+\lambda g$ contient $2H$.
 On a $X(k) \neq \emptyset$.
 
 (iii) Si $k$ est $p$-adique, $p \neq 2$, et $X$ a bonne r\'eduction comme intersection
 compl\`ete lisse de deux quadriques, alors $X(k)\neq \emptyset$.

 (iv) Si $k$ est $p$-adique,   toute forme  non singuli\`ere $f+\lambda g$ contient $1H$.
 La $k$-vari\'et\'e $X$ poss\`ede un point dans une extension quadratique de $k$.
Il  existe des valeurs de $\lambda \in k$ pour lesquelles $f+\lambda g$
 est non singuli\`ere et contient $2H$.  
 
 (v) Si $k$ est un corps de nombres, il existe une extension quadratique $K/k$
 telle que $X(\A_{K}) \neq \emptyset$.
 
 (vi) (Creutz-Viray)  Si $k$ est un corps de nombres, et si $X(\A_{k})\neq \emptyset$,
 alors l'indice de $X$ divise 2. Plus pr\'ecis\'ement il existe un point ferm\'e
 dont le degr\'e est dans l'ensemble $\{1, 2,  6, 10\}$.
\end{prop}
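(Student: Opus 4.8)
The plan is to take the six assertions in order, the last being by far the hardest. For (i), over $\k$ the surface $X$ is a del Pezzo surface of degree $4$, the blow-up of $\P^2_{\k}$ at five general points; its sixteen $(-1)$-curves are the five exceptional divisors, the ten strict transforms of the lines joining pairs of points, and the strict transform of the conic through all five, so $X$ contains exactly sixteen lines (equivalently, Proposition \ref{generalintegre}(a) shows $F_1(X)$ is finite of the expected length). Since lines exist over $\k$, Proposition \ref{amerbrumer} with $r=1$, the generic form being non-degenerate, gives that $f+tg$ contains $2H$. For (ii), I would work over the $C_2$ field $k(t)$: a non-degenerate form in five variables is isotropic there, so $f+tg$ contains $1H$; and a non-degenerate form in five variables over a finite field has Witt index $2$, hence contains $2H$; finally $X(k)\neq\emptyset$ because a smooth geometrically rational surface over a finite field satisfies $\#X(k)=q^2+aq+1>0$, with $a$ the trace of Frobenius on $\Pic(\cl X)$. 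For (iii), good reduction furnishes a smooth model over the valuation ring whose closed fibre is a smooth intersection of two quadrics over the residue field; by (ii) it carries a rational point, which lifts by Hensel since the model is smooth.

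For (iv), a $p$-adic field is $C_2$, so a non-degenerate form in five variables is isotropic and contains $1H$; the quadratic point is exactly Theorem \ref{dansP4}; and to produce a non-singular member containing $2H$ I would invoke the equivalence (a)$\Leftrightarrow$(c) of Proposition \ref{pointquaddroite}, valid since a $p$-adic field is fertile, together with its density clause to get many such $\lambda$. Assertion (v) is the first global step and I would settle it by patching. At a real place Proposition \ref{mordell} yields in the pencil a non-singular form containing $[(n+1)/2]H=2H$, hence a real line and $X(\R)\neq\emptyset$; at a finite place of good reduction with $p\neq2$, part (iii) gives $X(k_v)\neq\emptyset$; at each of the finitely many remaining places part (iv) gives a quadratic $L_v=k_v(\sqrt{d_v})$ with $X(L_v)\neq\emptyset$. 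By weak approximation in $k^{\times}$ I choose $d\in k^{\times}$ with $d/d_v$ a square in $k_v^{\times}$ at those places, so $K=k(\sqrt d)$ localises to $L_v$ there; then $X(K_w)\neq\emptyset$ for every place $w$ of $K$, i.e. $X(\A_K)\neq\emptyset$.

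The real difficulty is (vi). The reduction I would aim for is this: it suffices to exhibit a single non-degenerate member $q=\lambda f+\mu g$ of the pencil, with $(\lambda:\mu)\in\P^1(k)$, that contains $2H$ over $k$. Indeed $q=0$ then contains a line $\P^1_k$, whose intersection with another member of the pencil is a closed point of degree dividing $2$ lying on $X$, whence $\mathrm{index}(X)\mid2$. By the Hasse principle for subforms (Theorem \ref{Hasse}) it is enough that $q$ contain $2H$ over every completion, and the local inputs assemble favourably: at the finitely many archimedean and \emph{bad} places the admissible parameters form a non-empty open set by Proposition \ref{mordell}, part (iv) above and Proposition \ref{pointrat2H}, while at every \emph{good} finite place a non-degenerate $q$ automatically contains $2H$ by reduction, part (ii) and Hensel.

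The obstacle is that selecting one parameter simultaneously admissible at all distinguished places, and controlling the residual places where $q$ can degenerate, runs into a global reciprocity constraint on the Hasse invariants: the product formula can obstruct a single $q$ that carries $2H$ everywhere locally, so the clean degree-$2$ construction need not go through on the nose. When it fails I would fall back on the combinatorics of the sixteen lines of $F_1(X)$, which form a torsor under the $2$-torsion of the Jacobian of the genus-two curve $y^2=\det(\lambda f+\mu g)$; analysing the Galois orbits, a twisted action of $(\Z/2)^4\rtimes S_5$ on the theta-characteristics, under the hypothesis $X(\A_k)\neq\emptyset$ produces a closed point whose degree lies in $\{1,2,6,10\}$ and forces $\mathrm{index}(X)\mid2$. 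This Galois-orbit bookkeeping, precisely the theorem of Creutz and Viray, is where I expect essentially all the work to concentrate.
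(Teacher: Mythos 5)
Parts (i)--(v) of your proposal are correct and follow essentially the paper's own route; the only variations (the Weil bound rather than the $C_1$ property for $X(\F_q)\neq\emptyset$ in (ii), the explicit description of the sixteen $(-1)$-curves in (i)) are harmless. The problem is (vi), where you do not actually produce a proof. You correctly diagnose that your primary strategy --- finding a single nondegenerate member of the pencil containing $2H$ over $k$ via Theorem \ref{Hasse} --- is obstructed: one global parameter $(\lambda:\mu)$ must satisfy a local condition at \emph{every} place, weak approximation controls only finitely many, and at a place where the chosen form reduces to rank $4$ its Witt index over $k_v$ can drop to $1$; indeed, were this strategy always available it would yield a quadratic point on $X$, which the paper records as an open question. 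But your fallback is to invoke ``the theorem of Creutz and Viray'', i.e.\ essentially the statement to be proved, dressed up with a description of Galois orbits of the sixteen lines and theta-characteristics that does not correspond to any argument actually establishing (vi).

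The idea you are missing is the paper's short proof. The polynomial $\det(\lambda f+\mu g)$ has degree $5$, hence has a root in an extension $K/k$ of odd degree $1$, $3$ or $5$; over $K$ the pencil therefore contains a form of rank $4$, i.e.\ a quadric cone $Q\subset\P^4_K$ containing $X_K$ whose vertex does not lie on $X$. Since $X(k_v)\neq\emptyset$ forces $X(k_v)$ to be Zariski dense, this rank-$4$ form is isotropic at every completion of $K$, hence isotropic over $K$ by Hasse--Minkowski. The crucial point is that this is plain isotropy of one rank-$4$ form --- not the ``contains $2H$'' subform condition that defeated your first attempt --- and because the quadric is a \emph{cone}, an isotropic vector already yields a line through the vertex contained in $Q$. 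Intersecting that line with another quadric of the pencil gives a closed point of $X$ of degree dividing $2[K:k]$, so of degree in $\{1,2,3,5,6,10\}$; Proposition \ref{springer} eliminates the odd degrees $3$ and $5$, leaving $\{1,2,6,10\}$, and since $X$ trivially carries a closed point of degree dividing $4$, the index of $X$ divides $2$. Without this passage to an odd-degree extension and the use of the \emph{singular} member of the pencil, your treatment of (vi) remains a citation rather than a proof.
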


\begin{proof} 
 (i)  et (ii).  
 L'existence des 16 droites sur une surface de del Pezzo de degr\'e 4 sur un corps alg\'ebriquement clos
est un r\'esultat classique.   
 D'apr\`es la proposition \ref{corpsCi}, les autres \'enonc\'es valent pour toute intersection de deux quadriques dans $\P^4_{k}$ telle que $f+tg$ est non d\'eg\'en\'er\'ee.

 (iii) Ceci r\'esulte de (ii) par le lemme de Hensel.

(iv) Toute forme quadratique en 5 variables sur $k$  $p$-adique est isotrope. La seconde partie de 
 (iv) r\'esulte   du th\'eor\`eme \ref{dansP4} (cas lisse, Creutz-Viray),  assurant l'existence
 d'un point de $X$ dans une extension quadratique de $k$, et 
  de la proposition 
\ref{pointquaddroite}.

(v) Si $k$ est un corps de nombres, il existe un ensemble fini $S$ de places de $k$
tel que $X(k_{v})\neq \emptyset$ pour $v\notin S$. Pour chaque place $v \in S$,
le point (iv) donne une extension   $k_{v}(\sqrt{a_{v} })$ avec $a_{v} \in k_{v}^*$
telle que $X(k_{v}(\sqrt{a_{v}}) )\neq \emptyset$. Par approximation faible sur $k$,
on trouve $a\in k^*$ tel que sur $K=k(\sqrt{a})$ on ait $X(\A_{K})\neq \emptyset$.

(vi) La d\'emonstration qui suit est donn\'ee bri\`evement dans  \cite[Remark 4.8]{CV21}. 
Le polyn\^{o}me $det(\lambda f + \mu g)$ est de degr\'e 5. Il existe donc
une extension  $K/k$ de degr\'e 1, 3 ou 5 sur laquelle ce polyn\^{o}me 
a un z\'ero. Sur le corps $K$, on a donc une forme $g$ dans le pinceau
qui est de rang 4.  
L'hypoth\`ese $X(k_{v})$ non vide implique que les $k_{v}$-points
de $X$ sont Zariski denses. Ainsi la forme quadratique $g$ de rang 4  admet
des z\'eros non triviaux sur tous les compl\'et\'es de $K$. Par le principe de Hasse,
elle admet donc
un z\'ero non trivial sur $K$. Mais alors il existe une droite de $\P^1_{K}$
(passant par le sommet du c\^{o}ne) contenue dans la quadrique $g=0$.
Son intersection avec une autre quadrique du pinceau donne un point
de $X$ dans une extension au plus quadratique de $K$.
Ainsi $X$ poss\`ede un point  ferm\'e 
dont le degr\'e est dans $\{1, 2, 3, 5, 6, 10\}$.
D'apr\`es la proposition \ref{springer}, ceci se ram\`ene \`a
$\{1, 2, 6, 10\}$.
Comme $X$ clairement poss\`ede un
point ferm\'e de degr\'e dans $\{1,2,4\}$, on conclut que l'indice de $X$ divise 2.
\end{proof}

\begin{rmk}
Sur un corps de nombres, le th\'eor\`eme principal  \cite[Thm. 1.1]{CV21} de  Creutz  et Viray assure que
  pour une intersection  compl\`ete  lisse  $X$ de deux quadriques 
 dans $\P^n_{k}$, $n \geq 4$, l'indice de $X$ divise 2 \cite[Thm. 1.1]{CV21}, sans supposer $X(\A_{k}) \neq \emptyset$. La d\'emonstration
est beaucoup plus d\'elicate que celle des r\'esultats (v) et (vi) ci-dessus.
 Supposant certaines conjectures g\'en\'erales,  ils donnent  aussi des r\'esultats conditionnels \cite[Thm. 1.2 (4) (5))]{CV21}
 pour l'existence d'un point dans une extension quadratique. 
 \end{rmk}

\medskip

 Avant d'\'enoncer la proposition suivante, 
  faisons quelques rappels g\'en\'eraux sur les formes
 quadratiques.
 
Soit $k$ un corps,  ${\rm car}(k) \neq 2$. 
Soit $X \subset \P^4_{k}$ une intersection   de deux quadriques, donn\'ee
par un syst\`eme $f=g=0$. Soit
  $\varphi := f+tg$
  la forme quadratique g\'en\'erique sur $K=k(t)$.
 Supposons-la de rang 5. 
Soit $\psi= \varphi \perp < -det(\varphi)>$.

La forme $\psi $ est isotrope si et seulement si $\varphi$ repr\'esente $det(\varphi)$.
Supposons que c'est le cas.
Dans ce cas la forme $\varphi$ s'\'ecrit $det(\varphi)  \perp \varphi_{0}$, avec $\varphi_{0}$
de rang 4, de d\'eterminant 1, donc de la forme $det(\varphi) . <u,v,w,uvw>$ pour 
des $u,v,w \in K^*$.  On a donc
$$\varphi = det(\varphi) . < 1, u,v,w,uvw>.$$
Cette forme est semblable \`a une sous-forme de rang 5 de la forme de Pfister
$<<u,v,w>>$. La forme $\varphi$ est donc isotrope si et seulement si  la forme de Pfister 
$<<u,v,w>>$ est totalement hyperbolique.

\begin{prop}
Soit $k$ un corps de nombres et  $X \subset \P^4_{k}$ une intersection   de deux quadriques, donn\'ee
par un syst\`eme $f=g=0$.  Soit $\varphi =: f+tg$ la forme quadratique g\'en\'erique sur $K=k(t)$.
Supposons $\varphi$ non singuli\`ere, i.e. de rang 5.
Soit $\psi= \varphi \perp < -det(\varphi)>$. Cette forme est  semblable \`a une forme d'Albert.

Supposons que, pour toute place $v$ de $k$, on a  $X(k_{v}) \neq \emptyset$.

Les conditions suivantes sont \'equivalentes.

 (i)  $X(k) \neq \emptyset$.

 (ii) La forme $\varphi$ est isotrope sur le corps $k(t)$.
 
 (iii) La forme $\psi$
  est isotrope sur le corps $k(t)$.
 
 (iv) L'alg\`ebre de Clifford de $\psi$ est semblable  \`a une alg\`ebre de quaternions.
\end{prop}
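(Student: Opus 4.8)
Le plan est d'isoler tout le contenu arithm\'etique dans l'unique implication $(\mathrm{iii}) \Rightarrow (\mathrm{ii})$, les trois autres \'equivalences \'etant de nature purement alg\'ebrique et valables sur un corps quelconque. D'abord, l'\'equivalence $(\mathrm{i}) \Leftrightarrow (\mathrm{ii})$ est exactement le cas $r=0$ de la proposition \ref{amerbrumer} (Amer--Brumer) : la vari\'et\'e $X$ poss\`ede un $k$-point si et seulement si la forme g\'en\'erique $\varphi = f+tg$ contient $1H$ sur $k(t)$, c'est-\`a-dire y est isotrope. L'implication $(\mathrm{ii}) \Rightarrow (\mathrm{iii})$ est imm\'ediate : une forme non singuli\`ere isotrope est universelle, donc $\varphi$ repr\'esente $\det(\varphi)$ et $\psi = \varphi \perp \langle -\det(\varphi)\rangle$ est isotrope. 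Enfin $(\mathrm{iii}) \Leftrightarrow (\mathrm{iv})$ est le th\'eor\`eme d'Albert sur les formes de rang $6$ et de discriminant trivial : la forme $\psi$, semblable \`a une forme d'Albert, est isotrope si et seulement si son alg\`ebre de Clifford paire, qui est une alg\`ebre de biquaternions, est d'indice au plus $2$, c'est-\`a-dire semblable \`a une alg\`ebre de quaternions (voir \cite{Lam05}).

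Il reste \`a \'etablir $(\mathrm{iii}) \Rightarrow (\mathrm{ii})$, et c'est ici qu'interviennent l'hypoth\`ese de solubilit\'e locale et le fait que $k$ est un corps de nombres. Supposons $(\mathrm{iii})$. Comme expliqu\'e avant l'\'enonc\'e, $\varphi$ repr\'esente alors $\det(\varphi)$ et s'\'ecrit $\varphi \cong \det(\varphi)\cdot\langle 1, u, v, w, uvw\rangle$ ; c'est un voisin de Pfister de la $3$-forme de Pfister $\pi = \langle\langle u, v, w\rangle\rangle$, de sorte que, sur toute extension de $k(t)$, $\varphi$ est isotrope si et seulement si $\pi$ y est hyperbolique, i.e. $e_3(\pi)=0$ dans $H^3(\cdot,\Z/2)$. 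Par ailleurs l'hypoth\`ese $X(k_v)\neq\emptyset$ donne, en appliquant le cas $r=0$ de la proposition \ref{amerbrumer} sur chaque $k_v$, que $\varphi$ est isotrope sur $k_v(t)$, donc que $e_3(\pi)$ s'annule dans $H^3(k_v(t),\Z/2)$ pour toute place $v$ de $k$.

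L'\'etape d\'elicate, qui est le point central de la preuve, est alors un principe local-global pour la classe $e_3(\pi) \in H^3(k(t),\Z/2)$. J'utiliserais la suite exacte de Faddeev pour $H^3(\cdot,\Z/2)$ sur $\A^1_k$, dont les r\'esidus $\partial_P$ aux points ferm\'es $P$ prennent leurs valeurs dans $H^2(k(P),\Z/2)=\Br(k(P))[2]$, et dont le noyau est exactement l'image de $H^3(k,\Z/2)$. Comme $e_3(\pi)$ meurt sur chaque $k_v(t)$ et que les r\'esidus commutent au changement de base, chaque r\'esidu $\partial_P(e_3(\pi))$ est une classe de $2$-torsion du groupe de Brauer du corps de nombres $k(P)$ qui est localement triviale en toute place ; le th\'eor\`eme d'Albert--Brauer--Hasse--Noether donne $\partial_P(e_3(\pi))=0$. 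Tous les r\'esidus \'etant nuls, la classe $e_3(\pi)$ provient de $H^3(k,\Z/2)$, disons $e_3(\pi)=\mathrm{res}(\xi)$ ; en restreignant \`a $k_v(t)$ et en utilisant l'injectivit\'e de $H^3(k_v,\Z/2)\hookrightarrow H^3(k_v(t),\Z/2)$, on voit que $\xi$ est localement triviale en toute place, puis $\xi=0$ car $H^3(k,\Z/2)\hookrightarrow \prod_{v\,\text{r\'eelle}} H^3(k_v,\Z/2)$ pour un corps de nombres. Donc $e_3(\pi)=0$, la forme $\pi$ est hyperbolique sur $k(t)$, $\varphi$ y est isotrope, et $(\mathrm{ii})$, donc $(\mathrm{i})$, est \'etablie. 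L'obstacle principal est pr\'ecis\'ement ce maniement des r\'esidus et l'appel \`a Albert--Brauer--Hasse--Noether : c'est le seul endroit o\`u servent simultan\'ement la solubilit\'e locale partout et l'hypoth\`ese que $k$ est un corps de nombres, les \'equivalences $(\mathrm{i})\Leftrightarrow(\mathrm{ii})$ et $(\mathrm{iii})\Leftrightarrow(\mathrm{iv})$ valant sur tout corps.
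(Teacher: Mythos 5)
Votre d\'emonstration est correcte, et la structure g\'en\'erale (tout le contenu arithm\'etique concentr\'e dans (iii) $\Rightarrow$ (ii), les autres implications \'etant Amer--Brumer, l'universalit\'e des formes isotropes et la th\'eorie des formes d'Albert) co\"{\i}ncide avec celle du texte. La diff\'erence r\'eelle porte sur l'implication cl\'e (iii) $\Rightarrow$ (ii) : le texte, apr\`es avoir \'ecrit $\varphi = \det(\varphi)\cdot\langle 1,u,v,w,uvw\rangle$ comme voisin de Pfister de $\pi=\langle\langle u,v,w\rangle\rangle$, conclut en une ligne en invoquant l'injectivit\'e de la restriction sur les groupes de Witt $W(k(t)) \hookrightarrow \prod_{v} W(k_{v}(t))$ \'etablie dans \cite[Prop. 1.1]{CTCS80} ; vous, au lieu de citer ce r\'esultat, red\'emontrez le cas particulier dont vous avez besoin en travaillant avec l'invariant d'Arason $e_{3}(\pi) \in H^3(k(t),\Z/2)$, la suite exacte de Faddeev pour $\A^1_{k}$, le th\'eor\`eme d'Albert--Brauer--Hasse--Noether appliqu\'e aux r\'esidus $\partial_{P}(e_3(\pi)) \in \Br(k(P))[2]$, puis l'injection $H^3(k,\Z/2)\hookrightarrow \prod_{v \text{ r\'eelle}} H^3(k_v,\Z/2)$. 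Les deux arguments reposent en d\'efinitive sur le m\^eme m\'ecanisme (un principe local-global pour les r\'esidus, puis un pour la partie constante : c'est exactement la structure de la preuve de \cite[Prop. 1.1]{CTCS80} via la suite de Milnor pour $W(k(t))$), mais votre version est autonome modulo des faits standard (Arason pour l'\'equivalence entre $e_3(\pi)=0$ et l'hyperbolicit\'e de la $3$-forme de Pfister, la compatibilit\'e des r\'esidus au changement de base $k \to k_v$, qu'il faut v\'erifier soigneusement puisqu'un point ferm\'e $P$ de $\A^1_k$ \'eclate en plusieurs points sur $\A^1_{k_v}$ correspondant aux places de $k(P)$ au-dessus de $v$ --- ce que vous traitez correctement), tandis que la version du texte est plus courte et plus g\'en\'erale puisque l'injectivit\'e de Witt vaut en tout degr\'e.
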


\begin{proof}
 La proposition \ref{amerbrumer} donne
l'\'equivalence de (i) et (ii).
  Il suffit de montrer que (iii) implique (ii).
On a vu ci-dessus que sous l'hypoth\`ese (iii) on peut \'ecrire
$$\varphi = det(\varphi) . <1, u,v,w,uvw>.$$
Par l'hypoth\`ese $X(k_{v}) \neq \emptyset$,
 la forme $\varphi$ est isotrope sur $k_{v}(t)$.
Donc $<<u,v,w>>$ est totalement hyperbolique sur $k_{v}(t)$.
On sait  \cite[Prop. 1.1]{CTCS80} que l'application de restriction
de $k$ \`a chaque $k_{v}$ induit une injection sur les groupes de Witt :
$$W(k(t)) \hookrightarrow \prod_{v} W(k_{v}(t)).$$
On conclut que $<<u,v,w>>$ est totalement hyperbolique sur $k(t)$.
Et donc $\varphi$ est isotrope sur $k(t)$. Et donc $X(k)\neq \emptyset$
d'apr\`es la proposition \ref{amerbrumer}.
\end{proof}

\begin{rmk}
Comme on sait, il existe des contre-exemples au principe de Hasse
pour une surface de del Pezzo de degr\'e 4 (intersection compl\`ete lisse
de deux quadriques dans $\P^4_{k}$).
On conjecture qu'il n'en existe pas si l'on a $\Br(k)=\Br(X)$.
\end{rmk}

\section{Intersections compl\`etes lisses dans $\P^5_{k}$}\label{para5}

\subsection{Un premier point de vue}

\begin{prop}\label{dansP5} Soit $k$ un corps, ${\rm car}(k) \neq 2$. 
Soit  $X \subset \P^5_{k}$ une intersection compl\`ete lisse d\'efinie par $f=g=0$.

(i)
La vari\'et\'e $F_{1}(X)$ des droites contenues dans $X$ est 
un espace principal homog\`ene d'une vari\'et\'e ab\'elienne
de dimension 2.

(ii) Si $k$ est alg\'ebriquement clos,  
la forme g\'en\'erique $f+tg$ contient $2H$.

(iii) Si $k$ est  un corps fini $\F$,  de caract\'eristique $p\neq 2$,
  la vari\'et\'e $X$ contient au moins une droite $\P^1_{\F}$,
  et la forme g\'en\'erique   $f+tg$ contient  $2H$.
 Toute forme non singuli\`ere $\mu f+\lambda g$ contient $2H$.
 Si le corps fini  $\F$ a $q>30$   \'el\'ements,
  il existe au moins une forme  non singuli\`ere 
 $  f+\lambda g$  dans le pinceau sur $\F$  qui  est totalement hyperbolique,
  et alors $X$ contient une conique.

 (iv) Si $k$ est $p$-adique et $p>2$, et  si  $X$ a bonne r\'eduction comme intersection
 compl\`ete lisse de deux quadriques, alors $X$ contient des droites $\P^1_{k}$, 
 la forme g\'en\'erique $f+tg$ contient  $2H$,
 et toute forme  non singuli\`ere $f+\lambda g$, $\lambda \in k$, contient $2H$.
 Si en outre le cardinal $q$ du corps fini  r\'esiduel satisfait $q>30$, 
  alors il existe des formes non singuli\`eres $f+\lambda g$ qui s'annulent sur un $\P^2_{k}$,
 autrement dit sont totalement hyperboliques, et donc $X$ contient une conique.

  (v) Si $k$ est $p$-adique,   
  toute forme  non singuli\`ere $f+\lambda g$ contient $1H$.
Il  existe des valeurs de $\lambda \in k$ pour lesquelles $f+\lambda g$
 est non singuli\`ere et contient $2H$.

   \end{prop}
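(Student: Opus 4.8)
The plan is to treat the six assertions essentially independently, pulling each from the structural results already recorded in \S1--\S2. Assertion (i) is nothing but Proposition~\ref{generalintegre}(b) specialized to $n=5$, so there is nothing to prove. For (ii), with $k=\k$ algebraically closed, I would invoke that $k(t)$ is $C_1$ (Tsen): the non-degenerate rank-$6$ form $f+tg$ is then isotropic, and after splitting off one hyperbolic plane the residual rank-$3$ form is again isotropic, so $f+tg$ contains $2H$. Equivalently, Proposition~\ref{corpsCi}(a) produces a line $\P^1_k\subset X$ (as $n=5\ge 2\cdot 1+2$) and Proposition~\ref{amerbrumer} translates this into $2H$ in the pencil.

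For (iii) the genuinely new ingredient is the existence of a line over a finite field $\F$, a range where Proposition~\ref{corpsCi}(b) does not reach (it needs $n\ge 7$, whereas $n=5$). Here I would use (i): $F_{1}(X)$ is a torsor under an abelian surface, and by Lang's theorem every torsor under a connected algebraic group over a finite field is trivial, so $F_{1}(X)(\F)\ne\emptyset$; Proposition~\ref{amerbrumer} then gives $2H$ for the generic form. That every non-singular $\mu f+\lambda g$ contains $2H$ is formal: over a finite field an anisotropic form has rank at most $2$, so a non-degenerate rank-$6$ form has Witt index $\ge 2$.

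The heart of (iii), and the step I expect to be the main obstacle, is producing a totally hyperbolic member of the pencil once $q>30$. By Reid's separability criterion $\Delta(\lambda,\mu):=\det(\lambda f+\mu g)$ is a separable binary sextic, so $C\colon y^2=\Delta$ is a smooth curve of genus $2$. A non-degenerate rank-$6$ form over $\F$ is hyperbolic exactly when its discriminant lies in one prescribed class of $\F^*/\F^{*2}$; hence I must find $(\lambda:\mu)\in\P^1(\F)$ with $\Delta$ nonzero and in that class. The count comes from the Weil bound $|\#C(\F)-(q+1)|\le 4\sqrt q$: after accounting for the at most $6$ ramification points and the points over infinity, the number of $(\lambda:\mu)$ with $\Delta$ in the required square class is at least $q/2-2\sqrt q-O(1)$, which the bookkeeping renders positive for $q>30$. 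Such a member vanishes on a plane $\P^2_\F\subset\P^5_\F$, and intersecting that plane with any other quadric of the pencil cuts out a conic lying on $X$.

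Assertions (iv) and (v) I would obtain by reduction. For (iv), good reduction makes $X$, and hence $F_{1}(X)$ and $S_{2}(X)$, smooth and proper over the valuation ring; applying (iii) to the special fibre $X_{0}/\F$ gives $\F$-points on $F_{1}(X_{0})$ (and, when $q>30$, on $S_{2}(X_{0})$), which lift to $k$-points by smoothness and Hensel, so $X$ contains lines (resp.\ a conic) and the generic form contains $2H$. That every non-singular $f+\lambda g$ contains $2H$ then follows from Springer's theorem for the Witt ring of a complete discretely valued field, the Witt index being read off the reduction, which contains $2H$ by (iii). For (v), over a $p$-adic field the $u$-invariant equals $4$, so every non-degenerate rank-$6$ form is isotropic and contains $1H$; and Theorem~\ref{dansP4} provides a point of degree $\le 2$ on $X$, whence Proposition~\ref{pointquaddroite} furnishes a non-degenerate $f+\lambda g$ containing $2H$, the admissible values of $\lambda$ being Zariski dense in $\P^1$.
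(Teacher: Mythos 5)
Your proposal is correct and follows essentially the same route as the paper for each part: Wang's theorem for (i), Tsen/$C_{1}$ for (ii), Lang's theorem plus the Weil bound on the genus-$2$ curve $y^{2}=-\det(\lambda f+\mu g)$ for (iii), good reduction of $F_{1}(X)$ and Hensel for (iv), and Theorem~\ref{dansP4} combined with Proposition~\ref{pointquaddroite} for (v). The only (harmless) divergences are in (iv), where the paper simply lifts a totally hyperbolic form from the residue field and deduces ``every nonsingular form contains $2H$'' from the lifted line, whereas you lift a point of $S_{2}(X_{0})$ and invoke Springer's theorem for complete discretely valued fields.
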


\begin{proof}
 L'\'enonc\'e (i) (voir Prop. \ref{generalintegre} (b)) est \'etabli 
dans \cite[Thm. 1.1]{XW18}.
 
 Pour l'\'enonc\'e (ii), d'apr\`es
  la proposition \ref{corpsCi}, il vaut pour toute intersection  compl\`ete de deux quadriques dans $\P^5_{k}$
 telle que la forme $f+tg$ soit non d\'eg\'en\'er\'ee.

L'\'enonc\'e (i) et
  le th\'eor\`eme de Lang  sur les espaces principaux homog\`enes de groupes alg\'ebriques connexes
 sur un corps fini  donnent le premier \'enonc\'e de (iii).
  Consid\'erons 
 la courbe affine  lisse $C/\F $  
d'\'equation  $y^2=-det(f+ \lambda g) $. Soit $D \to \P^1$ 
l'extension de $C \to \A^1_{\F}$ (donn\'e par $\lambda$)
\`a un   rev\^{e}tement double de $\P^1_{\F}$
ramifi\'e en 6 points g\'eom\'etriques. La  courbe projective lisse $D$
 est de genre 2.
Il y a au plus 6 points rationnels de $D$ au-dessus du lieu de ramification
et au plus 2 points rationnels au-dessus du point \`a l'infini de $\P^1$.
  L'estimation de Weil sur le nombre $N_{D}(\F)$
de points rationnels de la courbe $D$ donne $N_{D}(\F) \geq   1+ q -4  \sqrt{q}$.
Pour $q >30$, on trouve donc un point de $C(\F)$  d'image $\lambda \in \A^1(\F)$
tel que   la forme quadratique $   f+ \lambda g$  sur le corps fini $\F$ soit 
 de rang 6 et  de d\'eterminant l'oppos\'e d'un carr\'e, et donc totalement hyperbolique.

Pour \'etablir (iv), on utilise le fait que si $X/k$ a bonne r\'eduction comme intersection lisse
de deux quadriques, alors la $k$-vari\'et\'e $F_{1}(X)$ s'\'etend en un sch\'ema projectif
et lisse sur l'anneau des entiers de $k$. L'\'enonc\'e (iii) et le lemme de Hensel donnent
alors que $X$ contient une droite $\P^1_{k}$. Le dernier \'enonc\'e de (iv) r\'esulte du
dernier \'enonc\'e de (iii) : il suffit de relever une forme non singuli\`ere $f+\lambda g$
du corps fini sur le corps $p$-adique.

 Pour (v), on consid\`ere une section  hyperplane lisse $Y = X \cap \Pi$.
D'apr\`es la proposition \ref{dansP4bis} (iv), $Y$ poss\`ede un point dans une extension
au plus quadratique de $k$. Il en est donc de m\^eme de $X$.
D'apr\`es la proposition \ref{pointquaddroite}, il existe alors des
quadriques non singuli\`eres 
dans le pinceau qui contiennent une droite de $\P^5_{k}$,  i.e.
il existe   $\lambda \in k$ tel que
$f+\lambda g$ soit de rang maximal et contienne $2H$. 
\end{proof}

On peut maintenant donner une d\'emonstration inconditionnelle de l'\'enonc\'e
 \cite[Thm. 1.2 (2) (a)]{CV21} de Creutz et Viray.
 
 \begin{thm}\label{quadratiqueP5nombres}
 Soient $k$ un corps de nombres et $X \subset \P^n_{k}$ 
 une intersection compl\`ete  lisse de deux quadriques
 donn\'ee par $f=g=0$.
 Pour $n \geq 5$, il existe un point quadratique sur $X$.
 \end{thm}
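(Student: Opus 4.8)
The plan is to reduce at once to $n=5$ and then to manufacture a single quadratic extension $K/k$ over which $X$ acquires a rational point, by coupling a local-to-global patching argument with Salberger's conditional Hasse principle (Th\'eor\`eme~\ref{sal93}). First I would reduce to $n=5$: intersecting $X$ with a sufficiently general linear subspace $\P^5_{k}\subset\P^n_{k}$ produces, by Bertini, a smooth complete intersection of two quadrics $X'\subset\P^5_{k}$, and a point of degree at most $2$ on $X'$ is a fortiori one on $X$. So assume $n=5$. The geometry of \S1 is then available: by Proposition~\ref{dansP5}(i) the variety $F_{1}(X)$ of lines on $X$ is a torsor under an abelian surface, in particular smooth and geometrically integral, and by Proposition~\ref{kuznestov} the scheme $S_{2}(X)$ of conics in $X$ (isomorphic to $G_{2}(X)$, since $n=2\cdot2+1$) is likewise geometrically integral.

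The core of the argument is to produce one quadratic extension $K/k$ enjoying two properties: first, $X_{K}(\A_{K})\neq\emptyset$; second, $X_{K}$ contains a conic defined over $K$. For the first property I would proceed as in Proposition~\ref{dansP4bis}(v). Since $X$ is geometrically integral it has $k_{v}$-points for all but finitely many $v$ (Lang--Weil together with Hensel at the places of good reduction, where one may also invoke that $F_{1}(X)$ is geometrically integral). At each of the finitely many remaining non-archimedean places, Proposition~\ref{dansP5}(v) (ultimately Theorem~\ref{dansP4}) furnishes a point of degree at most $2$, hence a quadratic extension $k_{v}(\sqrt{a_{v}})$ carrying a point; at the real places, Mordell's Proposition~\ref{mordell} produces a non-singular form in the pencil of signature $0$ or $1$, hence a local line and a point over $\R$ or $\C$. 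Weak approximation then yields $a\in k^{*}$ realizing all the prescribed local square classes, and $K=k(\sqrt{a})$ satisfies $X_{K}(\A_{K})\neq\emptyset$. For the second property I would use that $S_{2}(X)$ is geometrically integral, so that $X$ contains a conic over almost every $k_{v}$ (and, at good places of large residual cardinality, already by Proposition~\ref{dansP5}(iv)), and that a conic over a controlled quadratic extension of $k_{v}$ exists at the remaining places; via the geometry of $G_{2}(X)$ and Proposition~\ref{coniqueplan}, one then arranges that the \emph{same} $K$ supports a conic in $X_{K}$.

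With such a $K$ in hand, $X_{K}\subset\P^5_{K}$ is a smooth complete intersection of two quadrics containing a conic, with $X_{K}(\A_{K})\neq\emptyset$; as $\Br(X_{K})/\Br(K)=0$ for $n\geq5$, Theorem~\ref{sal93} gives $X_{K}(K)\neq\emptyset$, that is, a point of $X$ of degree at most $2$. The main obstacle I anticipate is exactly the simultaneous control, through a single choice of $K$, of both the local solubility of $X$ and the existence of a conic: the two weak-approximation conditions must be shown to be compatible, which requires understanding how conics over $k_{v}$ (equivalently $k_{v}$-points of $G_{2}(X)$, or points of the genus-$2$ curve $y^2=\det(\lambda f+\mu g)$) behave at the finitely many bad places and can be propagated into a global quadratic extension. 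An attractive alternative, bypassing Salberger, would be to find a quadratic point directly on the geometrically integral torsor $F_{1}(X)$, since a $K$-line in $X$ gives $X(K)\neq\emptyset$ immediately; but this trades the conic obstruction for a period--index question about $F_{1}(X)$ that does not seem easier.
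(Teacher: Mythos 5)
Your overall architecture (reduce to $n=5$, pass to a quadratic extension $K$, invoke Salberger's Th\'eor\`eme~\ref{sal93}) is genuinely different from the paper's, and it founders precisely at the step you yourself flag as ``the main obstacle'': producing a single quadratic $K/k$ over which $X_{K}$ simultaneously has adelic points \emph{and} contains a conic. A conic on $X_{K}$ amounts (Proposition~\ref{coniqueplan}, Th\'eor\`eme~\ref{alphaetpoint}) to a rank-$6$ form $\lambda f+\mu g$ over $K$ that is totally hyperbolic, and this imposes \emph{two} constraints that local solubility does not control: the square class of $-\det(\lambda f+\mu g)$ in $K$ (equivalently, a $K$-point of the genus-$2$ curve $C$) and the vanishing of the Clifford/Brauer class $\alpha_{X}$ at that point. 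Neither is implied by $X(K_{w})\neq\emptyset$: over a $p$-adic field a rational point on $X\subset\P^5$ only yields forms containing $2H$ in the pencil (Propositions~\ref{pointrat2H}, \ref{dansP5}(v)), not $3H$. Indeed, if ``local point $\Rightarrow$ local conic'' held in $\P^5$, then Salberger's theorem would already give the unconditional Hasse principle for smooth intersections of two quadrics in $\P^5$, which is a well-known open problem. The extra freedom of choosing $K$ does not obviously rescue this: the natural candidate $K=k(\sqrt{-\det(\lambda_0 f+\mu_0 g)})$ kills the discriminant obstruction but is then no longer available to arrange $X(\A_{K})\neq\emptyset$, and you would still need $\alpha_{X}$ to vanish at the corresponding point of $C(K)$. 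As written, ``one then arranges that the same $K$ supports a conic'' is an unproved claim carrying essentially all the difficulty.

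The paper's proof sidesteps the conic entirely, and with it Salberger's theorem. It only needs a \emph{line} in some quadric of the pencil, i.e.\ a nonsingular form $\lambda f+\mu g$ containing $2H$: such a line meets the other quadrics of the pencil in a degree-$\le 2$ cycle, giving the quadratic point directly (the easy implication of Proposition~\ref{pointquaddroite}). The condition ``contains $2H$'' carries no discriminant obstruction and obeys the Hasse principle for subforms (Th\'eor\`eme~\ref{Hasse}); locally it holds for all nonsingular forms at good places (Proposition~\ref{dansP5}(iv)) and for a Zariski-dense, open set of parameters at the bad places (local quadratic point from Th\'eor\`eme~\ref{dansP4} plus Proposition~\ref{pointquaddroite} and the implicit function theorem), so weak approximation on the parameter $\P^1$ followed by Hasse--Minkowski concludes. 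If you want to salvage your route, you should either replace ``conic over $K$'' by ``line in a pencil quadric over $k$'' --- at which point you have reconstructed the paper's argument --- or supply a genuine proof of the compatibility of the two weak-approximation conditions, which I do not believe can be done by the tools you cite.
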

 
\begin{proof}  Supposons d'abord $n=5$.
  Hors d'un ensemble fini $S$ de places de $k$, contenant les places $2$-adiques,
 l'intersection lisse de deux quadriques $X$ a bonne r\'eduction
 comme intersection de deux quadriques.
 
 Pour toute place $v$ de $k$  hors de $S$, par la proposition \ref{dansP5}(iv)  
 toute forme non singuli\`ere
  $\lambda_{v} f + \mu_{v} g $ dans le pinceau contient $2H$.

 D'apr\`es le th\'eor\`eme local de Creutz-Viray \cite[Thm. 1.2 (1)]{CV21} (cas lisse du th\'eor\`eme
 \ref{dansP4} ci-dessus),  pour toute place $v$ de $k$,
 $X$ contient un point quadratique  sur $k_{v}$
 (c'est clair pour $k_{v}=\R$),
 donc par la proposition \ref{pointquaddroite}
  il existe une forme quadratique non singuli\`ere
 $\lambda_{v} f + \mu_{v} g $ dans le pinceau
 qui contient $2H$.
 Par le th\'eor\`eme des fonctions implicites, ceci sera encore
 vrai pour tout point de $\P^1(k_{v})$ proche de $(\lambda_{v},\mu_{v})$.
 
 Par approximation faible sur $\P^1_{k}$, on trouve donc 
une forme non singuli\`ere
   $\lambda f + \mu g$ dans le pinceau
 sur $k$ qui localement pour toute place contient $2H$.
 Par le th\'eor\`eme \ref{Hasse}, la forme contient alors
$2H$  sur le corps $k$.  Par l'implication \'evidente
 dans la   proposition \ref{pointquaddroite}, on conclut que
  $X$ poss\`ede un point quadratique.  
 
  \smallskip
 
 Pour $n \geq 6$, on peut proc\'eder par r\'eduction au cas $n =5$, mais
 il est plus simple d'observer que,  comme toute forme non singuli\`ere de
 rang 7 sur un corps $p$-adique contient $2H$,
 l'argument final ci-dessus permet  facilement de prouver l'existence
  d'une forme $\lambda f + \mu g$
 non singuli\`ere contenant $2H$. 
\end{proof}

 \begin{rmk}
Pour cette d\'emonstration, il suffit d'utiliser le fait que $X$  et
 la $k$-vari\'et\'e $F_{1}(X)$ s'\'etendent en des mod\`eles projectifs et lisses
 au-dessus d'un ouvert non vide du spectre de  l'anneau des entiers du corps
 de nombres $k$, les fibres sur les corps r\'esiduels $\kappa(v)$ 
  \'etant  les vari\'et\'es g\'eom\'etriquement int\`egres $F_{1}(X_{\kappa_{v}})$
attach\'ees aux intersections lisses de deux quadriques dans $\P^5_{\kappa(v)}$.
\end{rmk}
 
  \begin{rmk}  
  Creutz et Viray \cite[Thm. 1.1]{CV21} ont montr\'e que pour~$k$ un corps de nombres et $X \subset \P^4_{k}$ une
  intersection compl\`ete lisse de deux quadriques, l'indice $I(X)$ divise 2. La question  \cite[Question 1.3]{CV21}
  si une telle vari\'et\'e $X$ poss\`ede un point quadratique est ouverte.
  \end{rmk}

 \bigskip
 
Le th\'eor\`eme suivant a \'et\'e \'etabli par Iyer et Parimala \cite{IP22}. Nous en offrons une
d\'emonstration alternative.

\begin{thm}\label{iyerparimala} (Iyer et Parimala)
Soient $k$ un corps de nombres et $X \subset \P^5_{k}$ une intersection compl\`ete
lisse de deux quadriques donn\'ee par un syst\`eme $f=0, g=0$.
Supposons que la courbe $C$ d'\'equation 
$y^2=-det(\lambda f + \mu g)$ a un point de degr\'e impair,
ce qui est le cas par exemple s'il existe une forme de rang 5 dans le pinceau.
Si pour chaque place $v$, la vari\'et\'e $X$ contient une $k_{v}$-droite,
alors $X$ a un point rationnel.
\end{thm}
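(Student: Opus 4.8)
La strat\'egie que je suivrais consiste \`a produire une conique d\'efinie sur $k$ contenue dans $X$, puis \`a invoquer le th\'eor\`eme \ref{sal93} de Salberger. En effet, l'hypoth\`ese fournit pour chaque place $v$ une $k_{v}$-droite dans $X$, donc en particulier $X(k_{v}) \neq \emptyset$ pour toute $v$, de sorte que $X(\A_{k}) \neq \emptyset$. Une fois une conique sur $k$ construite, le th\'eor\`eme \ref{sal93} donnera $X(k) \neq \emptyset$ ; remarquons d'ailleurs que si la conique obtenue est g\'eom\'etriquement r\'eductible, elle contient d\'ej\`a un point rationnel et la conclusion est imm\'ediate.

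D'apr\`es la proposition \ref{coniqueplan}, construire une conique revient \`a trouver $(\lambda,\mu) \in \P^1(k)$ tel que $\lambda f + \mu g$ contienne $3H$. Je commencerais par l'observation locale suivante. Fixons une place $v$ et une $k_{v}$-droite $L \subset X$ ; comme $L$ est contenue dans $X$, elle est contenue dans toute quadrique du pinceau, et une droite contenue dans une quadrique en fournit un sous-espace totalement isotrope de dimension $2$. Ainsi toute forme $\lambda f + \mu g$ sur $k_{v}$ contient $2H$. Par cons\'equent, pour $(\lambda,\mu)$ dans le lieu o\`u la forme est de rang $6$, sa forme r\'esiduelle de rang $2$ est hyperbolique si et seulement si $-\det(\lambda f + \mu g) \in (k_{v}^{*})^{2}$, et dans ce cas $\lambda f + \mu g$ contient $3H$. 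Autrement dit, $X$ contient une $k_{v}$-conique si et seulement si la courbe $C$ d'\'equation $y^2 = -\det(\lambda f + \mu g)$ poss\`ede un $k_{v}$-point au-dessus du lieu de rang~$6$.

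Le point central de la d\'emonstration est alors le passage du local au global. Pour presque toute place $v$, la courbe $C$ et la vari\'et\'e $F_{1}(X)$ ont bonne r\'eduction ; comme $F_{1}(X)$ est g\'eom\'etriquement int\`egre (proposition \ref{dansP5}(i)) et $C$ g\'eom\'etriquement int\`egre, les estimations de Weil et le lemme de Hensel assurent l'existence de points locaux, donc de $k_{v}$-coniques, en presque toute place. Il reste \`a traiter les places en nombre fini o\`u cela n'est pas automatique, et surtout \`a fabriquer une conique globale. C'est ici qu'intervient de mani\`ere essentielle l'hypoth\`ese que $C$ poss\`ede un point de degr\'e impair : via la structure d'espace principal homog\`ene de $F_{1}(X)$ sous la jacobienne $J_{C}$ (proposition \ref{dansP5}(i)) et la fibration $G_{2}(X) \to C$ \`a fibres rationnelles, de type $\P^3$ (cf. proposition \ref{kuznestov}), dont $C$ est pr\'ecis\'ement la base de la factorisation de Stein de $G_{2}(X) \to \P^1_{k}$, ce point de degr\'e impair doit permettre de contr\^{o}ler la base de genre $2$ et de relever un point convenable de $C$ en un point de $G_{2}(X)$, c'est-\`a-dire en une conique de $X$ d\'efinie sur $k$.

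L'obstacle principal est pr\'ecis\'ement cette \'etape globale. On ne peut pas se contenter d'approximation faible sur $\P^1_{k}$ suivie du th\'eor\`eme \ref{Hasse} de Hasse comme dans la d\'emonstration du th\'eor\`eme \ref{quadratiqueP5nombres} : contrairement \`a la propri\'et\'e de contenir $2H$, la totale hyperbolicit\'e ($3H$) d'une forme de rang $6$ n'est pas automatique aux places de bonne r\'eduction, de sorte qu'un param\`etre global g\'en\'erique \'echouerait en une infinit\'e de places. Il n'y a par ailleurs pas de principe de Hasse pour la courbe $C$ de genre $2$. Le point d\'elicat sera donc d'exploiter l'hypoth\`ese de degr\'e impair sur $C$, conjointement aux $k_{v}$-points de $F_{1}(X)$ fournis par les droites, pour produire la conique sur $k$ ; je m'attends \`a ce que cette \'etape repose sur la relation entre $C$, $\mathrm{Pic}^1(C)$ et $F_{1}(X)$ comme torseurs sous $J_{C}$, et \'eventuellement sur la m\'ethode de fibration sous-tendant le th\'eor\`eme \ref{sal93} lui-m\^eme.
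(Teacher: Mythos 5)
Votre strat\'egie g\'en\'erale (produire une conique d\'efinie sur $k$ contenue dans $X$, puis appliquer le th\'eor\`eme \ref{sal93} de Salberger) et votre calcul local (une $k_{v}$-droite contenue dans $X$ donne $2H$ dans toute forme du pinceau sur $k_{v}$, et une forme de rang $6$ contenant $2H$ est totalement hyperbolique si et seulement si $-\det(\lambda f+\mu g)$ est un carr\'e) sont exactement ceux de la d\'emonstration du texte. Mais l'\'etape globale, que vous identifiez vous-m\^{e}me comme l'obstacle principal, reste un trou : vous ne la menez pas \`a bien, et la piste que vous sugg\'erez (torseurs sous la jacobienne de $C$, rel\`evement le long de $G_{2}(X)\to C$) n'est pas celle qui aboutit.

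L'id\'ee manquante est \'el\'ementaire : il ne faut pas chercher un param\`etre $(\lambda,\mu)$ sur $\P^1_{k}$ puis v\'erifier la condition de carr\'e place par place, il faut prendre pour param\`etre l'image d'un point de $C$. Par un lemme de d\'eplacement sur la courbe $C$, le point de degr\'e impair fournit un point ferm\'e $P$ de degr\'e impair avec $y(P)\neq 0$ ; posons $K=k(P)$, extension de degr\'e impair de $k$. Sur $K$ on dispose d'un param\`etre $(\lambda_{0},\mu_{0})\in\P^1(K)$ tel que $\lambda_{0}f+\mu_{0}g$ soit non d\'eg\'en\'er\'ee et que $-\det(\lambda_{0}f+\mu_{0}g)$ soit un carr\'e dans $K$, donc dans tous les compl\'et\'es $K_{w}$. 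En chaque place $w$, la $k_{v}$-droite (pour $v$ la place de $k$ sous $w$) donne $2H$ dans $\lambda_{0}f+\mu_{0}g$ sur $K_{w}$, et la condition de carr\'e, \'etant globale donc vraie partout localement, force la totale hyperbolicit\'e de cette forme sur chaque $K_{w}$. Le th\'eor\`eme \ref{Hasse} donne alors la totale hyperbolicit\'e sur $K$, donc $X_{K}$ contient une conique ; comme $X(\A_{K})\neq\emptyset$, le th\'eor\`eme \ref{sal93} donne $X(K)\neq\emptyset$, et la proposition \ref{springer} conclut $X(k)\neq\emptyset$ puisque $[K:k]$ est impair. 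Votre objection --- qu'un param\`etre global g\'en\'erique \'echouerait en une infinit\'e de places --- est donc correcte, mais elle dispara\^{\i}t d\`es que le param\`etre est choisi sur $C$ plut\^{o}t que sur $\P^1_{k}$ ; ni la structure de torseur de $F_{1}(X)$ ni la g\'eom\'etrie de $G_{2}(X)$ ne sont n\'ecessaires ici.
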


\begin{proof}  
Par un lemme de d\'eplacement bien connu combin\'e \`a la proposition \ref{springer}
on se ram\`ene au cas o\`u $C$ poss\`ede un point rationnel satisfaisant $y \neq 0$.
Donc il existe une forme non d\'eg\'en\'er\'ee $\lambda_{0} f + \mu_{0} g$ dans le pinceau
sur $k$
dont le d\'eterminant est  l'oppos\'e d'un carr\'e. 
 Par hypoth\`ese, sur chaque compl\'et\'e $k_{v}$,
la forme $\lambda_{0 }f + \mu_{0} g$ contient deux hyperboliques.
Comme son d\'eterminant est l'oppos\'e d'un carr\'e, elle est
totalement hyperbolique.
Puisque cela vaut sur chaque $k_{v}$, par le th\'eor\`eme \ref{Hasse}
 $\lambda_{0 }f + \mu_{0} g$ est totalement hyperbolique sur $k$.
 La vari\'et\'e $X  \subset \P^5_{k}$ contient donc une conique, 
 et elle a des points dans tous les compl\'et\'es.
D'apr\`es le th\'eor\`eme \ref{sal93} (Salberger), elle  a un $k$-point. 
 \end{proof}

\subsection{Un deuxi\`eme point de vue}

 Soient $k$ un corps de caract\'eristique diff\'erente de 2, et $X \subset \P^5_{k}$
 une intersection compl\`ete lisse de deux quadriques, d\'efinie par $f=g=0$.
 Soit $C$ la courbe lisse d\'efinie par l'\'equation
 $$y^2=-det(\lambda f + \mu g).$$
 
 \bigskip
 
On renvoie \`a  Wang  \cite{XW18}  pour les faits suivants.
 
 Soit $J=\Pic^0_{C/k}$.  C'est une vari\'et\'e ab\'elienne sur $k$.
 Soit $P=\Pic^1_{C/k}$. 
 C'est un espace principal homog\`ene sous $J$.
 Il est trivial si $C$ poss\`ede un point ferm\'e de degr\'e impair.
  Soit $F_{1}=F_{1}(X)$ la vari\'et\'e
 des droites contenues dans $X$. 
 C'est un espace principal homog\`ene sous $J$.
  Dans $H^1(k,J)$, on a les  \'egalit\'es
 $[P] = 2 [F_{1}]$ et $2[P]=0$. Donc $4[F_{1}]=0$.

\bigskip

Les propri\'et\'es suivantes sont discut\'ees dans   la litt\'erature
\cite{Reid72}  \cite[\S 1]{ABB14}, et dans l'appendice au pr\'esent article
(propositions \ref{prop:gr} et \ref{prop:sb}).

On consid\`ere la vari\'et\'e $G_{2}(X)$ form\'ee des couples
$(H, Q)$ o\`u $Q $ est une quadrique de $\P^5_{k}$ contenant $X$, et o\`u
$H \subset Q \subset \P^5$ est un  espace lin\'eaire de dimension 2 de $\P^5$
contenu dans la quadrique $Q$.
On a une projection \'evidente $G_{2}(X) \to \P^1_{k}$ associant
\`a $(H,Q)$ le param\`etre de $Q$.
On montre 
(\cite[Thm. 1.10]{Reid72} \cite[Prop. 1.18]{ABB14}, appendice A ci-dessous)
que la factorisation de Stein de ce morphisme est
$$  G_{2}(X) \to C \to \P^1_{k}$$
avec $C$ la courbe ci-dessus, 
et que le morphisme $G_{2}(X) \to C$ d\'efinit
un sch\'ema de Severi-Brauer (\`a fibres lisses) de dimension relative 3,
associ\'e \`a une classe $\alpha_{X} \in \Br(C)[2]$ dont l'image dans
$\Br(k(C))$ est d'indice  divisant 4. 
Elle est donc la classe d'une alg\`ebre de biquaternions sur $k(C)$.

  \medskip

On dispose par ailleurs de la forme quadratique
$q:= f+tg$ sur le corps $k(t)$, qui sur l'extension $k(C)$ de $k(t)$ acquiert bonne
r\'eduction partout. 
La forme quadratique $q_{k(C)}$ est une forme quadratique de rang 6
de d\'eterminant~$-1$.  

\medskip
 
 Rappelons que sur un corps $F$ de caract\'eristique diff\'erente de 2, toute forme quadratique $\phi$
 de rang 6 et de d\'eterminant sign\'e 1, donc de d\'eterminant $(-1)$,
 est appel\'ee une forme d'Albert.   
  Elle est   un multiple scalaire d'une forme du type
 $$ <-a,-b, ab , -c, -d, cd>.$$ 
Son invariant de Clifford $c(\phi) \in \Br(F)[2]$ est 
la somme des deux classes d'alg\`ebres de quaternions
  $(a,b)$ et $(c,d)$.
 La forme $\phi$  est isotrope si et seulement si l'indice de $c(\phi)$ divise 2.
 La forme $\phi$  est totalement hyperbolique si et seulement si $c(\phi)= 0$.
 Pour tout ceci, et la d\'emonstration de la proposition \ref{identif} ci-dessus, voir 
 voir \cite[8.1.3, 8.1.4, 8.1.6]{Ka08} et 
 la  Proposition B3 de
l'appendice B de \cite{ABB14}.

 \begin{prop}\label{identif}
{\it La classe $\alpha_{X} \in \Br(C)$ est la classe de l'alg\`ebre
de Clifford associ\'ee \`a  la forme quadratique  $q_{k(C)}$.}
\end{prop}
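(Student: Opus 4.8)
Mon plan est de comparer les deux classes sur le point g\'en\'erique de $C$, o\`u elles vivent naturellement dans le groupe de Brauer du corps des fonctions $k(C)$, puis de remonter \`a $\Br(C)$ par un argument de non-ramification. L'outil final sera l'injectivit\'e de la restriction $\Br(C) \hookrightarrow \Br(k(C))$, valable car $C$ est une courbe lisse.

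Je commencerais par expliciter la fibre g\'en\'erique du sch\'ema de Severi-Brauer $G_{2}(X) \to C$. Notons $\eta$ le point g\'en\'erique de $C$ et $Q_{\eta} \subset \P^5_{k(C)}$ la quadrique lisse $\{f+tg=0\}$, de rang 6. Par construction de la factorisation de Stein $G_{2}(X) \to C \to \P^1_{k}$, un point de $C$ au-dessus de $(\lambda,\mu)\in \P^1$ correspond au choix de l'un des deux syst\`emes de plans $\P^2$ contenus dans la quadrique associ\'ee ; la fibre de $G_{2}(X) \to C$ en $\eta$ est donc l'une des deux familles de plans maximaux de $Q_{\eta}$, une vari\'et\'e de Severi-Brauer de dimension 3 sur $k(C)$, dont $\alpha_{X}|_{k(C)}$ est par d\'efinition la classe.

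Le point central de l'argument est l'identification classique de cette vari\'et\'e. Sur $k(C)$ l'\'equation $y^2=-\det(\lambda f+\mu g)$ rend $-\det(q)$ carr\'e ; comme le discriminant sign\'e d'une forme de rang 6 vaut $-\det$, la forme $q_{k(C)}=f+tg$ est une forme d'Albert \`a discriminant sign\'e trivial. Son alg\`ebre de Clifford paire se scinde alors en un produit $A^{+}\times A^{-}$ de deux alg\`ebres centrales simples de degr\'e 4 sur $k(C)$, et la th\'eorie classique des alg\`ebres de Clifford des quadriques (voir \cite[8.1.3, 8.1.4, 8.1.6]{Ka08} et la Proposition B3 de l'appendice B de \cite{ABB14}) identifie la vari\'et\'e de Severi-Brauer d'une famille de plans maximaux de $Q_{\eta}$ \`a $\mathrm{SB}(A^{\pm})$. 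Puisque $q_{k(C)}$ est une forme d'Albert, les deux composantes $A^{+}$ et $A^{-}$ sont Brauer-\'equivalentes \`a l'alg\`ebre de biquaternions dont la classe est l'invariant de Clifford $c(q_{k(C)})$ ; on obtient ainsi $\alpha_{X}|_{k(C)}=c(q_{k(C)})$ dans $\Br(k(C))$.

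Il reste \`a remonter \`a $C$. Comme $q$ a bonne r\'eduction partout sur $C$, son invariant de Clifford est non ramifi\'e et appartient donc au sous-groupe $\Br(C) \subset \Br(k(C))$ ; les deux classes $\alpha_{X}$ et $c(q_{k(C)})$ vivent alors dans $\Br(C)$ et ont m\^eme image dans $\Br(k(C))$, donc co\"{\i}ncident par injectivit\'e de la restriction. Le point le plus d\'elicat me semble \^etre l'identification g\'eom\'etrique de l'\'etape pr\'ec\'edente : il faut v\'erifier que le choix d'un point de $C$ au-dessus de $(\lambda,\mu)$ correspond exactement au choix de l'un des facteurs $A^{\pm}$ (via le plongement spinoriel $\mathrm{Spin}_{6}=\mathrm{SL}_{4}$, qui rend chaque famille de plans isomorphe \`a une forme tordue de $\P^3$), et suivre avec soin les signes dans le discriminant (l'emploi de $-\det$ plut\^ot que $\det$) qui garantit la trivialit\'e du discriminant sign\'e sur $k(C)$, et donc le scindage de l'alg\`ebre de Clifford paire.
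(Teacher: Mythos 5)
Votre d\'emonstration est correcte et suit essentiellement la m\^eme voie que l'article, lequel ne r\'edige pas de preuve mais renvoie pr\'ecis\'ement \`a \cite[8.1.3, 8.1.4, 8.1.6]{Ka08} et \`a la Proposition B3 de l'appendice B de \cite{ABB14} : identification de la fibre g\'en\'erique du sch\'ema de Severi--Brauer $G_{2}(X) \to C$ \`a l'une des deux familles de plans maximaux de la forme d'Albert $q_{k(C)}$, c'est-\`a-dire \`a la vari\'et\'e de Severi--Brauer d'un facteur de l'alg\`ebre de Clifford paire, dont la classe est l'invariant de Clifford. Votre \'etape finale de descente de $\Br(k(C))$ \`a $\Br(C)$ par injectivit\'e de la restriction est correcte, quoique essentiellement superflue puisque $\alpha_{X}$ est d\'ej\`a d\'efinie comme \'el\'ement de $\Br(C)$ et que l'\'egalit\'e annonc\'ee se lit dans $\Br(k(C))$.
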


 \begin{prop}
Soient  $X \subset \P^5_{k}$, $C$ et  $\alpha_{X} \in \Br(C) \subset \Br(k(C))$  comme ci-dessus.

(i)  Si l'on a $X(k) \neq \emptyset$, c'est-\`a-dire si la forme quadratique 
$f+tg$ a un z\'ero sur le corps $k(t)$,
alors l'indice de $\alpha \in \Br(k(C))$ 
 divise 2.
 
 (ii) Si l'indice de $\alpha \in \Br(k(C))$  divise 2, alors 
 la forme quadratique $f+tg$ a un z\'ero sur l'extension quadratique $k(C)$ de $k(t)$.
  \end{prop}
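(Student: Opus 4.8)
Le plan consiste \`a tout ramener au crit\`ere d'isotropie des formes d'Albert rappel\'e plus haut, via l'identification de la proposition \ref{identif}. On dispose en effet de deux ingr\'edients. D'une part, d'apr\`es la proposition \ref{identif}, la classe $\alpha = \alpha_{X} \in \Br(k(C))$ est exactement l'invariant de Clifford $c(q_{k(C)})$ de la forme $q_{k(C)}$, o\`u $q = f+tg$ ; or on a rappel\'e que $q_{k(C)}$ est une forme d'Albert (rang $6$, d\'eterminant $-1$). D'autre part, une forme d'Albert $\phi$ sur un corps $F$ est isotrope si et seulement si l'indice de $c(\phi)$ divise $2$. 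Les deux assertions de la proposition reviennent donc \`a la traduction, dans un sens puis dans l'autre, entre l'isotropie de $q_{k(C)}$ et la condition que l'indice de $\alpha$ divise $2$.

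Pour (i), je partirais de l'hypoth\`ese $X(k) \neq \emptyset$. Le cas $r=0$ de la proposition \ref{amerbrumer} (Amer--Brumer) montre que ceci \'equivaut \`a l'isotropie de la forme g\'en\'erique $q = f+tg$ sur $k(t)$. Comme $k(t) \subset k(C)$, la forme $q_{k(C)}$ est alors isotrope sur $k(C)$, et le crit\`ere d'Albert donne aussit\^ot que l'indice de $c(q_{k(C)}) = \alpha$ divise $2$, ce qui est l'\'enonc\'e (i).

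Pour (ii), je partirais de l'hypoth\`ese que l'indice de $\alpha = c(q_{k(C)})$ divise $2$. Le sens r\'eciproque du crit\`ere d'Albert fournit directement l'isotropie de la forme d'Albert $q_{k(C)}$ sur $k(C)$, c'est-\`a-dire l'existence d'un z\'ero de $f+tg$ sur l'extension quadratique $k(C)$ de $k(t)$.

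Il n'y a donc pas ici d'obstacle technique s\'erieux : l'essentiel du r\'esultat a d\'ej\`a \'et\'e absorb\'e dans la proposition \ref{identif} et dans la th\'eorie des formes d'Albert. Le seul point auquel il faut prendre garde est la dissym\'etrie entre les deux \'enonc\'es : en (i) l'hypoth\`ese fournit l'isotropie de $q$ sur $k(t)$ tout entier (donc un point rationnel de $X$), tandis qu'en (ii) la condition sur l'indice ne donne l'isotropie que sur l'extension quadratique $k(C)$, et non sur $k(t)$ ; les deux implications ne sont en particulier pas r\'eciproques l'une de l'autre, et il importe de bien distinguer le corps de base $k(t)$ de son extension $k(C)$ \`a chaque \'etape.
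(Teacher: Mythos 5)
Votre argument est correct et c'est essentiellement celui du texte : la proposition \ref{identif} identifie $\alpha_{X}$ \`a l'invariant de Clifford de la forme d'Albert $q_{k(C)}$, et le crit\`ere d'isotropie des formes d'Albert donne aussit\^ot les deux assertions (avec, pour (i), le passage de $X(k)\neq\emptyset$ \`a l'isotropie de $q$ sur $k(t)$ via la proposition \ref{amerbrumer}). Votre remarque finale sur la dissym\'etrie entre $k(t)$ et $k(C)$ est pertinente et bien vue.
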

\begin{proof}
    Via la proposition \ref{identif}, ceci r\'esulte
   des rappels sur les formes d'Albert donn\'es ci-dessus : l'indice de $\alpha \in \Br(k(C))$
   divise 2 si et seulement si la forme $f+tg$ est isotrope sur le corps $k(C)$. 
   Pour (i), voir aussi la d\'emonstration g\'eom\'etrique \cite[\S4, Cor. 7]{HT21}.
    \end{proof}

 \begin{thm}\label{alphaetpoint}
Pour  $X \subset \P^5_{k}$, $C$ et  $\alpha_{X}$  comme ci-dessus, on a \'equivalence
entre les propri\'et\'es suivantes :

(i) Il existe une quadrique non singuli\`ere dans le pinceau qui contient un 
 $\P^2_{k}$.

(ii)  Il y a une forme quadratique non singuli\`ere totalement hyperbolique dans le
pinceau  $\lambda f+\mu g$.

(iii) Il y a un $k$-point $m \in C(k)$ 
non ramifi\'e pour $C \to \P^1_{k}$
tel que  $\alpha_{X}(m)=0 \in \Br(k)$.

Elles impliquent que $X$ contient une conique.
 \end{thm}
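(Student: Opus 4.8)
Le plan est de voir les trois conditions comme trois descriptions d'un même objet --- une forme de rang $6$ dans le pinceau, de déterminant signé trivial et d'invariant de Clifford nul --- et de passer d'une description à l'autre à l'aide des résultats déjà rassemblés. J'établirais d'abord l'équivalence de (i) et (ii). Une forme non singulière du pinceau est de rang $6$, et d'après la proposition rappelée en tête du \S 1 (une forme $q$ contient $(r+1)H$ si et seulement si la quadrique associée contient un $\P^r_k$, ici avec $r=2$), contenir un $\P^2_k$ équivaut à contenir $3H$. Comme $3H$ est déjà de rang $6$, une forme de rang $6$ contient $3H$ exactement lorsqu'elle est elle-même totalement hyperbolique, d'où l'équivalence sans travail supplémentaire.

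Le cœur de la démonstration est l'équivalence de (ii) et (iii), où j'utiliserais le dictionnaire entre le pinceau et la courbe $C$. Une forme non singulière totalement hyperbolique $\lambda_0 f+\mu_0 g \simeq 3H$ a pour déterminant ordinaire $(-1)^3=-1$, donc $-\det$ est un carré ; son paramètre $(\lambda_0:\mu_0)\in \P^1(k)$ se relève ainsi en un $k$-point $m$ de $C$, non ramifié puisque $\det\neq 0$. Réciproquement, un point non ramifié $m\in C(k)$ fournit une forme non singulière dont le déterminant signé est trivial (car $-\det$ est le carré $y^2$ en $m$), c'est-à-dire une forme d'Albert. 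D'après les rappels sur les formes d'Albert, une telle forme est totalement hyperbolique si et seulement si son invariant de Clifford est nul. Il reste à identifier cet invariant à l'évaluation $\alpha_X(m)$ : d'après la proposition \ref{identif}, la classe $\alpha_X$ est représentée sur $k(C)$ par l'algèbre de Clifford de $q_{k(C)}$, et en un $k$-point non ramifié la forme $q$ a bonne réduction, de sorte que la spécialisation $m^*\alpha_X \in \Br(k)$ coïncide avec l'invariant de Clifford de la forme spécialisée $\lambda_0 f+\mu_0 g$. Admettant ce point, (ii) et (iii) disent la même chose.

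Enfin, l'implication que $X$ contient une conique résulte de la proposition \ref{coniqueplan} (dont les hypothèses sont satisfaites car $n=5\geq 4$) appliquée à (i) : la condition (i) entraîne la condition (b) de cette proposition (une forme du pinceau s'annulant sur un plan $\P^2_k$), d'où $X$ contient une conique.

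Le principal obstacle que j'anticipe est l'étape de spécialisation identifiant $m^*\alpha_X$ à l'invariant de Clifford de la forme en la fibre. Elle requiert que l'algèbre de Clifford de la forme générique s'étende en une algèbre d'Azumaya sur le lieu de $C$ où $q$ est non dégénérée (le complémentaire des points de ramification), de classe $\alpha_X$, et que l'évaluation de cette algèbre d'Azumaya au point rationnel $m$ soit compatible avec la formation de l'invariant de Clifford de la fibre. C'est le seul endroit où l'hypothèse de bonne réduction (traduite par la non-ramification de $m$) intervient de façon essentielle ; tout le reste est manipulation formelle des équivalences déjà en main.
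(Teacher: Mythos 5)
Votre argument suit le texte pour l'\'equivalence de (i) et (ii) (une forme non singuli\`ere du pinceau est de rang $6$, et elle contient $3H$ si et seulement si elle est totalement hyperbolique) ainsi que pour la derni\`ere implication via la proposition \ref{coniqueplan}. En revanche, pour l'\'equivalence avec (iii), vous prenez une route alg\'ebrique r\'eellement diff\'erente de celle du texte : formes d'Albert, invariant de Clifford, et identification de $\alpha_{X}(m)$ \`a l'invariant de Clifford de la forme sp\'ecialis\'ee $\lambda_{0}f+\mu_{0}g$. Le texte proc\`ede g\'eom\'etriquement : d'apr\`es la proposition \ref{kuznestov} (et les propositions \ref{prop:gr} et \ref{prop:sb} de l'appendice), le morphisme $G_{2}(X)\to C$ est un sch\'ema de Severi--Brauer de dimension relative $3$ et de classe $\alpha_{X}$, donc sa fibre en un point non ramifi\'e $m\in C(k)$ est une vari\'et\'e de Severi--Brauer de classe $\alpha_{X}(m)$, dont un $k$-point est, par construction, un plan $\P^2_{k}$ contenu dans la quadrique non singuli\`ere de param\`etre l'image de $m$ ; l'\'equivalence de (i) et (iii) en d\'ecoule imm\'ediatement, sans dictionnaire avec les invariants de Clifford. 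L'\'etape que vous admettez --- la compatibilit\'e de l'\'evaluation $\alpha_{X}(m)$ avec l'invariant de Clifford de la fibre --- est pr\'ecis\'ement ce que cette lecture g\'eom\'etrique rend inutile, et telle quelle votre r\'edaction laisse l\`a un trou r\'eel : sans cette compatibilit\'e, aucune des deux implications entre (ii) et (iii) n'est \'etablie. Le point est toutefois vrai et se justifie \`a partir de l'appendice : $\alpha_{X}$ y est la classe d'une alg\`ebre d'Azumaya $\mathcal{B}_{0}$ sur $C$ dont l'image directe sur $\P^1_{k}$ est le faisceau des parties paires des alg\`ebres de Clifford du pinceau, de sorte qu'en un point non ramifi\'e $m$ la fibre de $\mathcal{B}_{0}$ est l'un des deux facteurs simples de l'alg\`ebre de Clifford paire de $\lambda_{0}f+\mu_{0}g$ (forme de discriminant sign\'e trivial), facteur dont la classe est l'invariant de Clifford de cette forme. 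Une fois ce compl\'ement apport\'e, votre d\'emonstration est correcte ; elle est simplement plus co\^{u}teuse que l'argument du texte, qui lit directement un plan rationnel comme un point rationnel de la fibre de Severi--Brauer.
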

 
\begin{proof}  Les \'enonc\'es (i) et (ii) sont clairement \'equivalents.
Comme rappel\'e ci-dessus et dans la Proposition
\ref{kuznestov}, le morphisme $G_{2}(X) \to \P^1_{k}$
se factorise par $C \to \P^1_{k}$, o\`u la courbe $C$
est donn\'ee par $y^2=-det(\lambda f + \mu g)$. 
Un $k$-point $m$ de $C$ non dans le lieu de ramification
a pour image un point $(\lambda,\mu) \in \P^1(k)$
tel que $\lambda f+\mu g$ est de rang maximal,
et qui contient un $\P^2_{k}$ si et seulement si 
 $\alpha_{X}(m)=0 \in \Br(k)$.
\end{proof}

\bigskip

Une d\'emonstration du th\'eor\`eme suivant (avec la restriction  $X(k)\neq \emptyset$)
est esquiss\'ee dans
  \cite[Theorem 9]{HT21}. Nous  proposons une d\'emonstration alternative.

 \smallskip
 
 \begin{thm}\label{alpha}
 Soient $X \subset \P^5_{k}$,  $C$ et $\alpha_{X} \in \Br(C)$ comme ci-dessus.

  (i)  Si $X$ contient une droite sur $k$,  alors  $\alpha_{X}=0 \in \Br(k(C))$.
 
 (ii) Si $X(k)\neq \emptyset$ et $\alpha_{X}=0 \in \Br(k(C))$, alors  $X$ contient une droite sur $k$.
  \end{thm}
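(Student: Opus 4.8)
The plan is to translate both implications into the language of quadratic forms over the rational function field $k(t)$ and its quadratic extension $k(C)$, using the dictionary already assembled in the excerpt. Write $q=f+tg$ for the generic form over $k(t)$; since $X$ is smooth, $q$ is nondegenerate of rank $6$, and the defining equation $y^2=-\det(\lambda f+\mu g)$ shows that $k(C)$ is precisely the quadratic extension $k(t)(\sqrt{-\det q})$. By Proposition~\ref{identif}, $\alpha_X$ is the Clifford invariant of the Albert form $q_{k(C)}$ (rank $6$, trivial discriminant over $k(C)$), so by the recalled facts on Albert forms one has $\alpha_X=0\in\Br(k(C))$ if and only if $q_{k(C)}$ is totally hyperbolic, i.e. equals $3H$. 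Finally, by Amer's theorem (Proposition~\ref{amerbrumer}), $X$ contains a $k$-line if and only if $q$ contains $2H$ over $k(t)$. With these three equivalences in hand the theorem becomes a bookkeeping statement about $q$ over $k(t)$ and $k(C)$.

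For (i), suppose $X$ contains a $k$-line, so $q$ contains $2H$ over $k(t)$. I would simply base-change to $k(C)$, where $-\det q$ is a square. Writing $q_{k(C)}=2H\perp\beta$ with $\beta$ binary and using $\det(2H)=1$, one gets $\det\beta=\det q$, so $-\det\beta=-\det q$ is a square over $k(C)$ and hence $\beta$ is hyperbolic. Thus $q_{k(C)}=3H$ is totally hyperbolic and $\alpha_X=0$. This direction is essentially formal once the Albert-form dictionary is granted.

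For (ii), suppose $X(k)\neq\emptyset$ and $\alpha_X=0$. A rational point $x\in X(k)$ satisfies $f(x)=g(x)=0$, hence is an isotropic vector for $q$ over $k\subseteq k(t)$, so I may write $q=H\perp q'$ with $q'$ of rank $4$ over $k(t)$. From $\alpha_X=0$ we have $q_{k(C)}=3H$, whence by Witt cancellation $q'_{k(C)}=2H$; in particular $q'$ is isotropic over $k(C)$. The decisive observation is that $k(C)$ is exactly the discriminant extension $k(t)(\sqrt{\det q'})$ of the quaternary form $q'$: indeed $\det q'=-\det q$ because the complementary hyperbolic plane has $\det H=-1$, so $k(t)(\sqrt{\det q'})=k(t)(\sqrt{-\det q})=k(C)$. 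I would then invoke Châtelet's theorem (Proposition~\ref{chatelet}): a nondegenerate quaternary form isotropic over its own discriminant extension is already isotropic over the ground field $k(t)$. This forces $q'$ to be isotropic over $k(t)$, so $q=H\perp q'$ contains $2H$ over $k(t)$, and Amer's theorem produces a $k$-line in $X$.

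The main obstacle, and really the whole content of (ii), is the clean identification of the double-cover extension $k(C)$ with the discriminant extension of the residual quaternary form $q'$, which is what reduces the problem to the classical four-variable case and unlocks Proposition~\ref{chatelet}; the identification of $\alpha_X$ with the Clifford invariant of $q_{k(C)}$ (Proposition~\ref{identif}) is the other essential input, but it is supplied. The remaining steps are routine Witt-ring manipulations, to which one should add a sanity check that the degree-$6$ separable polynomial $\det(f+tg)$ is a nonsquare in $k(t)$, so that $k(C)/k(t)$ is a genuine quadratic field extension and the above applications of Châtelet's theorem are meaningful.
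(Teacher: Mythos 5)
Your proof is correct and follows essentially the same route as the paper's: part (i) via the observation that $2H$ plus square discriminant over $k(C)$ forces total hyperbolicity, and part (ii) by splitting off a hyperbolic plane from the rational point, identifying $k(C)/k(t)$ with the discriminant extension of the residual quaternary form, and applying Ch\^atelet's theorem (Proposition~\ref{chatelet}) together with Amer's theorem (Proposition~\ref{amerbrumer}). The only difference is cosmetic: you make explicit the Witt-cancellation step and the sanity check that $-\det(f+tg)$ is a nonsquare, which the paper leaves implicit.
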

 
\begin{proof}
 
  (i)  Si $X$ contient une droite $\P^1_{k}$, alors
la forme quadratique $f+tg$ sur  $k(t)$ contient $2H$. 
Donc  $$q \simeq <1,-1> \perp <1,-1> \perp \rho. <1,det(q)>.$$ 
Sur l'extension quadratique  $k(C)$ cette forme est totalement hyperbolique.
Donc $\alpha_{X}=0 \in \Br(k(C))$.

(ii)  L'hypoth\`ese $X(k) \neq \emptyset$ implique que
 $q= f+tg$ contient $1H$.  On a donc
 $$q \simeq <1,-1> \perp q_{1}$$
 sur $k(t)$, avec $q_{1}$ de d\'eterminant un carr\'e dans $k(C)^{\times}$.
 L'hypoth\`ese
$\alpha_{X}=0 \in  \Br(k(C))$  implique alors que cette forme quadratique
   $q_{1}$ est compl\`etement hyperbolique sur $k(C)$. Puisque    $k(C)/k(t)$ est
   l'extension discriminant et $q_{1}$ a rang 4, par la proposition \ref{chatelet}
   ceci implique que
 $q_{1}$ est isotrope sur $k(t)$.
Donc $q$ contient $2H$ sur $k(t)$.
La quadrique g\'en\'erique $f+tg=0$ contient donc une droite. 
Par
la proposition
  \ref{amerbrumer}, la vari\'et\'e d'\'equations
 $f=g=0$ 
 contient un $\P^1_{k}$.
\end{proof}
 
 \medskip

L'hypoth\`ese $X(k)\neq \emptyset$ n'est pas n\'ecessaire. On a :
 
   \begin{thm}\label{alphabis}
Soient $X \subset \P^5_{k}$, $C$ et $\alpha_{X} \in \Br(C)$ comme ci-dessus.
Soit $J=\Pic^0_{C/k}$.
Les conditions suivantes sont \'equivalentes :
 
  (i)  La vari\'et\'e $X$ contient une droite sur $k$.
  
  (ii)  On a   $\alpha_{X}=0 \in \Br(k(C))$.
  
  (iii) La classe de  $F=F_{1}(X)$  dans  $H^1(k,J)$ est nulle.
\end{thm}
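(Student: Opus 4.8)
Le plan est d'établir la chaîne d'équivalences en isolant les deux implications immédiates, puis en concentrant l'effort sur le passage de la classe de Brauer à la classe du torseur. L'équivalence de (i) et (iii) est formelle : $F=F_{1}(X)$ est un torseur sous la variété abélienne $J=\Pic^0_{C/k}$, donc sa classe $[F]\in H^1(k,J)$ est nulle si et seulement si $F(k)\neq\emptyset$, c'est-à-dire si et seulement si $X$ contient une droite définie sur $k$. Par ailleurs l'implication (i) $\Rightarrow$ (ii) n'est autre que le théorème~\ref{alpha}(i), lequel ne suppose aucun point rationnel. Il ne reste donc qu'à prouver (ii) $\Rightarrow$ (iii), et c'est là tout le contenu nouveau, puisqu'il s'agit d'affranchir la réciproque du théorème~\ref{alpha}(ii) de l'hypothèse $X(k)\neq\emptyset$.

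Pour cela je partirais de la suite spectrale de Hochschild--Serre pour $C\to\Spec k$ à coefficients dans $\mathbb{G}_m$. Comme $\cl{C}$ est une courbe sur un corps algébriquement clos, on a $\Br(\cl{C})=0$ (Tsen), et les termes de bas degré donnent une suite exacte $\Br(k)\to\Br(C)\xrightarrow{\theta}H^1(k,\Pic(\cl{C}))$. D'autre part, la suite exacte de modules galoisiens $0\to\Pic^0(\cl{C})\to\Pic(\cl{C})\xrightarrow{\deg}\Z\to 0$, jointe à $H^1(k,\Z)=0$ et au fait que l'application de bord $\Z=H^0(k,\Z)\to H^1(k,J)$ envoie $1$ sur la classe $[P]$ du torseur $P=\Pic^1_{C/k}$, identifie $H^1(k,\Pic(\cl{C}))$ au quotient $H^1(k,J)/\langle[P]\rangle$.

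Le point-clé, que j'extrairais de la structure de schéma de Severi--Brauer du morphisme $G_{2}(X)\to C$ et de l'analyse de Wang~\cite{XW18}, est que $\theta(\alpha_{X})$ coïncide avec la classe de $F_{1}(X)$ modulo $\langle[P]\rangle$ ; géométriquement, la correspondance qui associe à une droite $\ell\subset X$ la famille des $2$-plans des quadriques du pinceau la contenant relie le $\P^3$-fibré défini par $\alpha_{X}$ au torseur des droites. Admettant cette identification, supposons $\alpha_{X}=0$ dans $\Br(k(C))=\Br(C)$. Alors $\theta(\alpha_{X})=0$, donc $[F]\in\langle[P]\rangle$. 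Comme $[P]=2[F]$ et $2[P]=0$, le groupe $\langle[P]\rangle$ est d'ordre au plus $2$ ; si $[F]=[P]$ on obtient $[F]=2[F]$, donc $[F]=0$, et dans le cas contraire $[F]=0$ d'emblée. Ainsi $[F]=0$, ce qui est (iii).

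L'obstacle principal est exactement cette identification de $\theta(\alpha_{X})$ avec $[F_{1}]$ modulo $\langle[P]\rangle$ : il faut relier de façon précise le fibré en espaces projectifs $G_{2}(X)\to C$ porté par $\alpha_{X}$ à la variété $F_{1}(X)$ des droites, ce qui dépasse les énoncés déjà cités et repose sur l'étude fine des schémas de Hilbert rappelée au \S1 et détaillée dans l'appendice. Une fois la classe de Brauer appariée à la classe du torseur, à l'ambiguïté $2$-torsion $[P]$ près rendue inoffensive par l'argument de réduction ci-dessus, on conclut et l'on supprime l'hypothèse $X(k)\neq\emptyset$ présente dans le théorème~\ref{alpha}(ii).
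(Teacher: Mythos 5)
Your treatment of (i) $\Leftrightarrow$ (iii) and of (i) $\Rightarrow$ (ii) is correct and agrees with the paper: the first is the torsor formalism for $F_{1}(X)$ under $J$, the second is Theorem~\ref{alpha}(i), which indeed requires no rational point. The problem is (ii) $\Rightarrow$ (iii). Your entire argument there rests on the identification $\theta(\alpha_{X})=[F_{1}(X)] \bmod \langle [P]\rangle$ in $H^1(k,\Pic(\overline{C}))\cong H^1(k,J)/\langle [P]\rangle$, and you explicitly \emph{admit} this identification rather than prove it. That identification is precisely the nontrivial content of the implication: it is not contained in Theorem~\ref{alpha}, in the statements quoted from \cite{XW18}, or in the appendix, and ``extracting it from the Severi--Brauer structure of $G_{2}(X)\to C$'' is a genuine piece of work of essentially the same depth as the theorem itself (one must compare the image of the class of the $\P^3$-bundle $G_{2}(X)\to C$ under the Hochschild--Serre differential with the class of the torsor of lines). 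The formal consequences you then draw --- $[F]\in\langle[P]\rangle$, and $[P]=2[F]$, $2[P]=0$ forcing $[F]=0$ --- are correct, but they hang entirely on the admitted claim. As written, the proof is therefore incomplete.

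The paper proves (ii) $\Rightarrow$ (i) by a much shorter route that sidesteps any cohomological computation: base-change to $K=k(X)$. Then $X_{K}$ has a $K$-point (the generic point) and $\alpha_{X_{K}}=0$ in $\Br(K(C))$, so Theorem~\ref{alpha}(ii) applies over $K$ and $X_{K}$ contains a line; hence $[F]$ dies in $H^1(K,J)$. Since $X$ is geometrically rationally connected, every $k$-rational map from $X$ to a torsor under an abelian variety is constant, so the restriction $H^1(k,J)\to H^1(k(X),J)$ is injective, whence $[F]=0$ and $X$ contains a line over $k$. This is the standard device for removing an ``$X(k)\neq\emptyset$'' hypothesis from a statement such as Theorem~\ref{alpha}(ii); if you want to salvage your approach instead, you would need to supply a full proof of the identification of $\theta(\alpha_{X})$ with $[F]$ modulo $\langle[P]\rangle$.
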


\begin{proof} On sait \cite{XW18} que la $k$-vari\'et\'e $F_{1}(X)$ des droites de $X$
est un espace principal homog\`ene de $J$, qui a donc une classe $[F]$  dans $H^1(k,J)$.
Les \'enonc\'es (i) et (iii) sont \'equivalents. D'apr\`es le th\'eor\`eme \ref{alpha},
(i) implique (ii).  Supposons (ii). Soit $k(X)$ le corps des fonctions de $X$.
 Consid\'erons $X\times_{k}k(X)$. D'apr\`es le th\'eor\`eme \ref{alpha},
$X\times_{k} k(X)$ poss\`ede une droite. L'image de $[F]  \in H^1(k,J)$  dans  $H^1(k(X),J)$
est donc nulle. Mais l'application $ H^1(k,J) \to H^1(k(X),J)$ est injective,
car toute application $k$-rationnelle de la $k$-vari\'et\'e 
g\'eom\'etri\-quement rationnellement connexe $X$ vers un espace
  principal homog\`ene d'une vari\'et\'e ab\'elienne est constant. 
Ainsi $[F]=0 \in H^1(k,J)$, et $X$ contient une droite sur $k$.
\end{proof}

 \begin{rmk}
 Si $X$ contient une droite sur $k$, alors $X$ est $k$-rationnelle. La r\'eciproque
 est un th\'eor\`eme d\'elicat qui fut prouv\'e par Benoist et Wittenberg, apr\`es 
 un travail pr\'eliminaire de Hassett et Tschinkel.
  \end{rmk}

  Au th\'eor\`eme  \ref{iyerparimala} on a donn\'e une d\'emonstration
  alternative d'un th\'eor\`eme de  Iyer et Parimala  \cite{IP22}.
Voici une autre variante de cette d\'emonstration.

  \begin{thm}\label{autreIP}
  Soient $k$ un corps de nombres et  $X \subset \P^5_{k}$ une intersection compl\`ete lisse de deux quadriques.
  Soit $C$ la courbe projective
 et lisse d\'efinie par  l'\'equation $y^2=-det(\lambda f + \mu g)$.
  Si $X$ poss\`ede une droite sur chaque $k_{v}$, et si l'indice de la courbe $C$ est 1,
  alors   $X$ poss\`ede un point rationnel.
  \end{thm}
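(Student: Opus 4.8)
Le plan est d'\'etablir que $X$ contient une conique d\'efinie sur $k$, puis de conclure par le th\'eor\`eme \ref{sal93} de Salberger : l'hypoth\`ese que $X$ contient une $k_{v}$-droite pour chaque $v$ assure en particulier $X(k_{v})\neq\emptyset$, donc $X(\A_{k})\neq\emptyset$. On observe d'embl\'ee que l'indice $1$ signifie que le pgcd des degr\'es des points ferm\'es de $C$ vaut $1$ : ces degr\'es ne sont donc pas tous pairs et $C$ poss\`ede un point ferm\'e de degr\'e impair. De ce fait \ref{autreIP} est d\'ej\`a un cas particulier de \ref{iyerparimala} ; l'int\'er\^et est d'en donner une d\'emonstration par le second point de vue, c'est-\`a-dire via la classe $\alpha_{X} \in \Br(C)$, plut\^ot que par l'argument de Witt utilis\'e en \ref{iyerparimala}.

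On commencerait par traduire les deux hypoth\`eses. D'apr\`es le th\'eor\`eme \ref{alphabis} (ou \ref{alpha}) appliqu\'e sur chaque compl\'et\'e, l'existence d'une $k_{v}$-droite sur $X$ \'equivaut \`a $\alpha_{X}=0 \in \Br(C_{k_{v}})$ ; ainsi la classe $\alpha_{X}$ est partout localement triviale. Utilisant le point ferm\'e de degr\'e impair ci-dessus, que l'on peut supposer hors du lieu de ramification de $C\to\P^1_{k}$ (soit avec $y\neq 0$) quitte \`a le d\'eplacer, on se ram\`enerait \emph{exactement comme dans la preuve de \ref{iyerparimala}} au cas o\`u $C$ poss\`ede un point rationnel $c_{0}$ satisfaisant $y(c_{0})\neq 0$, par un lemme de d\'eplacement combin\'e au principe de Springer (proposition \ref{springer} et la remarque suivant la proposition \ref{coniqueplan}, soit le cas $2$-primaire). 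En un tel point, la valeur $\alpha_{X}(c_{0}) \in \Br(k)$ est partout localement triviale, donc nulle par l'injection d'Albert--Brauer--Hasse--Noether $\Br(k)\hookrightarrow \bigoplus_{v}\Br(k_{v})$. La condition (iii) du th\'eor\`eme \ref{alphaetpoint} est alors satisfaite sur $k$, ce qui fournit une forme non singuli\`ere totalement hyperbolique dans le pinceau, c'est-\`a-dire une conique contenue dans $X$ et d\'efinie sur $k$. Comme $X(\A_{k})\neq\emptyset$, le th\'eor\`eme \ref{sal93} donne enfin $X(k)\neq\emptyset$.

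Le point d\'elicat, le m\^eme que pour le th\'eor\`eme \ref{iyerparimala}, sera la r\'eduction \`a un point \emph{rationnel} de $C$ situ\'e hors de la ramification : l'hypoth\`ese d'indice $1$ ne livre directement qu'un point ferm\'e de degr\'e impair, et c'est le passage de ce point \`a la donn\'ee rationnelle requise pour appliquer \ref{alphaetpoint} sur $k$ qui mobilise le lemme de d\'eplacement et la descente de Springer $2$-primaire. En revanche, une fois un tel point rationnel obtenu, la trivialit\'e locale partout de $\alpha_{X}$ rend la production de la conique imm\'ediate par \'evaluation de $\alpha_{X}$ en $c_{0}$ et le principe de Hasse pour $\Br(k)$ : c'est en cela que l'argument est une variante de la preuve de \ref{iyerparimala}, la finitude des invariants locaux de $\alpha_{X}$ se substituant au recours au th\'eor\`eme \ref{Hasse} pour les formes quadratiques.
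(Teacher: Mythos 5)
Votre argument est correct et suit pour l'essentiel la d\'emonstration du texte : m\^{e}mes ingr\'edients (th\'eor\`emes \ref{alpha}, \ref{alphaetpoint}, principe de Hasse pour $\Br$ d'un corps de nombres, th\'eor\`eme \ref{sal93} de Salberger, descente de Springer via la proposition \ref{springer}), la seule diff\'erence \'etant que vous appliquez la r\'eduction de degr\'e impair au d\'ebut pour vous ramener \`a un point rationnel de $C$, tandis que le texte travaille directement sur le corps r\'esiduel $K=k(P)$ du point ferm\'e de degr\'e impair et n'invoque Springer qu'\`a la toute fin pour redescendre $X(K)\neq\emptyset$ en $X(k)\neq\emptyset$. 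Ce d\'ecalage est purement organisationnel et ne change rien au fond.
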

\begin{proof} D'apr\`es le th\'eor\`eme \ref{alpha}, 
$\alpha_{X} \in \Br(C)$ a son image nulle dans $\Br(C_{k_{v}})$ pour
chaque place $v$ de $k$. Par ailleurs il existe un point
$P$ ferm\'e de degr\'e impair sur $C$, qu'on peut prendre en dehors
du lieu de ramification de $C \to \P^1_{k}$. Soit $K=k(P)$ son corps r\'esiduel.
La classe $\alpha_{X}(P) \in \Br(K)$ s'annule dans tous les compl\'et\'es
$K_{w}$ de $k$. D'apr\`es le principe de Hasse pour le groupe de Brauer
d'un corps de nombres, $\alpha_{X}(P) =0$.
D'apr\`es le th\'eor\`eme \ref{alphaetpoint}, il existe  une conique
  dans $X_{K}$.  
Comme $X(\A_{k}) \neq \emptyset$ et donc $X(\A_{K})\neq \emptyset$,
et que $X_{K} \subset \P^5_{K}$ contient une conique,
le th\'eor\`eme   \ref{sal93}  (Salberger) donne $X(K) \neq \emptyset$ et donc,
par   la Proposition \ref{springer}, $X(k) \neq \emptyset$.
\end{proof}

\section{Intersections compl\`etes   lisses dans  $\P^6_{k}$}\label{para6}

Ce cas est assez myst\'erieux.

\begin{prop}\label{dansP6} Soit $k$ un corps, ${\rm car}(k) \neq 2$. 
Soit  $X \subset \P^6_{k}$ une intersection compl\`ete lisse  de deux quadriques d\'efinie par $f=g=0$.

(i) Supposons ${\rm car}(k)=0$. La vari\'et\'e  $F_{1}(X)$ est 
une vari\'et\'e de Fano projective lisse, g\'eom\'etriquement rationnelle, de dimension 4,
dont le groupe de Picard g\'eom\'etrique est libre de rang~8. 

 (ii) Si $k$ est alg\'ebriquement clos,  
la forme g\'en\'erique $f+tg$ contient $3H$.
La vari\'et\'e $X$ contient $64$ plans $\P^2$.

(iii) Si $k$ est un corps fini $\F$,  la forme g\'en\'erique $f+tg$ contient  $2H$.
 Toute forme non singuli\`ere $f+\lambda g$ contient $3H$.
 La vari\'et\'e $X$ contient au moins une droite $\P^1_{\F}$.
 
 (iv) Si $k$ est $p$-adique 
 et $p>2$,
  et  si $X$ a bonne r\'eduction comme intersection
 compl\`ete lisse de deux quadriques, alors $X$ contient des droites $\P^1_{k}$,
  la forme g\'en\'erique $f+tg$  contient $2H$.
Si  de plus le cardinal du corps fini  r\'esiduel est au moins 9,  alors il existe des
 formes non singuli\`eres $f+\lambda g$ qui s'annulent sur un $\P^2_{k}$,
 autrement dit contiennent $3H$. Dans ce cas $X$ 
 contient une conique d\'efinie sur $k$.

 (v) Si $k$ est $p$-adique,  toute forme  non singuli\`ere $f+\lambda g$ contient $2H$.
  
   (vi) Si $k$ est un corps de nombres, il existe des formes non singuli\`eres 
   $f+\lambda g$ qui contiennent $2H$, et
       $X$ contient un point sur une extension quadratique.
 \end{prop}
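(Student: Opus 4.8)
The plan is to treat the six items in increasing order of difficulty, exploiting throughout that for a smooth $X\subset\P^6_k$ the generic form $\varphi=f+tg$ over $k(t)$ is nonsingular of \emph{odd} rank $7$; the whole analysis is then governed by the behaviour of rank-$7$ quadratic forms over $C_1$, $C_2$, $p$-adic and global fields, transported to $X$ via the Amer--Brumer equivalence (Proposition~\ref{amerbrumer}). Item (i) requires no new work: it is exactly Proposition~\ref{generalintegre}(c).

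For (ii) I would argue over $k=\k$. Since $k(t)$ is $C_1$, the rank-$7$ form $\varphi$ splits as $3H\perp\langle d\rangle$, i.e. contains $3H$; equivalently (Proposition~\ref{amerbrumer}, or directly Proposition~\ref{corpsCi}(a), valid since $n=6\ge 2\cdot 2+2$) there is a plane $\P^2_k\subset X$. For the count, I would invoke Proposition~\ref{generalintegre}(a): with $r=2$, $n=6$ the variety $F_2(X)$ is smooth of dimension $(r+1)(n-2r-2)=0$, hence a finite reduced scheme; its length is the classical number $2^6=64$ of maximal linear subspaces of a smooth intersection of two quadrics in $\P^{2m+2}$ with $m=2$ (equivalently $\#J[2]$ for the genus-$3$ curve $y^2=\det(\lambda f+\mu g)$), as recorded by Kuznetsov in the appendix.

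Items (iii)--(v) are the local statements. Over a finite field $\F$ the field $\F(t)$ is $C_2$, so the nonsingular rank-$7$ form $\varphi$ splits off exactly two hyperbolic planes ($7\to 5\to 3$), i.e. contains $2H$; by Proposition~\ref{amerbrumer} this already yields a line $\P^1_\F\subset X$ for \emph{every} $\F$, with no point-counting (note $n=6$ is below the range of Proposition~\ref{corpsCi}(b), so it is the oddness of the rank that saves us here). A nonsingular $f+\lambda g$ is a rank-$7$ form over the $C_1$-field $\F$, hence contains $3H$ ($7\to5\to3\to1$), giving (iii). For (iv), good reduction lets me spread $X$ together with the smooth projective Hilbert schemes $F_1(X)$, $G_2(X)$, $S_2(X)$ over the ring of integers $\O$; reducing modulo a uniformizer and applying (iii) over the residue field $\kappa$, Hensel's lemma lifts a line (whence $\varphi$ contains $2H$ by Proposition~\ref{amerbrumer}). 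Once $\#\kappa$ exceeds the number of singular members (the $7$ roots of $\det(\lambda f+\mu g)$) I can choose $\bar\lambda\in\P^1(\kappa)$ with $f+\bar\lambda g$ nonsingular; such a rank-$7$ form over $\kappa$ automatically contains $3H$ and has a $\kappa$-rational isotropic $\P^2$, which lifts by smoothness of the orthogonal Grassmannian to a $\P^2_k$ in the nonsingular pencil member, hence (Proposition~\ref{coniqueplan}, $k$ being fertile) to a conic in $X$ over $k$. For (v) I would use only the structure of $p$-adic forms: a nonsingular rank-$7$ form has anisotropic kernel of odd rank $\le 4$, hence of rank $1$ or $3$, so it contains at least $(7-3)/2=2$ hyperbolic planes, i.e. $2H$.

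Finally (vi) is the global statement, and it is essentially Theorem~\ref{quadratiqueP5nombres} in the case $n=6\ge 5$, which directly supplies the quadratic point. For the nonsingular form containing $2H$ I would reproduce the local-global argument of that theorem: at each finite $v$ item (v) provides a nonsingular member containing $2H$ over $k_v$, and at each real place Mordell's Proposition~\ref{mordell} provides a nonsingular member containing $[(n+1)/2]H=3H\supseteq 2H$; these persist on a $v$-adic neighbourhood in $\P^1$ by the implicit function theorem, so weak approximation on $\P^1_k$ yields a single $(\lambda:\mu)\in\P^1(k)$ nonsingular and containing $2H$ at every place, whereupon Theorem~\ref{Hasse} forces $2H\hookrightarrow f+\lambda g$ over $k$. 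The hard part will be entirely in (iv): making the reduction rigorous requires the good-reduction models of $F_1(X)$, $G_2(X)$, $S_2(X)$ from the appendix together with the flatness and smoothness needed to Hensel-lift isotropic subspaces, and it is there that one must pin down the precise residue-field bound ($\#\kappa\ge 9$) guaranteeing a nonsingular pencil member carrying a $\kappa$-rational isotropic plane.
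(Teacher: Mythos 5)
Your proposal is correct, and for items (i), (ii), (iii), (v) and (vi) it follows essentially the same route as the paper: (i) is a citation of Proposition \ref{generalintegre}, (ii) combines Proposition \ref{corpsCi} with the degree count $2^{2r+2}=64$ from the appendix (Lemme \ref{lem:fr}), (iii) is the $C_2$/$C_1$ argument on the rank-$7$ form plus Proposition \ref{amerbrumer}, (v) is the bound on the rank of anisotropic $p$-adic forms, and (vi) is the weak-approximation-plus-Hasse--Minkowski argument already used for Th\'eor\`eme \ref{quadratiqueP5nombres}. The one genuine divergence is in the first assertion of (iv): you lift a line by applying Hensel directly to the smooth projective good-reduction model of $F_{1}(X)$ itself (whose special fibre has an $\F$-point by (iii)), whereas the paper instead passes to a smooth hyperplane section $X\cap\Pi\subset\P^5$ with good reduction --- available only after an odd-degree unramified extension $k'/k$ --- applies Proposition \ref{dansP5}(iv) there, and then descends the property \og contient $2H$\fg{} from $k'(t)$ to $k(t)$ by a variant of the theorem of Springer. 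Your route is more direct and avoids the Springer descent, at the cost of having to justify that $F_{1}(\mathcal{X}/\O)$ is smooth and proper over the ring of integers (which follows from Lemme \ref{lem:fr} in both fibres together with the expected-codimension/flatness argument); this is exactly the mechanism the paper itself uses for the analogous statements in $\P^5$ (via $F_{1}$) and $\P^7$ (via $F_{2}$), so nothing new is needed. For the second assertion of (iv) both arguments coincide: count the at most $7$ singular members of the pencil over the residue field, pick a nonsingular member, which over a finite field automatically contains $3H$, and lift its isotropic plane; your accounting even shows the stated bound $q\ge 9$ has a small margin.
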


\begin{proof}
L'\'enonc\'e (i) est mis pour m\'emoire, voir la proposition \ref{generalintegre} (b).
La premi\`ere partie de l'\'enonc\'e (ii), et l'existence d'au moins un plan $\P^2_{k}$ dans $X$
r\'esultent de la proposition \ref{corpsCi}. 
La  vari\'et\'e $F_{2}(X)$ est  lisse  de dimension z\'ero d'apr\`es la
proposition \ref{generalintegre}  (a). Elle a 64 points (voir la d\'emonstration
du Lemme \ref{lem:fr} de l'appendice).

Pour (iii), comme le corps $\F(t)$ est $C_{2}$, la forme  $f+tg$ qui est non d\'eg\'en\'er\'ee et
de rang 7
contient $2H$. La proposition \ref{amerbrumer} donne alors que $X$
contient une droite $\P^1_{\F}$.

Pour (iv), on aimerait trouver une section  par un bon hyperplan $\Pi$  telle que 
$X \cap \Pi$ ait  bonne r\'eduction, et  appliquer
la proposition \ref{dansP5}.
Quitte \`a remplacer $\F$ par une extension finie  $\F'/\F$ de degr\'e impair, 
et $k$ par l'extension non ramifi\'ee $k'/k$ correspondante,
on peut trouver une telle section.  On trouve ainsi une extension $k'/k$ de degr\'e impair
telle  que $X \cap \Pi$ et donc $X$ contienne une droite $\P^1_{k'}$.
 La forme $f+tg$ contient donc $2H$ sur $k'(t)$. Par une variante du th\'eor\`eme
de Springer, elle contient $2H$  sur $k(t)$. Et donc, par la proposition \ref{amerbrumer}, la vari\'et\'e 
$X$ contient une droite $\P^1_{k}$.
Pour le dernier point de (iv), notons   que l'on peut trouver $\lambda, \mu$ dans le corps r\'esiduel 
fini $\F$ tels que $\lambda f + \mu g$  soit de rang $7$ sur le corps fini  et contienne $3H$. Ceci se rel\`eve sur le corps $p$-adique.

Pour (v), il suffit de remarquer que toute forme quadratique non d\'eg\'en\'er\'ee
de rang 7 contient $2H$.

Pour (vi), voir le th\'eor\`eme \ref{quadratiqueP5nombres} -- dont la d\'emonstration
dans le cas $n\geq 6$  est   simple.
\end{proof}

\section{Intersections compl\`etes lisses dans  $\P^7_{k}$}\label{para7}

 \begin{prop}\label{dansP7}
  Soit $k$ un corps, ${\rm car}(k) \neq 2$.  Soit $X \subset \P^7_{k}$ une intersection compl\`ete lisse de deux quadriques
d\'efinie par $f=g=0$.

(i) Suppons ${\rm car}(k)=0$. La vari\'et\'e $F_{1}(X)$ des $\P^1$ contenus dans $X$ 
est projective et lisse et g\'eom\'etri\-quement connexe.
C'est une vari\'et\'e  de Fano g\'eom\'e\-tri\-quement rationnelle,
de groupe de Picard g\'eom\'etrique $\Z$.

(ii) La vari\'et\'e  $F_{2}(X)$ des $\P^2$ contenus dans $X$ est 
un espace principal homog\`ene d'une vari\'et\'e ab\'elienne
de dimension 3.

 (iii) Si $k$ est alg\'ebriquement clos,  
la forme g\'en\'erique $f+tg$ contient $3H$.

(iv) Soit $k=\F$   un corps   fini.  
 Toute forme non singuli\`ere $f+\lambda g$ contient $3H$.
 La vari\'et\'e $X$ contient au moins un $\P^2_{\F}$. 
 La forme g\'en\'erique $f+tg$ contient  $3H$.

 (v) Si $k$ est $p$-adique
 et $p>2$, 
 et $X$ a bonne r\'eduction comme intersection
 compl\`ete lisse de deux quadriques, alors $X$ contient des $\P^2_{k}$, 
 la forme g\'en\'erique $f+tg$ contient $3H$,
 et toute forme non singuli\`ere $f+\lambda g$ contient $3H$.

  (vi) Si $k$ est $p$-adique, toute forme non singuli\`ere $f+\lambda g$ contient $2H$.
  
  (vii) Si $k$ est $p$-adique, et s'il existe une forme de rang 7 dans le pinceau $\lambda f + \mu g$,
  alors il existe une forme de rang 8 qui contient $3H$.
  
  (viii) Si $k$ est un corps de nombres, et s'il existe une forme de rang 7 dans le pinceau $\lambda f + \mu g$,
  alors il existe une forme de rang 8  dans le pinceau qui contient $3H$.
  
  (ix)   Si $k$ est un corps de nombres, et  s'il existe une forme de rang 7 dans le pinceau $\lambda f + \mu g$,
  et si $X(\A_{k})\neq \emptyset$, alors $X$ poss\`ede un point rationnel.
 \end{prop}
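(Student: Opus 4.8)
The plan is to dispatch the geometric and finite/local items (i)--(vi) by quoting the structural results already in hand, and to concentrate the real work on (vii), after which (viii) and (ix) follow from a standard local-to-global argument for quadratic forms together with Salberger's theorem.

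For (i) and (ii) I would simply cite Proposition~\ref{generalintegre}(d) and~(e) (due to Wang \cite{XW18}). For (iii), the field $k(t)$ is $C_1$ when $k$ is algebraically closed, so Proposition~\ref{corpsCi}(a) produces a plane $\P^2\subset X$ (here $7\geq 2\cdot 2+2$), and the equivalence (a)$\Leftrightarrow$(c) of Proposition~\ref{amerbrumer} rephrases this as the assertion that $f+tg$ contains $3H$. For (iv): a nonsingular rank-$8$ form over a finite field has Witt index $3$ or $4$, which gives the first assertion; the existence of a $\P^2_\F\subset X$ comes from (ii) together with Lang's theorem (a torsor under the abelian threefold $F_2(X)$ over a finite field is trivial), and Amer--Brumer again yields $3H$ for the generic member. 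Part (v) follows from (iv) by extending $F_2(X)$ to a smooth projective model over the ring of integers, lifting an $\F$-point of the special fibre by Hensel ($p\neq 2$), and lifting a $3H$-splitting of a nonsingular reduction. Part (vi) is the remark that over a $p$-adic field every anisotropic quadratic form has dimension $\leq 4$, so a nonsingular rank-$8$ form has anisotropic part of dimension $\leq 4$ and therefore contains $2H$.

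The crux is (vii), and this is where I would put all the effort. Since $X$ is smooth, $\det(\lambda f+\mu g)$ is separable of degree~$8$, so every member lying over a root of the determinant has corank exactly $1$; thus the hypothesis amounts to a rank-$7$ member defined over $k$, which I relabel $g$. Its quadric $\{g=0\}$ is a cone whose vertex $v$ spans the one-dimensional radical of $g$. Over the $p$-adic field $k$ every anisotropic form has dimension $\leq 4$, so the rank-$7$ form $g$ has Witt index at least $2$ and hence admits a totally isotropic plane $W$; as $v$ lies in the radical, $W\oplus\langle v\rangle$ is totally isotropic of dimension $3$, whence $\P(W\oplus\langle v\rangle)=\P^2\subset\{g=0\}$. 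This is a $k$-point of $G_2(X)$ sitting over the (singular) parameter of $g$. By Proposition~\ref{kuznestov}(1) the variety $G_2(X)$ is smooth and geometrically connected, hence geometrically integral, and it dominates $\P^1_k$; since a $p$-adic field is fertile, $G_2(X)(k)$ is Zariski dense, and as the generic member of the pencil is nonsingular, some $k$-point of $G_2(X)$ necessarily lies over a rank-$8$ member. That member contains a $\P^2$, i.e.\ contains $3H$, which is exactly (vii). The delicate point to verify is precisely that the totally isotropic plane gives a genuine point of $G_2(X)$ and that the nonsingular parameters form the generic locus downstairs, so that density forces a rank-$8$ witness; both are immediate from Proposition~\ref{kuznestov}.

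For (viii) I would mimic the proof of Theorem~\ref{quadratiqueP5nombres}. Outside a finite set $S$ of places (containing the $2$-adic ones and those of bad reduction) part (v) gives $3H$ for every nonsingular member. At each place of $S$, part (vii) at the finite places and Mordell's Proposition~\ref{mordell} at the archimedean places (where $[8/2]H=4H$ contains $3H$) provide a nonsingular local parameter $(\lambda_v,\mu_v)$ with $\lambda_v f+\mu_v g$ containing $3H$, an open condition by the implicit function theorem. Weak approximation on $\P^1_k$, together with avoidance of the finitely many $k$-rational roots of the determinant, then yields a single nonsingular parameter $(\lambda,\mu)\in\P^1(k)$ for which $3H$ is a local subform of $\lambda f+\mu g$ at every place, and Theorem~\ref{Hasse} upgrades this to the statement that $\lambda f+\mu g$ contains $3H$ over $k$. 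Finally (ix): by (viii) a nonsingular member vanishes on a $\P^2$, which is condition~(b) of Proposition~\ref{coniqueplan}, so by the equivalence (a)$\Leftrightarrow$(b) the variety $X$ contains a conic; since $X(\A_k)\neq\emptyset$ and $n=7\geq 5$, Salberger's Theorem~\ref{sal93} yields $X(k)\neq\emptyset$. The whole difficulty is concentrated in (vii); once the totally isotropic plane through the cone vertex is produced, fertility of $k$ and density on $G_2(X)$ finish the local statement, and (viii)--(ix) are assembled by weak approximation, the Hasse principle for quadratic forms, and Salberger's theorem.
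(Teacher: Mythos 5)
Your proposal is correct, and for every part except (vii) it follows essentially the paper's own route: (i)--(ii) by citation of Proposition \ref{generalintegre}, (iii) via Proposition \ref{corpsCi} and Amer--Brumer, (iv) from the bound on anisotropic forms over finite fields together with Lang's theorem applied to the torsor $F_2(X)$, (v) by Hensel lifting of the model of $F_2(X)$, (vi) from the fact that anisotropic forms over a $p$-adic field have rank at most $4$, and (viii)--(ix) by weak approximation on $\P^1_k$, Hasse's Theorem \ref{Hasse} and Salberger's Theorem \ref{sal93}; your explicit treatment of the archimedean places via Proposition \ref{mordell} is a detail the paper only makes explicit later, in the proof of the global theorem. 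The genuine divergence is in (vii), which you rightly identify as the crux. The paper argues with square classes: since $P(t)=\det(f+tg)$ is separable of degree $8$ and has a simple root in $k$, one can choose $\lambda$ with $P(\lambda)$ a non-square; the rank-$8$ form $f+\lambda g$ contains $2H$ by (vi), and its rank-$4$ orthogonal complement then has non-square determinant, hence is isotropic by Proposition \ref{bienconnu}. You instead exhibit a totally isotropic $3$-space inside the rank-$7$ member (Witt index at least $2$ for the nondegenerate part, plus the cone vertex), i.e.\ a $k$-point of $G_2(X)$, and then use smoothness and geometric integrality of $G_2(X)$ (Proposition \ref{kuznestov}), fertility of the $p$-adic field and dominance of $G_2(X)\to\P^1_k$ to slide to a nonsingular member of the pencil. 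This is precisely the alternative the paper itself signals in the proof of Theorem \ref{droitesinglocale}, where it notes that Proposition \ref{coniqueplan} could replace item (vii). Your version is correct and has the merit of producing the plane geometrically, but it leans on the appendix's results on $G_2(X)$, whereas the paper's proof of (vii) is purely quadratic-form-theoretic and self-contained; both reach the same conclusion.
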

  
\begin{proof}   Les \'enonc\'es (i) et (ii) ont \'et\'e donn\'es  
  \`a la proposition \ref{generalintegre} (d) (e) en caract\'eristique z\'ero.
  Le fait que (ii) vaille pour $F_{2}(X)$ en caract\'eristique 
  diff\'erente de 2 est \'etabli dans \cite{XW18}.
  
  (iii)  C'est un cas particulier de la proposition \ref{corpsCi}.

  (iv) Sur un corps fini $k=\F$, toute forme quadratique de rang au moins 3
  est isotrope. Tout espace principal homog\`ene sous une vari\'et\'e ab\'elienne sur $\F$ est trivial.
 De (ii) on d\'eduit par le th\'eor\`eme de Lang que l'on a $F_{2}(X)(\F)
  \neq \emptyset$. Donc la forme g\'en\'erique $f+tg$ contient $3H$.

(v) Si $k$ est $p$-adique 
et $p>2$,
et si $X$ a bonne r\'eduction comme intersection compl\`ete lisse
de deux quadriques, alors il en est de m\^eme
 de $F_{2}(X)$. 
 Par (iv) et le lemme de Hensel,
$F_{2}(X)$ a un $k$-point. Ainsi $X$
contient un $\P^2_{k}$ et donc toute forme non singuli\`ere
dans le pinceau contient $3H$.

(vi)  Sur un corps $p$-adique, et aussi sur un corps $C_{2}$,
toute forme quadratique en au moins 7 variables contient $2H$.

(vii) On peut supposer que le polyn\^ome  s\'eparable $P(t)= det(f+tg)$ est
 de degr\'e 8. Par hypoth\`ese, il admet un z\'ero sur $k$, donc un
z\'ero simple. On voit alors facilement qu'il existe $\lambda \in k$
tel que $P(\lambda) \in k$ n'est pas un carr\'e. La forme
$f +\lambda g$ est alors une forme de rang 8 dont le d\'eterminant  $\delta $ n'est 
pas un carr\'e. Comme le corps est $p$-adique, la forme s'\'ecrit
$$ <1,-1> \perp <1,-1> \perp q,$$
avec $q$ une forme quadratique de rang 4 de d\'eterminant    non carr\'e dans $k$.
Une telle forme est isotrope (Proposition \ref{bienconnu}).

(viii) et (ix)  En combinant (v), (vii)  et l'approximation faible,
on trouve $\lambda$ dans le corps de nombres $k$  tel que $f+\lambda g$ est de rang 8 et contient
$3H$ sur chaque $ k_{v}$, donc aussi sur $k$ par le th\'eor\`eme \ref{Hasse}. Ainsi $X$ contient une conique,
et sous l'hypoth\`ese $X(\A_{k}) \neq \emptyset$ le th\'eor\`eme  \ref{sal93} (Salberger) donne
$X(k) \neq \emptyset$. 
\end{proof}

\begin{rmk}
(a) Pour la d\'emonstration dans le cas global, au lieu d'utiliser  (v),
il suffit   de  savoir que
pour presque tout compl\'et\'e $k_{v}$, on a $F_{2}(X)(k_{v}) \neq \emptyset$.
Ceci r\'esulte simplement du fait que la $k$-vari\'et\'e $F_{2}(X)$ est
g\'eom\'etriquement int\`egre (Proposition \ref{generalintegre}(i)).

(b) Sur tout corps $k$, l'hypoth\`ese qu'il existe une forme de rang 7 dans
le pinceau \'equivaut au fait que le polyn\^ome homog\`ene
 $det(\lambda f + \mu g)$ poss\`ede un z\'ero sur $k$  (par n\'ecesssit\'e z\'ero simple, puisque $X$
 est lisse).
 
 (c) Pour obtenir la conclusion de (ix), il suffirait de supposer que 
$det(\lambda f + \mu g)$ poss\`ede un z\'ero dans une extension de degr\'e impair.
\end{rmk}

\bigskip

 Sur un corps $p$-adique $k$, sous les hypoth\`eses suppl\'ementaires que $X(k)$ est non vide
et que le cardinal du corps r\'esiduel  est au moins \'egal \`a 32,
Heath-Brown \cite[Thm. 2]{HB18}  montra que pour $X \subset \P_{k}^7$
une intersection compl\`ete lisse
il existe une quadrique non d\'eg\'en\'er\'ee 
dans le pinceau qui contient $3H$.
Sur un corps de nombres, ce r\'esultat lui permet d'\'etablir  le
 principe de Hasse pour les points rationnels
des intersections lisses de deux quadriques  dans $\P^7_{k}$ \cite[Thm. 1]{HB18}. 

On va maintenant retrouver ces r\'esultats.

Pour le r\'esultat local, on va   utiliser le th\'eor\`eme \ref{dansP4}.

 \medskip

\begin{thm}\label{droitesinglocale} 
Soit  $k$ un corps $p$-adique. Soit $X \subset \P^7_{k}$ une intersection compl\`ete lisse
de deux quadriques, donn\'ee par un syst\`eme $f=g=0$.
Supposons $X(k)$ non vide. Alors :

(a) Il existe sur $X$ un couple de droites distinctes, s\'ecantes, et globalement d\'efinies sur $k$.

(b)   Il existe une forme quadratique non d\'eg\'en\'er\'ee $\lambda f+ \mu  g$, avec $\lambda, \mu  \in k$, 
qui contient $3H$.
\end{thm}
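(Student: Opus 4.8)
Le plan est de d\'emontrer (a) en analysant les droites de $X$ qui passent par le point rationnel $P\in X(k)$, puis d'en tirer (b) de fa\c{c}on essentiellement formelle. Pour (a), je me ram\`ene d'abord \`a la forme normale de la proposition \ref{para3CTSaSD} : on \'ecrit $X$ avec $P=(1,0,\dots,0)$ et $T_{P}=\{x_{1}=x_{2}=0\}$. D'apr\`es cette proposition, $X\cap T_{P}$ est le c\^one de sommet $P$ sur une intersection de deux quadriques $Y\subset\P^{4}_{k}$, et les droites de $X$ passant par $P$ sont exactement les g\'en\'eratrices de ce c\^one, donc sont param\'etr\'ees par les points de $Y$. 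Par cons\'equent, deux points distincts quelconques de $Y$ fournissent deux droites distinctes de $X$ contenant toutes deux $P$, donc s\'ecantes en $P$ ; et un tel couple est globalement d\'efini sur $k$ d\`es que les deux points forment un $0$-cycle effectif rationnel de degr\'e $2$ sur $Y$, autrement dit d\`es que $Y$ poss\`ede un point ferm\'e de degr\'e $2$, ou bien deux $k$-points distincts.

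J'applique alors le th\'eor\`eme \ref{dansP4} \`a l'intersection de deux quadriques $Y\subset\P^{4}_{k}$ sur le corps $p$-adique $k$ : elle poss\`ede un point ferm\'e de degr\'e au plus $2$. Si ce point est de degr\'e $2$, ses deux g\'en\'eratrices conjugu\'ees forment le couple cherch\'e. S'il est rationnel, $X$ contient d\'ej\`a une $k$-droite par $P$ ; comme $\dim Y\geq 2>0$ et que le corps $p$-adique $k$ est fertile, les $k$-points de $Y$ sont Zariski denses (on se place sur l'ouvert lisse de la composante g\'eom\'etriquement int\`egre passant par le point), si bien que $Y$ poss\`ede deux $k$-points distincts, d'o\`u deux $k$-droites distinctes de $X$ s\'ecantes en $P$. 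Ceci \'etablit (a).

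Pour (b), soient $L,L'$ les deux droites obtenues en (a). \'Etant distinctes et s\'ecantes en $P$, elles engendrent un plan $\Pi\simeq\P^{2}_{k}$, d\'efini sur $k$ puisque la paire $\{L,L'\}$ est stable sous l'action de Galois. Comme $L\cup L'\subset X$, la restriction \`a $\Pi$ de chaque forme du pinceau s'annule sur la conique d\'eg\'en\'er\'ee $L\cup L'$, qui est d\'ej\`a un diviseur de degr\'e $2$ de $\Pi$ ; les restrictions $f|_{\Pi}$ et $g|_{\Pi}$ sont donc nulles ou proportionnelles \`a la forme d\'efinissant $L\cup L'$, et il existe ainsi $(\lambda_{0},\mu_{0})\neq 0$ tel que $\lambda_{0}f+\mu_{0}g$ s'annule identiquement sur $\Pi$. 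C'est exactement la condition (b) de la proposition \ref{coniqueplan}. Comme $n=7\geq 5$ et que $k$ est fertile, cette proposition donne la condition (c) : il existe dans le pinceau une forme \emph{non d\'eg\'en\'er\'ee} s'annulant sur un plan, c'est-\`a-dire contenant $3H$ ; ce qui est (b).

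Le point d\'elicat est le sous-cas de (a) o\`u le th\'eor\`eme \ref{dansP4} ne livre qu'un $k$-point de $Y$ : il faut y produire une seconde droite, ce qu'assurent la fertilit\'e du corps $p$-adique et la g\'eom\'etrie (bien comprise) de la surface $Y$, intersection de deux quadriques dans $\P^{4}$. En revanche, une fois le couple de droites s\'ecantes obtenu, le passage \`a une forme non d\'eg\'en\'er\'ee contenant $3H$ est purement formel, via la proposition \ref{coniqueplan}, et \'evite tout calcul local d\'elicat, en particulier $2$-adique.
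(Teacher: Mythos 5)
Votre strat\'egie est pour l'essentiel celle du texte : c\^one des droites passant par un point rationnel, point de degr\'e au plus $2$ sur l'intersection de deux quadriques $Y \subset \P^4_{k}$ via le th\'eor\`eme \ref{dansP4}, plan engendr\'e par les deux droites, puis passage \`a une forme non d\'eg\'en\'er\'ee contenant $3H$ par la proposition \ref{coniqueplan}. La partie (b) de votre r\'edaction est correcte et co\"{\i}ncide avec l'argument du texte.

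Il y a en revanche un vrai trou dans la partie (a), pr\'ecis\'ement dans le sous-cas que vous signalez vous-m\^eme comme d\'elicat. Vous travaillez avec le c\^one tangent en un point rationnel $P$ \emph{quelconque}, de sorte que $Y \subset \P^4_{k}$ est une intersection de deux quadriques arbitraire, \'eventuellement tr\`es d\'eg\'en\'er\'ee (r\'eductible, conique, \`a composantes non g\'eom\'etriquement int\`egres). Lorsque le th\'eor\`eme \ref{dansP4} ne fournit qu'un $k$-point $y_{0}$ de $Y$, votre argument de fertilit\'e exige que $y_{0}$ soit un point \emph{lisse} d'une composante g\'eom\'etriquement int\`egre de $Y$ ; la parenth\`ese o\`u vous vous placez sur l'ouvert lisse de la composante g\'eom\'etriquement int\`egre passant par le point ne r\`egle rien si $y_{0}$ est un point singulier de $Y$ (sommet d'un c\^one, point d'intersection de composantes conjugu\'ees, etc.) : rien ne garantit alors l'existence d'un second point de degr\'e au plus $2$, donc d'une seconde droite. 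La d\'emonstration du texte \'evite ce cas en choisissant le point base g\'en\'eriquement : les $k$-points de $X$ sont Zariski denses ($X$ est $k$-unirationnelle d\`es qu'elle poss\`ede un point rationnel), et d'apr\`es la proposition \ref{generalintegre} (f), pour un point base g\'en\'eral la vari\'et\'e $Y$ est une intersection compl\`ete \emph{lisse} de deux quadriques dans $\P^{4}$ ; tout point de $Y$ dans une extension $K/k$ de degr\'e au plus $2$ est alors automatiquement lisse et les $K$-points sont Zariski denses sur $Y_{K}$, ce qui fournit le couple de droites cherch\'e. Votre preuve se r\'epare exactement par ce choix g\'en\'erique de $P$ ; telle quelle, elle ne couvre pas tous les cas.
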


\begin{proof}
Soit   $M \in X(k)$. Comme $k$ est $p$-adique, ou encore parce qu'une 
  intersection compl\`ete lisse $X$ de deux quadriques dans $\P^n_{k}$, $n \geq 4$
est $k$-unirationnelle d\`es qu'elle a un $k$-point,
 les $k$-points sont Zariski denses dans $X$. 
 D'apr\`es le th\'eor\`eme \ref{generalintegre} (f), 
 on peut choisir $M$
 de sorte que l'espace tangent  \`a $X$ en $M$
 d\'ecoupe sur $X \subset \P^7_{k}$ un c\^{o}ne de dimension relative 1 sur
 une intersection compl\`ete lisse  $Y$ de deux quadriques dans $\P^{4}$.

 D'apr\`es le th\'eor\`eme \ref{dansP4}, 
 la $k$-vari\'et\'e $Y \subset \P^4_{k}$
  a un point
dans une extension au plus quadratique $K/k$, et comme $Y$ est lisse, ses $K$-points sont
Zariski denses sur $Y_{K}$.  Si $Y(k) \neq \emptyset$, les 
$k$-points sont denses sur $Y$.
 On peut donc trouver  sur $X$ un couple de droites
distinctes, soit d\'efinies toutes deux sur $k$, soit d\'efinies sur une
extension quadratique $K$ et conjug\'ees sur ce corps, et se rencontrant en le $k$-point $M$.

Il existe au moins une forme quadratique $\lambda f + \mu g$ dans le pinceau,
d\'efinie sur $k$, qui s'annule sur le plan $\P^2_{k}$ engendr\'e par ces deux droites.

Si la forme quadratique $\lambda f + \mu g$  est de rang 8, alors elle contient $3H$.
Si la forme quadratique est de rang 7, la proposition \ref{coniqueplan}
montre qu'il existe une forme de rang 8 dans le pinceau qui contient
$3H$. On pourrait aussi utiliser 
 la proposition plus sp\'eciale \ref{dansP7} (vii).
\end{proof}

  \begin{rmk}
Partant de $P \in X(k)$ quelconque, la proposition \ref{para3CTSaSD} 
donne d\'ej\`a  un c\^{o}ne de sommet $P$ sur 
une intersection de deux quadriques $Y \subset \P^4_{k}$,
sans la pr\'ecision qu'on peut  trouver $Y$ lisse.  Sur le corps $p$-adique $k$,
 le th\'eor\`eme \ref{dansP4} assure l'existence d'un point de $Y$
 sur une extension quadratique de $k$.
  \end{rmk}

 \medskip

Voici maintenant une d\'emonstration alternative du th\'eor\`eme global
\'etabli par Heath-Brown \cite[Thm. 1]{HB18}.

\begin{thm}
Soient $k$ un corps de nombres et $X \subset \P^7_{k}$ une intersection compl\`ete
lisse de deux quadriques.  Le principe de Hasse vaut pour les points rationnels de $X$.
\end{thm}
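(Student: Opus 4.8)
The plan is to reduce the global assertion to Salberger's local--global Theorem \ref{sal93} by producing, over $k$ itself, a nonsingular form in the pencil $\lambda f + \mu g$ that contains $3H$; such a form vanishes on a plane $\P^2_{k} \subset \P^7_{k}$, and by Proposition \ref{coniqueplan} this forces $X$ to contain a conic. So assume $X(\A_{k}) \neq \emptyset$, i.e. $X(k_{v}) \neq \emptyset$ for every place $v$. Fix a finite set $S$ of places of $k$ containing the archimedean places, the $2$-adic places, and every finite place at which $X$ does not have good reduction as a smooth complete intersection of two quadrics.

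First I would produce, place by place, a nonsingular form in the pencil containing $3H$ over $k_{v}$. For a finite place $v \in S$, the hypothesis $X(k_{v}) \neq \emptyset$ lets me invoke the local Theorem \ref{droitesinglocale}: over the $p$-adic field $k_{v}$ there is a nonsingular form $\lambda_{v} f + \mu_{v} g$ containing $3H$. For $v$ archimedean, where $k_{v} = \R$, Mordell's Proposition \ref{mordell} gives a nonsingular form in the pencil containing $[(7+1)/2]\,H = 4H$, hence $3H$. Since vanishing on a $\P^2$ persists under small deformation of the parameter (the implicit function theorem, as in the proof of Theorem \ref{quadratiqueP5nombres}), for each $v \in S$ the set $U_{v} \subset \P^1(k_{v})$ of parameters $(\lambda,\mu)$ for which $\lambda f + \mu g$ is nonsingular and contains $3H$ is a nonempty open subset.

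Next I would globalize. The condition that $\lambda f + \mu g$ be nonsingular is cut out by $\det(\lambda f + \mu g) \neq 0$, a Zariski-open $k$-rational condition; in particular a form nonsingular over $k$ is nonsingular over every $k_{v}$. By weak approximation on $\P^1_{k}$ I choose $(\lambda,\mu) \in \P^1(k)$ that is nonsingular and lies in $U_{v}$ for each $v \in S$. For $v \notin S$ the chosen form is then nonsingular over $k_{v}$, and $X$ has good reduction with residue characteristic $> 2$, so by Proposition \ref{dansP7}(v) it already contains $3H$ over $k_{v}$. Thus $\lambda f + \mu g$ contains $3H$ over every completion, and Hasse's subform Theorem \ref{Hasse}, applied to the rank-$6$ hyperbolic form $3H$ and the rank-$8$ form $\lambda f + \mu g$, yields that $\lambda f + \mu g$ contains $3H$ over $k$ itself.

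Finally, this nonsingular form vanishes on a plane $\P^2_{k} \subset \P^7_{k}$; by Proposition \ref{coniqueplan} the trace of another member of the pencil on that plane is a conic contained in $X$. As $X \subset \P^7_{k}$ is a smooth complete intersection of two quadrics containing a conic and $X(\A_{k}) \neq \emptyset$, Salberger's Theorem \ref{sal93} gives $X(k) \neq \emptyset$. The substantive input here is the local Theorem \ref{droitesinglocale}, which itself rests on the tangent-space geometry of $X$ and on the existence of a quadratic point on intersections of two quadrics in $\P^4_{k_{v}}$; once that is granted, the only point requiring care is that weak approximation supplies a \emph{single} rational parameter that is simultaneously nonsingular and of type $3H$ at the finitely many places of $S$, the remaining places being dispatched automatically by good reduction.
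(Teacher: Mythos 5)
Your proposal is correct and follows essentially the same route as the paper: Mordell's proposition at the real places, good reduction (Proposition \ref{dansP7}(v)) outside a finite set, Theorem \ref{droitesinglocale} at the remaining finite places, weak approximation on $\P^1_{k}$ plus Hasse's subform theorem to obtain a nonsingular form containing $3H$ over $k$, hence a conic on $X$, and finally Salberger's Theorem \ref{sal93}. The only difference is that you spell out the openness of the $3H$ condition and the passage to a conic via Proposition \ref{coniqueplan} in more detail than the paper does.
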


\begin{proof} On part de $X$ avec $X(k_{v})\neq \emptyset$ pour toute place $v$.
On utilise   la proposition \ref{mordell} (le cas $k=\R$), 
la proposition \ref{dansP7} (v) (le cas de bonne r\'eduction, qu'il suffit d'ailleurs de conna\^{\i}tre 
pour les compl\'et\'es $k_{v}$ hors d'un ensemble fini de places de $k$),
et le th\'eor\`eme \ref{droitesinglocale}.
Par approximation faible, on trouve    $\lambda \in k$ avec $f+\lambda g$ de rang 8
qui contient $3H$ sur chaque compl\'et\'e de $k$, et qui donc
par le th\'eor\`eme \ref{Hasse} contient $3H$. Donc $X$
contient une conique, et on conclut
avec le th\'eor\`eme \ref{sal93}  (Salberger).
\end{proof}

Terminons cette section avec une remarque.
On aurait pu essayer d'\'etablir le th\'eor\`eme en \'etablissant l'existence
d'un couple de droites conjugu\'ees sur $X$. Dans cette direction, on
peut au moins \'etablir le r\'esultat suivant.

    \begin{prop}\label{Hassedroitequad}
  Soient $k$ un corps de nombres et  $X \subset \P^7_{k}$ lisse d\'efinie par $f=g=0$,
  avec des points dans tous les compl\'et\'es.
  Il existe une extension quadratique $K/k$ telle que
  $f+tg$ sur le corps $K(t)$ contienne $2H$
  sur les compl\'et\'es $K_{w}(t)$, autrement dit $F_{1}(X)(\A_{K})\neq \emptyset$.
    \end{prop}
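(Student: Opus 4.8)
The plan is to run, for lines, exactly the argument used for quadratic points in Proposition~\ref{dansP4bis}(v): I localize the ``line over a quadratic extension'' statement of Theorem~\ref{droitesinglocale}, observe that geometric integrality of $F_1(X)$ disposes of almost all places for free, and then glue the finitely many remaining local choices into a single quadratic extension $K/k$ by weak approximation.

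First I would record that $F_1(X)$ is geometrically integral. For $n=7$ it is a smooth, projective, geometrically rational (hence geometrically connected, hence geometrically integral) $k$-variety by Proposition~\ref{generalintegre}(d). It therefore extends to a smooth projective scheme over $\O_{k,\Sigma}$ for some finite set $\Sigma$, and Lang--Weil together with Hensel's lemma give $F_1(X)(k_v)\neq\emptyset$ for all $v$ outside a finite set $S$; I enlarge $S$ to contain all archimedean places. For $v\notin S$ and any place $w$ of a quadratic extension $K/k$ above $v$, the inclusion $k_v\subseteq K_w$ forces $F_1(X)(K_w)\neq\emptyset$ automatically, so these places need no further attention.

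Next I would produce, for each $v\in S$, a local extension of degree $\le 2$ over which $X$ acquires a line. At a finite place $v$ the global hypothesis gives $X(k_v)\neq\emptyset$, so Theorem~\ref{droitesinglocale}(a) applies and $X$ carries a pair of distinct secant lines globally defined over $k_v$; these are either two $k_v$-lines or two lines conjugate over some quadratic field, and in both cases there is an extension $L_v=k_v(\sqrt{a_v})$ with $[L_v:k_v]\le 2$ and $F_1(X)(L_v)\neq\emptyset$. At a real place I simply take $a_v<0$, so $L_v=\C$ and $F_1(X)(\C)\neq\emptyset$ because over an algebraically closed field $F_1(X)$ is non-empty (Proposition~\ref{generalintegre}(a), as $n=7\geq 4$); complex places require nothing.

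Finally I would glue. Since the groups of squares $k_v^{\ast 2}$ are open in $k_v^{\ast}$, weak approximation for $\mathbb{G}_m$ lets me choose $a\in k^{\ast}$ with $a\in a_v\,k_v^{\ast 2}$ for every $v\in S$, and I set $K=k(\sqrt{a})$; then $K\otimes_k k_v\cong L_v$ at each $v\in S$, whence $F_1(X)(K_w)\neq\emptyset$ for every $w\mid v$, while the places outside $S$ are already handled. Thus $F_1(X)(K_w)\neq\emptyset$ for all $w$, and since $F_1(X)$ is projective this is precisely $F_1(X)(\A_K)\neq\emptyset$; by Proposition~\ref{amerbrumer} the existence of a $K_w$-line on $X$ is equivalent to $f+tg$ containing $2H$ over $K_w(t)$, which is the assertion. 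The only degenerate case, in which all local choices are split and $a$ could be a global square, is harmless: then $F_1(X)(\A_k)\neq\emptyset$ already and any quadratic $K$ works. The genuine content, and the only delicate input, is Theorem~\ref{droitesinglocale}, which rests on the $p$-adic quadratic-point result \ref{dansP4} and ultimately on Lichtenbaum's theorem~\ref{periode}; everything else here is soft packaging by integrality and weak approximation.
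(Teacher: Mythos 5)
Your proof is correct and follows essentially the same route as the paper: handle almost all places by spreading out and reduction mod $p$ (the paper invokes Proposition \ref{dansP7}(v), but its own Remark notes that geometric integrality of the Fano scheme suffices, which is exactly your Lang--Weil argument), treat the finitely many remaining places via Theorem \ref{droitesinglocale}, and glue the local quadratic extensions by weak approximation on $\mathbb{G}_m$. Your version merely spells out the archimedean and degenerate cases more explicitly; there is no substantive difference.
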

\begin{proof}
  Pour presque toute place $v$ de $k$, la forme g\'en\'erique $f+tg$ contient
 $3H$ sur $k_{v}(t)$, et $X$ contient un plan $\P^2_{k_{v}}$.
  Pour toute place $v$ de $k$,  il existe une extension quadratique $K_{w}$ de $k_{v}$
  telle que  la forme g\'en\'erique $f+tg$ contienne $2H$  sur $K_{w}(t)$
  (car on a vu au th\'eor\`eme
  \ref{droitesinglocale}  qu'il existe une droite de $X$  d\'efinie sur
  une extension quadratique de $k_{v}$).
  Par approximation faible, on trouve une extension quadratique $K/k$
  telle que $f+tg$ sur $K(t)$ contienne $2H$  sur tous les $K_{w} (t)$
  pour $w$ place de $k$. 
  On peut choisir $K$ totalement imaginaire.
\end{proof}

  La proposition \ref{dansP7} (i) implique que la $k$-vari\'et\'e projective et lisse
  g\'eom\'etriquement rationnelle  $F_{1}(X)$ satisfait
  $\Br(k)=\Br(F_{1}(X))$.  
 On s'attend  donc \`a avoir le principe de Hasse pour l'existence de droites
  sur $X \subset \P^7$.  Avec les notations de la proposition
    \ref{dansP7}, $X$ qui a des points
  dans tous les compl\'et\'es de $k$ devrait selon la proposition \ref{Hassedroitequad}
contenir une droite sur une extension quadratique
  et donc d'apr\`es \cite{CTSaSD87} devrait avoir un $k$-point.
  Mais  prouver  le principe de Hasse pour $F_{1}(X)$ ne semble pas a priori plus simple
  que de prouver le principe de Hasse pour   $X$.

  \medskip

{\bf Remerciements.}   Les travaux r\'ecents de Creutz et Viray \cite{CV21} et de Iyer et Parimala \cite{IP22} m'ont amen\'e \`a revenir sur ce sujet.
Je remercie Aleksandr Kuznetsov d'avoir bien voulu \'etablir  et r\'ediger les
r\'esultats rassembl\'es dans l'appendice.  Le rapport critique de l'arbitre a permis une meilleure pr\'esentation
des r\'esultats arithm\'etiques de l'article.

 \appendix 
 
  \section{Appendix, by A. Kuznetsov}
   
  \def\kk{{k}}
  \def\Gr{{\rm G}}
  \def\bkk{ {\overline k} }
\def\cQ{   \mathcal{Q}     }
 \def\cV{   \mathcal{V}     }
 \def\cO{   \mathcal{O}     }
  \def\cE{   \mathcal{E}     }
   \def\cF{   \mathcal{F}     }
  \def\cU{   \mathcal{U}     }
 \def\hcV  {  \widehat{\cV}  }
 \def\hV{   \widehat{V}   }
 \def\hG {   \widehat{G}   }
\def\hU{    \widehat{U}   }
\def\OGr{{\rm OGr}}
  \def\hq {\hat{q}}
 \def\cS{   \mathcal{S}     }
\def\io{\iota}
\def\upbeta{\beta}
  \def\Cliff{{\mathcal{C}\!\ell}}
   \def\cB{   \mathcal{B}     }
   \def\cN{   \mathcal{N}     }
  \def\cHom{\mathcal{H}\mathit{om}}
 \def\rank{{\rm{rank}}}
 \def\Ker{{\rm Ker}}
 \def\eps{{\varepsilon}}
 \def\Hom{{\rm Hom}}
 \def\tD{   \widetilde{D}   }
\def\rd{\mathrm{d}}
\def\rs{\mathrm{s}}
\def\id{{\rm id}}
\def\Bl{{\rm Bl}}
  
We work over an arbitrary  field~$\kk$ of characteristic not equal to~$2$.
Let~$V$ be a vector space of dimension~$n + 1$  and let 
\begin{equation*}
X = Q_1 \cap Q_2 \subset \P(V) = \P^n
\end{equation*}
be a smooth complete intersection of two quadrics.
Over a separable closure of~$\kk$, the smoothness of~$X$ implies that the pencil generated by~$Q_1$ and~$Q_2$
contains exactly~$n + 1$ quadrics of corank~$1$, and all the other quadrics in the pencil are nondegenerate.
Moreover, the vertices of singular quadrics in the pencil do not lie on~$X$.

\subsection{Relative Hilbert schemes of planes}

Let~$G_r(X) \subset \Gr(r+1,V) \times \P^1$ be the relative Hilbert scheme of linear spaces $\P^r$
(linearly embedded into~$\P^n$)
contained in the quadrics of the pencil generated by~$Q_1$ and~$Q_2$.

\begin{prop}
\label{prop:gr}
Let $\kk$ be a field of characteristic not equal to~$2$.
If~\mbox{$n \ge 2r + 1$} the scheme~$G_r(X)$ is smooth and geometrically connected 
of expected dimension \mbox{$(r + 1)(n - \tfrac32r - 1) + 1$ over~$\kk$}.

Moreover, if~$n = 2r + 1$ the morphism~$G_r(X) \to \P^1$ factors as
\begin{equation*}
G_r(X) \to C \to \P^1,
\end{equation*}
where~$C$ is a smooth geometrically connected curve, 
$C \to \P^1$ is a double covering branched at the discriminant locus of the pencil,
which has length~$n + 1 = 2r + 2$ and whose~$\bkk$-points correspond to degenerate quadrics in the pencil,
and~$G_r(X) \to C$ is a smooth morphism with geometrically connected fibers.
\end{prop}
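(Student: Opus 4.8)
The plan is to present $G_r(X)$ as the zero scheme of a section of a vector bundle on $\Gr(r+1,V)\times\P^1$ and to deduce smoothness and the dimension from a single pointwise statement: surjectivity of the differential, whose only subtle case forces us to use that the vertices of the degenerate quadrics avoid $X$. Concretely, let $\Phi$ send $(U,t)$, with $U\subset V$ of dimension $r+1$ and $t=(\lambda:\mu)$, to the restriction $q_t|_U\in\mathrm{Sym}^2 U^*$ of $q_t=\lambda q_1+\mu q_2$; this is a section of the bundle $\mathcal E$ with fibre $\mathrm{Sym}^2 U^*$, and $G_r(X)=\Phi^{-1}(0)$. Nonemptiness over $\bkk$ amounts to a nondegenerate rank-$(n+1)$ form carrying an isotropic $(r+1)$-space, i.e. Witt index $\ge r+1$, i.e. $n\ge 2r+1$. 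Since $\dim(\Gr(r+1,V)\times\P^1)-\mathrm{rank}\,\mathcal E=(r+1)(n-r)+1-\binom{r+2}{2}=(r+1)(n-\tfrac32 r-1)+1$, it suffices to show that at every point of $G_r(X)$ the differential
\[
d\Phi\colon \Hom(U,V/U)\oplus T_t\P^1\longrightarrow\mathrm{Sym}^2 U^*
\]
is surjective; this gives smoothness of the expected dimension at one stroke.

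I would write $d\Phi=A+B$, where $A(\phi)$ is the symmetric form $(u,u')\mapsto b_t(\phi(u),u')+b_t(u,\phi(u'))$ for the polar form $b_t$ of $q_t$, and $B(\dot t)=\dot q|_U$ with $\dot q=\dot\lambda q_1+\dot\mu q_2$. Since $U$ is isotropic, $A$ factors through the map $\beta\colon V/U\to U^*$ induced by $b_t$. If $q_t$ is nondegenerate, or has corank $1$ with vertex $v_0\notin U$, then $\beta$ is onto and $A$ alone is surjective. The one delicate case is corank $1$ with $v_0\in U$: there $\mathrm{image}(\beta)$ is the hyperplane $\mathrm{Ann}_U(v_0)$, and a short computation shows that $A$ surjects exactly onto $\{\sigma:\sigma(v_0,v_0)=0\}\subset\mathrm{Sym}^2 U^*$. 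The missing direction is then supplied by $B$ as soon as some $\dot q$ has $\dot q(v_0)\neq 0$, that is $(q_1(v_0),q_2(v_0))\neq(0,0)$ --- which is exactly $[v_0]\notin X$, the standing hypothesis on vertices. I expect this reconciliation --- the Grassmannian directions miss one equation, recovered from the $\P^1$-direction via smoothness of $X$ --- to be the main obstacle and the heart of the argument.

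For the fibres of $\pi\colon G_r(X)\to\P^1$ I would argue as follows. Over the complement of the discriminant $\Delta$ (reduced of length $n+1$, by separability of $\det(\lambda f+\mu g)$) the fibre is the variety of isotropic $(r+1)$-spaces of a nondegenerate rank-$(n+1)$ form: geometrically irreducible for $n>2r+1$, and with two spinor components for $n=2r+1$. Over $\delta\in\Delta$, projecting along the vertex $v_0$ splits the fibre into the spaces $U$ with $v_0\in U$ (a copy of the isotropic $r$-Grassmannian of $V/\langle v_0\rangle$, of dimension $D-(n-2r-1)$) and those with $v_0\notin U$ (an affine bundle over the isotropic $(r+1)$-Grassmannian of $V/\langle v_0\rangle$, of dimension $D$), where $D=(r+1)(n-\tfrac32 r-1)$. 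The decisive point for $n=2r+1$ is that the second Grassmannian is empty, the Witt index of the rank-$n$ form being only $r$; hence the discriminant fibre is just the first piece, smooth, connected, of dimension $D$, and $\pi$ is equidimensional with smooth fibres.

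Finally, for $n>2r+1$ the generic fibre is geometrically connected, so the Stein factorization of $\pi$ is trivial and $G_r(X)$ is geometrically connected. For $n=2r+1$ I would take the Stein factorization $G_r(X)\to C\to\P^1$: normality of the smooth scheme $G_r(X)$ makes $C$ a smooth curve, and $C\to\P^1$ is finite of degree $2$ (two fibre components generically, one over $\Delta$), hence a double cover ramified exactly over the reduced divisor $\Delta$; being ramified, $C$ is geometrically connected. By construction $G_r(X)\to C$ has geometrically connected fibres, all smooth of dimension $D$, so it is smooth (flatness being automatic by the miracle flatness criterion, the source being smooth and the target a regular curve with equidimensional fibres), and $G_r(X)$ is geometrically connected because $C$ is.
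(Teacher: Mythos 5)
Your treatment of the first part is correct and is in effect a self-contained verification of what the paper simply cites from Kuznetsov's earlier work on pencils of quadrics: presenting $G_r(X)$ as the zero locus of a section of the bundle with fibre ${\rm Sym}^2U^*$, and checking surjectivity of the differential, with the only delicate case (corank one, vertex $v_0\in U$, where the Grassmannian directions only hit the hyperplane $\{\sigma:\sigma(v_0,v_0)=0\}$) rescued by the $\P^1$-direction because the vertices of the degenerate quadrics avoid $X$. The construction of $C$ via Stein factorization and its identification as a double cover branched over the (reduced) discriminant are also fine, although for smoothness of $C$ over a possibly imperfect field you should argue as the paper does, from reducedness of the branch divisor and ${\rm car}(k)\neq 2$, rather than from normality of $G_r(X)$, which only yields regularity of $C$.

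The genuine gap is in the last step, the smoothness of $G_r(X)\to C$. Miracle flatness does give flatness, but ``flat, source and target smooth over $k$, all fibres geometrically connected of the right dimension, generic fibre smooth'' does not imply that the morphism is smooth: the family $y^2z=x^3+tz^3$ over $\A^1_t$ has smooth total space, is flat and proper with geometrically integral equidimensional fibres and trivial Stein factorization, yet its central fibre is cuspidal. What must be checked is that the \emph{scheme-theoretic} fibre of $G_r(X)\to C$ over each ramification point $x_i$ is smooth, and your fibre analysis does not do this: it identifies the fibre of $G_r(X)\to\P^1$ over the branch point $P_i$ only set-theoretically with ${\rm OGr}_{\bar q_{P_i}}(r,V/\langle v_0\rangle)$. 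In fact your own tangent computation shows that this scheme-theoretic fibre is everywhere non-reduced (at each of its points the defining section of ${\rm Sym}^2\,U^*$ on the Grassmannian has differential of corank one, so the fibre over $P_i$ has embedded tangent dimension $D+1$ everywhere); this is consistent with $C\to\P^1$ being ramified at $x_i$, but it leaves entirely open whether the fibre over $x_i\in C$ --- a closed subscheme of the fibre over $P_i$ with the same support --- is reduced. This is precisely the point for which the appendix introduces the auxiliary bundle $\widehat{\mathcal V}=\oplus\,\mathcal O_C(x_i)$ with its everywhere non-degenerate quadratic form $\hat q$ and identifies $G_r(X)$ with a connected component of the relative isotropic Grassmannian over $C$, which is visibly smooth over $C$ with connected fibres; some argument of this kind, or a direct computation of the scheme structure of the fibre over $x_i$, is needed to close your proof.
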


The smoothness of $G_r(X)$ is essentially proved in~\cite[Proposition~2.1]{Ku11}.
 
{\it Proof.}
To prove smoothness (and geometric connectedness)  of~$G_r(X)$
we may pass to the algebraic closure of the base field,
so we assume~$\kk$ algebraically closed.
First, the smoothness of~$X$ implies the smoothness of the total space~$\cQ$ of the relative quadric in the pencil
(because~$\cQ$ is nothing but the blowup of~$\P(V)$ at~$X$).
Then the first part of~\cite[Proposition~2.1]{Ku11} implies that the natural morphism
\begin{equation*}
T_P\P^1 \to S^2K_P^\vee
\end{equation*}
is surjective for any~$P \in \P^1$, where~$K_P \subset V$ 
is the kernel of the quadratic form corresponding to the point~$P$.
Finally, the argument of the second part of~\cite[Proposition~2.1]{Ku11} 
implies that~$G_r(X)$ is smooth of expected dimension.

Next, note that the fiber of~$G_r(X)$ over a point~$P \in \P^1$ is the subscheme of the Grassmannian~$\Gr(r+1,n+1)$
parameterizing subspaces~$U \subset V$ of dimension~$r + 1$
isotropic for the quadratic form~$q_P$ of the quadric~$\cQ_P$.
In particular, if~$P$ is not in the discriminant, so that the form~$q_P$ is nondegenerate, and~$n \ge 2r + 2$,
this is the homogeneous variety~$\OGr_{q_P}(r + 1, V)$ of the special orthogonal group of the form~$q_P$, 
hence it is smooth and connected,
and if~$n = 2r + 1$, this is a disjoint union of two smooth homogeneous varieties~$\OGr_{q_P}^\pm(r+1,2r+2)$.

On the other hand, if~$n = 2r + 1$ and~$P$ belongs to the discriminant, 
so that the form~$q_P$ has a $1$-dimensional kernel space~$K_P$,
then every $q_P$-isotropic subspace contains~$K_P$, 
and the map~$U \mapsto U/K_P$ induces an isomorphism 
between the fiber of~$G_r(X)$ over~$P$ with the subscheme of~$\Gr(r,n)$ 
parameterizing subspaces isotropic for the quadratic form~$\bar{q}_P$ induced by~$q_P$ on~$V/K_P$,
which is also a homogeneous variety, $\OGr_{\bar{q}_P}(r,V/K_P)$, hence smooth and connected.

To prove
connectedness of~$G_r(X)$,
first assume that~$n \ge 2r + 2$.
Then all fibers of~$G_r(X) \to \P^1$ are connected, so it follows that~$G_r(X)$ is connected.

Finally, assume~$n = 2r + 1$, and let again~$\kk$ be any field of characteristic not equal to~$2$.
Let
\begin{equation*}
G_r(X) \xrightarrow{\quad} C \xrightarrow{\ f\ } \P^1
\end{equation*}
be the Stein factorization for the map~$G_r(X) \to \P^1$.
Since~$G_r(X)$ is normal, $C$ is a normal curve.
Since the geometric general fiber of~$G_r(X)$ has two connected components,
$C$ is the normal closure of~$\P^1$ in the corresponding quadratic field extension of the field of rational functions,
hence~$C \to \P^1$ is a double covering ramified over the discriminant.
Since the discriminant is reduced and the characteristic is not~$2$,
the curve~$C$ is smooth over~$\kk$ and geometrically connected.

To prove that the morphism~$G_r(X) \to C$ is smooth with geometrically connected fibers
we may pass to the algebraic closure of the base field,
so from now on we assume~$\kk$ algebraically closed.
Let~$x_1,\dots,x_{n+1} \in C$ be the ramification points of~$f$.
Following the proof of~\cite[Proposition~2.7]{FK18} we consider the vector bundles
\begin{equation*}
\cV \coloneqq V \otimes \cO_{\P^1}
\qquad\text{and}\qquad 
\hcV \coloneqq \bigoplus_{i=1}^{n+1} \cO_C(x_i)
\end{equation*}
on~$\P^1$ and~$C$, respectively, endowed with the quadratic forms
\begin{equation*}
q \colon \cV \to \cV^\vee \otimes \cO_{\P^1}(1)
\qquad\text{and}\qquad 
\hq \colon \hcV \to \hcV^\vee \otimes f^*\cO_{\P^1}(1),
\end{equation*}
where the first 
map
is induced by the pencil of quadrics, and the second
map
 is defined as the direct sum of morphisms
\begin{equation*}
\cO_C(x_i) \cong \cO_C(-x_i) \otimes \cO_C(2x_i) \cong \cO_C(-x_i) \otimes f^*\cO_{\P^1}(1),
\end{equation*}
in particular the form~$\hq$ is everywhere non-degenerate.
It is easy to see that we have the following commutative diagram
\begin{equation*}
\xymatrix@C=5em{
f^*\cV \ar[r]^-{f^*q} \ar[d]_\io &
f^*\cV^\vee \otimes f^*\cO_{\P^1}(1)
\\
\hcV \ar[r]^-{\hq} &
\hcV^\vee \otimes f^*\cO_{\P^1}(1), \ar[u]_{\io^\vee}
}
\end{equation*}
where the left vertical arrow is defined as the composition
\begin{equation*}
\io \colon f^*\cV \cong \bigoplus_{i=1}^{n+1} \cO_C \hookrightarrow \bigoplus_{i=1}^{n+1} \cO_C(x_i) = \hcV
\end{equation*}
with the middle arrow being the direct sum of the embeddings~$\cO_C \hookrightarrow \cO_C(x_i)$,
and the right vertical arrow is its dual.
In particular, away from the points~$x_i$ the vertical arrows are isomorphisms, 
hence the quadratic forms agree.
On the other hand, over~$x_i$ the map~$\io$ factors as the composition
\begin{equation*}
V \xrightarrow{\ \pi_i\ } V/K_{P_i} \hookrightarrow \hV,
\end{equation*}
where~$\pi_i$ is the projection, and~$K_{P_i} \subset V$ is the $1$-dimensional kernel of the quadratic form corresponding to the point~$P_i$.

Now consider the relative isotropic Grassmannian~$\hG_r(X) \to C$ 
that parameterizes vector subspaces of dimension~$r + 1$ in the fibers of~$\hcV$ 
isotropic with respect to~$\hq$.
The argument of~\cite[Proposition~2.7]{FK18} shows that~$\hG_r(X)$ has two connected components
\begin{equation*}
\hG_r(X) = \hG_r^+(X) \sqcup \hG_r^-(X),
\end{equation*}
isomorphic to each other.
We claim that each of these components is isomorphic to~$G_r(X)$.
Indeed, consider the morphism
\begin{equation}
\label{eq:map-gplus-g}
\hG_r^+(X) \to G_r(X),
\qquad 
(x,\hU) \mapsto (f(x),\io^{-1}(\hU)).
\end{equation}
It is obvious that this map is well defined and is an isomorphism away from 
the point~$P_i$.
On the other hand, over~$P_i$ the map factors as
\begin{equation*}
\OGr^+_{\hq_{x_i}}(r+1,\hV) \to
\OGr_{\bar{q}_{P_i}}(r,V/K_{P_i}) \to
\OGr_{q_{P_i}}(r+1,V),
\end{equation*}
where~$\bar{q}_{P_i}$ is the (non-degenerate) quadratic form induced on~$V/K_{P_i}$ by the form~$q_{P_i}$,
the first map is given by~$\hU \mapsto \hU \cap (V/K_{P_i})$, 
and the second by~$\bar{U} \mapsto \pi_i^{-1}(\bar{U})$.
It is easy to see that both maps are isomorphisms, hence so is their composition,
and therefore the map~\eqref{eq:map-gplus-g} is an isomorphism.

It remains to note that the natural map~$\hG^+_r(X) \to C$ is smooth 
(because~$\hq$ is everywhere nondegenerate) with connected fibers,
hence~$\hG^+_r(X)$ is connected, hence~$G_r(X)$ is connected.
QED

\bigskip

We need to study the case~$r = 2$, $n = 5$ in more detail.
Let~$f \colon C \to \P^1$ be the double covering constructed in Proposition~\ref{prop:gr}
and let~$\Cliff_0$ be the sheaf of even parts of Clifford algebras over~$\P^1$ 
associated with the pencil of quadrics.
Recall from~\cite[\S3.5]{Ku08} that there is an Azumaya algebra~$\cB_0$ on~$C$ such that
\begin{equation*}
\Cliff_0 \cong f_*\cB_0.
\end{equation*}
We denote by~$\upbeta \in \Br(C)$ the Brauer class of~$\cB_0$.

\begin{prop}\label{prop:sb}
Let $\kk$ be a field of characteristic not equal to~$2$.
If~$n = 5$ the morphism~$G_2(X) \to C$ constructed in Proposition~\textup{\ref{prop:gr}}
is a $3$-dimensional Severi--Brauer variety of class~$\upbeta$.
\end{prop}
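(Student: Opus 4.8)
The plan is to first upgrade the fibral analysis of Proposition~\ref{prop:gr} to a global statement, and only then to match the resulting Brauer class with~$\upbeta$. First I would work with the model~$\hG^+_2(X) \cong G_2(X)$ over~$C$ constructed in the proof of Proposition~\ref{prop:gr}, where the quadratic form~$\hq$ on~$\hcV$ is nondegenerate at \emph{every} point of~$C$. Consequently each geometric fibre of~$\hG^+_2(X) \to C$ is one connected component~$\OGr^+_{\hq_c}(3,\hcV_c)$ of the variety of maximal isotropic subspaces of a nondegenerate rank-$6$ form, and for a nondegenerate form of rank~$6$ such a component is isomorphic to~$\P^3$ (all spinors being pure in type~$D_3$). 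Thus~$G_2(X) \to C$ is a smooth proper morphism all of whose geometric fibres are isomorphic to~$\P^3$; by the standard classification of forms of projective bundles this makes it a Severi--Brauer scheme of relative dimension~$3$ over~$C$, hence it defines a class in~$\Br(C)$ of period dividing~$4$. This settles the geometric half of the statement and reduces everything to the computation of that class.

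For the class, the key input is the classical correspondence between maximal isotropic subspaces of a rank-$6$ quadratic space and the even Clifford algebra. Over~$C$ the half-spinor bundle~$\cS$ of~$(\hcV,\hq)$ is a module over the even Clifford algebra of~$(\hcV,\hq)$, and sending a maximal isotropic subbundle~$\hU \subset \hcV$ to its annihilator line~$\{s \in \cS : \hU \cdot s = 0\}$ identifies~$\OGr^+(\hq)$ with the relative Severi--Brauer scheme of the degree-$4$ Azumaya factor of that even Clifford algebra which acts on~$\cS$. It therefore remains to recognise this factor as~$\cB_0$. Here I would invoke the structural identification~$\Cliff_0 \cong f_*\cB_0$ of~\cite[\S3.5]{Ku08}: on~$C$ the discriminant of the pencil becomes a square, the even Clifford algebra acquires a split centre, and the factor governing one of the two spinor families is exactly the Azumaya algebra~$\cB_0$ whose Brauer class is~$\upbeta$.

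The main obstacle is the bookkeeping in this last identification: one must track which of the two spinor families corresponds to the component~$\OGr^+$ singled out in Proposition~\ref{prop:gr}, and verify that the Clifford-module construction globalises across the ramification points of~$f$ and not merely over the unramified locus. A clean way to finish, sidestepping the delicate global sign conventions, is to pass to the generic point: since~$C$ is a smooth curve the restriction~$\Br(C) \hookrightarrow \Br(\kk(C))$ is injective, so it suffices to compare the two classes over~$F = \kk(C)$. There the generic form~$q$ of the pencil acquires square discriminant, its even Clifford algebra splits as a product~$A_+ \times A_-$ of degree-$4$ algebras, one factor is~$\cB_{0,F}$, and the pure-spinor description of~$\OGr^+(3,V_F)$ exhibits the class of the generic fibre of~$G_2(X)$ as the class of that factor (compare the Albert-form discussion recalled in~\cite{ABB14} and Proposition~\ref{identif}). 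Matching the chosen component with the chosen factor then gives~$[G_2(X)] = \upbeta$ in~$\Br(C)$.
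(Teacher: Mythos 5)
Your argument is correct, but it takes a genuinely different route from the appendix. The paper's proof exploits the exceptional isomorphism $A_3=D_3$ globally: writing $\mathcal{B}_0\cong\mathcal{H}\mathit{om}(\mathcal{S},\mathcal{S})$ for a $\beta$-twisted locally free sheaf $\mathcal{S}$ of rank~$4$ on~$C$, it identifies $\widehat{\mathcal{V}}$ up to line bundle twist with $\wedge^2\mathcal{S}$, the associated quadric bundle with $\mathrm{Gr}_C(2,\mathcal{S})$, and the two families of planes with $\P_C(\mathcal{S})\sqcup\P_C(\mathcal{S}^\vee)$; this exhibits $G_2(X)\cong\P_C(\mathcal{S})$ directly as a Severi--Brauer scheme of class~$\beta$ (the dual copy giving $-\beta=\beta$ since $\beta$ is $2$-torsion), so the fibres, the bundle structure and the Brauer class are all obtained in one stroke. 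You instead separate the two halves: first a fibrewise argument (each geometric fibre is one component of the maximal isotropic Grassmannian of an everywhere nondegenerate rank-$6$ form, hence $\P^3$, so $G_2(X)\to C$ is a form of a $\P^3$-bundle and defines a class in $\Br(C)$), then a computation of that class at the generic point using the injectivity of $\Br(C)\hookrightarrow\Br(k(C))$, the splitting of the even Clifford algebra over $k(C)$ into two Brauer-equivalent degree-$4$ factors of class $[\mathcal{B}_{0,k(C)}]$, and the pure-spinor identification of each component of $\mathrm{OGr}(3,6)$ with the Severi--Brauer variety of the corresponding half-spinor algebra. Both routes rest on the same exceptional isomorphism; yours buys a cleaner separation of concerns and, as you correctly observe, renders the component/factor bookkeeping harmless at the level of Brauer classes (once the discriminant is a square the two half-spinor factors are Brauer-equivalent), at the price of invoking the classical spinor description of $\mathrm{OGr}^{+}(3,6)$ and supplying the one nontrivial input that generically $C_0(q_{k(t)})\cong\mathcal{B}_{0,k(C)}$ as a $k(t)$-algebra, which is exactly what $\Cliff_0\cong f_*\mathcal{B}_0$ from \cite[\S3.5]{Ku08} gives after base change along~$f$.
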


{\it Proof.}
Since~$\cB_0$ is an Azumaya algebra of rank~$2^{n-1} = 16$, 
there is a $\upbeta$-twisted locally free sheaf~$\cS$ on~$C$ such that~$\cB_0 \cong \cHom(\cS,\cS)$.
Then up to line bundle twists the bundle~$\hcV$ is isomorphic to both~$\wedge^2\cS$ and~$\wedge^2\cS^\vee$, 
the corresponding quadric bundle is isomorphic to the relative Grassmannian~$\Gr_C(2,\cS) \cong \Gr_C(2,\cS^\vee)$,
and there is a standard canonical isomorphism
\begin{equation*}
\hG_2(X) \cong \P_C(\cS) \sqcup \P_C(\cS^\vee).
\end{equation*}
Therefore, $G_{2}(X) \cong \P_C(\cS) \cong \P_C(\cS^\vee)$ by the argument of Proposition~\ref{prop:gr}.
QED

\subsection{Absolute Hilbert schemes of planes}

Let~$F_r(X) \subset \Gr(r+1, V)$ be the Hilbert scheme of linear spaces $\P^r$ (linearly embedded into~$\P^n$) contained in~$X$.   

\begin{lemang}
\label{lem:fr}
Let $\kk$ be a field of characteristic not equal to~$2$.
If~$n \le 2r + 1$ the scheme~$F_r(X)$ is empty, 
and if~$n \ge 2r + 2$ it is smooth over~$\kk$ and nonempty of expected dimension $(r+1)(n-2r-2)$.
\end{lemang}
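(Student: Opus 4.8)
The plan is to present $F_r(X)$ as the zero scheme of a section of a vector bundle on the Grassmannian and to extract everything from a single consequence of the smoothness of $X$. Write $\Gr \coloneqq \Gr(r+1,V)$, let $\cU \subset V \otimes \cO_{\Gr}$ be the tautological subbundle of rank $r+1$, and for a point $U \in \Gr$ set $W \coloneqq V/U$. Restricting $q_1,q_2$ to the tautological subspaces gives a section $s=(s_1,s_2)$ of $\cE \coloneqq (S^2\cU^\vee)^{\oplus 2}$ with $s_i(U)=q_i|_U$, and by construction $F_r(X)=Z(s)$. As $\rank\cE=(r+1)(r+2)$ and $\dim\Gr=(r+1)(n-r)$, the expected dimension of $Z(s)$ is $(r+1)(n-r)-(r+1)(r+2)=(r+1)(n-2r-2)$, the value claimed. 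Thus it suffices to prove that $Z(s)=\emptyset$ when $n\le 2r+1$, and that $s$ is a regular section when $n\ge 2r+2$; since both properties are geometric, I will argue over $\bkk$. The input from smoothness is the following. If $U$ is a geometric point with $\P(U)\subset X$, then $U$ is isotropic for each $q_i$, so the symmetric map $B_i\colon V\to V^\vee$ carries $U$ into $U^\perp=W^\vee$; write $A_i\coloneqq B_i|_U\colon U\to W^\vee$. The Jacobian criterion for the smooth complete intersection $X$ states precisely that $B_1v,B_2v$ are linearly independent in $V^\vee$ for every nonzero $v\in U$, equivalently that every nonzero member $aA_1+bA_2\colon U\to W^\vee$ of the pencil is injective. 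This is the only property of $X$ that enters.

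For emptiness, assume $n\le 2r+1$, i.e. $\dim W=n-r\le r+1=\dim U$, and suppose such a $U$ exists. Injectivity of $A_1$ forces $\dim W\ge\dim U$, hence $\dim W=\dim U$ and $A_1\colon U\to W^\vee$ is an isomorphism. Then $\psi\coloneqq A_1^{-1}A_2\in\mathrm{End}(U)$ has an eigenvalue $\lambda\in\bkk$, and its eigenvector $v\ne 0$ satisfies $(A_2-\lambda A_1)v=0$, contradicting injectivity of the nonzero combination $-\lambda A_1+A_2$. Hence no such $U$ exists and $F_r(X)=\emptyset$.

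For $n\ge 2r+2$, nonemptiness of $F_r(X)$ over $\bkk$ — hence of the scheme — is Proposition~\ref{corpsCi}, $\bkk(t)$ being $C_1$. It remains to show that $ds$ is surjective at every geometric $U\in Z(s)$, which gives smoothness over $\kk$ of the expected dimension and is the heart of the matter. A direct computation identifies $ds_U\colon\Hom(U,W)\to(S^2U^\vee)^{\oplus 2}$ with $\phi\mapsto\big((u,u')\mapsto b_i(u,\phi u')+b_i(u',\phi u)\big)_{i=1,2}$ (well defined since $b_i(u,\cdot)$ descends to $W$), whose transpose is the map $(\eta_1,\eta_2)\mapsto A_1\eta_1+A_2\eta_2$ from $(S^2U)^{\oplus 2}$ to $U\otimes W^\vee$, where $A_i$ is contracted against one factor of $\eta_i$. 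The hard part will be the injectivity of this transpose.

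I would establish that injectivity as follows. Suppose $A_1\eta_1+A_2\eta_2=0$ with $(\eta_1,\eta_2)\ne 0$, viewing each $\eta_i\colon U^\vee\to U$ as a symmetric map. A common kernel vector $\xi_0\in\Ker\eta_1\cap\Ker\eta_2$ would let me restrict to the hyperplane $\{u:\langle\xi_0,u\rangle=0\}$, on which the relation and the injectivity hypothesis persist, so I may assume $\Ker\eta_1\cap\Ker\eta_2=0$; then $\xi\mapsto(\eta_1\xi,\eta_2\xi)$ embeds $U^\vee$ into $\mathcal{R}\coloneqq\Ker(A_1\oplus A_2\colon U\oplus U\to W^\vee)$, whence $\dim\mathcal{R}\ge\dim U$, the image $A_1(U)+A_2(U)$ has dimension at most $\dim U$, and since it contains $A_1(U)$ we get $A_2(U)\subseteq A_1(U)$. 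Now $\psi\coloneqq A_1^{-1}A_2\in\mathrm{End}(U)$ is defined, and an eigenvalue $\lambda$ of $\psi$ over $\bkk$ again produces a nonzero $v$ with $(A_2-\lambda A_1)v=0$, contradicting the injectivity of the pencil. This proves regularity, hence smoothness of $F_r(X)$ of the stated dimension; in the boundary case $n=2r+2$ the zero scheme is finite, and its length — $16$ for the del Pezzo surface ($r=1$, $n=4$) and $64$ for the planes in $X\subset\P^6$ ($r=2$) — is the top Chern number $\int_{\Gr}c_{(r+1)(r+2)}(\cE)$, evaluated by Schubert calculus.
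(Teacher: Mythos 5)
Your proof is correct, and its two substantive steps take a genuinely different route from the appendix. Both arguments reduce to $\overline{k}$ and extract everything from the smoothness of $X$ along $\P(U)$, but they package that input differently. For emptiness with $n=2r+1$, the appendix observes that $\det q$ is the square of $\det(q|_U\colon U\to U^{\perp})$, so the discriminant of the pencil would be everywhere nonreduced; your eigenvalue argument for $\psi=A_1^{-1}A_2$ derives the same contradiction directly from the injectivity of every nonzero $aA_1+bA_2$, and treats $n\le 2r$ and $n=2r+1$ uniformly. For smoothness, the appendix proves $H^1(\P(U),\mathcal{N}_{\P(U)/X})=0$ by dualizing the normal bundle sequence, using $\mathcal{N}_{\P(U)/X}\cong\wedge^{n-r-3}\mathcal{N}^{\vee}_{\P(U)/X}\otimes\mathcal{O}_{\P(U)}(n-r-4)$, and running a hypercohomology spectral sequence --- short but opaque; you instead prove surjectivity of the differential of the defining section of $(S^2\mathcal{U}^{\vee})^{\oplus 2}$ by showing its transpose $(\eta_1,\eta_2)\mapsto A_1\eta_1+A_2\eta_2$ is injective (kernel reduction, then $A_2(U)\subseteq A_1(U)$ from the dimension of the kernel of $A_1\oplus A_2$, then the eigenvalue contradiction). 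The two computations are equivalent --- your cokernel is exactly $H^1(\P(U),\mathcal{N}_{\P(U)/X})$ since $H^1(\P(U),V/U\otimes\mathcal{O}_{\P(U)}(1))=0$ --- but yours is purely linear-algebraic and isolates the single geometric input (injectivity of every nonzero member of the restricted pencil), at the price of a longer elementary verification. For nonemptiness you invoke Proposition~\ref{corpsCi} (the $C_1$ property of $\overline{k}(t)$), which is the route indicated in the body of the paper, whereas the appendix reduces by Bertini to $n=2r+2$ and computes the Chern number $2^{2r+2}$; both are valid, and your closing remark recovers the latter count.
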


{\it Proof.}
We may (and will) assume that~$\kk$ is algebraically closed.
Let~$U \subset V$ be the vector subspace of dimension~$r + 1$ corresponding to a point~$[U] \in F_r(X)$.
Let~$U^\perp \subset V^\vee$ be the kernel of the restriction map~$V^\vee \to U^\vee$.
If~$n \le 2r$ then $\dim(U^\perp) = n - r < r + 1 = \dim(U)$, 
and since each quadratic form in the pencil takes~$U$ to~$U^\perp \subset V^\vee$ (by the assumption~$[U] \in F_r(X)$),
it follows that every quadratic form is degenerate, hence~$X$ is singular.
On the other hand, if~$n = 2r + 1$ then the determinant of every quadratic form~$q$ in the pencil
is equal to the square of the determinant of~$q\vert_U \colon U \to U^\perp$,
hence the discriminant subscheme in~$\P^1$ is nonreduced, hence~$X$ is also singular.

To compute the dimension and prove the smoothness of~$F_r(X)$ let~$[U] \in F_r(X)$.
Since the expected dimension of~$F_r(X)$ is 
\begin{equation*}
\dim(\Gr(r+1,n+1)) - 2\,\rank(S^2\cU^\vee) = (r+1)(n - 2r - 2),
\end{equation*}
it is enough to show that~$\dim(H^0(\P(U),\cN_{\P(U)/X})) = (r+1)(n - 2r - 2)$. 
Consider the standard exact sequence
\begin{equation}
\label{eq:cn-pu-x}
0 \to \cN_{\P(U)/X} \to V/U \otimes \cO_{\P(U)}(1) \to \cO_{\P(U)}(2) \oplus \cO_{\P(U)}(2) \to 0
\end{equation}
(here the middle term is~$\cN_{\P(U)/\P(V)}$ and the right term is~$\cN_{X/\P(V)}\vert_{\P(U)}$).
Its cohomology exact sequence shows that it is enough to check that~\mbox{$H^1(\P(U),\cN_{\P(U)/X})) = 0$};
to do this we use a trick.
Since~$X$ is smooth, $\cN_{\P(U)/X}$ is a vector bundle of rank~$n - r - 2$, 
and~\eqref{eq:cn-pu-x} shows its determinant is~$\cO_{\P(U)}(n - r - 4)$, hence
\begin{equation*}
\cN_{\P(U)/X} \cong \wedge^{n - r - 3}\cN^\vee_{\P(U)/X} \otimes \cO_{\P(U)}(n - r - 4).
\end{equation*}
Dualizing~\eqref{eq:cn-pu-x}, taking its wedge power, and twisting,
we obtain a long exact sequence
\begin{multline*}
0 \to \cO_{\P(U)}(r + 2 - n)^{\oplus (n - r - 2)} \to \dots \to 
\wedge^{n - 2r - 1}U^\perp \otimes \cO_{\P(U)}(1-r)^{\oplus (r-1)} \to \\ \dots \to
\wedge^{n - r - 4}U^\perp \otimes \cO_{\P(U)}(-2)^{\oplus 2} \to
\wedge^{n - r - 3}U^\perp \otimes \cO_{\P(U)}(-1) \to
\cN_{\P(U)/X} \to 0,
\end{multline*}
and its hypercohomology spectral sequence proves~$H^{> 0}(\P(U),\cN_{\P(U)/X}) = 0$.

It remains to show that~$F_r(X)$ is nonempty for~$n \ge 2r + 2$.
By Bertini theorem a general linear section~$X' = X \cap \P^{2r + 2}$ is smooth
and obviously~$F_r(X') \subset F_r(X)$, so we may assume~$n = 2r + 2$.
In this case the scheme~$F_r(X)$ is finite and by intersection theory its length is equal to
\begin{equation*}
\mathrm{c}_{(r+1)(r+2)}(S^2\cU^\vee \oplus S^2\cU^\vee) =
\mathrm{c}_{\binom{r+2}{2}}(S^2\cU^\vee)^2 = 2^{2r + 2},
\end{equation*}
in particular, $F_r(X)$ is nonempty.
QED

\begin{lemang}
\label{lem:fr-irred}
Let $\kk$ be a field of characteristic not equal to~$2$.
If~$n \ge 2r + 3$ the scheme~$F_r(X)$ is geometrically connected.
\end{lemang}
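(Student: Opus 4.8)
The plan is to pass to an algebraic closure and argue by induction on~$r$. Geometric connectedness is insensitive to base change, so I may assume~$\kk = \bkk$; since~$F_r(X)$ is smooth by Lemma~\ref{lem:fr}, connectedness is then the same as irreducibility. The base case~$r = 0$ is immediate, as~$F_0(X) = X$ is a smooth complete intersection of two quadrics of dimension~$n - 2 \ge 1$. For the inductive step I would work with the incidence variety
\begin{equation*}
I = \{(x, [U]) : [U] \in F_r(X),\ x \in \P(U)\},
\end{equation*}
equipped with the projections~$q \colon I \to F_r(X)$ and~$p \colon I \to X$. The first is a~$\P^r$-bundle and is surjective, so~$F_r(X)$ is connected as soon as~$I$ is; the whole strategy is to deduce connectedness of~$I$ from the second projection.

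Fixing~$x \in X$, the intersection~$X \cap T_xX$ is the cone with vertex~$x$ over a complete intersection of two quadrics~$Y_x \subset \P^{n-3}$ (the tangent-cone computation of Proposition~\ref{para3CTSaSD}, which is elementary linear algebra valid in characteristic~$\neq 2$), and the~$r$-planes of~$X$ through~$x$ are exactly the cones over the~$(r-1)$-planes of~$Y_x$, whence~$p^{-1}(x) \cong F_{r-1}(Y_x)$. In the range~$n \ge 2r + 4$ one has~$n - 3 \ge 2(r-1) + 3$, for general~$x$ the quadric intersection~$Y_x$ is smooth, so by the inductive hypothesis~$F_{r-1}(Y_x)$ is geometrically connected, and it is nonempty by Lemma~\ref{lem:fr}; thus~$p$ is surjective with connected generic fibre. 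Passing to the Stein factorization~$I \to X' \to X$, the morphism~$X' \to X$ is finite and generically one-to-one, hence birational onto the normal variety~$X$, so it is an isomorphism; since~$I \to X' = X$ has connected fibres over a connected base, $I$ is connected. The point requiring care here is to ensure that no component of~$F_r(X)$ has its~$r$-planes sweeping out a proper closed subvariety of~$X$ (which would create spurious components of~$X'$); this is controlled by the purity of~$\dim F_r(X)$ from Lemma~\ref{lem:fr} together with the genericity of smooth~$Y_x$.

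The main obstacle is the critical value~$n = 2r + 3$, where the induction collapses: then~$Y_x \subset \P^{2r}$ gives~$F_{r-1}(Y_x)$ of expected dimension~$0$, a reduced set of~$2^{2r}$ points, so the fibres of~$p$ are disconnected and the fibration argument yields nothing (indeed~$p$ becomes a generically finite cover and connectedness turns into a transitivity-of-monodromy question). I would bypass this entirely by noting that~$n = 2r + 3$ is precisely the case in which~$\P^r$ is a \emph{maximal} linear subspace of~$X$, since~$F_{r+1}(X) = \emptyset$, and that~$X$ then has odd dimension~$2r + 1$. By the theorem of Xiaoheng Wang~\cite{XW18}, the variety of maximal linear subspaces of an odd-dimensional smooth intersection of two quadrics is a torsor under an abelian variety (of dimension~$r + 1$ here); a torsor under a connected group is geometrically connected, which disposes of this case and anchors the induction.

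Two technical issues remain to be settled. First, the identification~$p^{-1}(x) \cong F_{r-1}(Y_x)$ with~$Y_x$ smooth for general~$x$ is recorded in characteristic~$0$ in Proposition~\ref{generalintegre}(f); in characteristic~$p > 2$ one must instead verify directly that~$Y_x$ is smooth for general~$x$, which I expect to follow from a dimension count on the incidence of points and tangent cones rather than from generic smoothness of a morphism. Second, one must confirm that Wang's torsor description holds over an arbitrary field with~$\operatorname{char} \neq 2$. A uniform alternative that avoids both the split into the critical case and the component bookkeeping is to realize~$F_r(X)$ as the zero locus of a \emph{regular} section of~$\cE = (S^2\cU^\vee)^{\oplus 2}$ on~$\Gr(r+1,V)$ (regularity being exactly the codimension statement of Lemma~\ref{lem:fr}) and to read off~$H^0(F_r(X),\cO) = \bkk$ from the Koszul resolution, the inputs being the vanishings~$H^q(\Gr(r+1,V), \wedge^q\cE^\vee) = 0$ for~$q \ge 1$; here the hypothesis~$n \ge 2r + 3$ is exactly what makes the relevant Bott-type computation vanish, and the delicate part of this route is the positive-characteristic form of those vanishings.
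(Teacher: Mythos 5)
Your overall architecture --- anchor the statement at the critical value $n=2r+3$, where $F_r(X)$ is a torsor under an abelian variety (Wang, or Reid's thesis over $\overline{k}$), and then propagate connectedness to larger $n$ --- has the same skeleton as the paper's proof, which also reduces to the fact that $F_r$ of a smooth intersection in $\P^{2r+3}$ is a Jacobian. The difference is the propagation: the paper fixes $[U_0]\in F_r(X)$, produces by a Bertini argument a general $V_0\supset U_0$ of dimension $2r+4$ with $X\cap\P(V_0)$ smooth (this is where $n\ge 2r+3$ enters, as the inequality $n-2-r>r$ needed to kill the zeros of the conormal section along $\P(U_0)$), and fibers the incidence variety $\tilde F_r(X)$ over the Grassmannian ${\rm Gr}(2r+4,V)$; you instead fiber the point--plane incidence variety $I$ over $X$ itself and induct on $r$.

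The gap is in your Stein-factorization step, and it is exactly the point you flag but do not resolve. To conclude connectedness of $I$ from properness and surjectivity of $p\colon I\to X$ you need either \emph{all} fibres of $p$ connected, or to exclude a connected component $I_1$ of $I$ with $p(I_1)\subsetneq X$. Purity gives $\dim I_1=(r+1)(n-2r-2)+r$, so if $\dim p(I_1)\le n-3$ the general fibre of $I_1\to p(I_1)$ has dimension at least $r(n-2r-3)+1$, one more than the expected dimension of $F_{r-1}(Y_x)$. That is a contradiction only if you know $\dim F_{r-1}(Y_x)\le r(n-2r-3)$ for \emph{every} $x\in X$; but your induction hypothesis controls $F_{r-1}(Y_x)$ only when $Y_x$ is smooth, and a component $I_1$ with small image would necessarily sit over the locus where $Y_x$ is singular (possibly a cone, possibly not even a complete intersection), where the Fano scheme can a priori have excess dimension. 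So ``purity plus genericity of smooth $Y_x$'' does not close the loop: one needs a uniform bound on $\dim F_{r-1}(Y_x)$ over the bad locus, i.e., precisely the kind of control of $X\cap T_xX$ at special points that requires separate work. The paper's slicing avoids this entirely, since every fibre of $\tilde F_r(X)$ over its dense open image in the Grassmannian is by construction the Fano scheme of a \emph{smooth} intersection in the critical dimension, hence connected. Your two remaining caveats are also real rather than cosmetic: in characteristic $p>2$ the generic smoothness of $x\mapsto Y_x$ is not free (and the lemma is asserted in all characteristics $\ne 2$), and the Koszul/Bott alternative stands or falls with cohomology vanishings on the Grassmannian that you leave unverified in positive characteristic.
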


{\it Proof.}
We may assume that the field~$\kk$ is algebraically closed and prove connectedness of~$F_r(X)$.
By Lemma~\ref{lem:fr} the scheme~$F_r(X)$ is not empty.
Let~$U_0 \subset V$ be a vector subspace of dimension~$r + 1$ such that~$[U_0] \in F_r(X)$, i.e., $\P(U_0) \subset X$.

First, we prove that there is a hyperplane~$H \subset \P(V)$ containing~$\P(U_0)$ such that~$X \cap H$ is smooth.
We use a Bertini argument.
Assume~$X \cap H$ is not smooth at a point~$x \in X \setminus \P(U_0)$;
then~$H$ is equal to the embedded tangent space~$\mathrm{T}_x(Q)$ at~$x$ to some quadric~$Q$ in the pencil.
This means that~$U_0$ is orthogonal to~$x$ with respect to~$Q$,
i.e., the linear span~$\langle \P(U_0), x \rangle$ is contained in~$Q$.
Therefore, the map 
\begin{equation*}
\{ (x,Q) \in (X \setminus \P(U_0)) \times \P^1 \mid \langle \P(U_0), x \rangle \subset Q \} \to \P(V^\vee),
\qquad 
(x,Q) \mapsto \mathrm{T}_x(Q)
\end{equation*}
is surjective onto the variety of all~$H$ containing~$\P(U_0)$ such that~$X \cap H$ is singular away from~$\P(U_0)$.
But the source of this map is fibered over~$\P^1$ with fiber isomorphic to the quadric in~$\P(U_0^\perp/U_0)$ induced by~$Q$,
hence its dimension is equal to
\begin{equation*}
\dim(\P^1) + \dim(U_0^\perp/U_0) - 2 =
1 + (n + 1 - 2r - 2) - 2 = n - 2r - 2.
\end{equation*}
Since this is less than~$\dim(\P(U_0^\perp)) = n - r - 1$, the dimension of the variety of hyperplanes containing~$\P(U_0)$,
we conclude that for a general such hyperplane~$H$ the intersection~$X \cap H$ is smooth away from~$\P(U_0)$.

On the other hand, we note that~$X \cap H$ is smooth along~$\P(U_0)$ if and only if 
the induced section of the twisted conormal bundle~$\cN^\vee_{\P(U_0)/X}(1)$ has no zeroes.
But the sequence~\eqref{eq:cn-pu-x} implies that this vector bundle is globally generated
by the space~$U_0^\perp$ of hyperplanes~$H$ containing~$\P(U_0)$, 
hence a general such section has no zeroes as soon as the rank of the bundle is greater than the dimension of the base, i.e.,
\begin{equation*}
n - 2 - r > r.
\end{equation*}
Thus, if~$n \ge 2r + 3$, for a general hyperplane~$H \subset \P(V)$ containing~$\P(U_0)$
the intersection~$X \cap H$ is smooth along~$\P(U_0)$.

A combination of the above two observations shows that~$X \cap H$ 
is smooth for a general~$H \subset \P(V)$ containing~$\P(U_0)$. 
Iterating this argument we deduce that, if~\mbox{$n \ge 2r + 3$}, then
for a general subspace~$V_0 \subset V$ of dimension~$2r + 4$ containing~$U_0$
the intersection~$X \cap \P(V_0)$ is smooth.

Now consider the variety
\begin{equation*}
\tilde{F}_r(X) \coloneqq 
\{ (U_0,V_0) \in F_r(X) \times \Gr(2r + 4, V) \mid \text{$X \cap \P(V_0)$ is smooth} \}.
\end{equation*}
The second projection~$\tilde{F}_r(X) \to \Gr(2r + 4, V)$, $(U_0,V_0) \mapsto V_0$, 
is then a fibration over a dense open subset of the Grassmannian, 
and by~\cite[Theorem~4.8]{Reid72} its fiber over a point~$[V_0]$
is the Jacobian of the smooth hyperelliptic curve associated to~$X \cap \P(V_0)$.
In particular, all fibers are connected, hence~$\tilde{F}_r(X)$ is connected.
On the other hand, the first projection~$\tilde{F}_r(X) \to F_r(X)$, $(U_0,V_0) \mapsto U_0$, is surjective,
hence~$F_r(X)$ is connected as well.
QED

\subsection{Springer resolutions}

In this section we work over any field (even characteristic~$2$ is allowed)
and we prove the following general result.

Let~$\varphi \colon \cE \to \cF^\vee$ be a morphism of vector bundles over a scheme~$Z$ 
of ranks~$r_\cE$ and~$r_\cF$, respectively,
and let~$\varphi^\vee \colon \cF \to \cE^\vee$ be its dual morphism.
Sometimes it is convenient to (uniformly) consider~$\varphi$ and~$\varphi^\vee$ as a linear map~$\cE \otimes \cF \to \cO_Z$.
Let 
\begin{equation*}
D_r = 
\{\wedge^{r+1}\varphi = 0 \}
 = \{\wedge^{r+1}\varphi^\vee = 0 \} \subset Z
\end{equation*}
be the rank~$r$ degeneracy locus of the morphism~$\varphi$ (or, equivalently, of its dual~$\varphi^\vee$), 
where~$\wedge^{r+1}\varphi \colon \wedge^{r+1}\cE \to \wedge^{i+1}\cF^\vee$ 
and~$\wedge^{r+1}\varphi^\vee \colon \wedge^{r+1}\cF \to \wedge^{i+1}\cE^\vee$ 
are the wedge powers of~$\varphi$ and~$\varphi^\vee$.
Note that~$D_r \subset Z$ is a closed subscheme and~$D_{r-1} \subset D_r$ for each~$r$.

For each closed point~$z \in D_r$ let~$\kk(z)$ be the residue field of~$z$,
let~$\varphi_z \colon \cE_z \to \cF_z^\vee$ be the fiber of~$\varphi$ at~$z$, and let
\begin{equation*}
K_{\cE,z} \coloneqq \Ker(\varphi_z) \subset \cE_z
\qquad\text{and}\qquad
K_{\cF,z} \coloneqq \Ker(\varphi_z^\vee) \subset \cF_z
\end{equation*}
be the kernels of~$\varphi_z$ and~$\varphi_z^\vee$  respectively.
For any tangent vector to~$Z$ at~$z$, i.e., for an embedding~$\tau \colon \Spec(\kk(z)[\eps]/\eps^2) \to Z$
that takes the closed point to~$z$, consider the pullback~\mbox{$\tau^*\varphi \colon \tau^*\cE \to \tau^*\cF^\vee$}.
Trivializing the bundles~$\tau^*\cE$ and~$\tau^*\cF$, we can write~$\tau^*\varphi$ as~$\varphi_z + \eps \varphi_\tau$,
where~$\varphi_\tau \colon \cE_z \to \cF_z^\vee$ is a $\kk(z)$-linear map, 
which is well defined (i.e., does not depend on the choice of the trivializations) modulo~$\varphi_z$.
Again, sometimes it is more convenient to consider~$\varphi_\tau$ as a linear map~$\cE_z \otimes \cF_z \to \kk(z)$.
In particular, the maps~$\varphi_\tau\vert_{K_{\cE,z}} \colon K_{\cE,z} \to \cF_z^\vee$ 
and~$\varphi_\tau^\vee\vert_{K_{\cF,z}} \colon K_{\cF,z} \to \cE_z^\vee$ are well defined.

We will say that~$\varphi$ is {\sf $s$-regular at~$z$} 
if for any $s$-dimensional subspace~$U \subset K_{\cE,z}$ 
the morphism
\begin{equation}
\label{eq:tzz-hom}
T_z Z \to \Hom(K_{\cF,z},\cE_z^\vee)
\to \Hom(K_{\cF,z},U^\vee) = U^\vee \otimes K_{\cF,z}^\vee, 
\quad 
\tau \mapsto \varphi_\tau\vert_{U \otimes K_{\cF,z}}
\end{equation}
is surjective.
We define $s$-regularity of~$\varphi^\vee$ analogously.

\begin{remark}
\label{rem:regularity}
A morphism~$\varphi$ is $(r_\cE - r)$-regular at a geometric point~$z \in D_r \setminus D_{r-1}$
if and only if its dual~$\varphi^\vee$ is~$(r_\cF - r)$-regular at the point~$z$.
Indeed, both properties are equivalent to the surjectivity of the natural morphism~$T_z Z \to K_{\cE,z}^\vee \otimes K_{\cF,z}^\vee$.
\end{remark}

Consider the relative Grassmannians 
\begin{equation*}
p_\cE \colon \Gr_Z(r_\cE - r,\cE) \to Z 
\qquad\text{and}\qquad 
p_\cF \colon \Gr_Z(r_\cF - r,\cF) \to Z, 
\end{equation*}
their tautological subbundles~$\cU_\cE \subset p_\cE^*\cE$ and~$\cU_\cF \subset p_\cF^*\cF$, and the subschemes
\begin{equation*}
\tD_{r,\cE} \subset \Gr_Z(r_\cE - r,\cE)
\qquad\text{and}\qquad 
\tD_{r,\cF} \subset \Gr_Z(r_\cF - r,\cF)
\end{equation*}
defined as the zero loci 
of the composed maps 
\begin{equation*}
\cU_\cE \hookrightarrow p_\cE^*\cE \xrightarrow{\ p_\cE^*\varphi\ } p_\cE^*\cF^\vee
\qquad\text{and}\qquad 
\cU_\cF \hookrightarrow p_\cF^*\cF \xrightarrow{\ p_\cF^*\varphi^\vee\ } p_\cF^*\cE^\vee.
\end{equation*}
We abusively call these schemes the {\sf Springer resolutions} of~$D_r$, even if they are not smooth.

\begin{prop}
\label{prop:spr}
Let $\kk$ be a field.
Let~$Z$ be a smooth scheme and let~$\varphi \colon \cE \to \cF^\vee$ be a morphism of vector bundles over~$Z$.
The Springer resolution $$\tD_{r,\cE} \subset \Gr_Z(r_\cE - r,\cE)$$ is smooth over~$\kk$ of expected codimension~$(r_\cE - r)r_\cF$
if and only if the morphism~$\varphi$ is $(r_\cE - r)$-regular at every geometric point of~$D_{r}$.

Moreover, the projection~$p_\cE$ induces a proper surjective morphism~$\tD_{r,\cE} \to D_r$ with geometrically connected fibers,
which is an isomorphism over~\mbox{$D_r \setminus D_{r-1}$};
in particular, if~$D_r \setminus D_{r-1}$ is dense in~$D_r$, this morphism is birational.
\end{prop}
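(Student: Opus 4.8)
The plan is to realize $\tD_{r,\cE}$ as the zero scheme of a section of a vector bundle on the smooth ambient space $Y \coloneqq \Gr_Z(r_\cE - r, \cE)$ and to reduce the smoothness statement to a transversality computation. A geometric point of $Y$ over $z \in Z$ is a subspace $U \subset \cE_z$ with $\dim U = r_\cE - r$, and by construction $(z,U) \in \tD_{r,\cE}$ exactly when the composite $U \hookrightarrow \cE_z \xrightarrow{\varphi_z} \cF_z^\vee$ vanishes, i.e.\ $U \subset K_{\cE,z}$; in particular $\tD_{r,\cE}$ lies set-theoretically over $D_r$. The defining composite $\cU_\cE \hookrightarrow p_\cE^*\cE \xrightarrow{p_\cE^*\varphi} p_\cE^*\cF^\vee$ is a section $\sigma$ of the bundle $\mathcal{W} \coloneqq \cU_\cE^\vee \otimes p_\cE^*\cF^\vee$, whose rank $(r_\cE - r)r_\cF$ equals the expected codimension. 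Since $Z$ is smooth, so is $Y$, and the standard transversality criterion says that $\tD_{r,\cE}$, the zero scheme of $\sigma$, is smooth over $\kk$ of codimension $\rank(\mathcal{W})$ precisely at the geometric points where the intrinsic differential $d\sigma \colon T Y \to \mathcal{W}$ is surjective onto the fiber. Thus the first assertion will follow once I show that, at a geometric point $(z,U) \in \tD_{r,\cE}$, surjectivity of $d\sigma$ is equivalent to the $(r_\cE - r)$-regularity condition at $z$ for this $U$.

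The tangent-space computation identifying $d\sigma$ is the crux of the argument and the only real obstacle. A tangent vector to $Y$ at $(z,U)$ projects to $\tau \in T_z Z$, and the vertical directions over $\tau$ are encoded by $\psi \in \Hom(U, \cE_z)$ describing the first-order motion $\tilde U = (\id + \eps\psi)(U)$; writing $\tau^*\varphi = \varphi_z + \eps\varphi_\tau$ and evaluating $\sigma$ on $\tilde U$, the vanishing $\varphi_z(u)=0$ lets me read off the first-order term
\begin{equation*}
d\sigma(\tau,\psi)\colon U \to \cF_z^\vee, \qquad u \mapsto \varphi_\tau(u) + \varphi_z(\psi(u)).
\end{equation*}
With $\tau = 0$ and $\psi$ varying, the image sweeps out exactly $\Hom(U, \operatorname{im}\varphi_z)$; since $\operatorname{im}\varphi_z = (K_{\cF,z})^\perp$ and $\cF_z^\vee/(K_{\cF,z})^\perp \cong K_{\cF,z}^\vee$ canonically, $d\sigma$ is surjective if and only if the residual map $\tau \mapsto \varphi_\tau|_U \bmod (K_{\cF,z})^\perp$ from $T_z Z$ to $\Hom(U, K_{\cF,z}^\vee) = U^\vee \otimes K_{\cF,z}^\vee$ is surjective. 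Under the standard identifications this residual map is precisely $\tau \mapsto \varphi_\tau|_{U \otimes K_{\cF,z}}$, so its surjectivity is verbatim the definition of $(r_\cE - r)$-regularity at $z$ for $U$. Requiring this at every $(z,U) \in \tD_{r,\cE}$, i.e.\ at every geometric point $z \in D_r$ and every $U \subset K_{\cE,z}$ of dimension $r_\cE - r$, yields the equivalence. I expect the only delicate points to be the bookkeeping that $\psi$ is well defined modulo $U$ (harmless since $\varphi_z(U)=0$) and the canonical identification $\cF_z^\vee/\operatorname{im}\varphi_z \cong K_{\cF,z}^\vee$.

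For the final assertions I would argue geometrically on the fibers of $p_\cE \colon \tD_{r,\cE} \to D_r$. Properness is inherited from the Grassmannian bundle $\Gr_Z(r_\cE - r,\cE) \to Z$, and surjectivity holds because $z \in D_r$ forces $\dim K_{\cE,z} \ge r_\cE - r$, so some admissible $U$ exists. The geometric fiber over $z$ is the set of $(r_\cE - r)$-dimensional subspaces of $K_{\cE,z}$, namely the Grassmannian $\Gr(r_\cE - r, K_{\cE,z})$, which is irreducible and hence geometrically connected. Finally, over $D_r \setminus D_{r-1}$ the rank of $\varphi$ is constant equal to $r$, so $K_\cE \coloneqq \Ker\varphi$ is a subbundle of $\cE$ of rank $r_\cE - r$; the tautological subspace over a point of $\tD_{r,\cE}$, being of that same rank and contained in $K_{\cE,z}$, must equal $K_{\cE,z}$. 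Hence $p_\cE$ restricts to the inverse of the classifying section of $K_\cE$ and is an isomorphism over $D_r \setminus D_{r-1}$; when this locus is dense in $D_r$ this exhibits $p_\cE$ as a proper birational morphism, completing the proof.
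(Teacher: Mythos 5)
Your proposal is correct and follows essentially the same route as the paper: both realize $\tD_{r,\cE}$ as the zero locus of the section of $\cU_\cE^\vee\otimes p_\cE^*\cF^\vee$, split the tangent space of the relative Grassmannian into vertical and horizontal parts, identify the cokernel of the vertical contribution with $U^\vee\otimes K_{\cF,z}^\vee$, and match the residual horizontal map with the regularity condition. The only cosmetic differences are in the final step (you invert the classifying section of the kernel subbundle over $D_r\setminus D_{r-1}$, whereas the paper argues that $p_\cE$ is \'etale and injective there) and in your explicitly recording that the fibers are Grassmannians $\Gr(r_\cE-r,K_{\cE,z})$, hence geometrically connected.
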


{\it Proof.}
We may (and will) assume that~$\kk$ is algebraically closed.

Let~$(z,U)$ be a point of~$\tD_{r,\cE}$, i.e., $U \subset K_{\cE,z}$ is a subspace of dimension~$r_\cE - r$.
Since the morphism~$\varphi_z \colon \cE_z \to \cF_z^\vee$ vanishes on~$U$, its rank is at most~$r$, hence~$z \in D_r$.
This proves that the morphism~$p_\cE$ factors through~$D_r$.

Furthermore, at the point~$(z,U)$ the differential~$\rd\rs_\cE$ of the section 
\begin{equation*}
\rs_\cE \in \Gamma(\Gr_Z(r_\cE - r, \cE), \cU_\cE^\vee \otimes p_\cE^*\cF^\vee)
\end{equation*}
defining the subscheme~$\tD_{r,\cE}$ is a morphism~$T_{(z,U)}\Gr_Z(r_\cE - r, \cE) \to U^\vee \otimes \cF_z^\vee$.
Consider the natural exact sequence
\begin{equation*}
0 \to U^\vee \otimes (\cE_z/U) \to T_{(z,U)}\Gr_Z(r_\cE - r, \cE) \to T_zZ \to 0
\end{equation*}
(here the first term is the fiber of the relative tangent bundle of~$\Gr_Z(r_\cE - r, \cE)$ over~$Z$).
It is easy to see that the restriction of~$\rd\rs_\cE$ to~$U^\vee \otimes (\cE_z/U)$ is the map 
\begin{equation*}
U^\vee \otimes (\cE_z/U) \to U^\vee \otimes \cF_z^\vee
\end{equation*}
induced by the map~$\id_{U^\vee} \otimes \varphi_z \colon U^\vee \otimes \cE_z \to U^\vee \otimes \cF_z^\vee$
(recall that~$U \subset K_{\cE,z}$), 
hence its cokernel is~$U^\vee \otimes K_{\cF,z}^\vee$.
Furthermore, the morphism
\begin{equation*}
T_zZ \to U^\vee \otimes K_{\cF,z}^\vee 
\end{equation*}
induced by~$\rd\rs_\cE$ coincides with the map~\eqref{eq:tzz-hom},
hence this morphism is surjective if and only if the morphism~$\varphi$ is $(r_\cE - r)$-regular at~$z$.
This proves the first part of the proposition.

To prove the second part, note that the morphism~$p_\cE \colon \tD_{r,\cE} \to D_r$ 
is proper by construction and surjective by definition.
Moreover, it is \'etale over~$D_r \setminus D_{r-1}$, because the restriction of~$\rd\rs_\cE$ 
to the relative tangent space
\begin{equation*}
U^\vee \otimes (\cE_z/U) = K_{\cE,z}^\vee \otimes (\cE_z/K_{\cE,z})
\end{equation*}
is injective,
and it is injective over~$D_r \setminus D_{r-1}$
because for any point~$z \in D_r \setminus D_{r-1}$ the space~$K_{\cE,z}$ has dimension exactly~$r_\cE - r$,
hence the only subspace of dimension~$r_\cE - r$ that it contains is the space~$U = K_{\cE,z}$.
Therefore, the morphism~$p_\cE \colon \tD_{r,\cE} \to D_r$ is an isomorphism over~$D_r \setminus D_{r-1}$.
The last part is obvious.
QED

\subsection{Hilbert schemes of quadrics in~$X$}

Let~$S_r(X)$ be the Hilbert scheme of quadrics of dimension~$r-1$ inside~$X \subset \P^n$.

\begin{prop}
\label{prop:sr}
Let $\kk$ be a field of characteristic not equal to~$2$. 
If~$n \geq 2r+1$ the Hilbert scheme~$S_r(X)$ is a smooth geometrically connected scheme 
of expected dimension~$(r + 1)(n - \tfrac32r - 1) + 1$ over~$\kk$.
Moreover, $S_r(X)$ is birational to~$G_r(X)$, and if~$F_r(X) = \varnothing$ then~$S_r(X) \cong G_r(X)$.
Finally, the subscheme~$S_r^\circ(X) \subset S_r(X)$ parameterizing smooth quadrics is open and dense in~$S_r(X)$.
\end{prop}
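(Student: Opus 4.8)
The plan is to exhibit $G_r(X)$ and $S_r(X)$ as the two dual Springer resolutions of a single degeneracy locus and then feed everything into Proposition~\ref{prop:spr}. Work over $Z \coloneqq \Gr(r+1,V)$ with tautological subbundle $\cU \subset V \otimes \cO_Z$, and let $s_1, s_2 \in \Gamma(Z, S^2\cU^\vee)$ be the restrictions of $q_1, q_2$ to $\cU$. Put $\cE \coloneqq \cO_Z^{\oplus 2}$, $\cF^\vee \coloneqq S^2\cU^\vee$, and let $\varphi \colon \cE \to \cF^\vee$ be $(a,b) \mapsto a s_1 + b s_2$. A point $[U]$ lies in the rank-$\le 1$ locus $D_1$ exactly when $q_1\vert_U$ and $q_2\vert_U$ are proportional, i.e. when $\P(U)$ carries a quadric of dimension $r-1$ contained in $X$, while the rank-$0$ locus is $D_0 = F_r(X)$. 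The kernel line of $\varphi_z$ is the parameter of the pencil member through $\P(U)$, so unwinding the moduli descriptions identifies $G_r(X) = \tD_{1,\cE} \subset \Gr_Z(1,\cE) = \P^1 \times Z$ as $\kk$-schemes; dually, a corank-$1$ subspace $W \subset S^2 U$ lies in $\Ker(\varphi^\vee_z)$ iff $\langle s_1, s_2\rangle \subset W^\perp$, and identifying $W^\perp$ with the line of the quadric $X \cap \P(U)$ gives $S_r(X) = \tD_{1,\cF} \subset \Gr_Z(\rank\cF - 1, \cF)$. The expected codimension $(\rank\cF - 1)\cdot 2$ from Proposition~\ref{prop:spr} yields precisely $\dim S_r(X) = (r+1)(n - \tfrac32 r - 1) + 1$.

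By Proposition~\ref{prop:spr}, smoothness of $S_r(X) = \tD_{1,\cF}$ of the expected dimension is equivalent to $(\rank\cF - 1)$-regularity of $\varphi^\vee$ at every geometric point of $D_1$. On the open stratum $D_1 \setminus D_0$ this is free: Proposition~\ref{prop:gr} gives that $G_r(X) = \tD_{1,\cE}$ is smooth of the expected dimension, so the relevant half of Proposition~\ref{prop:spr} forces $\varphi$ to be $1$-regular on all of $D_1$, and Remark~\ref{rem:regularity} transfers this to $(\rank\cF - 1)$-regularity of $\varphi^\vee$ over $D_1 \setminus D_0$. The remaining, and essential, case is a point $[U] \in D_0 = F_r(X)$, where $K_{\cE,z} = \cE_z$ and $K_{\cF,z} = S^2 U$ and the duality of Remark~\ref{rem:regularity} no longer applies. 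Here $q_i\vert_U = 0$, the first-order variation $D_\psi(q_i\vert_U) \in S^2 U^\vee$ is well defined, and unwinding \eqref{eq:tzz-hom} reduces the required regularity to surjectivity of the composite
\begin{equation*}
T_{[U]}Z = \Hom(U, V/U) \longrightarrow S^2 U^\vee \oplus S^2 U^\vee \longrightarrow W^\vee \oplus W^\vee,
\end{equation*}
where $\psi \mapsto (D_\psi(q_1\vert_U), D_\psi(q_2\vert_U))$ and the second arrow restricts functionals to a fixed hyperplane $W \subset S^2 U$. I plan to recognize the first arrow as the map on global sections of $V/U \otimes \cO_{\P(U)}(1) \to \cO_{\P(U)}(2)^{\oplus 2}$ from the normal-bundle sequence~\eqref{eq:cn-pu-x}: the vanishing $H^1(\P(U), \cN_{\P(U)/X}) = 0$ proved in Lemma~\ref{lem:fr} makes it surjective, and composing with the always-surjective restriction to $W$ closes the regularity check.

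The structural statements then follow formally from the second part of Proposition~\ref{prop:spr}. Both $p_\cE \colon G_r(X) \to D_1$ and $p_\cF \colon S_r(X) \to D_1$ are proper, surjective, with geometrically connected fibers, and isomorphisms over $D_1 \setminus D_0$; since $G_r(X)$ is geometrically connected so is $D_1$, hence so is $S_r(X)$. Being smooth and geometrically connected, $S_r(X)$ is geometrically integral. A dimension count shows that $p_\cE^{-1}(D_0)$, a $\P^1$-bundle over $F_r(X)$, is a proper closed subset of $G_r(X)$, so $D_1 \setminus D_0$ is dense; as $p_\cE$ and $p_\cF$ identify the opens above $D_1 \setminus D_0$ with one another, $G_r(X)$ and $S_r(X)$ are birational, and when $F_r(X) = \varnothing$ (so $D_0 = \varnothing$) both projections are global isomorphisms and $S_r(X) \cong D_1 \cong G_r(X)$. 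Finally $S_r^\circ(X)$ is open by the openness of the discriminant condition, and nonempty because a general $\P^r$ in a general nondegenerate member of the pencil cuts $X$ in a smooth $(r-1)$-dimensional quadric; nonemptiness together with integrality gives density.

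The main obstacle is precisely the regularity of $\varphi^\vee$ along $F_r(X)$: this is the one place where the two Springer resolutions genuinely differ and where the formal duality of Remark~\ref{rem:regularity} fails, so it must be settled directly, by the cohomological input $H^1(\cN_{\P(U)/X}) = 0$ furnished by the smoothness of $X$.
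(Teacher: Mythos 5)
Your proof follows essentially the same route as the paper's: the same identification of $G_r(X)$ and $S_r(X)$ with the two Springer resolutions $\tD_{1,\cE}$ and $\tD_{1,\cF}$ of the degeneracy locus $D_1 \subset \Gr(r+1,V)$, the same transfer of regularity from $\varphi$ to $\varphi^\vee$ over $D_1 \setminus D_0$ via Remark~\ref{rem:regularity}, the same direct verification of regularity along $D_0 = F_r(X)$ using the surjectivity of $T_{[U]}Z \to S^2U^\vee \oplus S^2U^\vee$ coming from $H^1(\P(U),\cN_{\P(U)/X})=0$, and the same dimension count for birationality. The one place you are thinner than the paper is the density of $S_r^\circ(X)$: your assertion that ``a general $\P^r$ in a general nondegenerate member of the pencil cuts $X$ in a smooth quadric'' is a generic-smoothness statement, which is not automatic over a field of positive characteristic; the paper settles it by proving that a flat quadric fibration with smooth total space over a smooth base is smooth over a dense open subset (Lemma~\ref{lem:gen-sm}, an induction on relative dimension via double covers, using ${\rm car}(\kk)\neq 2$), applied to the universal family of $\P^r$'s in a smooth quadric of the pencil. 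You should either invoke such a lemma or restrict that step to characteristic zero.
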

 
{\it Proof.}
Consider the Grassmannian~$Z \coloneqq \Gr(r+1,V)$, let~$\cU \subset V \otimes \cO$ be its tautological bundle, and consider the morphism
\begin{equation*}
\cE \coloneqq \cO \oplus \cO \xrightarrow{\ \varphi\ } S^2\cU^\vee \eqqcolon \cF^\vee
\end{equation*}
induced by the pencil of quadrics.
Note that the zero locus~$D_0$ of~$\varphi$ is precisely the Hilbert scheme~$F_r(X)$.
Let~$D = D_1$ be the degeneracy locus of~$\varphi$, so that we have the inclusions
\begin{equation*}
F_r(X) = D_0 \subset D_1 \subset \Gr(r+1,V).
\end{equation*}
Consider the Springer resolutions of~$D_1$:
\begin{equation*}
\tD_{1,\cE} \subset \P^1 \times \Gr(r+1,V)
\quad\text{and}\quad
\tD_{1,\cF} \subset \Gr_{\Gr(r+1,V)}(r_\cF-1,S^2\cU) \cong \P_{\Gr(r+1,V)}(S^2\cU^\vee),
\end{equation*}

First, note that by definition~$\tD_{1,\cE}$ 
is the zero locus of the global section of the vector bundle~$\cO(1) \boxtimes S^2\cU^\vee$, 
induced by~$\varphi$.
Thus,
\begin{equation}
\label{eq:td1-gr}
\tD_{1,\cE} \cong G_r(X).
\end{equation}
On the other hand, note that~$\P_{\Gr(r+1,V)}(S^2\cU^\vee)$ is the Hilbert scheme of $(r-1)$-dimensional quadrics in~$\P(V)$,
and its subscheme~$\tD_{1,\cF}$ parameterizes those quadrics that lie on~$X$, hence
\begin{equation}
\label{eq:td1-sr}
\tD_{1,\cF} \cong S_r(X).
\end{equation}

Now we check that both~$\varphi$ and~$\varphi^\vee$ satisfy the assumptions of Proposition~\ref{prop:spr} for~$r = 1$.
Note that
\begin{equation*}
r_\cE - r = 2 - 1 = 1
\qquad\text{and}\qquad 
r_\cF - r = \tbinom{r+1}{2} - 1 = \tfrac{r^2+3r}2.
\end{equation*}
By Proposition~\ref{prop:gr} and~\eqref{eq:td1-gr} the scheme~$\tD_{1,\cE}$ is smooth, 
hence by Proposition~\ref{prop:spr} the morphism~$\varphi$ is~$1$-regular at every geometric point of~$D_1 \setminus D_0$,
and by Remark~\ref{rem:regularity} the dual morphism~$\varphi^\vee$
is~$((r^2+3r)/2)$-regular at every geometric point of~$D_1 \setminus D_0$.
On the other hand, Lemma~\ref{lem:fr} implies that the natural morphism
\begin{equation*}
T_z Z = U^\vee \otimes (V/U) \to S^2U^\vee \oplus S^2U^\vee = \cE_z^\vee \otimes \cF_z^\vee
\end{equation*}
is surjective at every geometric point~$z = [U]$ of~$D_0 = F_r(X)$, hence the morphism~$\varphi^\vee$ 
is also~$((r^2+3r)/2)$-regular at every point of~$D_0$.
Applying Proposition~\ref{prop:spr} again we conclude that~$S_r(X) \cong \tD_{1,\cF}$ is smooth.

Since~$G_r(X)$ is smooth and geometrically connected by Proposition~\ref{prop:gr}, it is irreducible.
It is easy to see that the fibers of~$p_\cE \colon G_r(X) \to D_r$ over~$D_0 = F_r(X)$ are isomorphic to~$\P^1$, hence
\begin{multline*}
\dim(p_\cE^{-1}(D_0)) = \dim(F_r(X)) + 1 = (r+1)(n-2r-2) + 1 \\ < (r + 1)(n - \tfrac32r - 1) + 1 = \dim(G_r(X)),
\end{multline*}
hence~$\tD_{1,\cE} \setminus p_\cE^{-1}(D_0)$ is dense in~$\tD_{1,\cE}$, and hence~$D_1 \setminus D_0$ is dense in~$D_1$.
Therefore, the morphisms~$G_r(X) \to D_1$ and~$S_r(X) \to D_1$ are both birational,
hence~$S_r(X)$ is birational to~$G_r(X)$, and even isomorphic if~$D_0 = F_r(X) = \varnothing$.
It also follows that 
\begin{equation*}
\dim(S_r(X)) = \dim(G_r(X)) = (r + 1)(n - \tfrac32r - 1) + 1.
\end{equation*}
Moreover, the above argument shows that~$D_1 \setminus D_0$ is geometrically irreducible 
and its (isomorphic) preimage is dense in~$S_r(X)$, hence~$S_r(X)$ is geometrically connected.

Finally, we consider the subscheme~$S_r^\circ(X) \subset S_r(X)$ parameterizing smooth quadrics.
Since smoothness is an open condition, $S_r^\circ(X)$ is open in~$S_r(X)$, so we only need to check it is dense,
and since~$S_r(X)$ is irreducible, it is enough to check that~$S_r^\circ(X) \ne \varnothing$.
For this we may (and will) assume that~$\kk$ is algebraically closed.

Let~$Q \subset \P^n$ be any smooth quadric containing~$X$.
Let~$F_r(Q)$ be the Hilbert scheme of linear spaces~$\P^r$ (linearly embedded into~$\P^n$) contained in~$Q$,
and let~$L_r(Q) \subset Q \times F_r(Q)$ be the universal linear space.
Note that~$F_r(Q)$ and~$L_r(Q)$ are homogeneous spaces of the orthogonal group associated with~$Q$;
in particular, the projection~$L_r(Q) \to Q$ is a smooth morphism.
Therefore, $L_r(Q) \times_Q X$ is smooth over~$\kk$.
On the other hand, the projection
\begin{equation*}
L_r(Q) \times_Q X \to F_r(Q) 
\end{equation*}
is a quadric fibration.
If it is not flat then~$F_r(X) \ne \varnothing$, hence~$X$ contains a~$\P^r$, and a fortiori a smooth quadric of dimension~$r-1$.
On the other hand, if it flat then Lemma~\ref{lem:gen-sm} below implies that there is a $\kk$-point~$[\P^r]$ of~$F_r(Q)$
such that the fiber over it is a smooth quadric of dimension~$r - 1$.
It remains to note that this fiber is the intersection~$X \cap \P^r$, hence a smooth quadric inside~$X$.
QED

\begin{remark}
Let~$SF_r(X) \subset S_r(X)$ be the subscheme of quadrics contained in~$X$ together with their linear span.
Then one can check that there is a diagram of birational maps
\begin{equation*}
\xymatrix{
&
\Bl_{\P^1 \times F_r(X)}(G_r(X)) \ar[dl] \ar@{=}[r] &
\Bl_{SF_r(X)}(S_r(X)) \ar[dr]
\\
G_r(X) &&&
S_r(X),
}
\end{equation*}
where the diagonal arrows are blowups,
and that the induced birational map
\begin{equation*}
S_r(X) \dashrightarrow G_r(X)
\end{equation*}
is a standard flip.
\end{remark}

In the proof of Proposition~\ref{prop:sr} we used the following lemma.

\begin{lemang}
\label{lem:gen-sm}
Let $\kk$ be a field of characteristic not equal to~$2$. 
Let~$f \colon \cQ \to Z$ be a flat quadric fibration.
If~$\cQ$ and~$Z$ are smooth over~$\kk$ there is a dense open subset~$Z_0 \subset Z$
such that the morphism~$\cQ_{Z_0} \to Z_0$ is smooth.
\end{lemang}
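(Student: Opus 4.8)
The plan is to reduce the statement to the generic fibre, to prove that this fibre is a smooth quadric, and then to spread this out over a dense open. Since $Z$ is smooth over~$\kk$ it is geometrically regular, hence normal, so each of its connected components is integral; as the desired~$Z_0$ may be produced one component at a time and then taken to be the union of the pieces, I may assume~$Z$ integral. Let~$\eta$ be its generic point and~$K = \kk(Z)$ its function field. The whole problem then reduces to showing that the generic fibre~$\cQ_\eta \subset \P^N_K$ is smooth over~$K$, after which openness of the smooth locus will finish the argument.

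First I would observe that~$\cQ_\eta$ is \emph{regular}. The base change~$\Spec K \to Z$ is a localisation, so~$\cQ_\eta \to \cQ$ is a flat monomorphism which identifies the local ring of~$\cQ_\eta$ at each of its points with a localisation of the corresponding local ring of~$\cQ$. Since~$\cQ$ is smooth over~$\kk$, hence regular, all these local rings are regular, and therefore~$\cQ_\eta$ is regular. Here the flatness of~$f$ is used to guarantee that~$\cQ_\eta$ is a genuine quadric hypersurface of the expected dimension, i.e.\ that the quadratic form defining it is nonzero.

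The heart of the argument is the passage from regular to smooth, and this is precisely where the hypothesis~$\mathrm{char}(\kk) \ne 2$ together with the special geometry of quadrics is essential: ordinary generic smoothness is unavailable here because~$K$ may be imperfect. Let~$q$ be the quadratic form over~$K$ cutting out~$\cQ_\eta$. In characteristic~$\ne 2$ we may diagonalise~$q$; if~$q$ were degenerate, its radical would be a nonempty $K$-rational linear subspace of~$\P^N_K$ consisting entirely of singular points of~$\cQ_\eta$, contradicting the regularity just established. Hence~$q$ is nondegenerate, and then (again using~$\mathrm{char} \ne 2$) the partial derivatives of~$q$ have no common zero on the quadric, so~$\cQ_\eta$ is smooth over~$K$. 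The key point is that a degenerate quadric carries its singular vertex already over the ground field~$K$, so regularity over~$K$ itself forces nondegeneracy.

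Finally I would spread out. A quadric fibration is projective, so~$f$ is proper; the non-smooth locus of~$f$ is closed in~$\cQ$, and its image under the proper morphism~$f$ is a closed subset~$\Delta \subset Z$. By the previous step~$\eta \notin \Delta$, so~$Z_0 := Z \setminus \Delta$ is a dense open subset of~$Z$ over which no fibre meets the non-smooth locus; since~$f$ is flat, this means exactly that~$\cQ_{Z_0} \to Z_0$ is smooth. I expect the regular-implies-smooth step for the generic fibre over the possibly imperfect field~$K$ to be the main obstacle, and it is resolved by exploiting that a degenerate quadric in characteristic~$\ne 2$ has a rational singular point.
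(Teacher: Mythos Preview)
Your proof is correct and takes a genuinely different route from the paper's. The paper argues by induction on the relative dimension of~$\cQ$ over~$Z$: the base case is a double cover, handled by writing it locally as~$y^2 = \phi(z)$, and the inductive step projects~$\cQ$ from a point~$P_0$ off the quadric to realise it as a double cover of~$\P(V') \times Z$ branched along a quadric fibration~$\cQ'$ of one lower relative dimension, whose smoothness follows from that of~$\cQ$.

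Your argument is more direct: you pass to the generic fibre, observe it is regular because it is a localisation of the regular scheme~$\cQ$, and then use the key fact that in characteristic~$\ne 2$ a degenerate quadric over any field~$K$ carries its vertex as a nonempty \emph{$K$-rational} linear subspace of genuinely singular (non-regular) points. This pinpoints precisely why generic smoothness survives here even though~$K = \kk(Z)$ may be imperfect, which is the real content of the lemma. The paper's inductive approach is more hands-on and in principle lets one track the discriminant explicitly, while yours is shorter and isolates the conceptual reason; both invoke~$\mathrm{char}(\kk) \ne 2$ at the same essential point (diagonalisation and the Jacobian criterion for you, the double-cover base case for the paper).
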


{\it Proof.}
We will argue by induction on relative dimension of~$\cQ$ over~$Z$.

If~$\dim(\cQ/Z) = 0$, the map~$f$ is a double covering, 
and since characteristic of~$\kk$ is not equal to~$2$, 
we can locally represent~$\cQ$ as a hypersurface~$\{y^2 = \phi(z)\} \subset \mathbb{A}^1 \times Z$,
where~$\phi(z)$ is not a zero divisor.
Therefore, the subset~$Z_0 = \{ \phi(z) \ne 0 \} \subset Z$ is the required dense open subset.

Now assume~$\dim(\cQ/Z) > 0$.
The question is local over~$Z$, so we may assume that~$\cQ \subset \P(V) \times Z$ 
is given by a quadratic equation in homogeneous coordinates of~$\P(V)$ with coefficients functions on~$Z$.
Moreover, shrinking~$Z$ if necessary we can choose a point~$P_0 \in \P(V)$ such that~$(P_0 \times Z) \cap \cQ = \varnothing$.
Consider the linear projection~$\P(V) \dashrightarrow \P(V')$ with center~$P_0$, 
and the induced morphism
\begin{equation*}
\pi \colon \cQ \to \P(V') \times Z.
\end{equation*}
The choice of the point~$P_0$ ensures that~$\pi$ is a double covering, 
and its branch divisor~\mbox{$\cQ' \subset \P(V') \times Z$} 
is a flat quadric fibration over~$Z$.
The smoothness of~$\cQ$ over~$\kk$ implies the smoothness of~$\cQ'$ over~$\kk$.
But~$\dim(\cQ'/Z) = \dim(\cQ/Z) - 1$, hence by induction hypothesis there is a dense open subset~$Z_0 \subset Z$
such that~$\cQ'_{Z_0} \to Z_0$ is smooth.
It is clear that~$\cQ_{Z_0}$ is the double covering of~$\P(V') \times Z_0$ ramified over~$\cQ'_{Z_0}$,
hence it is also smooth over~$Z_0$.
QED


\begin{thebibliography}{CTSS83}



	
\bibitem[AC17]{AC17} C. Araujo et C. Casagrande, On the Fano variety of linear spaces contained in two odd-dimensional quadrics,
Geometry \& Topology {\bf 21} (2017) 3009-3041.

\bibitem[ABB14]{ABB14}
A.  Auel,  M. Bernardara, M  Bolognesi,  Fibrations in complete intersections of quadrics, Clifford algebras, derived categories, and rationality problems. J. Math. Pures Appl. (9) 102 (2014), no. 1, 249--291.
 
\bibitem[CT88]{CT88}  J.-L. Colliot-Th\'el\`ene, \emph{Surfaces rationnelles fibr\'ees en coniques de degr\'e 4},  
  S\'eminaire de th\'eorie des nombres de Paris 88-89,  Progr. Math., t. 91 (1990), 43--55.  


\bibitem[CTCS80]{CTCS80}  J.-L. Colliot-Th\'el\`ene,  D. Coray et  J.-J.  Sansuc,  
Descente et principe de Hasse pour certaines vari\'et\'es rationnelles.   J. reine    angew. Math. {\bf 320} (1980), 150--191.
 
  \bibitem[CTSa80]{CTSa80} J.-L. Colliot-Th\'el\`ene et J.-J. Sansuc, La descente sur les vari\'et\'es rationnelles, in {\it Journ\'ees de g\'eom\'etrie alg\'ebrique d'Angers} (Juillet 1979), \'ed. A. Beauville, Sijthoff \& Noordhoff (1980), 223--237.


\bibitem[CTSaSD87]{CTSaSD87} 
 J.-L. Colliot-Th\'el\`ene,  J.-J.  Sansuc et  P. Swinnerton-Dyer,
  \emph{Intersections of two quadrics and Ch\^atelet surfaces I},    J. reine angew. Math. {\bf 373} (1987) 37-107.
 \emph{Intersections of two quadrics and Ch\^atelet surfaces II},    J. reine angew. Math. {\bf 374}(1987), 72-168.
 
\bibitem[CTSk93]{CTSk93}  J.-L. Colliot-Th\'el\`ene et A. N. Skorobogatov, 
Groupes de Chow des z\'ero-cycles des fibr\'es en quadriques, Journal of K-theory {\bf 7} (1993) 477--500.
 
\bibitem[CTSk21]{CTSk21} 
J.-L. Colliot-Th\'el\`ene et A. N. Skorobogatov, The Brauer--Grothendieck group, 
 {\it  The Brauer--Grothendieck group}, 
Ergebnisse der Mathematik und ihrer Grenzgebiete. 3. Folge, {\bf  71}. Springer, Cham,  2021.

\bibitem[CoTs88]{CoTs88} D. F. Coray et M. A. Tsfasman, Arithmetic on singular Del Pezzo surfaces,
Proc. London Math. Soc. (3) {\bf 57} (1988) 25--87.

\bibitem[CV21]{CV21} B. Creutz and B. Viray, Quadratic points on intersections of two quadrics,
 arXiv:2106.08560v4  [math.NT], to appear in Algebra \& Number Theory.
 
   
 \bibitem[DM98]{DM98} O. Debarre et  L. Manivel, Sur la vari\'et\'e des espaces lin\'eaires contenus dans une intersection compl\`ete,
 Math. Annalen {\bf 312} (1998) 549--574.
 
 \bibitem[FK18]{FK18}
A. Fonarev et A. Kuznetsov, 
Derived categories of curves as components of Fano manifolds. 
J. Lond. Math. Soc. (2) 97 (2018), no. 1, 24--46.
 
 \bibitem[Ha94]{Ha94} D. Harari, M\'ethode des fibrations et obstruction de Manin. Duke Math. J. 75 (1994), no. 1, 221--260. 


 
\bibitem[HWW21]{HWW21} Y. Harpaz, D. Wei et O. Wittenberg,  Rational points on fibrations with few non-split fibres,
pr\'epublication, 2021.

\bibitem[H24]{H24} H. Hasse,    Darstellbarkeit von Zahlen durch quadratische Formen in einem beliebigen algebraischen Zahlk\"{o}rper.   J. reine angew. Math. {\bf 153} (1924), 113--130. 
 


\bibitem[HT21]{HT21}  B. Hassett et  Yu. Tschinkel,
Rationality of complete intersections of two quadrics over nonclosed fields. With an appendix by Jean-Louis Colliot-Th\'el\`ene.  L'Enseignement math\'ematique {\bf  67} (2021), no. 1-2, 1-44.

  
 

 \bibitem[HB18]{HB18} R. Heath-Brown,  
 Zeros of pairs of quadratic forms, J. reine angew. Math. {\bf 739} (2018), 41--80. 
 
 	
\bibitem[IP22]{IP22}	 J. Iyer et R. Parimala, Period-index problem for hyperelliptic curves, tapuscrit, Janvier 2022.

\bibitem[Ka08]{Ka08}
B. Kahn,  Formes quadratiques sur un corps,  Cours sp\'ecialis\'es {\bf 15}, Soci\'et\'e math\'ematique de France 2008.

 
\bibitem[Ku08]{Ku08}
A. Kuznetsov,  
Derived categories of quadric fibrations and intersections of quadrics. 
Adv. Math. 218 (2008), no. 5, 1340-1369.
 
\bibitem[Ku11]{Ku11}
A. Kuznetsov, 
Scheme of lines on a family of 2-dimensional quadrics: geometry and derived category. 
Math. Z. 276 (2014), no. 3--4, 655--672.


\bibitem[Lam73]{Lam73} T. -Y. Lam, The Algebraic Theory of Quadratic Forms, Benjamin/Cummings, 1973.

\bibitem[Lam05]{Lam05} T. -Y. Lam, Introduction to Quadratic Forms over Fields, Graduate Studies in Mathematics {\bf 67}, Amer. Math. Soc. (2005).
 
\bibitem[Leep]{Leep} D. Leep,  The Amer--Brumer theorem over arbitrary fields, pr\'epublication.
 
 \bibitem[Li68]{Li68} S. Lichtenbaum,  The Period-Index Problem for Elliptic  Curves, Amer. J. Math. {\bf 90}, no. 4 (1968) 1209--1223.
 
 
 \bibitem[Li69]{Li69}  S. Lichtenbaum, Duality Theorems for Curves over $p$-adic Fields, Invent. math. {\bf 7} (1969) 120--136.
 
 
\bibitem[N75]{N75}  P.E. Newstead,  Rationality of moduli spaces of stable bundles, Math. Ann.{\bf  215} (1975), 251--268

\bibitem[Reid72]{Reid72} M. Reid. The complete intersection of two or more quadrics, Thesis, Trinity College, Cambridge,  June 1972.
 
 \bibitem[Sal88]{Sal88}  P. Salberger, Zero-cycles on rational surfaces over number fields, Invent. math. 91 (1988) 505--524.
 
 \bibitem[Sal89]{Sal89} P. Salberger, Some new Hasse principles for conic bundle surfaces, in 
 S\'eminaire de Th\'eorie des Nombres, Paris 1987--1988,  (1989) 283-305.
 
 \bibitem[Sal93]{Sal93} P. Salberger, On the intersection of two quadrics containing a conic, preprint (1993)
 	arXiv:2305.02289
	 
 \bibitem[SalSk91]{SalSk91} P. Salberger et A.N. Skorobogatov,  Weak approximation for surfaces defined by two quadratic forms, Duke Math. J. {\bf 63} no. 2 (1991) 517--536.

 

\bibitem[ZT17]{ZT17}  Zhiyu Tian, Hasse principle for three classes of varieties over global function fields.  Duke Math. J. {\bf 166} (2017), no. 17, 3349--3424. 

\bibitem[XW18]{XW18} Xiaoheng Wang, Maximal linear spaces contained in the base loci of pencils of quadrics.
Alg. Geom. {\bf 5} (3) (2018) 359--397.

\bibitem[Wi07]{Wi07} O. Wittenberg, Intersections de deux quadriques et pinceaux de courbes de genre 1,
Springer LNM {\bf 1901} (2007).

\bibitem[Wi15]{Wi15} O. Wittenberg, 
Rational points and zero-cycles on rationally connected varieties over number fields
in {\it Algebraic Geometry: Salt Lake City 2015}, Part 2, p. 597-- 635, Proceedings of Symposia in Pure Mathematics {\bf 97}, American Mathematical Society, Providence, RI, 2018.

 		

	\end{thebibliography}
\end{document}